\documentclass[11pt,reqno]{amsart}
\usepackage{xifthen,setspace,bm}
 \usepackage{subcaption}
\usepackage{aligned-overset}
\usepackage[utf8]{inputenc}
\usepackage[T1]{fontenc} 
\usepackage{bm}
\usepackage{amsmath,amsthm,amsfonts,amssymb,mathrsfs,stmaryrd,bigints,graphicx}
\usepackage{dsfont}
\usepackage{mathtools}
\usepackage[usenames]{color}
\usepackage{thmtools, thm-restate}

\usepackage[colorlinks=true,linkcolor=blue]{hyperref}
\DeclareFontFamily{U}{matha}{\hyphenchar\font45}
\DeclareFontShape{U}{matha}{m}{n}{
      <5> <6> <7> <8> <9> <10> gen * matha
      <10.95> matha10 <12> <14.4> <17.28> <20.74> <24.88> matha12
      }{}
\DeclareSymbolFont{matha}{U}{matha}{m}{n}
\DeclareMathSymbol{\varleftrightarrow}{3}{matha}{"D8}
\DeclareMathSymbol{\nvarleftrightarrow}{3}{matha}{"DC}

\newcommand{\sfZ}{\mathsf{Z}}
\theoremstyle{plain}
\newtheorem*{informal-thm}{Informal theorem}
\theoremstyle{plain}
\newtheorem{informal-ppn}{Informal Proposition}

\usepackage{epstopdf} 
\usepackage{amssymb}
\usepackage{setspace}
\usepackage{enumerate}
\usepackage{bigstrut}
\usepackage{esvect}
\usepackage{multirow}
\usepackage{mathtools,xparse}
\usepackage{mathrsfs}
\usepackage{hyperref}
\usepackage{xcolor}
\newtheorem*{theorem*}{Theorem}
\newtheorem*{lemma*}{Lemma}
\newtheorem{theorem}{Theorem}[section]
\newtheorem{corollary}[theorem]{Corollary}
\newtheorem{lemma}[theorem]{Lemma}
\newtheorem{proposition}[theorem]{Proposition}

\theoremstyle{definition}

\newtheorem{remark}[theorem]{Remark}

\newcommand{\nin}{\noindent}

\allowdisplaybreaks
\usepackage{chngcntr}

\renewcommand{\P}{\mathbb{P}}

\newcommand\ba{\boldsymbol{a}}

\newcommand{\E}{{\mathbb{E}}}

\newcommand{\Z}{\mathbb{Z}}

\newcommand{\R}{\mathbb{R}}

\newcommand{\e}{\varepsilon}

\newcommand{\N}{\mathbb{N}}

	\renewcommand{\P}{\mathbb{P}}

\newcommand{\cC}{\mathcal{C}}

\newcommand{\cG}{\mathcal{G}}

\newcommand{\cL}{\mathcal{L}}

\newcommand{\cT}{\mathcal{T}}

\newcommand{\cZ}{\mathcal{Z}}

\newcommand{\wt}{\widetilde}

\newcommand{\f}{\frac}

\renewcommand{\setminus}{\backslash}

\newcommand\dd{\mathrm{d}}
\DeclareMathOperator{\Var}{Var}

\def\ba{\begin{align}}
\def\ea{\end{align}}
\def\bs{\begin{split}}
\def\es{\end{split}}

\begin{document}

\title{Logarithmic intermittency of the critical 2D SHF}
\author{Shirshendu Ganguly,   Kyeongsik Nam}

\begin{abstract}
While the solution to the $1+1$ dimensional stochastic heat equation with multiplicative noise is closely related to the exponential of a Brownian motion, owing to the sub-critical nature of the one-dimensional KPZ equation, the two-dimensional picture exhibits an additional weak-to-strong disorder transition as the disorder strength varies. In \cite{shfcritical}, the critical two-dimensional stochastic heat flow (SHF) was constructed as the scaling limit of the partition function of $2+1$ dimensional directed polymers under the logarithmic intermediate-disorder scaling at criticality. The SHF is not function-valued but is instead a random measure. Like many naturally occurring random measures, it is expected to exhibit rich intermittency, and \cite{shfsingularity} established that it is almost surely singular with respect to Lebesgue measure. More recently, \cite{loglog} showed that the logarithm of the SHF averaged over small balls is asymptotically Gaussian, with both its mean and variance diverging as the ball radius tends to zero. In this paper we prove a sharp result quantifying the singularity of the support of the SHF as well as its intermittency. In particular, we show that, almost surely, for all small $\e>0$, up to a vanishing error, all the mass of the point-to-plane SHF in any domain is concentrated on
{  ${1}/{\big(\e^2\log^{1/2+o(1)}(1/\e)\big)}$ balls of radius $\e$, each containing $\e^2{\log^{1/2+o(1)}(1/\e)}$ mass,}
thus precisely establishing its logarithmic fractal behavior. A key ingredient in the proof is a refined large-deviations theory, which allows access to conditional distributions, by taking advantage of the Gaussian-like behavior of the SHF at quasi-critical scales.  A further useful observation that features prominently is that conditioning a Brownian motion on its endpoint being unusually large essentially induces a shift in the mean of its increments, and consequently, at small enough scales, their distributions do not alter significantly. 
\end{abstract}

\address{Department of Statistics, UC Berkeley} 
\email{sganguly@berkeley.edu}

\address{Department of Mathematical Sciences, KAIST, South Korea}
\email{ksnam@kaist.ac.kr}

\maketitle
\vspace{-20pt}
 
\begin{figure}[!ht]
\centering
  \includegraphics[width=0.3\linewidth]{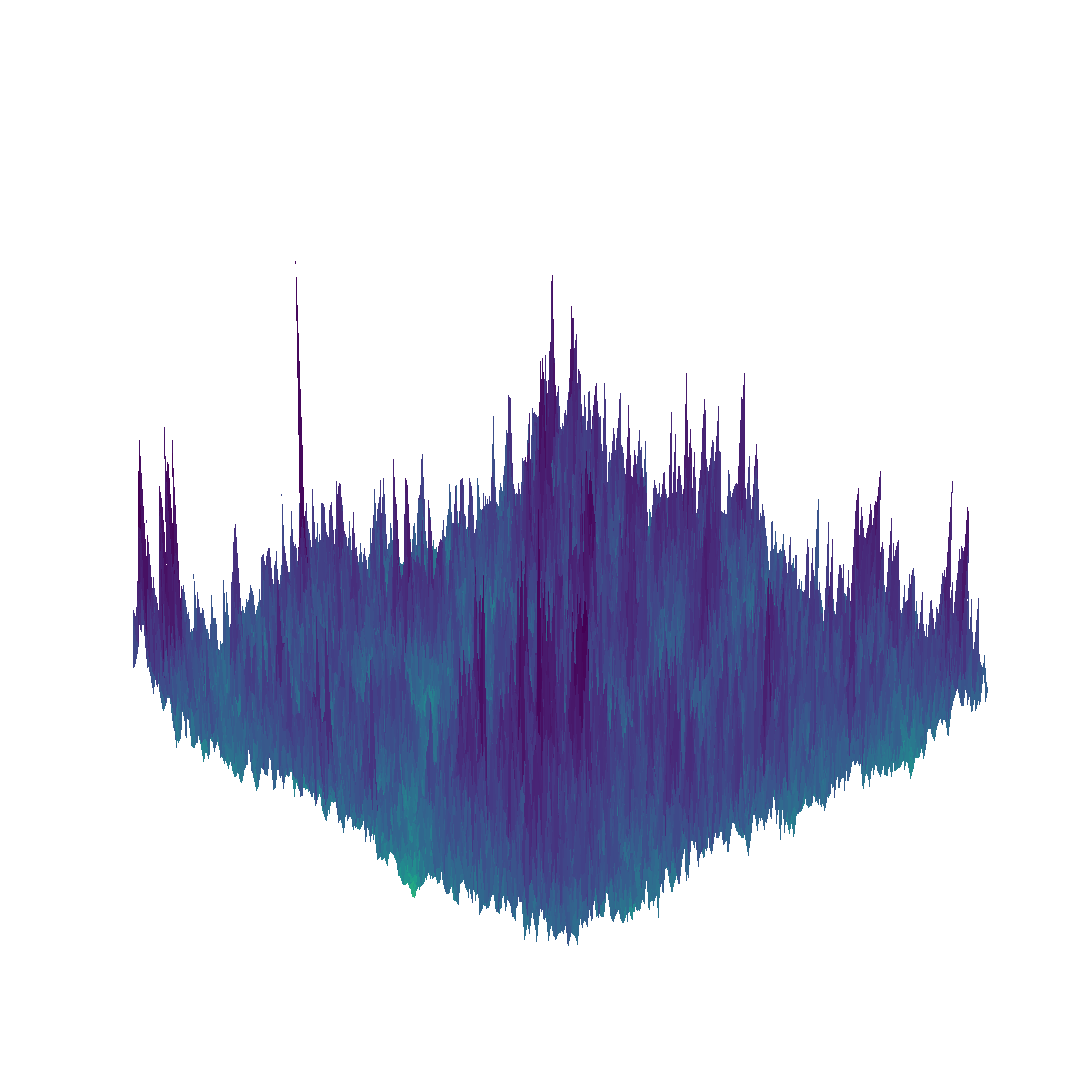}
  %\hfill
  \includegraphics[width=0.3\linewidth]{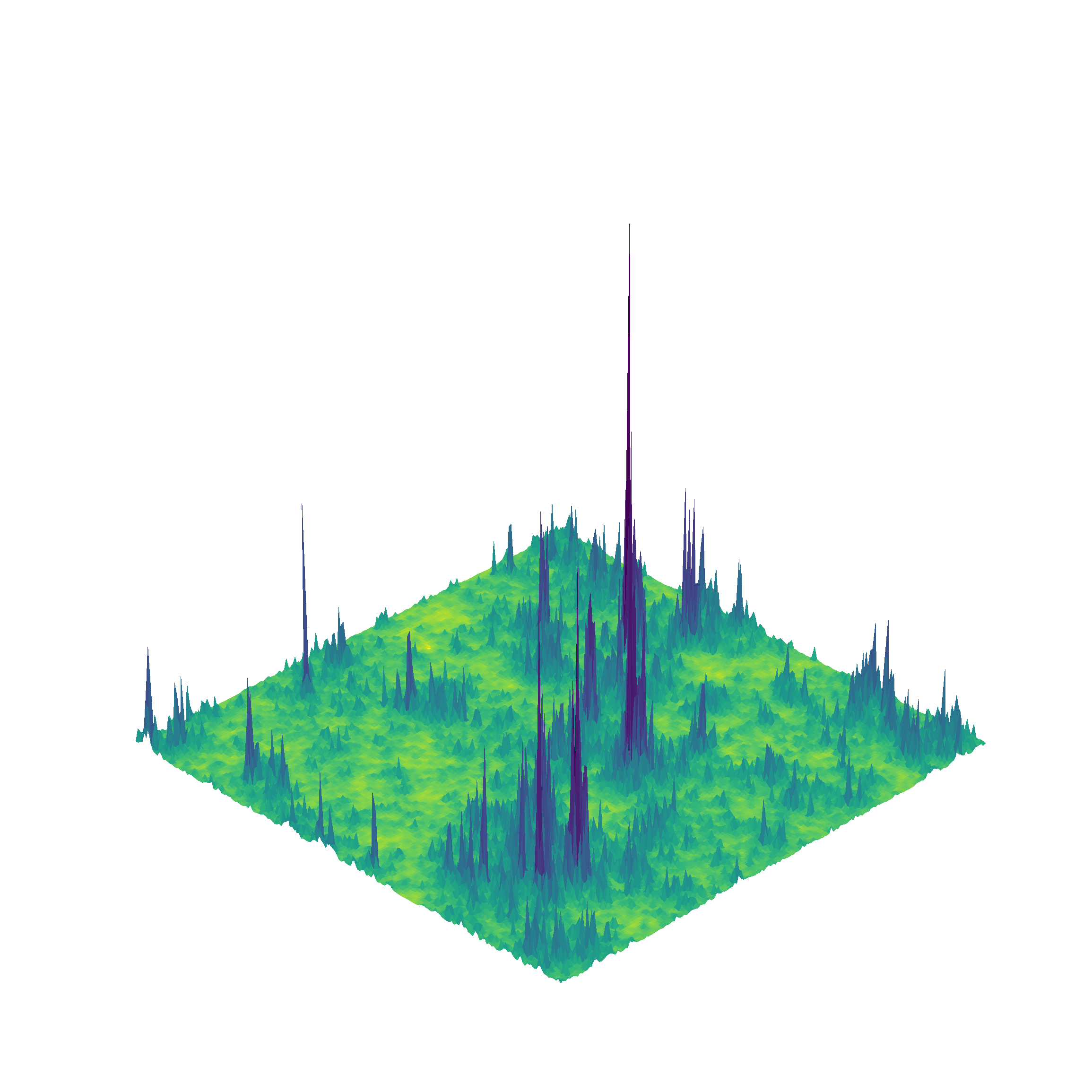}
  %\hfill
  \includegraphics[width=0.3\linewidth]{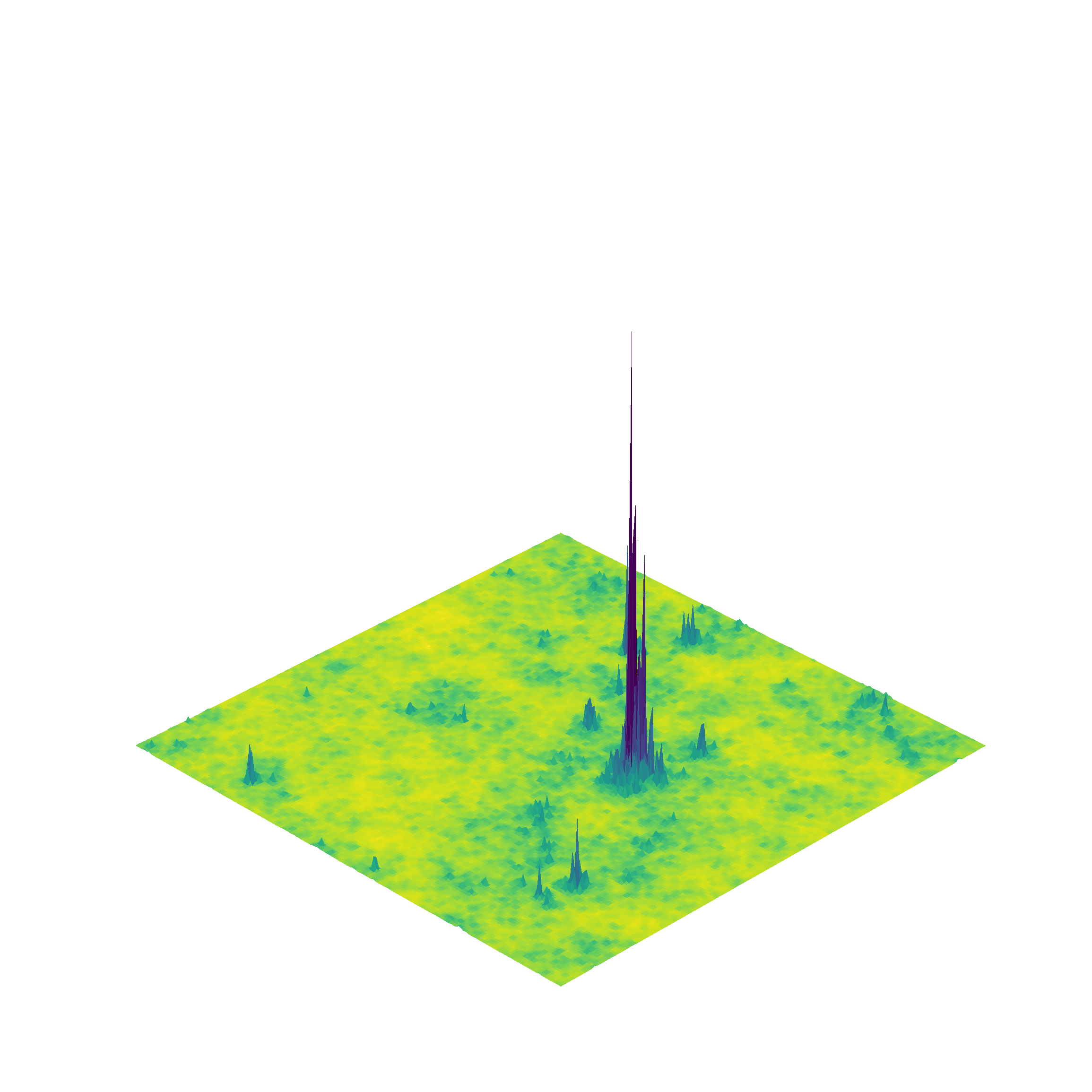}
\caption{
A simulation of the point-to-plane partition function for the
lattice directed polymer on $\mathbb{Z}^{2+1}$
(two spatial dimensions and one time dimension) of length $N=1000$, with the inverse temperature parameter $\beta_N$
in the critical window given by
$e^{\beta^2_N}-1 = \frac{1}{R_N} \Big(1+\frac{\vartheta+o(1)}{\log N} \Big)$.
Here, $\vartheta=-6,0,6$ from left to right, respectively.  $R_N$ denotes the expected number of collisions or overlap of two independent simple random walks up to time $N$, which grows logarithmically in $N$ (the precise asymptotics is recorded later around \eqref{choiceb}). As $\vartheta$ decreases, the surface
becomes progressively flatter and converges to the
random-walk heat kernel as $\vartheta\to-\infty$, whereas larger
positive values of $\vartheta$ produce increasingly singular
surfaces.
}
\label{fig:simulation}
\end{figure}

\clearpage

\setcounter{tocdepth}{2}{
  \tableofcontents
}

\section{Introduction and main results}
\nin
The following is the stochastic heat equation (SHE) in dimension $d$:
\begin{align}\label{SHE}
    \partial_t Z = \frac{1}{2}\Delta Z+\beta  \xi Z, \qquad t>0,   x\in \R^d,
\end{align}
where $\xi$ is space-time white noise.
Formally, the logarithm of the solution is the Cole-Hopf solution to the KPZ equation
\begin{align}\label{KPZ}
    \partial_t h = \frac{1}{2}\Delta h+ \frac{1}{2}|\nabla h|^2+ \beta  \xi, \qquad t>0,   x\in \R^d.
\end{align}
The roughness of the noise term coupled with the non-linearity makes the KPZ equation ill-posed. 
As is well known, dimension one has seen spectacular activities in the study of the above equations both through pre-limiting models and the lens of integrable probability \cite{amir2011probability,corwin2012kardar}, as well as through the theory of regularity structures \cite{hairer2014theory} and paracontrolled calculus \cite{gubinelli2015paracontrolled} which put forth a comprehensive theory of such singular stochastic PDEs (SPDEs) in dimension $1.$ A particular feature of the latter is that it is sub-critical, i.e., the non-linear term is asymptotically vanishing at small scales. This is the opposite of the super-critical case $d\ge 3$ where the effect of non-linearity blows up at small scales.
All of this makes the critical case of dimension $2$ particularly interesting as the effect of non-linearity stays the same at all scales.
While making sense of \eqref{KPZ} in $d=2$ and higher dimensions still remains a major open problem, the planar SHE \eqref{SHE} has led to a rather deep picture through a series of rigorous works \cite{bertini1998two, shfsubcritical0, shfcriticalmoment, chatterjee, shfsubcritical, gu,  shfuppermoment, shfcritical}. A natural way of making sense of \eqref{SHE} is by regularizing the noise. One then attempts to pass to a limit which usually involves a renormalization procedure. It turns out that if the mollification scale is $\e,$ then the renormalization associated with the intermediate disorder scaling involves taking $\beta$ to be ${\hat \beta}/{\sqrt{\log  \left({1}/{\e}\right)}}$ where $\hat \beta$ is $O(1).$
%\red{

As is well known, a formal solution to the SHE is given by the continuum counterpart of the partition function of the discrete directed polymer in a random environment (DPRE). 
We next turn to defining the latter to help base our upcoming discussion on an easier to visualize discrete lattice object.
Let
\begin{align}\label{polymer456}
    Z_{M,N}^{\beta}(x,y) := E\Big[  \exp\Big(\sum^{N-1}_{n=M+1}(\beta\omega(n,S_n)-\lambda(\beta))\Big) \mathbf 1_{\{S_{N}=y\}}   |  S_M=x\Big],
\end{align}
where $(S_n)_{n\geq 0}$ denotes a two-dimensional simple random walk, whose law 
and expectation 
are denoted 
by $P$ and 
$E$ respectively. and $(\omega(n,x))_{n\in \N, x\in \Z^2}$ is a family %%%%%
of i.i.d. random variables %%%%%%
with mean $0$ and variance $1$ %%%%%
having a finite log-moment generating function $\lambda(\beta):=\log\E\big[ e^{\beta\omega} \big]<\infty$ ($\forall \beta\in \R$) (we can assume that the variables are i.i.d. standard Gaussians). This is the %%%%%
discrete equivalent of a space-time white %%%%%
noise. Summarizing, $Z_{M,N}^{\beta}(x,y)$ is the partition function of the directed polymer model where each space-time lattice point $(n,x)$ is equipped with an i.i.d. variable $\omega(n,x).$ The directed polymer is obtained by tilting  a trajectory $\{S_{n}\}_{M\le n\le N}$, by the tilt factor $$ \exp\Big(\sum^{N-1}_{n=M+1}(\beta\omega(n,S_n)-\lambda(\beta))\Big),$$ where, following usual statistical mechanics terminology, the expression in the exponent is called the energy. For the reader less familiar with this topic, let us quickly mention that the parameter $\beta$ (inverse temperature) tunes the interaction between the base measure, i.e., the random walk, and the disorder. Thus, the smaller $\beta$ is, the closer the polymer measure is to the base measure while for larger values of $\beta$, the model approaches the zero temperature behavior of last passage percolation where the entire measure degenerates to a single path, the one with the {maximum} energy, often termed as the \emph{geodesic}. 
This is known as the weak-to-strong disorder transition.

While in dimensions three and higher, one indeed witnesses a transition as above as $\beta$ crosses a critical value $\beta_c$, in one and two dimensions it is expected that the entire interval $\beta >0$ is in the strong disorder regime and an interesting interpolating behavior is only witnessed when one zooms in on a critical window around $\beta=0.$ This is known as the intermediate disorder regime. 
For dimension $1$ and polymers of length $N,$ the intermediate disorder behavior is witnessed by choosing $\beta=\hat \beta N^{-1/4}$. It was shown in the foundational work \cite{alberts2014intermediate} that for any $\hat \beta>0,$ the discrete partition function $Z^{\hat \beta N^{-1/4}}_{0,N}(0, x\sqrt N)$ converges to the solution $Z^{\hat \beta}$ of the SHE \eqref{SHE} with $\hat \beta$ in place of $\beta$, regardless of the distribution of the disorder variables. This established the universality of the directed polymer in the intermediate disorder regime in dimension one.

In dimension $2,$ the correct scaling is logarithmic owing to the logarithmic blow up of the Green function. That is, one should instead take $\beta={\hat \beta} \frac{1}{\sqrt{\log N}}.$ 
Further, more interestingly, unlike dimension $1,$ inside the intermediate disorder regime, one witnesses a weak-to-strong disorder phase transition at $\hat \beta=\hat \beta_c$. The critical value is of course model dependent. While {it turns out that $\displaystyle \hat \beta_c = \sqrt{\pi}$ for the directed polymer,  in the continuous model obtained by a mollification of the SHE in \eqref{SHE}, one has $\displaystyle \hat \beta_c = \sqrt{2\pi}$. \cite[Appendix]{shfnotgmc} includes further details of comparison of the critical windows of these approximations which the interested reader is encouraged to refer to. 

The main object of investigation in this article is the solution to the SHE exactly at this critical point constructed in the important work \cite{shfcritical}. However, unlike in one dimension, the solution is not function-valued but rather measure-valued. While we will introduce this object formally shortly, let us first review briefly related developments prior to \cite{shfcritical}.
 Motivated by the works on the delta-Bose gas in \cite{albeverio}, in the pioneering paper \cite{bertini1998two} the first computations on the $2d$ SHE
in the critical window were carried out. The critical scaling was rediscovered in \cite{shfsubcritical0} who also discovered the phase transition. Curiously, this observation seems to be missing from \cite{bertini1998two}.
Subsequently, the sub-critical model, i.e., when {$\hat \beta <\hat \beta_c,$} has been studied extensively (see \cite{chatterjee, shfsubcritical0, gu, shfsubcritical}). The super-critical case still remains essentially out of reach though recent progress has been {made in a regime approaching  super-criticality \cite{berger2025strong}}.
 
Turning to the critical point, in \cite{shfcritical}, it was shown that the sequence of scaled fields (with the special choice of $\beta_N$ described shortly) 
\begin{align}\label{preSHF}
    \cZ_{N;  s,t}^{\beta_N}(\dd x, \dd y):=
    \f{N}{4}
    Z _{[  Ns ],[ Nt ]}^{\beta_N}
 (  \llbracket  \sqrt{N}x    \rrbracket
    ,
    \llbracket   \sqrt{N}y   \rrbracket
) 
    \dd x \dd y, \qquad 0\le s<t<\infty  ,
\end{align}
thought of as measures on $\R^2 \times \R^2$, converges and the limiting object was termed as the critical $2d$ stochastic heat flow (SHF).
Above, $[\cdot]$ maps a real %%%%%%
number to %%%%%%
its nearest even %%%%%%
integer neighbour, %%%%%%
$\llbracket\cdot\rrbracket$ maps $\R^2$ %%%%%%
points to 
their nearest even %%%%%%
integer point on %%%%%%
$\Z^2_\text{even}:=\{ (z_1,z_2)\in\Z^2:z_1+z_2\in 2\Z \}$, %%%%%%
and $\dd x  \dd y$ denotes the Lebesgue %%%%%%
measure on $\R^2 \times \R^2$. %%%%%%
The critical $\beta_N$ is chosen  %%%%%%
 such that
\begin{align}\label{choiceb}
\sigma_N^2:= e^{\lambda(2\beta_N)-2\lambda(\beta_N)}-1 = \frac{1}{R_N} \Big(1+\frac{\vartheta+o(1)}{\log N} \Big),
\end{align}
{where $\vartheta \in \R$ and $R_N$ denotes the expected number of collisions or overlap of two independent simple random walks up to time $N$. One also has the asymptotics: $R_N = \frac{\log N}{\pi} + c_0 + o(1)$ for some absolute constant $c_0$} (see \cite[Appendix A.1]{shfnotgmc} for details). Summarizing, the following is their main result (see also the recent survey \cite{caravenna2024critical}).

\begin{theorem*} \label{shfdefthm}
    Let $\beta_N$ be as in \eqref{choiceb} for %%%%%%
some fixed $\vartheta \in \R$ and %%%%%%
$\big( \cZ_{N;  s,t}^{\beta_N}(\dd x, \dd y) \big)_{0\le s<t<\infty}$ 
    %%%%%%
be defined as in \eqref{preSHF}. %%%%%%
Then as $N\rightarrow\infty$, the process of %%%%%%
random measures %%%%%%
    $(\cZ_{N; s,t}^{\beta_N}(\dd x,\dd y))_{0\le s\le t<\infty}$ %%%%%%
converges in %%%%%%
finite dimensional %%%%%%
distributions to %%%%%%
a unique limit
    \begin{align}\label{SHF5687}
        \mathscr{Z}^\vartheta
        =
        (\mathscr{Z}_{s,t}^{\vartheta}(\dd x,\dd y))_{0\le s\le t<\infty},
    \end{align}
    named the Critical %%%%%%
2d Stochastic Heat Flow.
\end{theorem*}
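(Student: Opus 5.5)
The plan follows the strategy of \cite{shfcritical}: prove tightness, then show that every subsequential limit has the same finite-dimensional laws. For tightness, the first moment of $\cZ_{N;s,t}^{\beta_N}(\varphi,\psi)$ against test functions $\varphi,\psi\in C_c(\R^2)$ converges to the heat-kernel integral $\int\varphi(x)g_{t-s}(y-x)\psi(y)\,\dd x\,\dd y$. Hence the family of random measures is tight in the vague topology, for each fixed finite collection of times $s_i<t_i$. Uniqueness of the limit is the real content. The route is not moment determinacy, since the moments of the critical SHF grow too fast for Carleman's criterion. Instead I would show that for every pair of test functions $(\varphi,\psi)$, the averaged field $\cZ_{N;s,t}^{\beta_N}(\varphi,\psi)$ is Cauchy in distribution, using a Lindeberg-type comparison with a coarse-grained model that no longer depends on $N$.

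First, I would establish uniform moment bounds. Using the chaos expansion of \eqref{polymer456} together with the renewal structure of the collision local time, I would show that for each fixed $h\ge 2$,
\[
\sup_N \E\big[\cZ_{N;s,t}^{\beta_N}(\varphi,\psi)^h\big]<\infty .
\]
The renewal structure is the Dickman-type renewal of \cite{shfcriticalmoment}, applied at the critical tuning \eqref{choiceb}. I would prove the bound through a functional-inequality argument on the $h$-fold operator $\sum (\text{random walk kernel})\times(\sigma_N^2\text{-weighted collisions})$, controlling it in weighted $\ell^p$ spaces as in \cite{shfuppermoment}. This also gives convergence of the second moment to an explicit limit involving the Dickman function and $\vartheta$.

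Second, I would coarse-grain. Partition space-time into mesoscopic boxes of time length $\e N$ and spatial side $\sqrt{\e N}$. In the chaos expansion, keep only the terms whose consecutive visited boxes are either "close", meaning they form a single block, or separated by diffusive jumps with enough spacing. This produces a coarse-grained polynomial $\mathscr{Z}^{(\e)}_N$. Its variables are the block partition functions $\Theta_N(\text{box pairs})$, which are approximately independent, have mean zero, and have variance of order $1/\log(1/\e)$. Using the moment bounds from the first step, I would show
\[
\limsup_N \E\big|\cZ_N(\varphi,\psi)-\mathscr{Z}^{(\e)}_N(\varphi,\psi)\big|^2\to 0 \quad\text{as } \e\to 0 .
\]

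Third, I would run the Lindeberg replacement. The law of $\mathscr{Z}^{(\e)}_N$ depends on $N$ only through the law of the $\Theta_N$. By the second step these converge, in a suitable sense, to limiting variables with the same mean, the same variance, and uniformly bounded fourth moments. A Lindeberg principle for multilinear polynomials then shows that the laws of $\mathscr{Z}^{(\e)}_N$ and $\mathscr{Z}^{(\e)}_{N'}$ differ by an amount that vanishes as $N,N'\to\infty$ and then $\e\to0$. The hypotheses are hypercontractivity, or more precisely bounded fourth moments of the coarse-grained variables, and smallness of the maximal influence. Since the coarse-graining error is uniform in $N$, the distributions of $\cZ_N(\varphi,\psi)$ form a Cauchy sequence. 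The same argument applied to linear combinations of finitely many $(s_i,t_i,\varphi_i,\psi_i)$ gives convergence of finite-dimensional distributions to a unique limit, and the result is independent of the disorder law.

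I expect the main obstacle to be the fourth-moment and influence bounds for the coarse-grained variables $\Theta_N$, uniformly in $N$ and at criticality. These require sharp renewal estimates showing that higher moments stay bounded even though second moments sit exactly at the threshold, and I would attack them through the operator $\ell^p$ bounds of the first step.
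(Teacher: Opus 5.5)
Your plan is essentially the proof of \cite{shfcritical}, which is all the paper relies on here, since the theorem is quoted rather than reproved. The ingredients match: first-moment tightness in the vague topology, an $L^2$ coarse-graining approximation uniform in $N$, uniform fourth-moment bounds on the coarse-grained variables via weighted $\ell^p$ operator estimates, and a Lindeberg principle making $\cZ_{N;s,t}^{\beta_N}(\varphi,\psi)$ Cauchy in law, extended to finite-dimensional distributions through linear combinations. Two points to sharpen: there is no a priori limit law for $\Theta_{N,\e}$ (that would be as hard as the theorem itself), so the swap must be made between $\Theta_{N,\e}$ and $\Theta_{N',\e}$ directly; and since these variables are dependent when their boxes overlap and the coarse-grained polynomial has unbounded degree, the Lindeberg principle must allow finite-range dependence and needs fourth-moment bounds on the coarse-grained model and its partial derivatives, not just on the $\Theta$'s.
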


Subsequently, an axiomatic framework constructing the SHF and proving its uniqueness was put forth in \cite{tsai}.\\

\nin
\textbf{Intermittency in random measures.} Naturally occurring random measures often  exhibit rich fractal or intermittent behavior, typically being singular with respect to the natural measure on the base space. Sharp tail and decay of correlation estimates have facilitated the study of intermittent behavior in  $1+1$ dimensional polymer models and the associated solution to the stochastic heat equation. While the primary focus of this article will be the planar setting which we turn to shortly, let us nonetheless point the reader to {\cite{book,inter,inter2,ganguly,lil}} which review some of the developments.

A particularly canonical example of this is the Gaussian multiplicative chaos (GMC) obtained by exponentiating the Gaussian free field (GFF). Constructed rigorously in the seminal work \cite{kahane1985chaos}, it was subsequently shown by Duplantier and Sheffield in \cite[Proposition 3.4]{lqz} that the GMC measure is supported on the exceptional set of points where the GFF ``value'' (made sense of by taking local averages) is unusually high. Such points are termed as thick points, see e.g.  \cite{hu2010thick}. It is worth pointing out that the topological support is indeed the whole space since thick points form a dense set, but nonetheless, the measure is carried by the thick points in the sense that the complement of thick points has zero measure. We will continue to use the word support in this slightly loose sense throughout the rest of the article with the hope that it evokes the right picture. The interested reader is also referred to \cite{gmc} for a beautiful survey on the developments around the construction of GMCs. 
The preceding discussion already allows us to articulate the main goal of the paper as follows.

\begin{center}
\textit{ Prove a counterpart to the Duplantier-Sheffield result for the SHF.}
\end{center}

While, the SHF was shown not to be an exponential of a Gaussian process by establishing certain moment inequalities in \cite{shfnotgmc}, it still exhibits rather intriguing intermittent behavior, as is evident from Figure \ref{fig:simulation}. 
In this vein, several results have recently been established. Before getting to our main result we briefly review these. \\

\nin
\textbf{Growth of moments.}
A particular manifestation of intermittency is the rapid growth of its moments, a problem that has seen some important developments.  
In fact, the {tightness of the family $(\cZ_{N; 0,1}^{\beta_N}(\dd x,\dd y))_{N \in \mathbb N}$ follows immediately from a first-moment analysis, while the non-triviality of sub-sequential limits was first established by controlling the second and third moments of observables of the form
\begin{align}\label{flatdata} 
\mathscr{Z}^\vartheta_{t}(\varphi):=\int_{\R^2}  \varphi(x)   \mathscr{Z}_t^\vartheta(\dd x, \mathbf 1),
\text{ where}\quad
\mathscr{Z}_t^\vartheta(\dd x, \mathbf 1)
:=\int_{y\in \R^2} \mathscr{Z}_{0,t}^\vartheta(\dd x, \dd y),
\end{align}
and $\varphi$ is any smooth non-negative test function,
 in \cite{bertini1998two, shfcriticalmoment}.} It was predicted in the late nineties in {\cite{conjecture}} that such observables should in fact exhibit a double exponentially growing behavior of their moments, i.e., the $h^{th}$ moment should grow as $\exp(\exp (ch))$ for some constant $c = c(\vartheta)>0$.} 
In \cite{shfuppermoment}, using functional analytic tools an upper bound of $\exp(\exp(c h^2))$ was proven (see also \cite{chen1,chen2,chen3}).  On the other hand, a lower bound obtained as a consequence of the Gaussian correlation inequality \cite{gaussian1, gaussian2} is $\exp\left(c {h\choose 2}\right)$
{for some constant $c>0$,} 
and can be found, for instance, in \cite{shfnotgmc}. This stood as the state of the art until in recent work \cite{ganguly2025sharp} by the authors, a double exponential lower bound was established using a novel connection to the discrete Gaussian free field on Feynman diagrams and the connection to spanning trees via Kirchhoff's matrix-tree theorem \cite{matrixtree}. \\

\nin
\textbf{Probabilistic developments.}
While the preceding results pertain to the algebraic problem of moments, there also have been developments on the probabilistic side. For instance, \cite{shfsingularity} considered the SHF mass of a small ball of radius $\e,$ say $B=B(0,\e),$ i.e., the ball of radius $\e$ around the origin.  To be more concrete let
\begin{align} 
X_\e:= \frac{\int_{x\in B}\mathscr{Z}_1^\vartheta(\dd x, \mathbf 1)}{\pi \e^2}.
\end{align}
Note that $X_\e$ is the mass SHF assigns to $B$, i.e., the partition function of the polymer with one endpoint uniformly chosen inside $B$ and another completely free, relative to its Lebesgue measure. 
This normalization ensures that $X_\e$ has mean {of order $O(1)$} for all $\e>0$. 
In \cite{shfsingularity} it was shown that $X_\e \Rightarrow 0$ ($\Rightarrow$ denotes convergence in distribution) as $\e \to 0.$
This was done by setting up a comparison, using monotonicity of fractional moments, to a slightly sub-critical model on a renormalized space-time lattice, say, corresponding to  ${\hat \beta}/{\sqrt{\log\left(\frac{1}{\e}\right)}} $ where $\hat \beta$ is strictly less than $\hat \beta_c$. This allows one to reduce the analysis to the easier sub-critical regime where it is known already from \cite{shfsubcritical}, that as $\e \to 0$, $X_\e \Rightarrow W$ where $W$ is a log-Normal variable of mean one corresponding to a Gaussian with variance approximately  $\log \left(\frac{1}{1-\left({\hat \beta}/{\hat \beta_c}\right)^2}\right)$. Since such a log-Normal variable converges to zero in probability as $\hat \beta$ converges to $\hat\beta_c$ using the above comparison one can deduce the same for $X_\e$ in the critical case. This along with stationarity shows that the Critical $2d$ SHF is almost surely singular with respect to Lebesgue measure.

As we will see in the forthcoming section, a recent result of \cite{loglog} which is closely related to the theme of this paper proves again a log-CLT result but importantly directly in the critical regime where the variance of the associated Gaussian now blows up unlike in the sub-critical regime where it converged to $\log\left(\frac{1}{1-\left({\hat \beta}/{\hat \beta_c}\right)^2}\right).$

Given the above evidence of the singularity of the SHF measure and the intermittent structure revealed through moment growth as well as simulations (Figure \ref{fig:simulation}) several natural questions arise. \\

\nin
$(1)$
If most of the mass is carried by a sparse collection of 
exceptional peaks, how high are the peaks? \\

\nin
$(2)$
How large is the set of points exhibiting unusually high values, i.e., more precisely what is its dimension?\\

\nin
$(3)$
What is the spatial organization of these peaks? \\

\nin
$(4)$ What is the corresponding extreme value theory, the value of the maximum and its fluctuations?\\

\nin
As indicated, by now there is a thorough understanding of all the above questions for the planar GFF and the associated Liouville quantum {gravity measure (LQG) \cite{lqz, gmc, bramson2016convergence, biskup2016extreme}}. This paper aims to initiate a parallel program for the SHF and answers the first two questions in a strong sense. \\

\nin
We now turn to precisely defining all the objects in play and the statement of our main result.

\subsection{Main result}
Recall the SHF from \eqref{SHF5687}. Letting $E:=\mathbb R^2\times\mathbb R^2,$ 
and \(\mathcal M_+ = \mathcal M_+(E)\), the space of positive locally finite Borel
measures on \(E\), equipped with the Borel \(\sigma\)-field induced by the
vague topology:
\[
        \mathcal B _{\mathrm{vag}}(\mathcal M_+)
        :=
        \sigma\left(
        \mu\mapsto \int_E \varphi\,d\mu:
        \varphi\in C_c(E)
        \right).
\]
Then for any time interval $[s,t],$ each SHF increment $\mathscr Z^\vartheta_{s,t}$ 
is viewed as a random element in $(\mathcal M_+,\mathcal B _{\mathrm{vag}}(\mathcal M_+)).$ 
Throughout the paper,
for a kernel \(A = A(dx,dy)\) and test functions \(f,g\) on $\mathbb R^2$, following \cite{loglog}, we use the notation
\[
        f\blacktriangleleft A\blacktriangleright g
        :=
        \int_{\mathbb R^2\times\mathbb R^2}
        f(x)A(dx,dy)g(y),
\] 
and similarly define $
        f\blacktriangleleft A$ and $A\blacktriangleright g$ to be the natural measures on $\R^2.$ \\

 We are now in a position to state our main result.
\begin{theorem} 
\label{main1}
Let $\vartheta\in \mathbb R$ and  $
     \mathsf    Q:=[0,1]^2.$ Further, let  \(\xi\in (0,1/10)\) be a fixed constant which should be thought of as arbitrarily small. Then the following  statements hold almost surely, simultaneously for all $0<\e <1/2$.   
\begin{enumerate}
\item[\rm{(i)}]
There exists a collection
\(\mathcal B_\e\) of balls of radius \(\varepsilon\) such that  
\[
        |\mathcal B_\e|
        \le
        \varepsilon^{-2}(\log \varepsilon^{-1})^{-1/2+\xi},
\]
and 
\begin{align} \label{900}
    (\mathscr Z^\vartheta_{0,1}\blacktriangleright 1 )\left(
        \mathsf Q\setminus\bigcup_{B\in\mathcal B_\e}B
        \right)
        =o_{\e}(1).
\end{align}

\item[\rm{(ii)}] {Denoting by $\cC_\e$ a set which is a union of at
most
\[
        \varepsilon^{-2}(\log \varepsilon^{-1})^{-1/2-\xi}
\]
balls of radius \(\varepsilon\), we have 
\begin{align} \label{901}
   \sup_{\cC_\e}   \,\,(\mathscr Z^\vartheta_{0,1}\blacktriangleright 1)({\mathcal C}_\e\cap \mathsf Q)=o_{\e}(1),
      % \qquad\text{almost surely}.
\end{align}
where the supremum is taken over all such possible sets  $\cC_\e$.}
\end{enumerate}
\end{theorem}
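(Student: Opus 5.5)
Throughout, write $\mu:=\mathscr Z^\vartheta_{0,1}\blacktriangleright 1$ restricted to $\mathsf Q$. The plan is a thick-point analysis in the spirit of Duplantier--Sheffield, driven by the log-normal CLT of \cite{loglog}. For a ball $B=B(x,\e)$ set $X_\e(x)=\mu(B)/(\pi\e^2)$. The input from \cite{loglog} is that $\log X_\e(x)$ is asymptotically Gaussian with mean $\approx -\tfrac12 V_\e$ and variance $V_\e\approx\log\log(1/\e)$. Hence, under the size-biased (Peyrière/rooted) measure $\mu(\dd x)\,\P(\dd\omega)$, the variable $\log X_\e(x)$ is Gaussian with mean $+\tfrac12 V_\e$. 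This predicts that typical $\mu$-points see $X_\e\approx (\log 1/\e)^{1/2+o(1)}$, i.e.\ mass $\e^2(\log1/\e)^{1/2+o(1)}$ per ball. Both parts then follow by counting. The mass in balls with $X_\e\le(\log1/\e)^{1/2-\xi}$ should be small, which gives (ii): a union of $\e^{-2}(\log)^{-1/2-\xi}$ balls can carry much mass only if many of them have $X_\e\ge (\log)^{1/2+\xi/2}$. For (i), one takes $\mathcal B_\e$ to be the balls (from a fixed grid of $\asymp\e^{-2}$ balls) with $X_\e\ge(\log)^{1/2-\xi}$. Since $\mu(\mathsf Q)=O(1)$, there are at most $\e^{-2}(\log)^{-1/2+\xi}$ of them, and the complement carries little mass.

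The key steps, in order, are as follows.

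\emph{Step 1.} Upgrade the CLT of \cite{loglog} to a moderate-deviations statement for $\log X_\e$ at scales $\pm\eta V_\e^{1/2}\cdot V_\e^{1/2}$, i.e.\ at deviation $\xi\log\log(1/\e)$ from the mean. This is needed for both the $\P$-law and the size-biased law. Concretely, I would aim for the two first-moment bounds
\[
\E\big[\mu(\{x: X_\e(x)\le (\log\tfrac1\e)^{1/2-\xi}\})\big]\to0,\qquad \E\big[\mu(\{x: X_\e(x)\ge (\log\tfrac1\e)^{1/2+\xi}\})\big]\to0.
\]
The first bound gives (i) in expectation via Markov. For (ii), one bounds the mass of $\cC_\e$ by two contributions. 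The first is the number of balls times $\e^2(\log)^{1/2+\xi/2}$, which is $o(1)$. The second is the mass of high balls, which is $o(1)$ by the second bound.

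\emph{Step 2.} Obtain the size-biased law. I would run the multi-scale decomposition of the SHF at quasi-critical scales $\e_k$ (Chapman--Kolmogorov along $\mathscr Z_{0,s}\mathscr Z_{s,1}$). At these scales the increments of $\log$ mass are nearly independent and approximately Gaussian, so $\log X_\e$ behaves like a Brownian motion in the time parameter $\log\log$. Conditioning on the endpoint being atypically large, or size-biasing by the endpoint, shifts the increment means without altering their law at fine scales. This is the mechanism that gives the upper and lower tail estimates with the right exponents.

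\emph{Step 3.} Pass to an almost sure statement for all $\e$ simultaneously. First work along $\e_n=e^{-n^{a}}$, or a sparser sequence where the $\log\log$ changes slowly, and use Borel--Cantelli. This requires polynomially decaying error probabilities, so Step 1 must quantify the $o(1)$ as a rate in $\log\log(1/\e)$. Between consecutive $\e_n$, monotonicity does the interpolation: a ball of radius $\e$ is covered by $O(1)$ grid balls of radius $\e_n\ge\e$, where $\e_n/\e$ is bounded in the $\log$ scale. The $\xi$-slack absorbs the resulting constant-factor changes in counts and in the $\log$ powers.

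I expect the main obstacle to be Step 2. Accessing the conditional (size-biased) distribution of $\log X_\e$ sharply enough to get moderate deviations of order $\xi\log\log$ is hard because the SHF is not Gaussian. The approximate independence across scales must therefore be controlled with errors summable over $\asymp\log\log(1/\e)$ scales. I would attempt this via comparison with sub-critical models at each scale together with fractional/second-moment estimates.
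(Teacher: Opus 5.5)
Your reduction is correct and matches the paper's. The two first-moment bounds you target in Step~1 are a ball-average form of Corollary~\ref{corcorcor}, and counting, Borel--Cantelli along a sparse sequence and sandwiching between scales is what Section~\ref{min567891} does. The gap is that the whole content of the theorem sits inside those two bounds, and Step~2 restates the heuristic (conditioning on a large endpoint shifts increment means) without a mechanism that proves it.

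The events you must control, $\log\mathcal Z_\varepsilon\approx(\tfrac12\pm\xi)\log\log\varepsilon^{-1}$, lie at a distance of the order of the variance $\log\log\varepsilon^{-1}$ from the mean, under both $\mathbb P$ and the size-biased law. This is a genuine large-deviation regime with probabilities that are powers of $\log\varepsilon^{-1}$, so nothing can be ``upgraded'' from the distributional CLT of \cite{loglog}.

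The tools you name do not suffice either. The sub-critical/fractional-moment comparison of \cite{shfsingularity} concerns moments of order $h<1$, and is only qualitative there. At best it addresses $\mathbb E[\mathcal Z_\varepsilon\mathbf 1_{\{\mathcal Z_\varepsilon<(\log\varepsilon^{-1})^{1/2-\xi}\}}]$. Part (ii) needs the complementary bound $\mathbb E[\mathcal Z_\varepsilon\mathbf 1_{\{\mathcal Z_\varepsilon>(\log\varepsilon^{-1})^{1/2+\xi}\}}]\to0$, which requires sharp control at order $1+\delta$ with $\delta<2\xi$. Integer moments $\mathbb E\mathcal Z_\varepsilon^k\approx(\log\varepsilon^{-1})^{\binom k2}$ are far too lossy for this. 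Markov with $k=2$ gives only $(\log\varepsilon^{-1})^{1/2-\xi}$. H\"older interpolation gives $\mathbb E\mathcal Z_\varepsilon^{1+\delta}\lesssim(\log\varepsilon^{-1})^{\delta}$ instead of the needed $(\log\varepsilon^{-1})^{\delta(1+\delta)/2+o(1)}$.

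The missing idea is how to make the ``nearly independent Gaussian increments'' picture rigorous \emph{under the rare conditioning}. Exact independence across scales holds only for the decoupled product $G_{\eta;1}=\prod_{i\le M_{\varepsilon,\eta}}G_i$. The true $\mathcal Z_{\varepsilon,b}$ differs from it by the cross-scale terms $D_I$ of Lemma~\ref{expansion234}. The unconditional $L^2$ bounds of \cite{loglog} on these terms survive division by $\mathbb P(\mathrm{UT}_{\varepsilon,\eta}(\alpha))=N_{\varepsilon,b}^{-\alpha^2/(2(1-\eta))+o(1)}$ only for collections whose total length is of order $\log\log\varepsilon^{-1}$ or more.

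The paper closes this gap in three steps:
\begin{enumerate}
\item[(a)] A sharp upper-tail LDP for $G_{\eta;1}$ (Proposition~\ref{largedeviation45}). It uses exponential tilting, a Taylor expansion of $\mathbb E[G_i^\theta]$ for non-integer $\theta$ (made possible by high-moment concentration of $G_i$ near $1$ for $i\le N_{\varepsilon,b}-(\log\varepsilon^{-1})^\eta$), and Berry--Esseen under the tilt.
\item[(b)] The conditional comparison of Proposition~\ref{prop:terminal-block-extension}. Short collections are handled by the projected Radon--Nikodym bound $\mathbb E|\mathsf{RN}_U-1|^p\le(\log\varepsilon^{-1})^{-p\eta/2+o(1)}$ (Proposition~\ref{prelimcond}, which comes from the regularity of $s\mapsto\mathbb P(S_{U^c}\ge t_{\eta,\alpha}-s)$), combined with a long/short H\"older split.
\item[(c)] Size-biased concentration of $G_{\eta;1}$ by binning (Lemma~\ref{gconcen}), transferred to $\mathcal Z_{\varepsilon,b}$ through ratio bounds on $\mathcal Z_{\varepsilon,b}/G_{\eta;1}$ (Proposition~\ref{prop:Z-mass-concentration-from-Geta1-Geta2}).
\end{enumerate}

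A smaller point concerns your interpolation between sparse scales. Covering an $\varepsilon$-ball by $O(1)$ balls of radius $\varepsilon_n\ge\varepsilon$, with only constant-factor changes in counts, fails along a sequence such as $r_m=e^{-e^m}$, where $r_m/r_{m+1}$ is unbounded. Instead:
\begin{enumerate}
\item[(i)] cover each ball of radius $r_m\ge\varepsilon$ by $C(r_m/\varepsilon)^2$ balls of radius $\varepsilon$;
\item[(ii)] cover each $\varepsilon$-ball by $C(\varepsilon/r_{m+1})^2$ balls of radius $r_{m+1}<\varepsilon$.
\end{enumerate}
The $\varepsilon^{-2}$ factor rescales exactly, so one only needs $\log r_{m+1}^{-1}\asymp\log r_m^{-1}$.
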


A few remarks are in order. 

\begin{remark}  \label{re13}

By the strict local positivity result for the  SHF
\cite{positive}, the point-to-line measure
\(\mathscr Z^\vartheta_{0,1}\blacktriangleright1\) assigns positive mass to
every nonempty open ball, almost surely.
Since \(\mathsf Q\) has
nonempty interior, it follows that
\[
        \bigl(\mathscr Z^\vartheta_{0,1}\blacktriangleright1\bigr)(\mathsf Q)>0
        \qquad\text{almost surely}.
\]
Consequently,  in words, Theorem~\ref{main1} shows that the number of \(\varepsilon\)-balls needed to capture all of this strictly positive random mass, up to a vanishing (in $\e$)
error, is ``exactly''
\begin{equation}\label{covering68954}
        \varepsilon^{-2}
        (\log \varepsilon^{-1})^{-1/2+o(1)}.
\end{equation}
\end{remark}

\begin{remark}\label{raredisc2345}Theorem \ref{main1} has strong connections (through the lens of large deviations) to the recent result 
\cite{loglog} stating that the logarithm of the locally averaged, at scale $\e$, SHF  has Gaussian
fluctuations on the scale \(\sqrt{\log\log\varepsilon^{-1}}\). To state things precisely, let us denote the spatially averaged point-to-line mass at
microscopic scale \(\varepsilon\) by
 \begin{align} \label{ori}
        \mathcal Z_{\varepsilon }
        :=
        p(\varepsilon^2)\blacktriangleleft \mathscr{Z}^\vartheta_{0,1}\blacktriangleright 1 .
        \end{align}
Above $p(\e^2)$ denotes the heat kernel at time $\e^2$ (the precise definition appears later in \eqref{eq:heat-kernel-density-definition}).
Then, 
\begin{equation}\label{lognorm456}
        \frac{
        \log\mathcal Z_{\varepsilon }
        +
        \frac{1+ o_{\e}(1)}{2}\log\log\varepsilon^{-1}
        }{
        \sqrt{\log\log\varepsilon^{-1}}
        }
        \Longrightarrow
        \mathcal N(0,1),
        \qquad
         \varepsilon \rightarrow 0,
\end{equation}
where  $  \mathcal N(\mu, \sigma^2)$ is a Gaussian variable with mean $\mu$ and variance $\sigma^2$, and $o_\e(1)$ converges to $0$ as $\e \to 0$. 

Given this, Theorem \ref{main1} may be seen as the large deviation counterpart. The pointwise typical
smoothed density $\mathcal Z_{\varepsilon }$ is, in light of \eqref{lognorm456}, approximately a log-Normal variable $\exp (\sigma \mathsf{Z}-\frac{\sigma^2}{2})$ with $\sigma^2=\log\log\varepsilon^{-1}$ where $\mathsf{Z}\sim\mathcal  N(0,1).$ Note that the centering ensures that the log-Normal variable has mean one. However, observe that $\mathcal Z_{\varepsilon }$ is typically  of order $(\log\varepsilon^{-1})^{-1/2},$ 
and hence a typical \(\varepsilon\)-ball has SHF mass of order $\varepsilon^2(\log\varepsilon^{-1})^{-1/2}.$ Thus, the mean of the log-Normal variable is  obtained from the rare event that $ \mathsf{Z}\approx \sqrt{\log\log \varepsilon^{-1}}$ {which occurs with probability $1/(\log \varepsilon^{-1})^{1/2 + o(1)}.$}
This suggests that the SHF measure is supported on the sparse set of $\e$-balls where the above rare event is witnessed.  
The theorem makes this heuristic precise.
\end{remark}

\begin{remark} 
Theorem~\ref{main1} is a Minkowski-dimension type statement but with a crucial logarithmic correction. Such a refinement beyond the usual notion of Minkowski-dimension is necessary since the sparsity of the SHF support is only logarithmic in nature (the SHF is almost a function; in fact it is known to be in the negative H\"older space $\cC^{-\delta}$ for any $\delta>0$, see \cite{shfsingularity} for the formal statement as well as the precise definition of the space $\cC^{-\delta}$).
Indeed, the leading factor in the covering number is still
\(\varepsilon^{-2}\), and the characteristic information is conveyed only by the
logarithmic correction $(\log\varepsilon^{-1})^{-1/2+o(1)}.$ 
Thus, the ordinary notion of dimension which is designed to  detect only powers of
\(\varepsilon\) is not fine enough to capture such logarithmic sparsity. Another possible approach is to work with the notion of the Hausdorff measure associated with the following gauge function incorporating a logarithmic factor 
\[
        h_*(r)
        :=
        r^2(\log r^{-1})^{1/2}.
\]
However, this will require further refined understanding and, given the already substantial length of the paper, we do not pursue this direction here. {It is also worth mentioning that the above form of the covering number in \eqref{covering68954}, where the logarithmic correction to the main polynomial term is essential to capture the fractal property, is strongly reminiscent of the case of the trace of a planar Brownian motion \cite{taylor1964exact,le1990wiener}. In this case, the polynomial exponent is again $2$,} thereby revealing no information, but with a crucial pre-factor of $(\log \e^{-1})^{-1}$ (in place of the $(\log \e^{-1})^{-1/2}$ factor in our case), guided by the logarithmic nature of Brownian local time in two dimensions.
 Finally, fully logarithmic gauge functions, without any polynomial pre-factor, have appeared in the context of the critical Gaussian multiplicative chaos \cite{barral2015basic,barral2014critical}. \end{remark} 

\subsection{Idea of the proof}\label{iop}
While the proof is long and involves several steps, in this section we attempt to provide a broad overview of the basic intuition driving the proof and some of the central ingredients that will be developed to implement the proof strategy. The starting point is the recent result \eqref{lognorm456} in \cite{loglog} who proved that small ball averages of the SHF behave like a log-Normal variable in the sense that its logarithm under appropriate centering and scaling converges to a standard Gaussian variable as the ball radius converges to zero. 
 To begin discussing the key ideas of this paper it would be helpful to review the approach from \cite{loglog} which is what we start with along with a general discussion on why one might expect \eqref{lognorm456} to be true.

{Throughout the discussion, we will use $p(t):=p(t,\cdot)$ to denote the heat kernel at time $t$ on $\R^{2}$ (the formal definition appears in \eqref{eq:heat-kernel-density-definition} later).}

The SHF has a natural semigroup structure in time and the Gaussian structure emerges from the small time behavior. Namely, for  short time intervals, the SHF behaves sub-critically  or quasi-critically. In these regimes it is known that the SHF is in the Edwards-Wilkinson phase. In particular, in \cite{shfsingularity} it was shown that as $\e \to 0,$ letting $G$ denote $p(\varepsilon^2)\blacktriangleleft \mathscr{Z}^\vartheta_{0,\e^2}\blacktriangleright 1,$
\begin{equation}\label{quasicritical}
\sqrt{\log(\e^{-1})}\log G    \Longrightarrow
  c \mathsf Z,
\end{equation}
where $\mathsf Z$ is a standard Gaussian variable {and $c>0$ is a  constant} which can be computed explicitly. 
This along with the fact that $\E G=1$ suggests that $G$ is approximately the following log-Normal variable: $$G \approx 1+ \frac{c}{\sqrt{\log(\e^{-1})}}\sfZ\approx  \exp\Big(\frac{c}{\sqrt{\log(\e^{-1})}}\sfZ-\frac{c^2}{2\log(\e^{-1})}\Big)$$
(we will not attempt to precisely define $\approx$ in this informal discussion). 
In fact, this continues to hold for all quasi-critical times. We will let $t_i:=b^{-2i}\e^2$ for any fixed constant $b>0$ (this choice of the parametrization will be notationally convenient throughout the rest of the paper), and 
\begin{equation}\label{blockdef9860}
G_i=p({t_{i-1}})\blacktriangleleft Z_i\blacktriangleright 1
\end{equation}
where, for brevity, $Z_i:=\mathscr{Z}^\vartheta_{t_{i-1},t_{i}}$.
It then holds that {for a small enough constant $b>0$,}
{\begin{equation}
G_i \approx  \exp\Big(\frac{1}{\sqrt{N_{\e,b}-i}}\sfZ-\frac{1}{2(N_{\e,b}-i)}\Big),
\end{equation}
where $N_{\e,b}= \left\lfloor
        \frac{\log\varepsilon^{-1}}{\log b^{-1}}
        \right\rfloor,$ essentially as long as $t_i \ll 1$.
See Figure \ref{fig:decomposition}. Note that the $G_i$s are independent.  To use this, let us define a threshold $M$ to be specified later.  Then, noticing that a product of independent log-Normal variables is simply another log-Normal variable with the variance of the corresponding Gaussian just being the sum of the variances of the individual Gaussians, we have 
        \begin{equation}\label{clt21}
\frac{\sum_{i=1}^M\log G_i        +
        \frac{1}{2}\log \left(N_{\e,b}/[N_{\e,b}-M]\right)
        }{
        \sqrt{\log \left(N_{\e,b}/[N_{\e,b}-M]\right)}
        }
        \Longrightarrow
        \mathcal N(0,1),
\end{equation}
where we used that $$\sum_{i=1}^{M}\frac{1}{N_{\e,b}-i}\approx \log \big(N_{\e,b}/[N_{\e,b}-M]\big).$$
However, the object of interest for us is the full SHF flow counterpart and not the product of the individual blocks.   Let $\cZ_i:=p({\e^2})\blacktriangleleft \mathscr{Z}^\vartheta_{t_0,t_{i}}\blacktriangleright 1$.   
Then, to transfer the above result one needs to show \begin{align}\label{tensor12}
\cZ_i \approx \prod_{j=1}^i G_j=:\cG_i.
\end{align}
This is indeed plausible since in the quasi-critical regime $\mathscr{Z}^\vartheta_{t_{i-1},t_{i}}$ is not as singular and the effect of convolution is essentially the same as taking products.
 \begin{figure}[]
\centering
  \includegraphics[width=.4\linewidth]{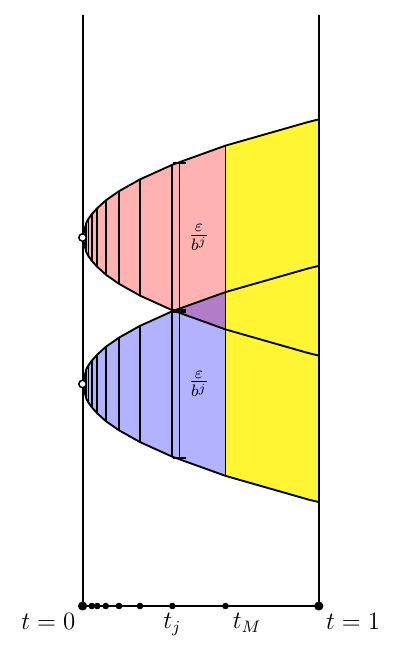}
\caption{
An illustration of the decomposition into blocks $G_i$ in \eqref{blockdef9860}. The points marked on the time axis correspond to the times $t_i=\e^2 b^{-2i}.$ While the block 
$G_i=p_{t_i-1}\blacktriangleleft \mathscr{Z}^\vartheta_{t_{i-1},t_{i}}\blacktriangleright 1$ actually depends 
on the entire noise in the time strip $[t_{i-1},t_i]$, since the heat kernel $p_{t_i-1}$ is primarily concentrated on the spatial window of width $\sqrt{t_{i-1}}$ and the expectation of the SHF $\mathscr{Z}^\vartheta_{t_{i-1},t_{i}}$ is again the heat kernel, the effective amount of noise that $G_i$ depends on takes the shape of a diffusive trapezoid whose two sides have lengths $\sqrt{t_{i-1}}$ and $\sqrt{t_{i}}$ and the width is $t_{i}-t_{i-1}.$  In the figure, blocks centered at two points spatially separated by $\e b^{-j}$ are shown. Consequently the blocks start having non-trivial overlap from index $j$ onwards. Finally, the threshold index $M$ is indicated where our analysis will treat the blocks between $t_{M}$ and $1$ as edge blocks (marked in yellow) and those to the left of $t_M$ as bulk blocks. Most of the analysis will focus on the latter due to them being strongly in the quasi-critical phase and consequently in the Gaussian regime.
}
\label{fig:decomposition}
\end{figure}
Then, one can choose $M$ to ensure two things.  First, \eqref{tensor12} stays valid. Second, $$\log [N_{\e,b}-M] \ll \log N_{\e,b},$$ so that, by combining these estimates, one can conclude 
\begin{equation}\label{clt211}
\frac{\log \cZ_M        +
        \frac{1+ o_{\e}(1) }{2}\log N_{\e,b}
        }{
        \sqrt{\log N_{\e,b}}
        }
        \Longrightarrow
        \mathcal N(0,1).
\end{equation}
Finally, one can show that if $M$ is close enough to $N_{\e,b},$ then 
\begin{equation}\label{compare2356}
\cZ_M \approx \cZ_{N_{\e,b}},
\end{equation}
and hence one can conclude that \eqref{clt211} holds even after replacing  $\cZ_M$ by $\cZ_{N_{\e,b}}.$
The above is the broad strategy implemented in \cite{loglog}. We now discuss in a bit more detail the technical content of the latter which will allow us to present the new ideas and techniques developed in this paper.\\

\nin
\textbf{Proof of \eqref{tensor12}}
In \cite{loglog}, it is shown that for $M\approx N_{\e,b}- \log N_{\e,b}$
\begin{equation}
\mathbb E\left[
\left|
\frac{\mathcal Z_{M}}{\cG_M}-
1
\right|^2
\right]\longrightarrow 0.
\end{equation}
To be completely precise, the above is only shown on a high probability event where $\cG_M$ is not too small, but we will ignore this technicality in this somewhat informal discussion of the ideas.

The proof of the above relies on an expansion of the ratio $\cZ_M/\cG_M$. This is obtained by using the following decomposition $$Z_i=(Z_i\blacktriangleright1)p_i+D_i$$  and expanding  the full partition function
\[
     \cZ_M=p(\varepsilon^2)\blacktriangleleft Z_1\bullet\cdots\bullet Z_{M} \blacktriangleright1.
\] 
Above $\bullet$ is a notion of a convolution made precise later in \eqref{conv34} (this is not the usual notion of convolutions as the objects being dealt with are singular and one has to resort to a definition involving mollification followed by passing to a limit). 

 Objects appearing in the expansion then are indexed by intervals $I$ with the corresponding contribution being
\begin{equation}\label{mainobject}
        D_I
        =
        p_{i_I-1}
        \blacktriangleleft
        D_{i_I}\bullet D_{i_I+1}\bullet\cdots\bullet D_{j_I-1}
        \bullet Z_{j_I}
        \blacktriangleright1 .
\end{equation}

The final expression one obtains is 
\begin{equation}\label{finalexp}
\cZ_M=\prod_{i=1}^M G_i + \sum_{\wt I\in \mathfrak I}\prod_{I\in \wt I}D_I \prod_{i\notin \wt I}G_i,
\end{equation}
where $\wt I$ is a collection of disjoint intervals which are subsets of $[1,M]$ and $ \mathfrak I$ is the set of all such collections. 
Dividing through by $\cG_M$ one obtains:
\begin{equation}\label{final34}
\frac{\cZ_M}{\cG_M}=1+ \sum_{\wt I\in \mathfrak I}\prod_{I\in \wt I}\frac{D_I}{G_I} ,
\end{equation}
where $G_I=\prod_{i\in I}G_i.$
The task is now reduced to showing the smallness of the $D_I$s and this is what \cite{loglog} shows. {Very recently, the above strategy coupled with a change-of-measure argument based on exponential tilting was employed in \cite{fractional} to obtain sharp estimates for the moments of $ \mathcal Z_{\varepsilon }
$ of fractional order \(h\in(0,1)\).}\\

Given this discussion, we now attempt to offer a quick overview of the key underlying ideas developed in this paper. Recall that for a large value of $\sigma^2$, the log-Normal $X\sim \exp (\sigma \sfZ-\frac{\sigma^2}{2})$ (where $\sfZ$ is a standard Gaussian distribution) has expectation $1$ but is typically of size $\exp (\sigma-\frac{\sigma^{2}}{2}) \approx 0.$ Thus the primary contribution to the mean is from a large deviation event where $\log X \approx \frac{1}{2}\sigma^2$ which occurs with probability approximately $\exp (-\frac{\sigma^2}{2}).$\\

\nin
\textbf{Upper-tail large deviations}
This motivates us to study the large deviation behavior for $\cZ:=\cZ_{N_{\e,b}}.$ In particular, we will consider the following upper-tail event 
\begin{align} \label{utdef}
        \left\{
        \cZ      \ge
      N_{\varepsilon,b}^{-\frac{1}{2} + \alpha}
        \right\}
\end{align}
where $\alpha>0$. Since, as indicated above, by \eqref{clt211}, the typical value of $\cZ$ is $N_{\varepsilon,b}^{-\frac{1}{2}}$, $\alpha$ parametrizes the various large deviation events at the polynomial (in $N_{\varepsilon,b}$) scale.\\

Continuing to draw a parallel to a log-Normal variable, one then expects, 
\begin{align}\label{firstappear00293}
        \mathbb P\left(\cZ
        \ge  N_{\varepsilon,b}^{-\frac{1}{2}+\alpha} 
        \right)
=
        N_{\varepsilon,b}^{-\frac{\alpha^2}{2}+o(1)}.
\end{align}
However, we will not directly prove the above statement (we expand on this further later in Remark \ref{largedevz}) but a proxy for this via analyzing $\cG_M.$ 
For \(\alpha>0\), define the bulk upper-tail large deviation event
\begin{align} \label{utdef87864}
\mathrm{UT}_{\varepsilon}(\alpha)
        :=
        \left\{
        \prod_{i=1}^MG_{i}        \ge
      N_{\varepsilon,b}^{-\frac{1}{2} + \alpha}
        \right\},
\end{align}
where $M=N_{\e,b}-N_{\e,b}^{o(1)}$. Here $o(1)$ is used to denote that the gap $N_{\e,b}-M$ will be chosen to be a small power of $N_{\e,b}$.  This will be crucial for us and the  small power will be amplified and made effective by invoking arbitrarily high-moment concentration of the $G_i$s near $1$ when $i\le M$. This technical point will become clearer later through Lemma \ref{goodevent} and the discussion preceding it.\\

The argument in this paper hinges on the following two key statements stated informally for the purposes of the current discussion.
\begin{informal-ppn}[Sharp large deviations for $\cG_M$]
 \begin{align}  \label{dec9878}
        \mathbb P\left(\mathrm{UT}_{\varepsilon}(\alpha)        \right)
=
        N_{\varepsilon,b}^{-\frac{\alpha^2}{2}+o(1)}.
\end{align}\end{informal-ppn}

\begin{informal-ppn}[Conditional and unconditional comparison of $\cG_M$ and $\cZ$] For any $\alpha \ge 0$ (where the $\alpha=0$ case essentially corresponds to no conditioning)
\begin{equation} \label{cond8787}
\mathbb E\left[
\left|
\frac{\mathcal Z}{\cG_M}\right|^2 \ 
\Big\vert  \  \mathrm{UT}_{\varepsilon}(\alpha) 
\right]= N_{\e,b}^{o(1)}.
\end{equation}
Note that above the numerator is $\cZ=\cZ_{N_{\e,b}}$, the object of interest, and not $\cZ_M$, while the denominator is $\cG_M$. Hence, with the already indicated choice of $M$, one cannot expect to have a better bound than $N_{\e,b}^{o(1)}.$
\end{informal-ppn}

Given the above, one can deduce the following mass concentration result, from which Theorem \ref{main1} can be concluded without too much difficulty.

\begin{informal-ppn} 
Let \(\zeta\in(0,1/10)\) be an arbitrary constant. 
Then there exists $\upsilon_\zeta>0$ such that for all  sufficiently small $\varepsilon>0,$
\begin{equation}
        \mathbb E\left[
        \mathcal Z \mathbf 1_{ \mathcal Z < (\log \varepsilon^{-1})^{1/2-\zeta}}
        \right]
        +
        \mathbb E\left[
        \mathcal Z \mathbf 1_{ \mathcal Z > (\log \varepsilon^{-1})^{1/2+\zeta}} 
        \right] \le (\log \varepsilon^{-1})^{-\upsilon_\zeta}.
\label{eq:Z-mass-concentration-final-Geta 00}
\end{equation}
\end{informal-ppn}

While the proof of \eqref{eq:Z-mass-concentration-final-Geta 00} using \eqref{dec9878} and \eqref{cond8787} is not completely straightforward, the majority of the new ideas developed in the paper go into the proofs of the first two propositions. In the remainder of the section, we offer a glimpse of some of the key novelties. \\

\nin
\textbf{Proof of \eqref{dec9878}}. Letting $Y_i=\log G_i$, the upper-tail deviation for products of the $G_i$s is equivalent to that of the sum of the $Y_i$s. The analysis now proceeds by obtaining rather refined estimates on the log-Laplace transform $\lambda_i(\theta)$ of $Y_i$. That is, $\lambda_i(\theta)=\log \E(\exp(\theta Y_i))=\log \E(G_i^{\theta}).$ Now, here one encounters a usual difficulty. Namely, for polymer models, one has access to only integer moments via their connections to random walk local time, see e.g., \cite{shrinking} whereas the above demands estimating $\E(G_i^{\theta})$ for any $\theta>0$. 

What comes to our aid is that for each $i\le M,$ owing to quasi-criticality, $G_i$ is in fact concentrated near $1$ and thus Taylor expanding $G^{\theta}_{i}$ allows us to transfer sharp second moment estimates along with control on higher integer moments to sharp enough estimates of $\lambda_i(\theta)$ for a general $\theta.$ While the remaining details to go from this to \eqref{dec9878} are involved and beyond the scope of the present discussion, let us briefly mention that the key step involves suitably exponentially tilting the measure along with a quantitative central limit theorem for the sum of the $Y_i$s under the tilted measure. \\

\nin
\textbf{Proof of \eqref{cond8787}}. Starting with the expansion in \eqref{final34}, the proof proceeds by estimating the $L^2-$norm of $\prod_{I\in \wt I}\frac{D_I}{G_I}$ where ${\wt I\in \mathfrak I}$ \emph{conditionally} on $\mathrm{UT}_{\varepsilon}(\alpha).$ Unconditional estimates already developed in \cite{loglog} show that the norm decays exponentially in the length $|\wt I| :=\sum_{I\in \wt I}|I|.$ Thus such estimates continue to hold even in the conditional setting provided that $|\wt I|\gg \log N_{\e,b},$ since the conditioning event, by \eqref{dec9878}, has probability polynomial in $N_{\e,b}.$

However, the main idea here goes into controlling the terms where $|\wt I|\ll \log N_{\e,b}$ and the above naive approach is not effective. The key observation is the following.  
Suppose $X_1,X_2, \ldots, X_M$ are independent random variables with $X_i \sim \mathcal N(0,\frac{1}{N_{\e,b}-i}).$ Then conditioning $\sum_{i}X_{i}\ge \alpha \log N_{\e,b}$ for some $\alpha>0$ has roughly the effect of shifting the mean of the $i^{th}$ variable by $\alpha/(N_{\e,b}-i).$  Assuming $\alpha=1,$ for notational simplicity,  for a small subset $U \subset \{1,2,\ldots,M\}$, the Radon-Nikodym derivative of the conditional distribution with respect to the unconditional distribution, projected onto the variables indexed by $U$ typically looks like 
\begin{align}
{\mathrm{RN}}_U:=\exp\left(\sum_{i\in U} \Big[{X_i}-\frac{1}{2(N_{\e,b}-i)}\Big]\right)\approx 1+\sum_{i\in U}{X_i}.
\end{align}
Above, we used that the Radon-Nikodym derivative of $\mathcal N(\frac{1}{j}, \frac{1}{j})$ with respect to $\mathcal N(0,\frac{1}{j})$ at $x$ is $\exp(x-\frac{1}{2j}).$
Thus,
 $$\E\left[\big({\mathrm{RN}}_U-1\big)^2\right]\approx \sum_{i\in U}\Var (X_i)=\sum_{i\in U}\frac{1}{N_{\e,b}-i}\le \frac{|U|}{N_{\e,b}-M},$$ and hence as long as $|U|\ll \log N_{\e,b}$ and $ N_{\e,b}-M\gg \log N_{\e,b}$, one can conclude that $\E\left[\big({\mathrm{RN}}_U-1\big)^2\right]$ is small.

In the application, we show that the above intuition applies even to the random variables $\log G_i$ which are approximately Gaussian. The tool that facilitates this is a rather sharp understanding of the large deviation probabilities which allows us to characterize conditional distributions. 
Most importantly, this control of the conditional density allows us to transfer unconditional estimates on $\prod_{I\in \wt I}D_{I}$ to conditional ones when $|\wt I|$ is small.

While several other issues remain, we will refrain from elaborating on them in this informal discussion and instead proceed to the main body of the paper.

\subsection{Acknowledgements}
SG thanks Francesco Caravenna, Rongfeng Sun and Nikos Zygouras for various useful discussions regarding the SHF and  
Francesco Caravenna again for sharing his code which was used to generate the simulations of the polymer partition function displayed in Figure \ref{fig:simulation}. 
The authors also thank Zoe Himwich and Vilas Winstein for  help with the figures.
SG was supported by a Miller Research Professorship at the Miller Institute for Basic Research in Science and NSF Career grant-1945172. KN was supported by
Samsung Science and Technology Foundation under Project Number SSTF-BA2202-02. This project was initiated during  KN’s visit to UC Berkeley in the Spring of 2026.

\section{Key intermediate statements}
The proof of Theorem \ref{main1} relies on some key intermediate statements as indicated in Section \ref{iop}. These are recorded here, but we first introduce the necessary preliminary setup.

\subsection{Critical 2D SHF} \label{sec2.1}
We start with the formal definition of the critical two-dimensional stochastic heat flow (SHF) following \cite{tsai}. It will be helpful to set up some notation first.  \\

\nin
\textbf{Heat kernel.} For \(0\le s<t\), let \(P_{s,t}\) denote the 2D heat kernel from time \(s\) to
time \(t\).  More precisely,
\begin{equation}
        P_{s,t}(dx,dy)
        :=
        p(t-s,y-x)\,dxdy,
        \qquad x,y\in\mathbb R^2,
\label{eq:heat-kernel-Pst-definition}
\end{equation}
where $p(t,x)$ denotes the 2D heat kernel
\begin{equation}
        p(t,x)
        :=
        \frac{1}{2\pi t}
        \exp\left\{
        -\frac{|x|^2}{2t}
        \right\},
        \qquad t>0,\ x\in\mathbb R^2.
\label{eq:heat-kernel-density-definition}
\end{equation}
Often, for brevity at the cost of a slight abuse of notation, for every fixed $t>0$, we will use $p(t)$ to denote $p(t,\cdot)$.\\

\nin
\textbf{Random measures, vague topology.}
Let \(\mathcal M_+ = \mathcal M_+(\mathbb R^2  \times \mathbb R^2)\) be  the space of positive locally finite measures
on  $\mathbb R^2  \times \mathbb R^2$, equipped with the $\sigma$-algebra generated by a vague topology:
\[
        \mathcal B _{\mathrm{vag}}(\mathcal M_+)
        :=
        \sigma\left(
        \mu\mapsto \int_{\mathbb R^2  \times \mathbb R^2} \varphi\,d\mu:
        \varphi\in C_c(\mathbb R^2  \times \mathbb R^2)
        \right).
\]

\nin
With the above preparation we can now define the SHF. 
For a coupling parameter \(\vartheta\in\mathbb R\), the
critical $2d$ stochastic heat flow (SHF) is a stochastic process 
\begin{align}\label{SHF23}
\mathscr Z^{\vartheta}&=\{\mathscr Z^{\vartheta}_{s,t}\}_{0<s\le t<\infty}
\end{align}
such that
\begin{enumerate}
\item 
$\mathscr Z^{\vartheta}$ is an $\mathcal M_+$-valued, {continuous process} (in the vague topology) in $(s,t)$ where $s\leq t\in\R$.
\item  (Chapman-Kolmogorov) For  $s< t< u$, $\mathscr Z^{\vartheta}_{s,t}\bullet  \mathscr Z^{\vartheta}_{t,u}=\mathscr Z^{\vartheta}_{s,u}$. Here $\bullet$ denotes the convolution operation introduced in \cite{clark2024continuum} extending the usual convolution of kernels to the present singular setting by introducing a mollification procedure followed by passing to the limit. Namely,
\begin{align}\label{conv34}
\mathscr Z^{\vartheta}_{s,t}\bullet  \mathscr Z^{\vartheta}_{t,u}:=\lim_{\delta\to 0}Z^{\vartheta}_{s,t}\bullet_{\delta}\mathscr Z^{\vartheta}_{t,u},
\end{align}
where 
\begin{align}
\mathscr Z^{\vartheta}_{s,t}\bullet_{\delta}\mathscr Z^{\vartheta}_{t,u}({\rm{d}}x, {\rm{d}}x')=\int_{\R^{4}}\mathscr Z^{\vartheta}_{s,t}({\rm{d}}x, {\rm{d}}y)p(\delta^2, y-y')\mathscr Z^{\vartheta}_{t,u}({\rm{d}}y', {\rm{d}}x').
\end{align}
Above $p(\delta^2, \cdot)$ is the heat kernel from \eqref{eq:heat-kernel-density-definition}. Further, note that the existence of the limit in \eqref{conv34} is by no means obvious and indeed is a part of the statement in \cite{clark2024continuum}.

\item (Independent increments)
For  $s< t< u$, $\mathscr Z^{\vartheta}_{s,t}$ and $\mathscr Z^{\vartheta}_{t,u}$ are independent.
\item  (Moments)
For $n=1,2,3, 4$, $s<t$, and $x=(x_1,\ldots,x_n),x'=(x'_1,\ldots,x'_n)\in (\R^{2})^n$,
\begin{align}
    \E\, \bigotimes_{i=1}^n \mathscr Z^{\vartheta}_{s,t}(d x_i,d x'_i)
    =
    d x\, d x' \, Q^{n,\vartheta}(t-s,x,x'),
\end{align}
where $Q^{n,\vartheta}(t,x,x')$ is the integral kernel of the 2D delta-Bose semigroup  $Q^{n,\vartheta}(t)$. We will not, however, require the precise definition of the latter and consequently not state it and instead refer the interested reader to \cite{tsai}.
\end{enumerate}

{In the degenerate case \(s=t\), SHF reduces
to Lebesgue measure supported on the diagonal of
\(\mathbb R^2\times\mathbb R^2\), i.e. the pushforward of Lebesgue measure on \(\mathbb R^2\) under the diagonal
embedding \(x\mapsto(x,x)\).}

\subsection{Key notations}
\label{subsec:setup}

The time scale $\e^2$  (corresponding to spatial scale $\e$) will be often termed \emph{microscopic} in analogy to a single point in the lattice case. Following the discussion in Section \ref{iop} we will now decompose the interval of times from the microscopic scale
\(\varepsilon^2\) to a macroscopic time of order one into geometric scales  growing at rate $b^{-2}$ given by a small enough constant \(0<b<1\) chosen later.
These times will be denoted by  
\[
        t_i=t_{i,\varepsilon,b}:=\varepsilon^2 b^{-2i},
        \qquad
        i=0,1,\ldots,N_{\varepsilon,b},
\]
where
\begin{align}\label{ndef}
        N_{\varepsilon,b}
        :=
        \left\lfloor
        \frac{\log\varepsilon^{-1}}{\log b^{-1}}
        \right\rfloor .
\end{align}
Throughout the paper, \(b>0\) is regarded as a small but fixed constant, while
\(\varepsilon \rightarrow 0\).  Thus \(N_{\varepsilon,b}\) is comparable to \(\log\varepsilon^{-1}\), with the comparison constant
depending on \(b\).  
  Later, we will also consider a slightly more general
setting where both \(b\) and the coupling parameter \(\vartheta\) in \eqref{SHF5687} will mildly depend on
\(\varepsilon\), in a way that they converge to certain limiting values as
\(\varepsilon\downarrow0\).

Further, for \(1\le i\le N_{\varepsilon,b}\), define the $i$-th block SHF and the heat kernel by
\begin{equation}\label{keyblock}
Z_i:=\mathscr{Z}^\vartheta_{t_{i-1},t_i},\qquad  p_i:=p(t_i,\cdot),\qquad P_{i-1,i} := P_{t_{i-1},t_i}.
\end{equation}
Just to clarify, above, the first term in a random measure on $\R^2 \times \R^2$, the second is a function on $\R^2$ and the third is a {measure} on $\R^2\times \R^2.$
Note that we suppress the coupling parameter $\vartheta$  in $Z_i$ for the sake of simplicity.
Define
the partition function associated with the \(i\)-th block $[t_{i-1},t_i]:$
\begin{equation}\label{tensoredblocks}
        G_i:=p_{i-1}\blacktriangleleft Z_i\blacktriangleright 1.
\end{equation}
Note that 
\begin{align} \label{mean1}
    \mathbb E G_i  = p_{i-1}\blacktriangleleft  P_{i-1,i} \blacktriangleright 1  =1.
\end{align}  Since the SHF increments on disjoint time intervals are independent,
the variables \((G_i)_{1\le i\le N_{\varepsilon,b}}\) are independent. \\

We next define the partition  function 
\begin{align} \label{auxpar}
    \mathcal Z_{\varepsilon,b}:= 
    p(\varepsilon^2)\blacktriangleleft Z_1\bullet Z_2\bullet\cdots\bullet Z_{N_{\varepsilon,b}-1}\blacktriangleright 1  =   p(\varepsilon^2)\blacktriangleleft  \mathscr Z^\vartheta_{\varepsilon^2, t_{N_{\varepsilon,b} - 1} }\blacktriangleright 1 .
\end{align} 
Note that above we only consider the noise up to $ t_{N_{\varepsilon,b} - 1}$ instead of  $t_{N_{\varepsilon,b}}.$ This is purely a convention to ease some of notation appearing in the proofs. We will expand on this again shortly (see Remark \ref{elaborate1450}) once we have had a chance to record some of our estimates. However, a scaling argument later in
Corollary~\ref{corcorcor} will allow us to pass from ${\mathcal Z_{\varepsilon,b}}$ to the
original \([0,1]\)-object $\cZ_\e$ (defined in \eqref{ori}).
 \\

\nin
Carrying on with further notational groundwork, throughout this paper, an
interval means an interval of integers. If
\[
        I=[i_I,j_I]\subset[1,N_{\varepsilon,b}]\cap\mathbb Z,
\]
then
\[
        |I|:=j_I-i_I+1,
\] where $|I|$ denotes the cardinality of $I.$ A recurring object will be  \(\mathfrak I_{\varepsilon,b}\), the set of all
nonempty finite collections \(\widetilde I\) of disjoint intervals 
\begin{align} \label{defii}
        I\subset [1,N_{\varepsilon,b}-1],
        \qquad |I|\ge2.
\end{align} 
Again, that we are only considering indices up to $N_{\varepsilon,b}-1$ is guided by the discussion following \eqref{auxpar}.
That is, a generic element $\wt I \in \mathfrak I_{\varepsilon,b}$ is a collection $\{I_1 <I_2< \ldots< I_k\}$ for some $k,$ where $I_j=[a_j,b_j]$ and $I_j<I_{j+1}$ is used to imply $b_j< a_{j+1}.$
For  \(\widetilde I \in \mathfrak I_{\varepsilon,b}\), define its total length by
\begin{align} \label{idef1}
        |\widetilde I|
        :=
        \sum_{I\in\widetilde I}|I|,
\end{align}
and write
\begin{align}\label{idef2}
        \#\widetilde I
\end{align}
for the number of disjoint intervals in the collection, i.e., for $\widetilde I=\{I_1 <I_2< \ldots< I_k\}$, we have $\#\widetilde I=k$. For \(\widetilde I \in \mathfrak I_{\varepsilon,b}\) and an interval $I$, we write
\begin{align} 
        G_{\widetilde I}:=\prod_{I\in \widetilde  I}G_I,\qquad G_I:=\prod_{i\in I}G_i .
\end{align}    

While the main result of this section will be stated shortly, we first provide some further motivation behind our choices of the parameters as well as develop some further notation.  
Recall from Section \ref{iop}, say \eqref{clt21}, the threshold $M$. We define this precisely now.  Given
\(0<\eta<1\), we let
\begin{align} \label{ellm}
        \ell_{\varepsilon,\eta}
        :=
        \left\lfloor
        (\log\varepsilon^{-1})^\eta
        \right\rfloor,
        \qquad
        M_{\varepsilon,\eta}
        :=
        N_{\varepsilon,b}-\ell_{\varepsilon,\eta},
\end{align}
and introduce the bulk and edge (or terminal) index set respectively as follows: 
\[
[1,M_{\varepsilon,\eta}]  \ \text{ and }  \ [M_{\varepsilon,\eta}+1,N_{\varepsilon,b}-1].
\]
This separation guarantees that for every
bulk index \(i\le M_{\varepsilon,\eta}\), the distance to the terminal scale is
at least \(\ell_{\varepsilon,\eta}\) thus ensuring strong enough concentration of 
\( G_i\) around $1$ (the upper-tail event we will condition on will also only involve the bulk variables. 
For \(\widetilde I\subset[1,N_{\varepsilon,b}-1]\), we will write
\begin{equation}\label{bulk123}
       \widetilde  I^{\rm bulk}:= \widetilde  I\cap
[1,M_{\varepsilon,\eta}],
        \qquad
        G_{\widetilde I}^{\rm bulk}
        :=
        \prod_{i\in \widetilde  I^{\rm bulk}}G_i,
\end{equation}
with the convention that an empty product is equal to \(1\). Thus
\(G_{\widetilde I}^{\rm bulk}=G_{\widetilde I}\) if \(\widetilde I\subset[1,M_{\varepsilon,\eta}]\), while \(G_{\widetilde I}^{\rm bulk}=1\) if
\(I\subset[M_{\varepsilon,\eta}+1,N_{\varepsilon,b}-1 ]\).
{For an interval $I$ we will often abuse notation and also consider it to be an element $\wt I$ consisting of the single interval $I.$}

~

The next definition prescribes an event on which every single-block partition function, in the bulk regime, on account of their proximity to one  is stipulated to be bounded away from zero:
\begin{align} \label{nice}
\Omega_{\varepsilon,\eta}
        :=
        \bigcap_{i=1}^{M_{\varepsilon,\eta}}
        \left\{G_i\ge\frac12\right\}.
\end{align}
This will be shown to be a high probability event (with failure probability an arbitrarily large power of $\log \e^{-1}$). {We also remark that we will 
frequently use the shorthand}
\begin{align} \label{loglog}
\mathcal L_\varepsilon:=\log\log\varepsilon^{-1}.
\end{align} 

We next define the following bulk and terminal products 
\[
        G_{\eta;1}:=\prod_{i=1}^{M_{\varepsilon,\eta}}G_i,
        \qquad
        G_{\eta;2}:=\prod_{i=M_{\varepsilon,\eta}+1}^{N_{\varepsilon,b}-1}G_i .
\]
Here \(G_{\eta;1}\) contains the factors in the bulk region and \(G_{\eta;2}\)
contains the remaining terminal factors. Thus \(G_{\eta;1}G_{\eta;2}=G_{[1, N_{\e,b-1}]}\) is the
fully decoupled product of partition functions.
We next define the following  upper-tail large deviation event.
For $\alpha>0,$ let
\begin{align} \label{utdef11}
\mathrm{UT}_{\varepsilon,\eta}(\alpha)
        :=
        \left\{
        G_{\eta;1}        \ge
      N_{\varepsilon,b}^{-\frac{1-\eta}{2} + \alpha}
        \right\}.
\end{align}
This event depends only on the bulk variables $(Z_1,\ldots,Z_{M_{\varepsilon,\eta}}).$ \\

Given the above, we have the three following statements which are the precise versions of the informal statements appearing earlier in Section \ref{iop}.

\subsection{Intermediate propositions}
The key upper-tail large deviation result (the formal version of  \eqref{dec9878}) is as follows.

\begin{proposition}\label{largedeviation45}
For any $\alpha>0$ and $\eta \in (0,1/10)$,   there exists
\(\widetilde \nu_b\ge0\) (depending on $\alpha$) with \(\widetilde \nu_b\to0\) as \(b\downarrow0\)  such that for any small enough constant $b>0$ (depending on $\alpha$), as $\varepsilon \rightarrow 0,$
\begin{align}  \label{dec}
        N_{\varepsilon,b}^{-\frac{\alpha^2}{2(1-\eta)}-\widetilde  \nu_b+o(1)}
        \le
        \mathbb P\left(G_{\eta;1}
        \ge  N_{\varepsilon,b}^{-\frac{1-\eta}{2}+\alpha} 
        \right)
        \le
        N_{\varepsilon,b}^{-\frac{\alpha^2}{2(1-\eta)}+ \widetilde  \nu_b+o(1)}.
\end{align}
\end{proposition}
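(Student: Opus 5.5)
We write $Y_i:=\log G_i$, so that the event in \eqref{dec} reads $S:=\sum_{i=1}^{M_{\varepsilon,\eta}}Y_i\ge x_\varepsilon$ with $x_\varepsilon:=(\alpha-\frac{1-\eta}{2})\log N_{\varepsilon,b}$. The $Y_i$ are independent. The plan is a Cramér-type argument: obtain sharp log-Laplace transforms, tilt the measure exponentially, and apply a quantitative CLT under the tilted measure.

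\textbf{Step 1: moment input.} Let $k_i:=N_{\varepsilon,b}-i\ge \ell_{\varepsilon,\eta}$. For every fixed $\theta$ in a compact set, we aim to prove
\[
\lambda_i(\theta)=\log\mathbb E[G_i^\theta]=\frac{\theta(\theta-1)}{2}\,\frac{\sigma_b^2}{k_i}\bigl(1+O(k_i^{-1/2})\bigr),
\]
with $\sigma_b^2\to1$ as $b\downarrow0$. For this we would use the quasi-critical second moment asymptotics $\mathrm{Var}(G_i)=\sigma_b^2/k_i+O(k_i^{-2})$, together with bounds $\mathbb E|G_i-1|^{2m}\le C_m k_i^{-m}$ for integer $m$; both come from the delta-Bose moment formulas. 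Set $W_i=G_i-1$. On $\{|W_i|\le 1/2\}$ we Taylor expand $(1+W_i)^\theta=1+\theta W_i+\binom{\theta}{2}W_i^2+O(|W_i|^3)$ and use $\mathbb EW_i=0$. The event $\{|W_i|>1/2\}$ has probability $O(k_i^{-m})$ for any $m$, by Markov. Its contribution is controlled by Hölder's inequality for $\theta\ge0$. For negative $\theta$ we would additionally need a lower-tail bound such as $\mathbb E[G_i^{-p}]<C$, which I expect to import from existing negative-moment estimates. Since we only need $\theta$ near $\theta^*$ below, which is positive, it suffices to treat $\theta\in[0,C]$. Summing over $i$ and using $\sum_{i\le M}1/k_i=\log(N_{\varepsilon,b}/\ell_{\varepsilon,\eta})+O(1)=(1-\eta)\log N_{\varepsilon,b}+O(1)$ gives
\[
\Lambda(\theta):=\sum_i\lambda_i(\theta)=\frac{\theta(\theta-1)}{2}\,\sigma_b^2(1-\eta)\log N_{\varepsilon,b}\,(1+o(1)).
\]
Here the errors $\sum_i k_i^{-3/2}$ are $O(\ell_{\varepsilon,\eta}^{-1/2})$, which is negligible.

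\textbf{Step 2: upper bound.} By Chebyshev, $\mathbb P(S\ge x_\varepsilon)\le \exp(\Lambda(\theta)-\theta x_\varepsilon)$. We optimize at $\theta^*=\frac12+\frac{\alpha}{1-\eta}$, computed for $\sigma_b=1$. This gives the exponent
\[
-\frac{\bigl(\alpha-\frac{(1-\eta)}{2}(1-\sigma_b^2)\bigr)^2}{2(1-\eta)\sigma_b^2}\log N_{\varepsilon,b}+\cdots,
\]
which equals $-\frac{\alpha^2}{2(1-\eta)}+\widetilde\nu_b$ with $\widetilde\nu_b\to0$ as $\sigma_b\to1$. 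This is exactly where the $b$-dependent loss enters.

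\textbf{Step 3: lower bound.} Define the tilted law $d\mathbb Q=e^{\theta^*S-\Lambda(\theta^*)}d\mathbb P$. Under $\mathbb Q$ the $Y_i$ remain independent, with mean $\sum_i\lambda_i'(\theta^*)\approx x_\varepsilon$ and variance $\sum_i\lambda_i''(\theta^*)\asymp\log N_{\varepsilon,b}$. To keep these tilted derivatives under control we also need the Step 1 asymptotics uniformly for complex or perturbed $\theta$, or at least for second derivatives. For this I would differentiate the Taylor expansion in $\theta$, using moments $\mathbb E[G_i^\theta|\log G_i|^r]$. We shift the target slightly: we choose $\theta^*$ so that the $\mathbb Q$-mean equals $x_\varepsilon+\sqrt{\log N_{\varepsilon,b}}$. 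Then the Lyapunov CLT under $\mathbb Q$, whose third-moment sums are $O(\sum_i k_i^{-3/2})=o(1)$ relative to the variance, gives $\mathbb Q(x_\varepsilon\le S\le x_\varepsilon+3\sqrt{\log N_{\varepsilon,b}})\ge c>0$. On this window,
\[
\mathbb P(S\ge x_\varepsilon)\ge c\,e^{\Lambda(\theta^*)-\theta^*x_\varepsilon-3\theta^*\sqrt{\log N_{\varepsilon,b}}}=N_{\varepsilon,b}^{-\frac{\alpha^2}{2(1-\eta)}-\widetilde\nu_b+o(1)}.
\]

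\textbf{Main obstacle.} I expect the hardest part to be Step 1, specifically the sharp variance $\sigma_b^2/k_i$ with relative error $o(1)$ in $k_i$ but only $b$-dependent accuracy in the constant. This requires the quasi-critical second-moment formula for $p_{i-1}\blacktriangleleft Z_i\blacktriangleright1$ and uniform higher integer moment bounds $\mathbb E|G_i-1|^{2m}\lesssim_m k_i^{-m}$ throughout the bulk. I would try first to obtain the latter by comparing with the $2$-body delta-Bose kernel and using the known moment bounds at quasi-critical disorder.
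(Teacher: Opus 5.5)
Your plan is essentially the paper's proof. The paper follows the same steps:
\begin{itemize}
\item it Taylor-expands $M_i(\theta)=\mathbb E[G_i^\theta]$ and its first two $\theta$-derivatives for $\theta$ in a compact subset of $(0,\infty)$ (Lemma~\ref{lem:Mi-expansion-full});
\item it bounds the tilted third moments by $Ck_i^{-3/2}$ (Lemma~\ref{lem:tilted-third-moment-full});
\item it tilts at the root of $\lambda'(\theta)=x_\varepsilon$ and applies Berry--Esseen under the tilted law;
\item it computes $\mathcal I(x_\varepsilon)=\frac{(x_\varepsilon+V/2)^2}{2V}+O(1)$, where $V=\sum_{i\le M_{\varepsilon,\eta}}\mathbb E W_i^2$.
\end{itemize}

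The one difference is in the prefactor. The paper proves the exact asymptotic $\mathbb P(S\ge x_\varepsilon-s)=e^{-\mathcal I(x_\varepsilon-s)}K(s)$ with $K\asymp(\log N_{\varepsilon,b})^{-1/2}$. It needs this because the same computation must also give the regularity in $s$, \eqref{eq:local-ratio-final-correct}, which feeds the Radon--Nikodym bounds of Section~\ref{rnproof34}. For this proposition alone, your Chernoff upper bound together with the shifted-tilt window and Lyapunov CLT lower bound is enough.

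The moment inputs you single out as the main obstacle are not proved in the paper; they are imported from \cite{loglog}, namely (2.15) and Corollary~B.3. What is available there is the uniform bound $|k_i\,\mathbb E W_i^2-1|\le\nu_b$ with $\nu_b\to0$, not an exact $\sigma_b^2/k_i$. This is sufficient, because only $V=a_{\varepsilon,b}\log N_{\varepsilon,b}$ with $|a_{\varepsilon,b}-(1-\eta)|\le\nu_b+o(1)$ enters the argument.

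There is one slip you must fix. The optimal tilt is $\theta^*=\tfrac12+x_\varepsilon/V\approx\frac{\alpha}{1-\eta}$, not $\tfrac12+\frac{\alpha}{1-\eta}$. With your value, the Chernoff exponent becomes $\bigl(\frac{1-\eta}{8}-\frac{\alpha^2}{2(1-\eta)}\bigr)\log N_{\varepsilon,b}$. That is a loss of $\frac{1-\eta}{8}\log N_{\varepsilon,b}$, which does not vanish as $b\downarrow0$. The exponent you display is the correct value at the true minimizer, so only the stated $\theta^*$ is wrong.

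Relatedly, once you differentiate in $\theta$, work on $[\Theta_1,\Theta_2]\subset(0,\infty)$ rather than $[0,C]$. The quantities $\mathbb E[G_i^\theta|\log G_i|^r]$ are controlled near $G_i=0$ only for $\theta$ bounded away from $0$. This costs nothing, since $\theta^*\approx\alpha/(1-\eta)$ is bounded away from $0$.
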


Next, formalizing \eqref{cond8787}, the following proposition is our main decoupling statement.

\begin{proposition}[Conditional decoupling of partition function]
\label{prop:terminal-block-extension}
 There exists $C>0$ such that the following holds:
Let \(\alpha>0\) and \(\eta \in (0,1/10)\) be any constants. Then for any small enough constant \(b>0\) (depending only on $\alpha$ and $\eta$),
\begin{equation}
\mathbb E\left[
\mathbf 1_{\Omega_{\varepsilon,\eta}}
\left|
\frac{\mathcal Z_{\varepsilon,b}}{G_{\eta;1}}
-
G_{\eta;2}
\right|^2
\,\middle|\,
\mathrm{UT}_{\varepsilon,\eta}(\alpha)
\right] \le 
   C 
        (\log\varepsilon^{-1})^{C
 \eta } 
\label{eq:terminal-extension-main}
\end{equation}
for all 
sufficiently small \(\varepsilon>0\).
\end{proposition}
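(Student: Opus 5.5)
Our plan is to use the interval expansion \eqref{finalexp}, now carried all the way to $N_{\varepsilon,b}-1$. Writing $Z_i=(Z_i\blacktriangleright 1)p_i+D_i$ and expanding $\mathcal Z_{\varepsilon,b}=p(\varepsilon^2)\blacktriangleleft Z_1\bullet\cdots\bullet Z_{N_{\varepsilon,b}-1}\blacktriangleright 1$ gives
\[
\frac{\mathcal Z_{\varepsilon,b}}{G_{\eta;1}}-G_{\eta;2}
=\sum_{\widetilde I\in\mathfrak I_{\varepsilon,b}}\frac{\prod_{I\in\widetilde I}D_I\prod_{i\notin\widetilde I,\, i\le N_{\varepsilon,b}-1}G_i}{G_{\eta;1}}
=\sum_{\widetilde I\in\mathfrak I_{\varepsilon,b}}\frac{D_{\widetilde I}}{G^{\rm bulk}_{\widetilde I}}\prod_{i\in[M_{\varepsilon,\eta}+1,N_{\varepsilon,b}-1]\setminus\widetilde I}G_i ,
\]
with $D_{\widetilde I}:=\prod_{I\in\widetilde I}D_I$. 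On $\Omega_{\varepsilon,\eta}$ every bulk denominator is at least $2^{-|\widetilde I^{\rm bulk}|}$. The leftover terminal product has conditional second moment $O(1)^{\ell_{\varepsilon,\eta}}$ at worst. Since the bulk $G_i$ are uncorrelated with the terminal ones and the conditioning involves only the bulk, we will instead bound terminal factors by the unconditional second moment bounds of \cite{loglog}, which are $1+O(1/(N_{\varepsilon,b}-i))$ per block. Their product over the edge is then at most $\exp(\sum_{j\le \ell}1/j)\lesssim(\log\varepsilon^{-1})^{C\eta}$, and this is where the power $C\eta$ originates.

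\textbf{Step 1 (unconditional weights and long collections).} From \cite{loglog} we import the bound that on $\Omega_{\varepsilon,\eta}$,
\[
\mathbb E\bigl[|D_{\widetilde I}/G^{\rm bulk}_{\widetilde I}|^2\,(\text{terminal factors})^2\bigr]\le (Cb)^{c|\widetilde I|}\,(\log\varepsilon^{-1})^{C\eta},
\]
so the decay is exponential in $|\widetilde I|$ with rate improving as $b\downarrow0$. For the conditioned quantity we use Cauchy–Schwarz on the sum with weights $(Cb)^{-c|\widetilde I|/2}$, which reduces the problem to conditional bounds on each term. If $|\widetilde I|\ge K\log N_{\varepsilon,b}$, we simply bound the conditional expectation by the unconditional one divided by $\mathbb P(\mathrm{UT}_{\varepsilon,\eta}(\alpha))\ge N_{\varepsilon,b}^{-\alpha^2/(2(1-\eta))-\widetilde\nu_b-o(1)}$ from Proposition \ref{largedeviation45}. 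Choosing $b$ small (depending on $\alpha$) makes $(Cb)^{c|\widetilde I|}$ beat this polynomial loss, and the sum over such $\widetilde I$ is $O(1)$.

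\textbf{Step 2 (short collections: the main obstacle).} For $|\widetilde I|\le K\log N_{\varepsilon,b}$ we must show that conditioning on $\mathrm{UT}_{\varepsilon,\eta}(\alpha)$ barely distorts the law of the bulk blocks indexed by $U:=\widetilde I^{\rm bulk}$, together with their neighbours, which $D_I$ depends on. Write $\mathbb E[\,\cdot\,|\,\mathrm{UT}]=\mathbb E[\,\cdot\;\mathrm{RN}_U]$, where
\[
\mathrm{RN}_U=\frac{\mathbb P\bigl(\sum_{i\notin U}Y_i\ge a-\sum_{i\in U}Y_i \,\big|\, (Z_i)_{i\in U}\bigr)}{\mathbb P(\mathrm{UT})},\qquad Y_i=\log G_i,\quad a=\log N_{\varepsilon,b}^{-\frac{1-\eta}{2}+\alpha}.
\]
We need a sharp (ratio-level) version of the large deviation estimate of Proposition \ref{largedeviation45}, uniform in a shift $s=\sum_{i\in U}Y_i$ of size $O(|U|/\sqrt{\ell_{\varepsilon,\eta}})$. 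Concretely we aim for
\[
\mathbb P\Bigl(\sum_{i\notin U}Y_i\ge a-s\Bigr)\Big/\mathbb P(\mathrm{UT})=e^{\theta^* s+O(|U|/\ell_{\varepsilon,\eta})}(1+o(1)),
\]
where $\theta^*\approx\alpha/(1-\eta)$ is the tilt parameter. This would come from the same exponential tilting plus quantitative CLT used for Proposition \ref{largedeviation45}, run for the sum with the $U$ blocks removed. The estimate gives $\mathrm{RN}_U\approx\prod_{i\in U}G_i^{\theta^*}/\mathbb E[G_i^{\theta^*}]$. On the event where the $Y_i$ are small, Hölder's inequality together with the high moment concentration of $G_i$ (bulk, $N_{\varepsilon,b}-i\ge\ell_{\varepsilon,\eta}$) gives $\mathbb E[\mathrm{RN}_U^p]\le \exp(C_p|U|/\ell_{\varepsilon,\eta})=O(1)$. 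Large values of $|s|$ are handled by crude bounds, since they have super-polynomially small probability. Hölder then transfers the unconditional bound of Step 1, taken at slightly higher moments of $D_{\widetilde I}$ (also available from \cite{loglog} with geometric decay), to the conditional one. The delicate points are precisely this uniformity of the local large-deviation asymptotics under shifts and the handling of the dependence of $D_I$ on blocks adjacent to $U$. Summing over $\widetilde I$ with the geometric weights then gives \eqref{eq:terminal-extension-main}.
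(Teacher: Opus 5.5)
There is a genuine gap. It sits in the transfer from unconditional to conditional bounds in Step 2, and Step 1 contains a related slip.

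Weighted Cauchy--Schwarz over $\widetilde I$ is at best the triangle inequality in $L^2(\mathbb P(\cdot\mid\mathrm{UT}_{\varepsilon,\eta}(\alpha)))$. So for each short $\widetilde I$ you would need $\mathbb E[D_{\widetilde I}^2(\cdots)^2\,\mathrm{RN}_U]\lesssim\prod_{I\in\widetilde I}z^{2|I|}(N_{\varepsilon,b}-j_I)^{-2}$, which is the full square of the position factor in \eqref{eq:DI-L2-bound-current}. Otherwise the sum over the roughly $N_{\varepsilon,b}^q/q!$ placements of $q$ intervals diverges. This is also why your Step 1 bound $(Cb)^{c|\widetilde I|}$, stated without any factor $(N_{\varepsilon,b}-j_I)^{-1}$, cannot be summed to $O(1)$. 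The true length decay is also only $(\mathsf c/\log b^{-1})^{(|I|-1)/2}$, not a power of $b$.

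$\mathrm{RN}_U$ is not bounded uniformly in $\varepsilon$: it approaches $1/\mathbb P(\mathrm{UT}_{\varepsilon,\eta}(\alpha))$ when $S_U$ is large. Hölder therefore requires $L^p$ bounds, $p>2$, on $D_{\widetilde I}$ with decay in both length and position. You say these come from \cite{loglog}, but they are not available. The only decay estimate is the $L^2$ bound \eqref{eq:DI-L2-bound-current}. The only higher-moment control is the cancellation-free Lemma \ref{lem:short-fourth-product}, which grows like $e^{c_0|\widetilde I|}(\log b^{-1})^{(15+\kappa)|\widetilde I|/(6\Gamma)}$ and has no position factor.

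Two natural repairs also fail.
\begin{itemize}
\item Pairing one $L^2$ copy of $D_{\widetilde I}$ with one crude $L^6$ copy keeps only a single power of $\prod_I(N_{\varepsilon,b}-j_I)^{-1}$. That is insufficient once you have diagonalized.
\item Using $\mathrm{RN}_U=O(1)$ on $\{S_U\le 1\}$ leaves the complement, which must be paid for with the crude bound. But $\mathbb P(S_U>1)\lesssim(|U|\Delta_U)^r$, with $r$ fixed and $\Delta_U$ a sum over $U$, does not factor over the intervals. Meanwhile there are $N_{\varepsilon,b}^{O(\log N_{\varepsilon,b})}$ short collections to sum over.
\end{itemize}

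The missing idea is not to diagonalize. The paper expands the square into pairs $(\widetilde I,\widetilde J)$ and groups them by the connected components $\widetilde K$ of $\widetilde I\cup\widetilde J$. Within each component it applies Hölder with exponents $2,6,6,6$:
\begin{itemize}
\item the longer side goes in $L^2$, giving decay $(\log b^{-1})^{-m_{\rm long}/4}$ and one factor $(N_{\varepsilon,b}-j_K)^{-1}$ per component;
\item the shorter side goes in $L^6$ via the crude Lemma \ref{lem:short-fourth-product};
\item the tail product and $\mathrm{RN}_{\widetilde K^{\rm bulk}}$ also go in $L^6$.
\end{itemize}
Choosing $\Gamma$ large and then $b$ small lets the $L^2$ decay beat the crude growth, since $m_{\rm short}\le m_{\rm long}$. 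One position factor per component suffices because there are at most $16^{|K|}$ local pairs inside $K$. The cutoff $\delta\mathcal L_\varepsilon$ with $\delta$ small keeps the remaining entropy at $(\log\varepsilon^{-1})^{C\delta\log(1/\delta)}$.

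With this structure the only Radon--Nikodym input needed is $\|\mathrm{RN}_{\widetilde K^{\rm bulk}}\|_{L^6}\le 2$. Your tilting and shift-uniform local-ratio sketch would deliver exactly that, in line with Propositions \ref{prop:local-ratio-BE-correct} and \ref{thm:direct-Lp-no-split-corrected}.
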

The power $C\eta$ in the bound
\((\log\varepsilon^{-1})^{C\eta}\) essentially arises from the second moment of $G_{\eta;2}$ but will be harmless
because \(\eta\) will be chosen to be small.\\\\

Finally, we state our mass concentration statement formalizing \eqref{eq:Z-mass-concentration-final-Geta 00} which is the key consequence of the two above results used in the proof of Theorem \ref{main1}.
{Essentially repeating the discussion in Remark \ref{raredisc2345}, recall that the above results suggest that $\cZ_{\e,b}$ behaves like  the log-Normal variable $\exp(X)$ where $$X\sim \mathcal N\Big(-\frac{\log N_{\e,b}}{2}, {\log N_{\e,b}}\Big).$$  The log-Normal variable has mean $1$ but for small $\e$, is typically of size $\frac{1}{\sqrt{N_{\e,b}}}$ and hence very close to zero. Thus, the mean is driven by its tail behavior. Namely, it takes value $\approx \sqrt{N_{\e,b}}$ with probability approximately $1/\sqrt{N_{\e,b}}$ and it is this level set and nearby values that contribute to its mean. 
The key next result makes this precise.}

\begin{proposition}
\label{prop:Z-mass-concentration-from-Geta1-Geta2} 
Let \(\zeta\in(0,1/10)\) be an arbitrary constant. 
Then there exists $\upsilon_\zeta>0$ such that the following holds 
 for  any small  constant $b>0$: For all  sufficiently small $\varepsilon>0,$
\begin{equation}\label{mass45}
        \mathbb E\left[
        \mathcal Z_{\varepsilon,b}\mathbf 1_{\{\mathcal Z_{\varepsilon,b}<N_{\varepsilon,b}^{1/2-\zeta}\}}
        \right]
        +
        \mathbb E\left[
        \mathcal Z_{\varepsilon,b}\mathbf 1_{\{\mathcal Z_{\varepsilon,b}>N_{\varepsilon,b}^{1/2+\zeta}\}}
        \right] \le (\log \varepsilon^{-1})^{-\upsilon_\zeta}. 
\end{equation} 
\end{proposition}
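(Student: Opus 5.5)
We prove \eqref{mass45} by approximating $\mathcal Z_{\varepsilon,b}$ with the decoupled product $G_{\eta;1}G_{\eta;2}$ and working at the level of the bulk product $G_{\eta;1}$. Throughout, $\eta=\eta(\zeta)$ is small, $b$ is small depending on $\eta,\zeta$, and $N:=N_{\varepsilon,b}$. Since $\mathbb E\mathcal Z_{\varepsilon,b}=1$, the two terms in \eqref{mass45} add up to
\[
1-\mathbb E\bigl[\mathcal Z_{\varepsilon,b}\mathbf 1_{\{N^{1/2-\zeta}\le \mathcal Z_{\varepsilon,b}\le N^{1/2+\zeta}\}}\bigr].
\]
We nevertheless bound the two terms separately, using a dyadic-in-exponent decomposition of the values of $G_{\eta;1}$. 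The event $\Omega_{\varepsilon,\eta}^c$ has probability $O((\log\varepsilon^{-1})^{-A})$ for any $A$. Its contribution is handled by Cauchy--Schwarz together with a polynomial second-moment bound $\mathbb E\mathcal Z_{\varepsilon,b}^2\le C\log\varepsilon^{-1}$; the latter is standard from the explicit second moment of the SHF, or can be replaced by a $1+\delta$ moment.

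\textbf{Upper tail.} Write $G_{\eta;1}=N^{-(1-\eta)/2+a}$ and slice $a$ into intervals $[\alpha,\alpha+\delta)$. On the slice, and on $\Omega_{\varepsilon,\eta}$, we write $\mathcal Z_{\varepsilon,b}=G_{\eta;1}\bigl(G_{\eta;2}+R\bigr)$. By Proposition~\ref{prop:terminal-block-extension}, $\mathbb E[R^2\mid \mathrm{UT}(\alpha)]\le C(\log\varepsilon^{-1})^{C\eta}$, and $G_{\eta;2}$ is independent of the bulk with $\mathbb E G_{\eta;2}^2\le (\log\varepsilon^{-1})^{C\eta}$.

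The contribution of the slice to $\mathbb E\mathcal Z_{\varepsilon,b}$ is at most
\[
N^{-(1-\eta)/2+\alpha+\delta}\,\mathbb P(\mathrm{UT}(\alpha))\,\mathbb E\bigl[|G_{\eta;2}+R|\mid\mathrm{UT}(\alpha)\bigr].
\]
By Proposition~\ref{largedeviation45} this is bounded by $N^{f(\alpha)+C\eta+\delta+\tilde\nu_b+o(1)}$, where
\[
f(\alpha)=-\frac{1-\eta}{2}+\alpha-\frac{\alpha^2}{2(1-\eta)}.
\]
This is a concave parabola maximized at $\alpha=1-\eta$, where it equals $0$, and it is strictly negative, at a rate quadratic in $|\alpha-(1-\eta)|$, away from that point.

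We handle the slices as follows.
\begin{itemize}
\item For $\alpha\ge 1-\eta+\zeta/2$, summing over slices gives $N^{-c\zeta^2}$, once $\eta,\delta,\tilde\nu_b\ll\zeta^2$.
\item Very large $\alpha$, say $\alpha\ge K$, is handled by a Markov/second-moment bound. Alternatively, we use the upper bound in \eqref{dec}, which we expect to hold uniformly on compact sets of $\alpha$, and treat the remainder with $\mathbb E\mathcal Z^2$.
\item On slices with $\alpha<1-\eta+\zeta/2$, the event $\mathcal Z_{\varepsilon,b}>N^{1/2+\zeta}$ forces $|G_{\eta;2}+R|\ge N^{\zeta/2-C\eta-\delta}$. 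We bound this by conditional Chebyshev, which gains a factor $N^{-\zeta/2+O(\eta)}$ relative to the slice bound $N^{f(\alpha)+\ldots}\le N^{o(1)+O(\eta)}$.
\end{itemize}
The lower-tail term is handled symmetrically. Slices with $\alpha\le 1-\eta-\zeta/2$ contribute $N^{-c\zeta^2}$, and this includes $\alpha\le 0$, where we simply bound $G_{\eta;1}$ by $N^{-(1-\eta)/2}$. On the remaining slices, $\mathcal Z_{\varepsilon,b}<N^{1/2-\zeta}$ forces $G_{\eta;2}+R\le N^{-\zeta/2+O(\eta)}$, and the contribution is bounded by $G_{\eta;1}\cdot N^{-\zeta/2+O(\eta)}\cdot\mathbb P(\mathrm{UT}(\alpha))$, which is again small. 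This is the reason we take absolute values and do not need a lower bound on $G_{\eta;2}$.

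Since $N\asymp\log\varepsilon^{-1}$, every bound has the form $(\log\varepsilon^{-1})^{-\upsilon_\zeta}$, with $\upsilon_\zeta\approx c\zeta^2$. The finitely many slices (about $K/\delta$ of them) only cost a constant factor.

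\textbf{Main obstacle.} Propositions~\ref{largedeviation45} and~\ref{prop:terminal-block-extension} are stated for fixed $\alpha$, with $b$ chosen small depending on $\alpha$. To sum over slices we need the error terms $\tilde\nu_b$ and $o(1)$ to be uniform over $\alpha$ in a compact window $[\delta,K]$. We would first try to obtain this from monotonicity of $\alpha\mapsto\mathbb P(\mathrm{UT}(\alpha))$ on a finite mesh, choosing $b$ after fixing the mesh. The tails $\alpha>K$ and $\alpha$ near $0$ (where conditioning is trivial) would be handled with unconditional moment bounds.

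A secondary point is that the conditional estimate is on $\mathrm{UT}(\alpha)$ rather than on the slice. This is harmless, because the slice is contained in $\mathrm{UT}(\alpha)$ and we only need upper bounds, so we can replace the slice by $\mathrm{UT}(\alpha)$ and carry $\mathbb P(\mathrm{UT}(\alpha))$ throughout.
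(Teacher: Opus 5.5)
Your proposal is correct and follows essentially the paper's argument. The paper also reduces to $\Omega_{\varepsilon,\eta}$, writes $\mathcal Z_{\varepsilon,b}=G_{\eta;1}R_\eta$ with the conditional and unconditional second-moment bounds on $R_\eta$, and bins $\log G_{\eta;1}/\log N_{\varepsilon,b}$ on a finite mesh with $b$ chosen after the mesh, which is exactly how your uniformity concern is resolved. The only difference is bookkeeping: the paper splits first by whether $R_\eta$ lies outside $[N^{-\beta},N^{\beta}]$, using only Markov for $\mathbb P(G_{\eta;1}>\cdot)$ in the ratio-tail bins, and it isolates the large-deviation input in the separate size-biased concentration Lemma~\ref{gconcen}, whereas you fold the large-deviation bound and the conditional Chebyshev step into a single pass over slices.
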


\vspace{.1in}

We end this section with the following remark.

\begin{remark}\label{largedevz} One may wonder whether one can prove a large-deviation statement as Proposition \ref{largedeviation45} with $G_{\eta,1}$ replaced by $\cZ_{\e,b}.$ 
Towards this, note that the following is an immediate consequence of \eqref{mass45} along with the fact that $\E \cZ_{\e,b}
=1.$ 
\begin{align}  \label{dec23}
        \mathbb P\left(\cZ_{\e,b}
= N_{\varepsilon,b}^{\frac{1}{2}+o(1)} 
        \right)
=
        N_{\varepsilon,b}^{-\frac{1}{2}+o(1)}.
\end{align} 
The upper bound follows from Markov's inequality and the lower bound follows by observing that \eqref{mass45} implies that 
$$\E \left[\cZ_{\e,b}  \cdot \mathbf{1}\{ \cZ_{\e,b}
=  N_{\varepsilon,b}^{\frac{1}{2}+o(1)}\} 
        \right]=1-o(1).$$
Thus the above is the exact $\cZ_{\e,b}$ counterpart of \eqref{dec} for $\alpha=1.$ For a general value of $\alpha>0$, however, the estimates recorded in the paper do not suffice. While this will be taken up in a future project, let us nonetheless indicate that for the upper bound, at least for integer values of $\alpha$, say $k$, one can already apply Markov's inequality with $\cZ_{\e,b}^k$ to obtain that 
$$\P\left( \cZ_{\e,b} \ge N_{\varepsilon,b}^{k-\frac{1}{2}+o(1)}\right)\le N_{\e,b}^{-\frac{k^{2}}{2}+o(1)}.$$
This is expected to be sharp because for a log-Normal variable $\exp(X)$ with $X\sim  \mathcal N(-\frac{\log N_{\e,b}}{2}, {\log N_{\e,b}}),$ for any positive integer $k,$ $$\E (\exp(kX))=\exp\left({{k}\choose{2}} {\log N_{\e,b}}\right)$$ and a direct computation reveals that this is essentially driven by the event that $X= (k-1/2){\log N_{\e,b}}$ which occurs with probability $N_{\e,b}^{-\frac{k^{2}}{2}}.$ For general values of $\alpha>0$, just relying on integer moments will not yield sharp upper bounds, and one approach to address this is by proving a version of \eqref{eq:terminal-extension-main} where the $L^2$ norm on the LHS is replaced by an arbitrarily large ($\alpha$-dependent) power.

{The lower bound may be proven using the following two step strategy.  Define $$\mathcal Z_{\eta;1}:= 
    p(\varepsilon^2)\blacktriangleleft Z_1\bullet Z_2\bullet\cdots\bullet Z_{M_{\e,\eta}}\blacktriangleright 1  $$ analogous to $G_{\eta;1}.$\\
 
\nin
$(1)$
Show that $\cZ_{\e,b}$ is comparable to $\cZ_{\eta;1}.$ That is, the terminal part of the noise in the interval $[t_{M_{\e, n}},1]$ does not cause things to become too small. A similar idea appears in \cite[Section 5]{loglog}.\\

\nin
$(2)$ Show that the initial part admits an even stronger comparison with }$G_{\eta;1}$. That is,
 \begin{equation}
\mathbb E\left[
\mathbf 1_{\Omega_{\varepsilon,\eta}}
\left|
\frac{\mathcal Z_{\eta;1}}{G_{\eta;1}}
-
1
\right|^2
\,\middle|\,
\mathrm{UT}_{\varepsilon,\eta}(\alpha)
\right] =o(1).
\label{eq:terminal-extension-main99990}
\end{equation}
Note the distinction of the above from \eqref{eq:terminal-extension-main}.
While the arguments in the paper have mostly made use of the upper bound of the probabilities in \eqref{dec}, the above two steps, once implemented, will allow to transfer the lower bound of the probabilities in \eqref{dec} to $\cZ_{\e,b}$. \end{remark}

\subsection{Organization of the article} To help orient the reader to the rest of the paper we provide a roadmap summarizing each of the subsequent sections. \\

\nin
$\bullet$ In Section \ref{error7895}, we formally define the error term $D_i$ and record some moment bounds. We also present statements of concentration of the $G_i$s around $1$ as well as some norm bounds.\\
\nin
$\bullet$ In Section \ref{expansion7895}, we record the precise expansion of the partition function $\cZ_{\e,b}$ around the decoupled proxy $G_{\eta;1}$.
\\
\nin
$\bullet$ In Section \ref{sec3}, we prove Proposition \ref{prop:terminal-block-extension} conditional on two statements which together control the error terms in the expansion. They are proven subsequently in Sections \ref{sec4567} and \ref{sec6789}.\\
\nin
$\bullet$ In Section \ref{masconc4567}, we prove the mass concentration statement Proposition \ref{prop:Z-mass-concentration-from-Geta1-Geta2} and derive Theorem \ref{main1} in Section \ref{min567891}.\\
\nin
$\bullet$ Section \ref{sec5} is one of the most central sections in this paper, deriving Proposition \ref{largedeviation45} and, in the process, proving a general large-deviations result for the decoupled partition function $G_{\eta;1}.$\\
\nin
$\bullet$ In Section \ref{rnproof34}, developing another technical foundation of our arguments, the large-deviations machinery is employed to control the moments of the conditional density (conditioned on the event $\mathrm{UT}_{\varepsilon,\eta}(\alpha)$) when projected onto a small number of coordinates. \\
\nin
$\bullet$ Finally, in the Appendix, proofs of various outstanding statements are provided, including some measure-theoretic results, as well as some pertaining to convex analysis which are invoked in the large-deviations analysis.

~

{Before proceeding with the rest of the paper let us briefly comment on some notational conventions that will be adopted. A generic constant such as  \(C\) will be used to denote a positive constant whose value may and often will 
change from line to line.} 
{In addition, for two positive quantities \(x\) and \(y\), we write $x \asymp y$ 
if there exists a constant \(C\ge 1\) such that $C^{-1}y \le x \le Cy.$ 
More generally, we write  \(x \asymp_\alpha y \)  if the same comparison holds with a constant \(C=C(\alpha)\ge 1\) depending on \(\alpha\).}
  
\section{Error terms and  moments}\label{error7895}
Recall from \eqref{mainobject} the error variables  which quantitatively measure the difference between the original partition function and the 
decoupled one $\prod_i G_i$. In this section we first define them formally and record some moment bounds. 
Recall from \eqref{keyblock} that $Z_i:=\mathscr{Z}^\vartheta_{t_{i-1},t_i}$ and $p_i:=p(t_i,\cdot)$.
For \(1\le i\le N_{\varepsilon,b}\),   define 
\begin{align}\label{error}
        D_i:=Z_i-(Z_i\blacktriangleright 1)p_i .
\end{align}
In other words, \(D_i\) is the 
 error in approximating the point-to-point partition function   by the point-to-line partition
function multiplied by the heat kernel term $p(t_i,\cdot) $. This measures, in some sense, the singularity of the measure $Z_i$ (relative to the heat kernel). {Note that one might find  the above choice of multiplying $(Z_i\blacktriangleright 1)$ by $p_i$ somewhat unnatural since all of this is defined on the time interval \([t_{i-1},t_i]\)  and has time length
\(t_i-t_{i-1}\), suggesting $p({t_i-t_{i-1}})$ (see \eqref{eq:heat-kernel-density-definition}) as the right multiplier and not \(p_i\). However, for our purposes this discrepancy will be negligible because the sequence \((t_i)_i\) is chosen on an exponentially separated time scale with the growth scale being $b^{-2}$ which will also approach $\infty$.}

The above is a single block version of the following more general interval-level error term defined as follows:
For  an interval $I = [i_I,j_I] $ with \(|I|  = j_I - i_I+1 \ge2\), 
\begin{align}\label{error23}
        D_I
        :=
        p_{i_I-1}
        \blacktriangleleft
        D_{i_I}\bullet\cdots\bullet D_{j_I-1}\bullet Z_{j_I}
        \blacktriangleright 1.
\end{align}   
As we will shortly formalize in the forthcoming section, in the expansion comparing $\cZ_{\e,b}$ and the decoupled product $G_{\eta;1}$, this will be a generic term. 

But as a first order of business we record some moment bounds which will serve as key inputs throughout the paper. 

\subsection{$L^2$ bounds on $D_I$} 
 By
\cite[Corollary 3.5]{loglog}, there exists a constant \(\mathsf c<\infty\),
independent of \(b\in(0,1/2]\), such that for every interval
\(I=[i_I,j_I]\subset {[1,N_{\varepsilon,b}-1]}\) with $|I| =   j_I - i_I+1  \ge 2,$
\begin{equation}
        \mathbb E D_I^2
        \le
        \frac{\mathsf c^{|I|}}
        {(\log b^{-1})^{|I|-1}}
\frac{1}{(N_{\varepsilon,b}-j_I)^2}.
\label{eq:GT-cor35-current}
\end{equation}
Equivalently,
\begin{equation}
        \|D_I\|_{L^2}
        \le
        \frac{\mathsf c^{|I|/2}}
        {(\log b^{-1})^{(|I|-1)/2}}
\frac{1}{N_{\varepsilon,b}-j_I}.
\label{eq:DI-L2-bound-current}
\end{equation} 
This estimate separates two effects: a decay depending on the length of the
interval \(I\), through \((\log b^{-1})^{-(|I|-1)/2}\), and a decay depending
on the location of its right endpoint, through \((N_{\varepsilon,b}-j_I)^{-1}\).
Both effects will feature centrally in our estimates.

\begin{remark}\label{elaborate1450} 
Given the form of the above estimate, we now briefly explain the choice of working with $N_{\varepsilon,b}-1$ instead of $N_{\varepsilon,b}$ in \eqref{auxpar}. This is simply to work with the clean expression in the bound \eqref{eq:DI-L2-bound-current}. Note that the expression becomes $\infty$ if $j_I=N_{\varepsilon,b}$ and hence the bound is effective only up to $N_{\e,b}-1$. Although one can easily remedy this by suitably modifying the expression to accommodate the case $j_I=N_{\varepsilon,b}$, at the cost of possibly losing its clean form, working with \eqref{eq:DI-L2-bound-current} will be notationally convenient in several algebraic expressions appearing in our arguments. 
\end{remark}

\subsection{Concentration of the single-scale factors around one.} 
 For \(1\le i\le N_{\varepsilon,b}\),  define
\begin{align}\label{sing34}
    W_i:= G_i-1.
\end{align}
Then $ W_i$ is a centered random variable, since $\mathbb E G_i=1$ (see \eqref{mean1}).  
The following  second moment statement for $W_i$ is quoted from  \cite[(2.15)]{loglog}
\begin{align}\label{2moment}
    \lim_{b\to 0} \sup_{\e\leq 1/2,i \in[1,N_{\varepsilon,b}-1]}
		\big| (N_{\varepsilon,b}-i) \E   W_i ^2 - 1 \big|
	= 0.
\end{align}
{The preceding estimate is formulated in the regime where \(b>0\) is held fixed
as \(\varepsilon\rightarrow0\), and the limit \(b\rightarrow0\) is taken 
afterward. We remark that a suitably modified version of the preceding estimate remains
valid  for \(\varepsilon\)-dependent parameters
\(b=b_\varepsilon\) and \(\vartheta=\vartheta_\varepsilon\), provided that  {$b_\varepsilon \rightarrow b^\star >0$ and $\vartheta_\varepsilon\rightarrow  \vartheta^\star$} as \(\varepsilon\rightarrow0\) (see Lemma \ref{vary} in Appendix for details).} In addition, by
\cite[(B.22) in Corollary~B.3]{loglog}, there exists $c_*>0$ such that for {every $b\le 1/2$, $\varepsilon>0$ and an integer \(p\ge2\), there exists a constant $C_p>0$ such that}
\begin{equation}
        \mathbb E  | W_i |^p 
        \le  [\mathbb E   W_i ^{2p}]^{1/2}
\le       {C_p} (N_{\varepsilon,b}-i)^{- p/2 },
        \qquad \forall i\in [1,N_{\varepsilon,b} - c_*].
\label{eq:centered-G-moment-B3}
\end{equation} 
{The argument used to prove \cite[(B.22) in Corollary~B.3]{loglog} also shows
that the estimate remains valid when the coupling parameter
\(\vartheta=\vartheta_\varepsilon\) depends on \(\varepsilon\), provided that
\(\sup_{\varepsilon}\vartheta_\varepsilon<\infty\).  Indeed, the proof of
\cite[(B.22)]{loglog} relies only on the general estimate
\cite[(B.21)]{loglog}, whose hypotheses \(L\ge c\) and \(r\le 1/2\) continue
to hold  under this
bounded-above assumption.} 

In particular, the estimate \eqref{eq:centered-G-moment-B3} implies a super-polynomial, in $(\log \e)^{-1}$, failure probability bound for the event $\Omega_{\varepsilon,\eta}$ defined in \eqref{nice}. To obtain the super-polynomial bound which will indeed be important for our arguments, it was crucial that we chose our cutoff threshold in \eqref{ellm} to be a small power of $\log \e^{-1}$.

\begin{lemma}\label{goodevent}
 Let $\eta \in (0,1/10)$  and $b\in (0,1/2]$ be any constants.   For any constant $A>10$, the following bound holds for all sufficiently small $\varepsilon>0$:
\begin{align}\label{prob1234}
    \mathbb P( \Omega_{\varepsilon,\eta}^c) \le   (\log \varepsilon^{-1})^{-A}.
\end{align}
\end{lemma}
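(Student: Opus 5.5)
The plan is a union bound over the bulk indices, with a high-moment Markov inequality for each single-block factor, relying on \eqref{eq:centered-G-moment-B3}. Since $\{G_i<1/2\}\subset\{|W_i|>1/2\}$ with $W_i=G_i-1$ as in \eqref{sing34}, we have
\[
\mathbb P(\Omega_{\varepsilon,\eta}^c)\le \sum_{i=1}^{M_{\varepsilon,\eta}}\mathbb P\big(|W_i|>1/2\big)\le \sum_{i=1}^{M_{\varepsilon,\eta}} 2^{p}\,\mathbb E|W_i|^p
\]
for any integer $p\ge2$, which will be fixed depending on $A$ and $\eta$.

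For the moment bound to apply we need every bulk index to lie in the range $[1,N_{\varepsilon,b}-c_*]$ where \eqref{eq:centered-G-moment-B3} holds. For $i\le M_{\varepsilon,\eta}=N_{\varepsilon,b}-\ell_{\varepsilon,\eta}$ we have $N_{\varepsilon,b}-i\ge \ell_{\varepsilon,\eta}=\lfloor(\log\varepsilon^{-1})^\eta\rfloor$, and this exceeds $c_*$ once $\varepsilon$ is small. Hence
\[
\mathbb E|W_i|^p\le C_p(N_{\varepsilon,b}-i)^{-p/2}\le C_p\,\ell_{\varepsilon,\eta}^{-p/2}\le 2^{p/2}C_p(\log\varepsilon^{-1})^{-\eta p/2}.
\]
The number of bulk indices is at most $N_{\varepsilon,b}\le \log\varepsilon^{-1}/\log b^{-1}\le 2\log\varepsilon^{-1}$, using $b\le1/2$. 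Summing over the bulk therefore gives
\[
\mathbb P(\Omega_{\varepsilon,\eta}^c)\le C'_p(\log\varepsilon^{-1})^{1-\eta p/2}.
\]

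It remains to choose $p$ as an integer with $\eta p/2\ge A+2$, for instance $p=\lceil 2(A+2)/\eta\rceil$. The extra factor $(\log\varepsilon^{-1})^{-1}$ then absorbs the constant $C'_p$ for all sufficiently small $\varepsilon$, and the bound $(\log\varepsilon^{-1})^{-A}$ follows.

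There is no serious obstacle here. The only point to check is that the constant $C_p$ in \eqref{eq:centered-G-moment-B3} depends only on $p$, so that it is uniform in $i$ and $\varepsilon$, and that the bulk cutoff $\ell_{\varepsilon,\eta}$ grows polynomially in $\log\varepsilon^{-1}$. This growth is exactly what makes the per-index failure probability decay as an arbitrarily large power of $\log\varepsilon^{-1}$, enough to survive the union bound over about $\log\varepsilon^{-1}$ indices.
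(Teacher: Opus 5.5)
Your proof is correct and follows essentially the same route as the paper: a union bound over the bulk indices, with a high-moment Markov inequality from \eqref{eq:centered-G-moment-B3} and the bound $N_{\varepsilon,b}-i\ge\ell_{\varepsilon,\eta}$ on the bulk. The only differences are minor. The paper takes $p\ge 10A/\eta$ to get a per-index bound of $(\log\varepsilon^{-1})^{-5A}$, whereas you take $\eta p/2\ge A+2$. You also check explicitly that the bulk indices lie in $[1,N_{\varepsilon,b}-c_*]$, which the paper leaves implicit.
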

\begin{proof}
Note that for every $1\le i\le M_{\varepsilon,\eta} $, we have   $N_{\varepsilon,b}-i \ge \ell_{\varepsilon,\eta}$ (see \eqref{ellm} for the definition of $M_{\varepsilon,\eta}$). Hence
     by the moment bound \eqref{eq:centered-G-moment-B3},  for every $1\le i\le M_{\varepsilon,\eta} $ and any integer $p \ge 10A/\eta,$
\begin{align*}
    \mathbb P\left(| W_i| \ge \frac{1}{2}\right) \le 2^p \mathbb E| W_i|^p \le C 2^p \ell_{\varepsilon,\eta}^{-p/2} \le C  (\log \varepsilon^{-1})^{-5A},
\end{align*}
where we used $ \ell_{\varepsilon,\eta} = (\log \varepsilon^{-1})^\eta$.
Thus by a union bound, recalling $N_{\varepsilon,b}
        =
    \lfloor
{\log\varepsilon^{-1}}/{\log b^{-1}}
   \rfloor $, for all sufficiently small $\varepsilon>0$,
\begin{align*}
    \mathbb P( \Omega_{\varepsilon,\eta}^c) \le N_{\varepsilon,b} \cdot C (\log \varepsilon^{-1})^{-5A} \le  (\log \varepsilon^{-1})^{-A}.
\end{align*}
\end{proof}
In addition, as a consequence of the moment bound for $W_i$, we obtain a  moment estimate for a 
decoupled partition function $\prod_i G_i$.  

\begin{lemma}\label{gmoment}
For every integer \(p\ge2\), there exist 
\(C,b_0>0\) (depending on $p$) such that for any  constant $b\in (0,b_0)$ and 
\(A\subset[1,N_{\varepsilon,b}]\),
\begin{equation}
        \left\|
        \prod_{i\in A}G_i
        \right\|_{L^p}
        \le
 \left(
    \log b^{-1}
        \right)^{p^2c_*}
        \exp\left\{
        C
        \sum_{i\in A\cap[1,N_{\varepsilon,b}-c_*]}
        \frac1{N_{\varepsilon,b}-i}
        \right\},
\label{eq:single-G-product-moment-full-terminal-corrected}
\end{equation}
where $c_*>0$ is a constant from \eqref{eq:centered-G-moment-B3}.
\end{lemma}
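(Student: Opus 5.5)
The plan is to use independence of the $G_i$ to factor the $L^p$ norm, so that it suffices to bound each $\|G_i\|_{L^p}$ separately:
\[
\Big\|\prod_{i\in A}G_i\Big\|_{L^p}=\prod_{i\in A}\|G_i\|_{L^p}.
\]
This holds because $\mathbb E\prod_{i\in A} G_i^p=\prod_{i\in A}\mathbb E G_i^p$, and $G_i\ge 0$. We then split $A$ into the bulk part $A\cap[1,N_{\varepsilon,b}-c_*]$ and the at most $c_*$ terminal indices $A\cap(N_{\varepsilon,b}-c_*,N_{\varepsilon,b}]$, and handle the two parts differently.

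\textbf{Bulk indices.} Write $G_i^p=(1+W_i)^p$ and expand binomially:
\[
\mathbb E G_i^p=1+p\,\mathbb E W_i+\sum_{k=2}^p\binom pk\mathbb E W_i^k .
\]
The linear term vanishes since $\mathbb E W_i=0$ by \eqref{mean1}. For $i\le N_{\varepsilon,b}-c_*$, \eqref{eq:centered-G-moment-B3} gives
\[
\mathbb E|W_i|^k\le C_k (N_{\varepsilon,b}-i)^{-k/2}\le C_k (N_{\varepsilon,b}-i)^{-1}\qquad (k\ge2),
\]
using $N_{\varepsilon,b}-i\ge 1$. Hence $\mathbb E G_i^p\le 1+C_p'(N_{\varepsilon,b}-i)^{-1}\le \exp\{C_p'/(N_{\varepsilon,b}-i)\}$. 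Taking the $1/p$-th power gives $\|G_i\|_{L^p}\le\exp\{C/(N_{\varepsilon,b}-i)\}$. Multiplying over the bulk indices produces exactly the exponential factor in \eqref{eq:single-G-product-moment-full-terminal-corrected}. No $b$-dependence enters here, because $C_p$ in \eqref{eq:centered-G-moment-B3} is uniform in $b\le1/2$.

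\textbf{Terminal indices.} For the at most $c_*$ indices with $i>N_{\varepsilon,b}-c_*$, we need a crude bound $\mathbb E G_i^p\le(\log b^{-1})^{p^3}$ or similar. Then these indices contribute at most $\big((\log b^{-1})^{p^2}\big)^{c_*}$ to the norm, matching the prefactor $(\log b^{-1})^{p^2c_*}$. This step is the main obstacle, since \eqref{eq:centered-G-moment-B3} is unavailable here.

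To handle it, we note that $G_i=p_{i-1}\blacktriangleleft Z_i\blacktriangleright 1$ is, by scaling of the SHF, the point-to-line mass averaged at spatial scale $\sqrt{t_{i-1}}$ over a time interval of length $t_i-t_{i-1}\approx b^{-2}t_{i-1}$. By diffusive scaling this is equal in law to an averaged SHF over a time ratio $b^{-2}$ with a shifted $\vartheta$ (bounded above for $t_i\le1$). Its $p$-th moment is then controlled by the delta-Bose semigroup moment bounds of \cite{shfuppermoment,loglog}. There, the $p$-th moment of a heat-kernel-smoothed SHF over time ratio $T=b^{-2}$ grows at most like $(\log T)^{\binom p2}$, because the pairwise collision kernel at the critical point contributes a logarithm per pair. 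This gives $\mathbb E G_i^p\le C(\log b^{-1})^{p^2}$. I would try to cite such a bound, e.g.\ \cite[Appendix B]{loglog}, directly; if no citation is available, I would derive it from the chaos expansion restricted to $p$ walks, with $\log b^{-1}$ coming from the renewal Green function over one block. Finally, we choose $b_0$ small so that $C\le\log b^{-1}$ absorbs the constants into the prefactor.
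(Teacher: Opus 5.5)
Your argument is essentially the paper's proof: factor the $L^p$ norm by independence, bound $\mathbb E G_i^p\le 1+C/(N_{\varepsilon,b}-i)$ for $i\le N_{\varepsilon,b}-c_*$ via the binomial expansion and \eqref{eq:centered-G-moment-B3}, and pay a $b$-dependent polylogarithmic factor for the at most $c_*$ terminal indices.

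The one thing to correct is the source of the terminal bound. The paper uses \eqref{singleend}, which is Lemma~\ref{lem:bounded-consecutive-fourth}(ii) with $\Gamma=1$. By SHF scaling, $G_i\stackrel{\textup{d}}{=}p(\rho^2)\blacktriangleleft\mathscr Z^{\Theta}_{0,1}\blacktriangleright1$ with $\rho^2=b^2/(1-b^2)$ and $\Theta=\vartheta+\log(t_i-t_{i-1})\le\vartheta$. The paper then applies the shrinking-ball moment bound $(\log\rho^{-1})^{\binom p2+\kappa}$ of \cite{shrinking}. Because $\Theta$ varies with $\varepsilon$ and $i$ and is only bounded above, this bound must be made uniform over the coupling parameter, which is done by the monotonicity in $\vartheta$ (Lemma~\ref{lem:moment-monotonicity-theta}); you did not address this point.

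The reference you propose, \cite[Appendix B]{loglog}, would not work here. Its bound \eqref{Lbound} requires $-\Theta/\log\rho^{-1}\ge c_{**}$, and this need not hold within a bounded number of scales of the terminal time. That failure is exactly why \eqref{eq:centered-G-moment-B3} is only available for $i\le N_{\varepsilon,b}-c_*$.
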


\begin{proof} 
We first bound the $L^p$ norm of a single $G_i$.
We write
\[
        \mathbb E G_i^p
        =
        \mathbb E(1+(G_i-1))^p
        =
        1+p\mathbb E (G_i-1)
        +
        \sum_{k=2}^{p}\binom pk \mathbb E (G_i-1)^k    =
        1 
        +
        \sum_{k=2}^{p}\binom pk \mathbb E (G_i-1)^k.
\]
Hence using \eqref{eq:centered-G-moment-B3}  and recalling $G_i-1=W_i,$
\begin{align} \label{sing}
        \mathbb E G_i^p
    \le
        1+
        \sum_{k=2}^{p}\binom pk
        \frac{C_k}{N_{\varepsilon,b}-i}               \le
        1+\frac{C}{N_{\varepsilon,b}-i},  \qquad \forall i\in [1,N_{\varepsilon,b} - c_*].
\end{align} 
Above, we take $C=\sum_{k=2}^{p}\binom pk
        C_k.$
Given this we now prove \eqref{eq:single-G-product-moment-full-terminal-corrected}.
We split $   A=A_{\mathrm{reg}}\cup A_{\mathrm{end}},$ 
where
\[
        A_{\mathrm{reg}}
        :=
        A\cap[1,N_{\varepsilon,b}-c_*],
        \qquad
        A_{\mathrm{end}}
        :=
        A\cap[N_{\varepsilon,b}-c_*+1,N_{\varepsilon,b}].
\]
By independence,
\begin{equation}
        \left\|
        \prod_{i\in A}G_i
        \right\|_{L^p}
        =
        \left\|
        \prod_{i\in A_{\mathrm{reg}}}G_i
        \right\|_{L^p}
        \left\|
        \prod_{i\in A_{\mathrm{end}}}G_i
        \right\|_{L^p}.
\label{eq:product-split-reg-end}
\end{equation}
We first estimate the regular part.  By independence,
\begin{align}
        \left\|
        \prod_{i\in A_{\mathrm{reg}}}G_i
        \right\|_{L^p}
     \overset{\eqref{sing}}{\le}
        \prod_{i\in A_{\mathrm{reg}}}
        \left(
        1+\frac{C}{N_{\varepsilon,b}-i}
        \right)^{1/p} \le
        \exp\left\{
        C
        \sum_{i\in A_{\mathrm{reg}}}
        \frac1{N_{\varepsilon,b}-i}
        \right\} .
\label{eq:regular-product-G-bound}
\end{align}
It remains to bound the last \(c_*\) terminal factors.  We  will show later (see \eqref{singleend} below) that for
any small enough  constant  \(b>0\),  
{\[
        \mathbb E G_i^p
        \le  
        \left(  
 \log b^{-1}
        \right)^{{p\choose2}+1} \le  \left(
   \log b^{-1}
        \right)^{p^2} , \qquad \forall i\in [1,N_{\varepsilon,b} ].
\]} 
{We remark that this estimate is essentially the shrinking-ball moment asymptotics in \cite{shrinking} (see (3) in Remark \ref{remark74} below for details).}
Since \(|A_{\mathrm{end}}|\le c_*\), this along with \eqref{eq:regular-product-G-bound} (recall $   A_{\mathrm{reg}}
    =
        A\cap[1,N_{\varepsilon,b}-c_*]$) conclude the proof.
\end{proof}

As consequences of Lemma~\ref{gmoment}, we record the following two estimates. Let $p \ge 2$ be any integer.
First, for any \(A\subset[M_{\varepsilon,\eta}+1,N_{\varepsilon,b}]\) and a  small enough constant $b>0$, for sufficiently small $\varepsilon>0,$ {there exists  $C>0$ (depending on $b$) such that}
\begin{align}
        \left\|
        \prod_{i\in A}G_i
        \right\|_{L^p}
        &\le
        \left(
          \log b^{-1}
        \right)^{p^2c_*}
        \exp\left\{
        C
        \sum_{i=M_{\varepsilon,\eta}+1}^{N_{\varepsilon,b}-c_*}
        \frac1{N_{\varepsilon,b}-i}
        \right\}  \le 
        (\log\varepsilon^{-1})^{C\eta}.
\label{eq:G-tail-product-moment-consequence}
\end{align}
{The last inequality holds} since $  M_{\varepsilon,\eta}
        =
        N_{\varepsilon,b}-\ell_{\varepsilon,\eta}$ and $\ell_{\varepsilon,\eta} = (\log \varepsilon^{-1})^\eta,$  and thus
\[
        \sum_{i=M_{\varepsilon,\eta}+1}^{N_{\varepsilon,b}-c_*}
        \frac1{N_{\varepsilon,b}-i}
        =
        \sum_{r=c_*}^{\ell_{\varepsilon,\eta}-1}
        \frac1r
        \le
        C\log\ell_{\varepsilon,\eta}
=
        C\eta\log\log\varepsilon^{-1}.
\]
Note that the factor
\(\left(  \log b^{-1}\right)^{p^2c_*}\) has been absorbed into the term $(\log\varepsilon^{-1})^{C\eta}$ (by increasing the value of the constant $C$). This is permitted and harmless because \(b>0\) is a fixed constant before taking the limit
\(\varepsilon \rightarrow 0\).

Similarly, {for a small enough constant $b>0$, for sufficiently small $\varepsilon>0$ and any \(A \subset [1,N_{\varepsilon,b}]\),}
\begin{align}
        \left\|
        \prod_{i\in A}G_i
        \right\|_{L^p}
        &\le
        \left(
          \log b^{-1}
        \right)^{p^2c_*}
        \exp\left\{
        C
        \sum_{i=1}^{N_{\varepsilon,b}-c_*}
        \frac1{N_{\varepsilon,b}-i}
        \right\}  \le 
        N_{\varepsilon,b}^{C}  \le 
        (\log\varepsilon^{-1})^{C},
\label{eq:G-full-product-moment-consequence}
\end{align} 
{where  in the last inequality we used $N_{\varepsilon,b} \le \log \varepsilon^{-1}$ for $b\in (0,1)$ (see \eqref{ndef}).}

We conclude this subsection by recording the following moment estimate from
\cite[Corollary~B.3]{loglog}. Define the centered SHF:
\[
        \mathscr W_{t,t'}^\vartheta
        :=
        \mathscr Z_{t,t'}^\vartheta-P_{t,t'}.
\]
Then, for every \(h\in\mathbb N\), there exist constants $C$ {(depending on $h$)} and a universal
{\(c_{**}>0\)}   such that, for all \(L\ge c_{**}\) and \(r\in(0,1/2]\),
\begin{align} \label{Lbound}
    \mathbb E 
        \left\vert
         p(r^2)\blacktriangleleft \mathscr W^{-L  \log r^{-1}}_{0,1}\blacktriangleright 1 
        \right\vert^h
  \le \left[\mathbb E
        \left(
         p(r^2)\blacktriangleleft \mathscr W^{-L  \log r^{-1}}_{0,1}\blacktriangleright 1 
        \right)^{2h}
        \right]^{1/2} 
        \le
        C L ^{- h/2 },
\end{align}
{where the first inequality follows from H\"older's inequality, while the second
follows from \cite[(B.21) in Corollary~B.3]{loglog}.}
In words, the above is a quantification of the statement that if $\vartheta$ is negative enough, then SHF approximates the heat kernel ({with an exact equality when $\vartheta=-\infty$}).

\section{Expansion of the partition function around the decoupled proxy}\label{expansion7895}
The
full partition function $\cZ_{\e,b}$ admits an expansion containing the
decoupled object $G_{\eta,1}$, the product of the single-scale partition functions \(G_i\), as one of the terms while the other terms represent interactions and is made of terms involving \(D_I\) defined above in \eqref{error} and \eqref{error23}.  

\newcommand{\nt}{\neg \cT}
Recalling the definitions around \eqref{defii}, for $\wt I \subset \mathfrak I_{\e,b}$ define the tail complement of \(\widetilde I\) by
\begin{align}\label{ti}
{\neg \cT(\widetilde I)}
        :=
        [M_{\varepsilon,\eta}+1,N_{\varepsilon,b}-1]\setminus   \widetilde I,
\end{align}
i.e., \(\nt(\widetilde I)\) is the set of tail  indices which are
\emph{not} covered by  \(\widetilde I\). Note that here we are slightly abusing notation to identify $\wt I$ with a collection of disjoint intervals as well as a subset of integers obtained by taking the union of the same intervals. We set \begin{equation}
         \widehat G_{\widetilde I}^{\rm tail}
        :=
        G_{\nt(\widetilde I)}
        =
        \prod_{i\in \nt(\widetilde I)}G_i ,
\label{eq:terminal-tail-product-definition}
\end{equation}
with the convention that the empty product is equal to \(1\). 
In other words,
\( \widehat G_{\widetilde I}^{\rm tail}\) is the product of the single-scale
factors \(G_i\) in the tail region \([M_{\varepsilon,\eta}+1,N_{\varepsilon,b}-1]\), \emph{excluding} those indices
covered by  \(\widetilde I\) (the $\widehat G$ notation is devised to indicate that it constitutes of the factors \emph{not} covered by any $\widetilde I$).

A similar expansion was proven in \cite{loglog} and the form below is a slight variation. 
\begin{lemma}\label{expansion234}The following exact identity holds. 
\begin{equation}
        \frac{\mathcal Z_{\varepsilon,b}}{G_{\eta;1}}
        =
        G_{\eta;2}
        +
      \sum_{\widetilde I\in\mathfrak I_{\varepsilon,b}}
        \mathcal E_{\eta;\widetilde I} 
\label{eq:terminal-expansion-identity}
\end{equation}
(see \eqref{defii} for the definition of $  \mathfrak I_{\varepsilon,b}$), 
where  each error term \(\mathcal E_{\eta;\widetilde I}\) has the following structure:
\begin{equation}
        \mathcal E_{\eta;\widetilde I}
        =
        \left(
        \prod_{I\in\widetilde I}
        \frac{D_I}{G_I^{\rm bulk}}
        \right)
                \widehat G_{\widetilde I}^{\rm tail}.
\label{eq:terminal-error-term-structure}
\end{equation}   

\end{lemma}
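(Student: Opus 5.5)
The plan is to expand each block $Z_i$, $1\le i\le N_{\varepsilon,b}-2$, using the decomposition \eqref{error},
\[
Z_i=(Z_i\blacktriangleright 1)\,p_i+D_i ,
\]
while keeping the last block $Z_{N_{\varepsilon,b}-1}$ intact. Inserting this into \eqref{auxpar} and using multilinearity of $\bullet$ writes $\mathcal Z_{\varepsilon,b}$ as a finite sum over subsets $S\subset[1,N_{\varepsilon,b}-2]$ of indices at which the $D$-term is chosen. At every other index the rank-one term $(Z_i\blacktriangleright1)\,p_i$ is chosen. The goal is to show that each summand factorizes into a product of the form $\prod_{I}D_I\prod_{j}G_j$, and then to divide by $G_{\eta;1}$.

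\textbf{Factorization.} The key observation is that a rank-one kernel $f(x)p_i(y)$ cuts the chain. For kernels $A,B$,
\[
\mu\blacktriangleleft A\bullet\bigl(f\otimes p_i\bigr)\bullet B\blacktriangleright 1=\bigl(\mu\blacktriangleleft A\blacktriangleright f\bigr)\bigl(p_i\blacktriangleleft B\blacktriangleright 1\bigr).
\]
Applied with $f=Z_i\blacktriangleright1$, the factor $(Z_i\blacktriangleright1)$ is absorbed into the left piece as $\cdots\bullet Z_i\blacktriangleright1$. Now fix $S$ and decompose $[1,N_{\varepsilon,b}-1]$ into maximal runs: each run consists of a (possibly empty) stretch $[a,j-1]\subset S$ followed by an index $j\notin S$ (with $j=N_{\varepsilon,b}-1$ allowed). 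The piece attached to such a run is $p_{a-1}\blacktriangleleft D_a\bullet\cdots\bullet D_{j-1}\bullet Z_j\blacktriangleright1$; this includes the first run, since $p(\varepsilon^2)=p_0$.
\begin{itemize}
\item If the run has no $D$'s, the piece is $p_{j-1}\blacktriangleleft Z_j\blacktriangleright1=G_j$ by \eqref{tensoredblocks}.
\item If it has at least one $D$, the piece is exactly $D_I$ with $I=[a,j]$, $|I|\ge2$, by \eqref{error23}.
\end{itemize}
Since $Z_{N_{\varepsilon,b}-1}$ is never split, every $D$-stretch is terminated by some $j\le N_{\varepsilon,b}-1$. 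Hence the map $S\mapsto\widetilde I$ is a bijection onto $\mathfrak I_{\varepsilon,b}\cup\{\emptyset\}$, and it yields
\[
\mathcal Z_{\varepsilon,b}=\sum_{\widetilde I}\prod_{I\in\widetilde I}D_I\prod_{j\notin\widetilde I}G_j .
\]

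\textbf{Division by $G_{\eta;1}$.} Dividing by $G_{\eta;1}=\prod_{i\le M_{\varepsilon,\eta}}G_i$ (we may work on the event where this is nonzero, or treat the identity as one between random variables on $\{G_{\eta;1}>0\}$), the bulk factors $G_j$ with $j\notin\widetilde I$ cancel. What remains in the denominator is $\prod_{I}G^{\rm bulk}_I$, and what remains in the numerator from uncovered indices is $\prod_{j\in\neg\mathcal T(\widetilde I)}G_j=\widehat G^{\rm tail}_{\widetilde I}$. This is exactly \eqref{eq:terminal-error-term-structure}. The term $\widetilde I=\emptyset$ gives $G_{\eta;2}$, which proves \eqref{eq:terminal-expansion-identity}.

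\textbf{Main obstacle.} The combinatorics are routine; the hard part is justifying the algebra for the singular convolution \eqref{conv34}. We need that $\bullet$ is associative and bilinear when signed kernels $D_i$ (differences of locally finite measures) are involved, and that the factorization identity above holds in the limit $\delta\to0$. I would first verify everything for $\bullet_\delta$, where all objects are honest integrals: there $p(\delta^2)$ applied to $p_i$ simply gives $p_{t_i+\delta^2}$, and the factorization is Fubini. I would then pass to $\delta\to0$ using the existence of the limits from \cite{clark2024continuum} together with the Chapman–Kolmogorov property of the SHF, and use the continuity of $t\mapsto p(t)$ to replace $p_{t_i+\delta^2}$ by $p_i$. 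The $L^2$ convergence inputs from \cite{loglog} (which proves an analogous expansion) would control these limits. Finiteness of the sum means no convergence issues arise in the expansion itself.
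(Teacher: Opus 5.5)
Your proposal is correct and follows the paper's own proof. You expand each block via $Z_i=(Z_i\blacktriangleright1)p_i+D_i$, use the rank-one cut to factor each summand into the terms $D_I$ (one per maximal $D$-run closed by a $Z$ factor) and single-block factors $G_j$, obtain $\mathcal Z_{\varepsilon,b}=\sum_{\widetilde I}\prod_{I\in\widetilde I}D_I\prod_{j\notin\widetilde I}G_j$, and then divide by $G_{\eta;1}$. Leaving $Z_{N_{\varepsilon,b}-1}$ unexpanded matches the paper's convention, since $D_{N_{\varepsilon,b}-1}\blacktriangleright1=0$; your remarks on justifying the algebra of the singular convolution go beyond the paper, which carries out the expansion formally and refers to \cite[Proposition 2.1]{loglog}.
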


 Thus the entire decoupling error is represented as a (weighted) sum of product of 
 $D_I$.  The size of such a collection is measured by its total
length $ |\widetilde I|:=\sum_{I\in\widetilde I}|I|.$
It controls
both the amount of decay of $D_I$ (see \eqref{eq:DI-L2-bound-current}) and the combinatorial
cost of summing over possible collections. 
\begin{proof}

The decoupling expansion is obtained by applying the same iterative decomposition as in
Proposition~2.1 in \cite{loglog}, with one important modification: we now normalize by the bulk
product \(G_{\eta;1}\), rather than by the full product \(\prod_i G_i\). 
More precisely,  we repeatedly decompose each block as $Z_i=(Z_i\blacktriangleright1)p_i+D_i$ and expand  the full partition function
\[
     p(\varepsilon^2)\blacktriangleleft Z_1\bullet\cdots\bullet Z_{N_{\varepsilon,b}-1} \blacktriangleright1,
\] 
and  then divide by \(G_{\eta;1}\). In the  expansion above,
each index \(i\) is treated in one of two ways: either 
\((Z_i\blacktriangleright1)p_i\) is used, or the error term \(D_i\) is used. Here, $(Z_i\blacktriangleright1)p_i$ cuts the product: the part before the cut is
closed by integrating \(Z_k\) against \(1\), while the part after the cut is
restarted from the  heat kernel \(p_k\).   Choosing
the term \(D_i\), on the other hand, does not create such a cut and the
product continues to the next scale. Thus the
associated interval-level contribution is
\begin{equation} 
        D_I
        =
        p_{i_I-1}
        \blacktriangleleft
        D_{i_I}\bullet D_{i_I+1}\bullet\cdots\bullet D_{j_I-1}
        \bullet Z_{j_I}
        \blacktriangleright1 .
\end{equation}
In particular, the last index \(j_I\) is a closing index corresponding to a $Z$ factor, not a \(D\)-one.
The leading term corresponds to the case where no error term is chosen.  It is $\prod_{i=1}^{N_{\varepsilon,b}-1}G_i
        =
        G_{\eta;1}G_{\eta;2},$ 
and hence becomes \(G_{\eta;2}\) after division by \(G_{\eta;1}\). For a nonempty collection \(\widetilde I\) of error intervals, the factors
\(G_i\) with bulk indices \(i\le M_{\varepsilon,\eta}\) and
\(i\notin\bigcup_{I\in\widetilde I}I\) are exactly cancelled by the
normalization \(G_{\eta;1}\). 
On the other hand, if \(i>M_{\varepsilon,\eta}\) and \(i\) is not covered by
any error interval, then the corresponding factor \(G_i\) is not cancelled and
remains in the terminal product $
               \widehat G_{\widetilde I}^{\rm tail}.$   
        
For completeness, let us include the following details. 
\begin{align*}p(\varepsilon^2)\blacktriangleleft Z_1\bullet\cdots\bullet Z_{N_{\varepsilon,b}-1} \blacktriangleright1&=p(\varepsilon^2)\blacktriangleleft \Big((Z_1\blacktriangleright1)p_1+D_1\Big)\bullet\cdots\bullet Z_{N_{\varepsilon,b}-1} \blacktriangleright1,\\
&=p(\varepsilon^2)\blacktriangleleft \Big((Z_1\blacktriangleright1)p_1\Big)\bullet\cdots\bullet Z_{N_{\varepsilon,b}-1} \blacktriangleright1\,\,\,+ \\&\quad \quad p(\varepsilon^2)\blacktriangleleft D_1\bullet\cdots\bullet Z_{N_{\varepsilon,b}-1} \blacktriangleright1,\\
&=\,\,\ldots \\
&= \prod_{i=1}^{N_{\varepsilon,b}-1} G_i + \sum_{\wt I\in \mathfrak I_{\e,b}}\prod_{I\in \wt I}D_I \prod_{i\notin \wt I}G_i.
\end{align*}
Above we followed our convention of denoting $\wt I$ as both a  collection of intervals as well as a set of integers obtained by taking the union of the intervals. Dividing by $G_{\eta;1}$ completes the proof.
\end{proof}

\section{Conditional negligibility of the decoupling error}   \label{sec3}

To prove Proposition \ref{prop:terminal-block-extension},  we must control the second term, i.e., the sum on the RHS in \eqref{eq:terminal-expansion-identity}.      
Recalling the expression
\begin{equation}\label{error56} \sum_{\widetilde I\in\mathfrak I_{\varepsilon,b}}
        \mathcal E_{\eta;\widetilde I} 
        =\sum_{\widetilde I\in\mathfrak I_{\varepsilon,b}}
        \left(
        \prod_{I\in\widetilde I}
        \frac{D_I}{G_I^{\rm bulk}}
        \right)
                \widehat G_{\widetilde I}^{\rm tail}.
\end{equation}
Note that we must bound the $L^2$ norm of the above sum  conditional on $\mathrm{UT}_{\varepsilon,\eta}(\alpha)$. 
The input we have is an unconditional $L^2$ bound from \cite{loglog}. As indicated in Section \ref{iop}, a key new ingredient we develop is a strong moment bound on the Radon-Nikodym derivative of the conditional distribution of the $Z_i$s given $\mathrm{UT}_{\varepsilon,\eta}(\alpha)$ with respect to the unconditional distribution. Further, note that \eqref{dec} delivers a sharp probability bound, in particular a lower bound, of $\P(\mathrm{UT}_{\varepsilon,\eta}(\alpha)).$
As already alluded to in Section \ref{iop}, given the above, there will be two styles of argument employed.  For $\wt I$ with large-total-length, i.e $|\wt I |$ from \eqref{idef1}, the exponential decay of   the \(L^2\)-norm of 
the factors \(D_I\) is strong enough to dominate the combinatorial entropy as well as the effect of conditioning. Namely, dividing the unconditional estimate by the lower bound on $\P(\mathrm{UT}_{\varepsilon,\eta}(\alpha))$ suffices. 

For $\wt I$ with small-total-length, the naive argument is not strong enough and a significantly more complicated argument is implemented. On the one hand the entropy factor is not too large which comes to our aid. But the main crucial observation is that the large deviation conditioning does not alter the distribution significantly when projected on a small number of coordinates involved in $\wt I.$ The moral reason for this was outlined in Section \ref{iop}. Quantitatively, this is captured by obtaining centered moment estimates of the conditional density (recorded in Proposition \ref{thm:direct-Lp-no-split-corrected} below).

 Motivated by this,
we split the decoupling error into a small-total-length part and a large-total-length
part.  The cutoff
\[
        \delta\mathcal L_\varepsilon
        =
        \delta\log\log\varepsilon^{-1}
\]
($\delta>0$ is a small  constant)
is chosen to match the  entropy scale of the interval collections. How it aids our analysis will be clear shortly.   
To be precise, 
for a small constant $\delta>0$, define
\begin{align}\label{short}
\mathcal R_{\eta,\le\delta}
        := \sum_{\substack{\widetilde I\in\mathfrak I_{\varepsilon,b}\\
        2\le |\widetilde I|\le \delta \mathcal L_\varepsilon}} \mathcal E_{\eta;\widetilde I} = 
        \sum_{\substack{\widetilde I\in\mathfrak I_{\varepsilon,b}\\
        2\le |\widetilde I|\le \delta \mathcal L_\varepsilon}}
      \left(
        \prod_{I\in\widetilde I}
        \frac{D_I}{G_I^{\rm bulk}}
        \right)
                \widehat G_{\widetilde I}^{\rm tail},
\end{align}
and
\begin{align} \label{large}
\mathcal R_{\eta,>\delta}
        :=\sum_{\substack{\widetilde I\in\mathfrak I_{\varepsilon,b}\\
         |\widetilde I|>\delta \mathcal L_\varepsilon}} \mathcal E_{\eta;\widetilde I} = 
        \sum_{\substack{\widetilde I\in\mathfrak I_{\varepsilon,b}\\
        |\widetilde I|>\delta \mathcal L_\varepsilon}}
        \left(
        \prod_{I\in\widetilde I}
        \frac{D_I}{G_I^{\rm bulk}}
        \right)
                \widehat G_{\widetilde I}^{\rm tail}.
\end{align}
Clearly
\begin{equation}\label{decomp12}
\sum_{\widetilde I\in\mathfrak I_{\varepsilon,b}}
        \mathcal E_{\eta;\widetilde I} = \mathcal R_{\eta,\le \delta}+\mathcal R_{\eta,>\delta}
\end{equation}

We estimate these two pieces separately. Further, we will always work on the high probability event $\Omega_{\e,\eta}$ from \eqref{prob1234}. The key small length estimate is the following.

\begin{proposition}[Conditional second moment: small-total-length]
\label{prop:second-moment-small-total-length-full}

There exists $C>0$  such that  the following holds:
Let \(\alpha>0\) and  \(\eta ,\delta\in (0,1/10) \)   be any constants. Then  for any small enough constant \(b>0\) (depending only on  $\alpha$),
\begin{equation}
        \mathbb E\left[
        \mathbf 1_{\Omega_{\varepsilon,\eta}}
        \left|
        \mathcal R_{\eta,\le\delta}  
        \right|^2
        \,\middle|\,
        \mathrm{UT}_{\varepsilon,\eta}(\alpha)
        \right]
        \le
        C 
        (\log\varepsilon^{-1})^{
 C\eta+ C \delta\log(1/\delta)} 
\label{eq:full-small-second-moment-polylog-eta-delta}
\end{equation} 
for all 
sufficiently small \(\varepsilon>0\).
\end{proposition}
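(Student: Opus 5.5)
The plan is to apply Cauchy--Schwarz over the collections $\widetilde I$ with $|\widetilde I|\le\delta\mathcal L_\varepsilon$, bound each conditional second moment by a change-of-measure argument, and then absorb the combinatorial entropy into the allowed loss $(\log\varepsilon^{-1})^{C\delta\log(1/\delta)}$.

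\textbf{Step 1: counting.} Since every $I\in\widetilde I$ has $|I|\ge 2$, we have $\#\widetilde I\le|\widetilde I|/2$. A collection with total length $m$ is determined by its interval endpoints. The number of such collections is therefore at most
\[
\sum_{k\le m/2}\binom{N_{\varepsilon,b}}{2k}\le (CN_{\varepsilon,b}/m)^{m}.
\]
For $m\le\delta\mathcal L_\varepsilon$ this is at most $(\log\varepsilon^{-1})^{\delta\log(1/\delta)+o(1)}$ up to the choice of constants, which is exactly the entropy term in \eqref{eq:full-small-second-moment-polylog-eta-delta}.

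The crude bound $N^{m}=(\log \varepsilon^{-1})^{\delta\log\log\log}$ is too large, so the counting must be sharpened. I would exploit the decay $(N_{\varepsilon,b}-j_I)^{-1}$ in \eqref{eq:DI-L2-bound-current}, which makes the sum over the location of each right endpoint cost only $\sum_j (N-j)^{-1}\lesssim\log N$ rather than $N$. To realize this I would use Cauchy--Schwarz with weights $w_{\widetilde I}=\prod_I (N-j_I)^{-1}$ in place of uniform counting. Then $\sum_{\widetilde I} w_{\widetilde I}\le \sum_k (C\log N)^k/k!$ over $k\le\delta\mathcal L_\varepsilon/2$. This gives $(\log N)^{\delta\mathcal L/2}/(\delta\mathcal L/2)!\approx (Ce/\delta)^{\delta\mathcal L/2}=(\log\varepsilon^{-1})^{C\delta\log(1/\delta)}$. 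The weighted Cauchy--Schwarz reads
\[
\Big|\sum \mathcal E\Big|^2\le \Big(\sum_{\widetilde I} w_{\widetilde I}\Big)\sum_{\widetilde I} w_{\widetilde I}^{-1}\,\mathcal E_{\eta;\widetilde I}^2 .
\]
It remains to show that $\mathbb E[\mathbf 1_{\Omega}\,\mathcal E_{\eta;\widetilde I}^2\mid \mathrm{UT}]\le C^{|\widetilde I|}(\log b^{-1})^{-(|\widetilde I|-\#\widetilde I)}(\log\varepsilon^{-1})^{C\eta}\,w_{\widetilde I}^{2}$. Given that, each summand contributes $w_{\widetilde I}\,(C/\log b^{-1})^{\cdots}$, and the sum is controlled as above once $b$ is small.

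\textbf{Step 2: the conditional moment of a single term.} On $\Omega_{\varepsilon,\eta}$ we have $G_I^{\rm bulk}\ge 2^{-|I|}$, so
\[
\mathbf 1_\Omega\,\mathcal E_{\eta;\widetilde I}^2\le 4^{|\widetilde I|}\prod_I D_I^2\,\big(\widehat G^{\rm tail}_{\widetilde I}\big)^2 .
\]
Let $U$ be the set of bulk indices touched by $\widetilde I$, so that $|U|\le\delta\mathcal L_\varepsilon$. The tail factors and the tail parts of the $D_I$ depend only on $Z_i$ with $i>M_{\varepsilon,\eta}$, which are independent of the conditioning event. The factors $D_I$ for distinct $I$ are also mutually independent.

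Write $\rho_U$ for the Radon--Nikodym derivative of the law of $(Z_i)_{i\in U}$ conditional on $\mathrm{UT}_{\varepsilon,\eta}(\alpha)$ with respect to its unconditional law. The conditional expectation then equals an unconditional expectation weighted by $\rho_U$. By H\"older,
\[
\mathbb E\big[\rho_U\, X\big]\le \|\rho_U\|_{L^q}\,\|X\|_{L^{q'}} .
\]
I would invoke the forthcoming moment bound on the projected conditional density (Proposition~\ref{thm:direct-Lp-no-split-corrected}). It gives $\|\rho_U\|_{L^q}\le C^{|U|}$, or $1+o(1)$, whenever $|U|\ll\log N_{\varepsilon,b}$; this is where the mean-shift heuristic of Section~\ref{iop} enters.

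\textbf{Step 3: higher moments of $D_I$.} The difficulty is that H\"older requires $L^{2q'}$ bounds on $D_I$, whereas only $L^2$ bounds are available in \eqref{eq:DI-L2-bound-current}. I would try to avoid this by exploiting the structure of $\rho_U$. If $\rho_U$ is close to a product of single-coordinate densities $\exp(\theta_i Y_i-\lambda_i(\theta_i))$, which is what the large-deviation analysis of Section~\ref{sec5} produces, then weighting by $\rho_U$ amounts to tilting each $Z_i$ by a power $G_i^{\theta_i}$ with $\theta_i$ small. Since $G_i$ is concentrated near $1$ with all moments controlled by \eqref{eq:centered-G-moment-B3}, one expects
\[
\mathbb E\big[G^{\theta}_{U}\,D_I^2\big]\le C^{|I|}\,\mathbb E[D_I^2] .
\]
Failing that, I would fall back on hypercontractivity or moment bounds in the spirit of \cite{loglog}, Corollary~B.3, which give $L^{p}$ bounds on $D_I$ matching \eqref{eq:DI-L2-bound-current} up to a factor $C_p^{|I|}$.

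The tail product $\widehat G^{\rm tail}$ is handled by \eqref{eq:G-tail-product-moment-consequence}, which yields $(\log\varepsilon^{-1})^{C\eta}$. I expect Step 3, transferring the $L^2$ decay of $D_I$ through the conditional density without losing the factors $(\log b^{-1})^{-1}$ and $(N-j_I)^{-2}$, to be the main obstacle.
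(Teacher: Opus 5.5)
There is a genuine gap, and it is where you suspect (Step 3). It is caused by your decomposition, not by a missing technical estimate.

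\textbf{Why the diagonal reduction fails.} Your weighted Cauchy--Schwarz reduces everything to diagonal terms. It then needs $\mathbb E[\mathbf 1_{\Omega_{\varepsilon,\eta}}\mathcal E_{\eta;\widetilde I}^2\mid\mathrm{UT}_{\varepsilon,\eta}(\alpha)]\lesssim w_{\widetilde I}^2$ times the decay. The only handle on the conditioning is the $L^p$ bound on $\mathsf{RN}_U-1$ from \eqref{rnspecial32}. So any change of measure forces both copies of $\prod_I D_I$ into $L^{2q'}$ with $q'>1$, while keeping both the factor $(\log b^{-1})^{-(|I|-1)/2}$ and the factor $(N_{\varepsilon,b}-j_I)^{-1}$. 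Nothing of this kind is available:
\begin{itemize}
\item \eqref{eq:DI-L2-bound-current} is purely an $L^2$ statement.
\item The only higher-moment control, Lemma~\ref{lem:short-fourth-product}, dominates $D_i$ by positive kernels and discards all cancellation. It grows exponentially in $|\widetilde I|$ and has no $(N_{\varepsilon,b}-j_I)^{-1}$ factor.
\end{itemize}

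\textbf{Why your proposed remedies do not close it.}
\begin{itemize}
\item By \eqref{eq:R-normalized-H-Lp-final}, $\mathsf{RN}_U=H_{U^c}(S_U)/\mathbb E[H_{U^c}(S_U)]$ is not a product of single-coordinate tilts. Also, the effective tilt $\theta_0\in J$ is of order $\alpha$, not small.
\item Even an exact tilt $G_i^{\theta}$ is a functional of the same $Z_i$ as $D_I$. So the bound $\mathbb E[G_U^{\theta}D_I^2]\le C^{|I|}\,\mathbb E[D_I^2]$ would require a new tilted second-moment analysis of the $D$-chain.
\item No hypercontractivity estimate is available for SHF functionals.
\item \cite[Corollary~B.3]{loglog} controls single-scale quantities such as $W_i$, not the chained $D_I$.
\end{itemize}
Nor can you rescue the diagonal reduction by splitting the two copies as $L^2\times L^6$. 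That yields only one factor $w_{\widetilde I}$. Then $\sum_{\widetilde I}w_{\widetilde I}^{-1}\cdot w_{\widetilde I}$ is an unweighted count of placements, already of order $N_{\varepsilon,b}\asymp\log\varepsilon^{-1}$ for a single interval and super-polylogarithmic once $\#\widetilde I$ grows.

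\textbf{The missing idea: keep the cross terms.}
\begin{enumerate}
\item Expand $|\mathcal R_{\eta,\le\delta}|^2$ as a sum over pairs $(\widetilde I,\widetilde J)$.
\item Use Lemma~\ref{lem:projected-RN-coarse-conditioning} to write each conditional expectation as $\mathbb E[\mathcal Y(\widetilde I,\widetilde J)\,\mathsf{RN}_{\widetilde K^{\rm bulk}}]$.
\item Inside each connected component $K$ of $\widetilde I\cup\widetilde J$, put the side of larger total length in $L^2$. Put the shorter side, the tail product and $\mathsf{RN}$ in $L^6$, using H\"older with exponents $2,6,6,6$.
\end{enumerate}
By independence of disjoint intervals, the $L^2$ side gives decay $\exp\{-\frac15(\log\log b^{-1})m_{\rm long}\}$ and one factor $(N_{\varepsilon,b}-j_K)^{-1}$ per component. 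The short side costs at most $\exp\{(c_0+\frac{15+\kappa}{6\Gamma}\log\log b^{-1})m_{\rm short}\}$, with $m_{\rm short}\le m_{\rm long}$. This cost is absorbed by taking $\Gamma$ large and then $b$ small.

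Only one harmonic factor per component is needed. So summing over $\widetilde K$, with at most $16^{|\widetilde K|}$ pairs mapping to a given $\widetilde K$, is exactly your Step~1 entropy computation. It gives $(\log\varepsilon^{-1})^{C\eta+C\delta\log(1/\delta)}$.
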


The simpler large-total-length estimate is the following. 
\begin{proposition}[Conditional second moment: large-total-length]
\label{prop:first-moment-large-total-length}
There exists $C>0$  such that  the following holds:
Let \(\alpha>0\) and  \(\eta ,\delta\in (0,1/10) \)   be any constants. Then for any \(\Lambda>0\), for any small enough constant \(b>0\) (depending only on  $\alpha$, $\delta$ and $\Lambda$),
\begin{equation}
        \mathbb E\left[
        \mathbf 1_{\Omega_{\varepsilon,\eta}}
        \left|
        \mathcal R_{\eta,>\delta}  
        \right|^2
        \,\middle|\,
        \mathrm{UT}_{\varepsilon,\eta}(\alpha)
        \right]
        \le C
(\log\varepsilon^{-1})^{C\eta-\Lambda} 
\end{equation} 
for all 
sufficiently small \(\varepsilon>0\).
\end{proposition}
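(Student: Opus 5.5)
The plan is to reduce the conditional second moment to an unconditional one, paying the factor $1/\mathbb P(\mathrm{UT}_{\varepsilon,\eta}(\alpha))$. By Proposition \ref{largedeviation45}, for $b$ small, $\mathbb P(\mathrm{UT}_{\varepsilon,\eta}(\alpha))\ge N_{\varepsilon,b}^{-\frac{\alpha^2}{2(1-\eta)}-\widetilde\nu_b-o(1)}\ge (\log\varepsilon^{-1})^{-\alpha^2}$, since $\widetilde\nu_b\le 1$ and $\eta<1/10$. Hence it suffices to show
\[
\mathbb E\big[\mathbf 1_{\Omega_{\varepsilon,\eta}}|\mathcal R_{\eta,>\delta}|^2\big]\le C(\log\varepsilon^{-1})^{C\eta-\Lambda-\alpha^2}.
\]
I would bound the $L^2$ norm of $\mathcal R_{\eta,>\delta}$ by the triangle inequality, $\sum_{\widetilde I}\|\mathbf 1_{\Omega_{\varepsilon,\eta}}\mathcal E_{\eta;\widetilde I}\|_{L^2}$. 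On $\Omega_{\varepsilon,\eta}$ every bulk factor satisfies $G_i\ge 1/2$, so $1/G_I^{\rm bulk}\le 2^{|I|}$. This leaves $2^{|\widetilde I|}\,\|\prod_{I\in\widetilde I}D_I\cdot \widehat G^{\rm tail}_{\widetilde I}\|_{L^2}$ to control.

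For each term I would use independence across disjoint time blocks. The intervals in $\widetilde I$ are disjoint, and $D_I$ depends only on $Z_i$ with $i\in I$. The tail product $\widehat G^{\rm tail}_{\widetilde I}$ uses only indices outside $\widetilde I$. So the $L^2$ norm factorizes as $\prod_I\|D_I\|_{L^2}\cdot\|\widehat G^{\rm tail}_{\widetilde I}\|_{L^2}$. The last factor is at most $(\log\varepsilon^{-1})^{C\eta}$ by \eqref{eq:G-tail-product-moment-consequence}. For the first, \eqref{eq:DI-L2-bound-current} gives $\|D_I\|_{L^2}\le \mathsf c^{|I|/2}(\log b^{-1})^{-(|I|-1)/2}(N_{\varepsilon,b}-j_I)^{-1}$. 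Since $|I|\ge 2$, we have $|I|-1\ge |I|/2$, so each term is bounded by
\[
(\log\varepsilon^{-1})^{C\eta}\Big(\frac{2\mathsf c^{1/2}}{(\log b^{-1})^{1/4}}\Big)^{|\widetilde I|}\prod_{I\in\widetilde I}\frac{1}{N_{\varepsilon,b}-j_I}.
\]

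The main obstacle is the entropy of the sum over collections. I would sum over right endpoints and lengths rather than count collections crudely. A collection is determined by, for each interval, its right endpoint $j_I$ and its length $|I|$. Summing over collections with fixed total length $L$ and $k$ intervals, the endpoint sums give at most $\big(\sum_{j\le N_{\varepsilon,b}-1}\frac{1}{N_{\varepsilon,b}-j}\big)^k\le (C\log N_{\varepsilon,b})^k$. The length compositions give at most $\binom{L}{k}\le 2^L$. Since $k\le L/2$, the factor $(C\mathcal L_\varepsilon)^{k}$ is not small, and it must be beaten by the per-length decay. Splitting $(\log b^{-1})^{-(|I|-1)/2}$ gives at least $(\log b^{-1})^{-1/2}$ per interval, which does not beat $\mathcal L_\varepsilon$. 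This is precisely where the condition $L>\delta\mathcal L_\varepsilon$ enters.

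I would handle it with a crude global bound: the number of choices of $k$ endpoints is $\binom{N_{\varepsilon,b}}{k}$, and combining this with $1/(N_{\varepsilon,b}-j_I)\le 1$ gives that the sum over $\widetilde I$ with $|\widetilde I|=L$ is at most
\[
(C\log N_{\varepsilon,b})^{L}\,\big(C(\log b^{-1})^{-1/4}\big)^{L}.
\]
The factor $(C\log N_{\varepsilon,b})^{L}$ is still too large, so instead I would keep the $j$-sum bounded by $C\log N_{\varepsilon,b}\le C\mathcal L_\varepsilon$ only per interval. Taking $b$ small so that $q:=C(\log b^{-1})^{-1/4}$ is tiny, I need $\sum_{L>\delta\mathcal L_\varepsilon}\sum_k (C\mathcal L_\varepsilon)^k q^{L}$ to be small. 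Using $(C\mathcal L_\varepsilon)^k\le (C\mathcal L_\varepsilon)^{L/2}$ alone fails, so this needs a refinement. The first thing I would try is the sharper endpoint bound: the sum $\sum_{j}(N_{\varepsilon,b}-j)^{-1}$ over $k$ ordered distinct endpoints, after using the $1/k!$ ordering, gives $(C\mathcal L_\varepsilon)^k/k!$. Then
\[
\sum_{k}\frac{(C\mathcal L_\varepsilon)^k}{k!}\,q^{L}\,2^L\le e^{C\mathcal L_\varepsilon}(2q)^{L}.
\]
Since $e^{C\mathcal L_\varepsilon}=(\log\varepsilon^{-1})^{C}$ and $(2q)^{\delta\mathcal L_\varepsilon}=(\log\varepsilon^{-1})^{-\delta\log(1/(2q))}$, choosing $b$ small depending on $\delta$, $\Lambda$ and $\alpha$ makes the total $\le(\log\varepsilon^{-1})^{C\eta-\Lambda-\alpha^2}$. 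Squaring and dividing by $\mathbb P(\mathrm{UT}_{\varepsilon,\eta}(\alpha))$ then yields the claim.
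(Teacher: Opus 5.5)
Your proposal is correct and follows essentially the paper's route. You first prove the unconditional bound using Minkowski, the factor $2^{|\widetilde I|}$ on $\Omega_{\varepsilon,\eta}$, independence across disjoint blocks, and \eqref{eq:DI-L2-bound-current} with $(|I|-1)/2\ge|I|/4$; the endpoint sum $\sum_k(1+\log N_{\varepsilon,b})^k/k!\le eN_{\varepsilon,b}$ is then beaten by the geometric decay over $|\widetilde I|>\delta\mathcal L_\varepsilon$ once $b$ is small, and you finish by dividing by the lower bound on $\mathbb P(\mathrm{UT}_{\varepsilon,\eta}(\alpha))$ from Proposition~\ref{largedeviation45}.

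The only slip is that $\widetilde\nu_b\le 1$ does not by itself give $\mathbb P(\mathrm{UT}_{\varepsilon,\eta}(\alpha))\ge(\log\varepsilon^{-1})^{-\alpha^2}$ when $\alpha$ is small. Either shrink $b$ further so that $\widetilde\nu_b<\alpha^2/3$, or, as the paper does, target the exponent $\Lambda+\frac{\alpha^2}{2(1-\eta)}+2$ in the unconditional bound.
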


As already  indicated, the proof is essentially an unconditional 
one followed by dividing by the probability of the upper-tail event.\\

Combining the previous two propositions immediately gives Proposition~\ref{prop:terminal-block-extension} as follows.
\begin{proof}[Proof of Proposition~\ref{prop:terminal-block-extension}] 
Let $\delta>0$ be a constant such that $\delta \log (1/\delta) < \eta.$
Since $  \frac{\mathcal Z_{\varepsilon,b}}{G_{\eta;1}}
-
        G_{\eta;2} =\mathcal R_{\eta,\le \delta} +\mathcal R_{\eta,>\delta} $, the left hand side of \eqref{eq:terminal-extension-main} is bounded by
\[ 
        2\mathbb E\left[\mathbf 1_{\Omega_{\varepsilon,\eta}}
        |\mathcal R_{\eta,\le\delta}|^2
        \,\middle|\,
        \mathrm{UT}_{\varepsilon,\eta}(\alpha)
        \right]
        +
       2 \mathbb E\left[\mathbf 1_{\Omega_{\varepsilon,\eta}}
        |\mathcal R_{\eta,>\delta}|^2
        \,\middle|\,
        \mathrm{UT}_{\varepsilon,\eta}(\alpha)
        \right].
\]  
Let $\Lambda>0$ be any  constant. The proof is now complete by the control on the first and second terms delivered by  Propositions
 \ref{prop:second-moment-small-total-length-full}  and
 \ref{prop:first-moment-large-total-length} respectively, provided that $b>0$ is  a   small enough constant.

\end{proof}

It remains to prove Proposition \ref{prop:second-moment-small-total-length-full} and Proposition \ref{prop:first-moment-large-total-length}; this will be accomplished in the two following sections.

\section{Conditional second moment: large-total-length}
\label{sec4567}
In this section we prove Proposition \ref{prop:first-moment-large-total-length}. This will rely on the following unconditional estimate.

\subsection{Unconditional second moment: large-total-length}

 In this section we prove that the contribution of collections \(\widetilde I\)
with large-total-length has an unconditional second moment which is smaller
than any prescribed power of \(\log\varepsilon^{-1}\).   The key idea is that, above the scale  $ |\widetilde I|>\delta\log\log\varepsilon^{-1}$, the decay from \eqref{eq:DI-L2-bound-current} dominates the
combinatorial cost of summing over all admissible collections, once 
\(b>0\) is sufficiently small.

\begin{proposition}[Unconditional second moment: large-total-length]

\label{Unconditional second moment: large-total-length}
There exists $C>0$  such that  the following holds:
Let  \(\eta ,\delta\in (0,1/10) \)   be any constants. Then for any \(\Lambda>0\), for any small enough constant \(b>0\) (depending only on  $\delta$ and $\Lambda$),
\begin{equation}
        \mathbb E\left[
        \mathbf 1_{\Omega_{\varepsilon,\eta}}
        \left|
\mathcal R_{\eta,>\delta}  
        \right|^2
        \right]
        \le
      C(\log\varepsilon^{-1})^{C\eta-\Lambda}
\label{eq:large-length-unconditional-second-moment-target}
\end{equation}
for all sufficiently small \(\varepsilon>0\) {(depending on $b,\eta,\delta$).}\end{proposition}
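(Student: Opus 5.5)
We bound the second moment of $\mathcal R_{\eta,>\delta}$ through the triangle inequality in $L^2$, treating each collection $\widetilde I$ separately:
\[
\Big\|\mathbf 1_{\Omega_{\varepsilon,\eta}}\mathcal R_{\eta,>\delta}\Big\|_{L^2}\le \sum_{\substack{\widetilde I\in\mathfrak I_{\varepsilon,b}\\ |\widetilde I|>\delta\mathcal L_\varepsilon}}\Big\|\mathbf 1_{\Omega_{\varepsilon,\eta}}\mathcal E_{\eta;\widetilde I}\Big\|_{L^2}.
\]

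\textbf{Bounding a single term.} On $\Omega_{\varepsilon,\eta}$ every bulk factor satisfies $G_i\ge 1/2$. Hence $1/G_I^{\rm bulk}\le 2^{|I|}$, and this can be absorbed into the constant $\mathsf c^{|I|}$. The key structural fact is independence across disjoint time strips. The variables $D_I$ for $I\in\widetilde I$, together with the tail factors $G_i$, $i\in\neg\mathcal T(\widetilde I)$, depend on disjoint collections of blocks $Z_i$. The only exception is the indicator $\mathbf 1_{\Omega_{\varepsilon,\eta}}$, and we simply drop it after using the pointwise bound. This gives
\[
\|\mathbf 1_{\Omega_{\varepsilon,\eta}}\mathcal E_{\eta;\widetilde I}\|_{L^2}\le \prod_{I\in\widetilde I}2^{|I|}\|D_I\|_{L^2}\cdot\|\widehat G^{\rm tail}_{\widetilde I}\|_{L^2}.
\]
By \eqref{eq:G-tail-product-moment-consequence} with $p=2$, the tail factor is at most $(\log\varepsilon^{-1})^{C\eta}$. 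By \eqref{eq:DI-L2-bound-current}, each $D_I$ contributes $(2\sqrt{\mathsf c})^{|I|}(\log b^{-1})^{-(|I|-1)/2}(N_{\varepsilon,b}-j_I)^{-1}$.

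\textbf{Summing over collections.} We must sum these bounds over all collections with total length exceeding $\delta\mathcal L_\varepsilon$. Since $|I|\ge2$, we have $|I|-1\ge |I|/2$, so each interval contributes a decay of at least $(C'/\log b^{-1})^{|I|/4}$, where $C'$ is absolute. What remains is the weight $(N_{\varepsilon,b}-j_I)^{-1}$ summed over positions.

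We organize the sum by the right endpoints $j_1<\dots<j_k$ and the lengths $\ell_1,\dots,\ell_k$ of the intervals. The left endpoints are then determined, so intervals are parametrized by (right endpoint, length). The sum then factorizes, up to dropping the disjointness constraint (which only enlarges it), into a sum over $k$ of $\frac{1}{k!}$ times the $k$-th power of
\[
\sum_{j\le N_{\varepsilon,b}-1}\frac{1}{N_{\varepsilon,b}-j}\sum_{\ell\ge2}x^{\ell}\,\mathbf 1\{\cdot\},\qquad x:=(C'/\log b^{-1})^{1/4}.
\]
The length constraint is handled by an exponential tilt. We multiply by $x^{-|\widetilde I|/2}\cdot x^{\delta\mathcal L_\varepsilon/2}\ge1$, which leaves a per-interval weight $x^{\ell/2}$. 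The harmonic sum over $j$ is at most $C\log N_{\varepsilon,b}\le C\mathcal L_\varepsilon$, so the bracket is at most $C x\,\mathcal L_\varepsilon$. Summing over $k$ then gives
\[
\exp(Cx\mathcal L_\varepsilon)\,x^{\delta\mathcal L_\varepsilon/2}=(\log\varepsilon^{-1})^{Cx-\frac{\delta}{8}\log(\log b^{-1}/C')}.
\]

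\textbf{Conclusion.} Choose $b$ small enough, depending on $\delta$ and $\Lambda$, that $\frac{\delta}{8}\log(\log b^{-1}/C')-Cx\ge \Lambda/2$. Squaring the resulting $L^2$ bound and multiplying by the tail factor then gives $C(\log\varepsilon^{-1})^{C\eta-\Lambda}$, which is \eqref{eq:large-length-unconditional-second-moment-target}.

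The main obstacle is the combinatorial step. There are no cancellations to exploit, since we use the triangle inequality. So we must verify that the entropy of placing intervals, measured by the harmonic weights $(N_{\varepsilon,b}-j)^{-1}$, produces only a factor $e^{O(x)\mathcal L_\varepsilon}$. This factor must be beaten by the tilt $x^{\delta\mathcal L_\varepsilon/2}$, which requires the decay parameter $x$ to be tied to $b$. The independence claim used to factor the norms should also be checked, since the $D_I$ and the tail $G_i$ must indeed involve disjoint blocks.
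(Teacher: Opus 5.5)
Your proposal is correct and follows the paper's proof essentially step for step: Minkowski over collections, the pointwise bound $1/G_I^{\rm bulk}\le 2^{|I|}$ on $\Omega_{\varepsilon,\eta}$, independence to factor into $\prod_{I\in\widetilde I}\|D_I\|_{L^2}\cdot\|\widehat G^{\rm tail}_{\widetilde I}\|_{L^2}$, and then a combinatorial sum over collections. The only difference is in that final count. The paper parametrizes by $(m,q,(u_r),(j_r))$, bounds the number of compositions by $2^m$ and the $q$-sum crudely by $eN_{\varepsilon,b}$, and then sums the geometric series over $m>\delta\mathcal L_\varepsilon$. Your Rankin-type tilt $x^{-(|\widetilde I|-\delta\mathcal L_\varepsilon)/2}\ge 1$ instead gives a slightly sharper entropy factor $(\log\varepsilon^{-1})^{O(x)}$ rather than a full power of $\log\varepsilon^{-1}$; in either case the loss is beaten by taking $b$ small.
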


\begin{proof} 
Recalling that $\mathbf 1_{\Omega_{\varepsilon,\eta}}
\mathcal R_{\eta,>\delta} = \displaystyle{\sum_{\substack{\widetilde I\in\mathfrak I_{\varepsilon,b}\\
         |\widetilde I|>\delta \mathcal L_\varepsilon}}  \mathbf 1_{\Omega_{\varepsilon,\eta}} \mathcal E_{\eta;\widetilde I}},$
applying the $L^2$-norm  Minkowski's inequality (triangle inequality),
\begin{equation}
\begin{aligned}
&\left\|
 \mathbf 1_{\Omega_{\varepsilon,\eta}}
        \left|
\mathcal R_{\eta,>\delta} \right|
\right\|_{L^2}                             \le
    \sum_{\substack{\widetilde I\in\mathfrak I_{\varepsilon,b}\\
        |\widetilde I|>\delta \mathcal L_\varepsilon}}
        \left\|
        \mathcal E_{\eta;\widetilde I}
        \mathbf 1_{\Omega_{\varepsilon,\eta}}
        \right\|_{L^2}         . 
\end{aligned}
\label{eq:minkowski-reduction-long-L2}
\end{equation}
We now estimate each summand in \(L^2\).  Recall that on the event 
\({\Omega_{\varepsilon,\eta}}\), each $G_{i}$ for each bulk index $i,$ i.e., $i\in [1, M_{\e,\eta}]$ is at least a $1/2$ and hence we have {$G_I^{\rm bulk}\ge 2^{-|I^{\rm Bulk}|}\ge 2^{-|I|}$ (see \eqref{bulk123} for the definition of $I^{\rm Bulk}$).}  Thus, recalling \eqref{eq:terminal-error-term-structure}
\begin{equation}
\begin{aligned}
        \left|
        \mathcal E_{\eta;\widetilde I}
        \right|
        \mathbf 1_{\Omega_{\varepsilon,\eta}}
        &\le
        \left(
        \prod_{I\in\widetilde I}
        2^{|I|}|D_I|
        \right)
                \widehat G_{\widetilde I}^{\rm tail}           =
        2^{|\widetilde I|}
        \left(
        \prod_{I\in\widetilde I}|D_I|
        \right)
                \widehat G_{\widetilde I}^{\rm tail}.
\end{aligned}
\label{eq:second-moment-good-event-bound}
\end{equation}
The random variables $  \{D_I:I\in\widetilde I\}$ and $          \widehat G_{\widetilde I}^{\rm tail}$
are  mutually independent  due to  disjointness.  Hence, 
\begin{equation}
\begin{aligned}
        \left\|
        \mathcal E_{\eta;\widetilde I}
        \mathbf 1_{\Omega_{\varepsilon,\eta}}
        \right\|_{L^2}
        &\le
        2^{|\widetilde I|}
        \left\|
        \left(
        \prod_{I\in\widetilde I}|D_I|
        \right)
                \widehat G_{\widetilde I}^{\rm tail}
        \right\|_{L^2}    =
        2^{|\widetilde I|}
        \left(
        \prod_{I\in\widetilde I}\|D_I\|_{L^2}
        \right)
        \left\|        \widehat G_{\widetilde I}^{\rm tail}\right\|_{L^2} .
\end{aligned}
\label{eq:second-moment-factorization}
\end{equation}
We first bound the tail factor.  
By  \eqref{eq:G-tail-product-moment-consequence}   with \(p=2\) and
{\(A= \neg \cT(\widetilde I)  \)}, 
for all sufficiently small \(\varepsilon>0\),
\begin{align} \label{245}
        \left\|        \widehat G_{\widetilde I}^{\rm tail}\right\|_{L^2}
        \le   (\log\varepsilon^{-1})^{C\eta}.  
\end{align} 
We next estimate the first factor.
By \eqref{eq:DI-L2-bound-current}, 
\[
        2^{|\widetilde I|}
        \prod_{I\in\widetilde I} \|D_I\|_{L^2}
        \le
        \prod_{I\in\widetilde I}
        \left[
        \frac{
        2^{|I|}\mathsf c^{|I|/2}
        }{
        (\log b^{-1})^{(|I|-1)/2}
        } \cdot 
        \frac{1}{N_{\varepsilon,b}-j_I}
        \right].
\]
where we are denoting any $I\in \wt I$ as $[i_I, j_I]$.
Since every interval $I$ in \(\widetilde I\) has length at least \(2\),  using $(|I|-1)/2 \ge |I|/4,$ and that $b\ge 0$ is small enough so that $\log b^{-1}\ge 1,$ we have
\[
        \prod_{I\in\widetilde I}
        \frac{
        2^{|I|}\mathsf c^{|I|/2}
        }{
        (\log b^{-1})^{(|I|-1)/2}
        }
        \le
        \frac{(2\mathsf c^{1/2})^{|\widetilde I|}}{(\log b^{-1})^{|\widetilde I| /4}}
        =
        e^{-c_b |\widetilde I| },
\]
where we set
\begin{equation}
        c_b
        :=
        \frac14\log\log b^{-1}
        -
        \log\bigl(2\mathsf c^{1/2}\bigr).
\label{eq:cb-first-moment}
\end{equation}
Hence applying this and \eqref{245} to \eqref{eq:second-moment-factorization},  the RHS of  \eqref{eq:minkowski-reduction-long-L2}  is bounded by 
\begin{equation} 
(\log\varepsilon^{-1})^{C\eta}
\sum_{\substack{\widetilde I\in\mathfrak I_{\varepsilon,b}\\
|\widetilde I|>\delta \mathcal L_\varepsilon}}\left[
e^{-c_b|\widetilde I|}
\prod_{I\in\widetilde I}
\frac{1}{N_{\varepsilon,b}-j_I}\right].
\label{eq:first-moment-reduced-sum-alpha}
\end{equation}

Let
\[
        m:=|\widetilde I|,
        \qquad
        q:=\#\widetilde I 
\]
(see \eqref{idef1} and \eqref{idef2} for the definitions of $|\widetilde I|$ and  $\#\widetilde I $),
 and  write the (disjoint)  intervals in
\(\widetilde I\) as
\[
        I_1,\ldots,I_q,
        \qquad
        I_r=[i_r,j_r],
        \qquad
        j_1<\cdots<j_q,
\]
and then set \(u_r:=|I_r| \ge 2 \). Then $\sum_{r=1}^q u_r = m$, and the data
\begin{align}\label{11correspond}
        \bigl(m,q,(u_1,\ldots,u_q),(j_1,\ldots,j_q)\bigr)  
\end{align}
determine \(\widetilde I\) uniquely.  Indeed, once \(u_r\) and \(j_r\) are
given, the \(r\)-th interval is necessarily $I_r = [j_r-u_r+1,\ldots,j_r].$ 
Conversely, every \(\widetilde I\in\mathfrak I_{\varepsilon,b}\)
gives such a tuple, subject to the admissibility conditions that $u_r\ge 2$ for all $r,$ the above
intervals lie in \(\{1,\ldots,N_{\varepsilon,b}\}\), are mutually disjoint.
Thus there is a one-to-one correspondence between
\(\mathfrak I_{\varepsilon,b}\) and the corresponding set of
admissible tuples.  In the following upper bound, we drop the admissibility
constraints on the endpoints \(j_1,\ldots,j_q\), which only enlarges the sum
since all summands are nonnegative. In other words,
\begin{equation}
\begin{aligned}
\sum_{\substack{\widetilde I\in\mathfrak I_{\varepsilon,b}\\
|\widetilde I|>\delta \mathcal L_\varepsilon}}\left[
e^{-c_b|\widetilde I|}
\prod_{I\in\widetilde I}
\frac{1}{N_{\varepsilon,b}-j_I}\right]  \le
\sum_{m>\delta \mathcal L_\varepsilon}
\sum_{q=1}^{m}
\sum_{\substack{u_1,\ldots,u_q\ge2\\
u_1+\cdots+u_q=m}}
\sum_{\substack{1\le j_1<\cdots<j_q< N_{\varepsilon,b}}}
e^{-c_bm}
\prod_{r=1}^{q}\frac{1}{N_{\varepsilon,b}-j_r}.
\end{aligned}
\label{eq:first-moment-mq-profile-endpoint}
\end{equation}
For fixed \(q\), 
\[
\sum_{\substack{1\le j_1<\cdots<j_q< N_{\varepsilon,b}}}
\prod_{r=1}^{q}
\frac{1}{N_{\varepsilon,b}-j_r}
\le \frac{1}{q!}  \left(\sum_{j=1}^{N_{\varepsilon,b}-1}  \frac{1}{N_{\varepsilon,b}-j}\right)^q \le 
\frac{(1 + \log N_{\varepsilon,b})^q}{q!}.
\]
{Above, we used the trivial bound $\sum_{i=1}^{k-1}\frac{1}{i}\le \log(1+k)$ for integers $k \ge 3$.}
For fixed \(m\) and \(q\),
\begin{align} \label{combinatorics}
        \#\{(u_1,\ldots,u_q):u_r\ge2,\ u_1+\cdots+u_q=m\}
        =  \binom{ (m-2q) +(q-1)}{q-1}\le 2^{m-q+1}\le  2^m.
\end{align}
Thus the RHS of \eqref{eq:first-moment-mq-profile-endpoint} is bounded by
\begin{equation} 
\sum_{m>\delta \mathcal L_\varepsilon}
e^{-c_bm}2^m
\sum_{q=1}^{m}\frac{(1 + \log N_{\varepsilon,b})^q}{q!}.
\label{eq:first-moment-before-q-sum-alpha}
\end{equation}
The \(q\)-sum is bounded by \(e^{1 + \log N_{\varepsilon,b}}=e N_{\varepsilon,b} \le C \log\varepsilon^{-1}\) {for a small enough constant $b>0$}  (see the definition of  $N_{\varepsilon,b} $ in \eqref{ndef}).
Also recalling the definition of $c_b$ in  \eqref{eq:cb-first-moment}, for  small enough  $b>0$, we have  \(c_b>\log2\). Thus recalling $\mathcal L_\varepsilon = \log \log\varepsilon^{-1},$
\[
        \sum_{m>\delta \mathcal L_\varepsilon}
     e^{-c_bm}2^m
        \le
        \frac{(2e^{-c_b})^{\delta \mathcal L_\varepsilon}}
        {1-2e^{-c_b}}  \le 
10(\log\varepsilon^{-1})^{-\delta(c_b-\log2)}.
\] 
Hence  for sufficiently small constant $b>0$,  we deduce that  the quantity  
\eqref{eq:first-moment-reduced-sum-alpha} is at most  $(\log\varepsilon^{-1})^{C\eta -\Lambda/2}.$ 
Putting things together yields 
$\left\|
 \mathbf 1_{\Omega_{\varepsilon,\eta}}
        \left|
\mathcal R_{\eta,>\delta} \right|
\right\|_{L^2}\le {C}(\log\varepsilon^{-1})^{C\eta -\Lambda/2}$. 
Taking the square of the $L^2$-norm, we conclude the proof.
\end{proof}

The conditional bound Proposition \ref{prop:first-moment-large-total-length} is now an immediate consequence of Proposition \ref{Unconditional second moment: large-total-length}. \begin{proof}[Proof of Proposition \ref{prop:first-moment-large-total-length}]
For a given $\alpha>0$, let  $(\widetilde \nu_b)_{b>0}$ be such that Proposition \ref{largedeviation45}, i.e., \eqref{dec} holds.
Let $b>0$ be a  small enough constant  such that $\widetilde \nu_b < 1 $ and Proposition \ref{Unconditional second moment: large-total-length} holds.
 By  
Proposition \ref{Unconditional second moment: large-total-length}   with $\Lambda + \frac{\alpha^2}{2(1-\eta)} + 2$ in place of $\Lambda$, 
\[
\begin{aligned}
& \mathbb E\left[
        \mathbf 1_{\Omega_{\varepsilon,\eta}}
        \left|
        \mathcal R_{\eta,>\delta}  
        \right|^2
        \,\middle|\,
        \mathrm{UT}_{\varepsilon,\eta}(\alpha)
        \right]   \le
        \frac{
        (\log\varepsilon^{-1})^{C\eta -\Lambda - \frac{\alpha^2}{2(1-\eta)}-2}
        }{
        \mathbb P(\mathrm{UT}_{\varepsilon,\eta}(\alpha))
        }                                          \overset{\eqref{dec}}{\le} 
        (\log\varepsilon^{-1})^{C\eta - \Lambda},
\end{aligned}
\] 
yielding the desired conclusion. 
\end{proof}

\section{Conditional second moment: small-total-length
}\label{sec6789}
In this section we prove Proposition \ref{prop:second-moment-small-total-length-full}.
This section is long and technical and hence to guide the reader we start with a roadmap.\\

\nin
$\bullet$ In Section \ref{basic234} we record some basic ingredients such as moments of block partition functions such as $p_{r-1}\blacktriangleleft     \mathscr Z^\vartheta_{t_{r-1},t_s} \blacktriangleright 1$.\\

\nin
$\bullet$  In Section \ref{cancel654} we record a crude estimate to control products of $D_I$. It does not use cancellation and instead, each \(D_i\) is dominated by a sum of two positive kernels  $Z_i$ and $(Z_i\blacktriangleright1)p_i $,
and the resulting products are factorized into independent blocks.  This bound will be applied only to  control $D_I$ with short intervals $I$. \\

\nin
$\bullet$ In Section \ref{rn535} we state the central estimate on conditional densities (under large deviations) projected on a small number of coordinates. The statement whose proof is deferred to Section \ref{rnproof34} will be a key ingredient in the proof of Lemma \ref{prop:second-moment-small-total-length-full}. \\

\nin
$\bullet$ Given the above inputs, we finish the proof of Lemma \ref{prop:second-moment-small-total-length-full} in Section \ref{centralsec}.

~

\subsection{Basic ingredients}\label{basic234}

The first input records a monotonicity property of integer moments with respect to the SHF parameter. 
\begin{lemma}[Moment monotonicity]
\label{lem:moment-monotonicity-theta}
Let $h \ge 2$ be a positive integer and \(\phi\ge0\) be a nonnegative test
function. Then {for any fixed time  $t>0$}, the map 
\[
        \vartheta
        \longmapsto
        \mathbb E\left[
        \left(
        Z_t^\vartheta(\phi)
        \right)^h
        \right]
\]
is nondecreasing.  
Consequently, for every \(\rho\in(0,1)\),
\[
        \vartheta_1\le\vartheta_2
        \quad\Longrightarrow\quad
        \mathbb E\left[
        \left(
        p(\rho^2)\blacktriangleleft Z_{0,1}^{\vartheta_1}\blacktriangleright 1
        \right)^h
        \right]
        \le
        \mathbb E\left[
        \left(
        p(\rho^2)\blacktriangleleft Z_{0,1}^{\vartheta_2}\blacktriangleright 1
        \right)^h
        \right].
\]
\end{lemma}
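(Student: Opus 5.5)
The plan is to prove monotonicity first for the discrete polymer approximations and then pass to the limit. Fix $h\ge2$, $t>0$ and $\phi\ge0$.

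\textbf{Step 1: the discrete moment as a collision expansion.} By \eqref{preSHF} and the convergence theorem of \cite{shfcritical}, $Z^\vartheta_t(\phi)$ is the distributional limit of the rescaled discrete partition functions $\cZ^{\beta_N}_{N;0,t}(\phi\otimes\mathbf 1)$, with $\beta_N=\beta_N(\vartheta)$ given by \eqref{choiceb}. For Gaussian disorder, the $h$-th moment of the discrete partition function equals an average over $h$ independent random walks with starting points weighted by $\phi$:
\[
E^{\otimes h}\Big[\prod_{a<b}\ \prod_{n}\exp\big(\beta_N^2\mathbf 1_{\{S^a_n=S^b_n\}}\big)\Big].
\]
Writing each factor as $1+\sigma_N^2\mathbf 1_{\{\cdot\}}$ and expanding the product gives a series in $\sigma_N^2$ with nonnegative coefficients. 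For general disorder, one instead uses the standard chaos expansion of the polymer in terms of $e^{\beta\omega-\lambda(\beta)}-1$; its $h$-fold moment again has nonnegative coefficients in the mixed cumulants. The key point is that each such coefficient is nondecreasing in $\beta_N$. Since $\vartheta\mapsto\sigma_N^2$ is increasing by \eqref{choiceb}, the pre-limit $h$-th moment is nondecreasing in $\vartheta$ for every fixed $N$.

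\textbf{Step 2: pass to the limit.} We need $\E[(\cZ_N(\phi))^h]\to\E[(Z^\vartheta_t(\phi))^h]$ as $N\to\infty$. This is where the main obstacle lies, because convergence in distribution does not by itself give convergence of moments. The plan is to use uniform integrability. For a fixed $\vartheta$ and a test function $\phi$ (bounded, or of the heat-kernel type with finite $\rho$), the moment bounds of \cite{shfuppermoment} (see also \cite{shfcriticalmoment}) give $\sup_N\E[\cZ_N(\phi)^{h+1}]<\infty$. Uniform integrability then follows, and with it convergence of the $h$-th moments.

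An alternative route avoids the limit altogether. The limiting moments are given by the delta-Bose semigroup $Q^{h,\vartheta}$ (property (4), or its extension to all $h$ from \cite{shfuppermoment}). Its kernel admits a series expansion whose terms are positive, and the parameter enters through increasing weights of the form $\int_0^\infty \frac{t^{s-1}e^{\vartheta s}}{\Gamma(s+1)}\,ds$, which are monotone in $\vartheta$. I would cite whichever of these two routes is cleaner. Monotonicity then passes to the limit, since inequalities between sequences are preserved under limits.

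\textbf{Step 3: the consequence.} Take $\phi=p(\rho^2)$. The quantity $p(\rho^2)\blacktriangleleft Z^\vartheta_{0,1}\blacktriangleright1=Z^\vartheta_1(p(\rho^2))$ is exactly $Z^\vartheta_t(\phi)$ with $t=1$. Since $p(\rho^2)$ is smooth, nonnegative and integrable, it is not compactly supported, so one first truncates it to $\phi_R=p(\rho^2)\mathbf 1_{B(0,R)}$ (smoothed). Monotone convergence as $R\to\infty$ then yields the claim, since both sides increase in $R$.
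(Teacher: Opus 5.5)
Your ``alternative'' route is exactly the paper's proof. The paper writes $\E[(Z^\vartheta_t(\phi))^h]$ via the collision-diagram expansion of \cite{shfuppermoment,shrinking}, notes that for $\phi\ge0$ every factor is nonnegative, and observes that $\vartheta$ enters only through $G_\vartheta(t)=\int_0^\infty e^{(\vartheta-\gamma)s}t^{s-1}/\Gamma(s)\,ds$. It then compares diagram by diagram. Your normalization of this weight is slightly off, but only its monotonicity in $\vartheta$ matters.

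Your primary route is genuinely different: you prove monotonicity at the lattice level, where it is elementary, and then carry it through the scaling limit. This frees you from the explicit continuum diagram formula, but you pay for it in the limit exchange, which needs three inputs you should state explicitly:
\begin{itemize}
\item monotonicity of $\vartheta\mapsto\beta_N(\vartheta)$: define $\beta_N$ by \eqref{choiceb} with the $o(1)$ dropped, and use that $\sigma_N^2$ is increasing in $\beta\ge0$;
\item a uniform-in-$N$ bound on some moment of order $>h$ for the lattice model itself, since \cite{shfuppermoment} treats the mollified continuum equation and \cite{shfcriticalmoment} stops at the third moment;
\item a truncation of \emph{both} test functions, since the convergence theorem is in the vague topology and the right test function $\mathbf 1$ is not compactly supported. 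Monotone convergence on the limit side handles this exactly as in your Step 3.
\end{itemize}
The paper's route avoids all three, because the diagram formula is already an identity for the limiting object with nonnegative $\phi$ and $\psi\equiv1$.

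One correction to Step 1. For general disorder, the claim that the expansion in $\xi=e^{\beta\omega-\lambda(\beta)}-1$ has nonnegative coefficients is false. Indeed $\E[\xi^3]=e^{\lambda(3\beta)-3\lambda(\beta)}-3e^{\lambda(2\beta)-2\lambda(\beta)}+2=\kappa_3\beta^3+O(\beta^4)$, which is negative for small $\beta$ when the third cumulant $\kappa_3$ of $\omega$ is negative. What is true is that a space-time site occupied by $k$ of the $h$ walks contributes the factor $e^{\lambda(k\beta)-k\lambda(\beta)}$. This factor is nondecreasing in $\beta\ge0$, since its derivative is $k(\lambda'(k\beta)-\lambda'(\beta))\ge0$ by convexity of $\lambda$. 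Monotonicity then follows without expanding anything. Alternatively, restrict to Gaussian disorder, which suffices because the limit does not depend on the disorder law.
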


Since this is a direct consequence of the  collision-diagram
representation for integer moments, we defer its proof to the appendix.

~

The next 
input is a shrinking-test-function moment bound at macroscopic time.  In
the lemma below, the parameter \(\vartheta\) is allowed to vary in a
bounded-above range which is a slight generalization of \cite{shrinking}, and will be convenient throughout the paper.

\begin{lemma}[Small ball moments]\label{shrinking}
Let $h \ge 2$ be a positive integer and  \(\kappa,c_0>0\).
  Then for every
\(\vartheta\le c_0\) and sufficiently small \(\rho>0\) (depending on $h,\kappa$, and $c_0$),
\begin{equation}
        \mathbb E\left[
        \left(
        p(\rho^2)\blacktriangleleft \mathscr{Z}^\vartheta_{0,1}\blacktriangleright 1
        \right)^h
        \right]
        \le
 (\log\rho^{-1})^{{h \choose 2}+\kappa}.
\label{eq:fixed-time-fourth-moment-input}
\end{equation}
\end{lemma}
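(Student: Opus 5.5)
The plan is to reduce to the known shrinking-ball moment asymptotics of \cite{shrinking} for a fixed coupling parameter, and then use monotonicity in $\vartheta$. By Lemma~\ref{lem:moment-monotonicity-theta}, for every $\vartheta\le c_0$,
\[
\mathbb E\left[\left(p(\rho^2)\blacktriangleleft \mathscr Z^{\vartheta}_{0,1}\blacktriangleright 1\right)^h\right]\le \mathbb E\left[\left(p(\rho^2)\blacktriangleleft \mathscr Z^{c_0}_{0,1}\blacktriangleright 1\right)^h\right].
\]
This removes the dependence on $\vartheta$ entirely, which is why the threshold on $\rho$ can be chosen to depend only on $h,\kappa,c_0$. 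It then suffices to prove the bound for the single parameter $\vartheta=c_0$.

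For fixed $\vartheta=c_0$, I would invoke the main result of \cite{shrinking}. It states that the $h$-th moment of the SHF tested against a test function shrinking to a point at scale $\rho$ grows like $C_{h,\vartheta}(\log\rho^{-1})^{\binom h2}(1+o(1))$. The intuition is that each of the $\binom h2$ pairs of replicas contributes a logarithmically divergent collision factor, and these factors combine multiplicatively at leading order. There is a mild mismatch with the test function used there, which is typically an indicator of a ball or a mollifier $\rho^{-2}\phi(\cdot/\rho)$, whereas here it is the Gaussian $p(\rho^2)$. To handle this I would either note that the proof in \cite{shrinking} (via the collision-diagram and chaos expansion with the delta-Bose kernels $Q^{n,\vartheta}$) applies verbatim to Gaussian test functions, since only the $L^1$ normalization and the $\rho$-scale localization enter, or compare directly. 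For the direct comparison, write $p(\rho^2)(x)\le \sum_k a_k\,\rho_k^{-2}\mathbf 1_{B(x_k,\rho_k)}$ by splitting into dyadic annuli with Gaussian-decaying weights. Minkowski's inequality in $L^h$ then reduces the problem to balls of radius $2^k\rho$, each contributing at most $C(\log(2^{-k}\rho^{-1}))^{\binom h2}$ by translation invariance, and the Gaussian weights make the sum over $k$ converge.

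The constant $C_{h,c_0}$ is then absorbed into the spare factor $(\log\rho^{-1})^{\kappa}$ once $\rho$ is small enough in terms of $h,\kappa,c_0$. This yields \eqref{eq:fixed-time-fourth-moment-input}.

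The main obstacle is ensuring that the cited asymptotic is genuinely uniform and matches the Gaussian test function, rather than being stated for a specific mollifier. If the Gaussian version is not directly available, the dyadic comparison above is the fallback. Its only delicate point is that the far annuli have large radius, so they must be controlled with a crude bound such as the $h$-th moment at scale $O(1)$, multiplied by the superexponentially small Gaussian weight $e^{-c4^k}$. That product is harmless.
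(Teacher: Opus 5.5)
Your proposal is correct and follows the paper's proof, which reduces to $\vartheta=c_0$ via the monotonicity Lemma~\ref{lem:moment-monotonicity-theta} and then cites the fixed-$\vartheta$ shrinking-ball asymptotics of \cite{shrinking}, absorbing the constant into $(\log\rho^{-1})^{\kappa}$. Your dyadic comparison between the Gaussian kernel $p(\rho^2)$ and ball averages addresses a test-function mismatch that the paper leaves implicit; it is a harmless extra safeguard.
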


\begin{proof}
    It is a direct consequence of  \cite{shrinking} and the monotonicity result in Lemma \ref{lem:moment-monotonicity-theta}.
\end{proof}

Finally, we need a uniform bound for blocks consisting of a bounded
number of consecutive scales.

\begin{lemma}[Block Moments]
\label{lem:bounded-consecutive-fourth}
 Let {$\vartheta\in \mathbb R$}, $h \ge 2$ and $\Gamma \ge 1$ be positive integers.  For an
interval 
\[
        Q=[r,s]\subset[1,N_{\varepsilon,b}],
        \qquad
        |Q|=s-r+1,
\]
set  
\[
        Z_Q
        :=
        p_{r-1}\blacktriangleleft  Z_r\bullet Z_{r+1}\bullet\cdots\bullet Z_s \blacktriangleright 1
        =
       p_{r-1}\blacktriangleleft     \mathscr Z^\vartheta_{t_{r-1},t_s} \blacktriangleright 1.
\]

\begin{enumerate}
\item[\rm{(i)}]
There exist constants \(C = C(h )>0\) and \(\tau = \tau(\Gamma ) >0\)  such that the following holds.  For every \(b\in(0,1/10)\), if
\[
        Q=[r,s]\subset[1,N_{\varepsilon,b}-\tau],
        \qquad
        |Q|\le \Gamma,
\]
then
\begin{equation}
        \mathbb E\left[
        Z_Q^h
        \right]
        \le
        C .
\label{eq:bounded-consecutive-nonterminal}
\end{equation}

\item[\rm{(ii)}]
For every \(\kappa>0\), there exists a constant
\(b_0=b_0(h,\kappa ) \in (0,1)\)  such that the following holds.  For every
\(b\in(0,b_0)\), if
\[
        Q=[r,s]\subset[1,N_{\varepsilon,b}],
        \qquad
        |Q|\le \Gamma,
\]
then
\begin{equation}
        \mathbb E\left[  
        Z_Q^h
        \right]
        \le
        \left(\Gamma\log b^{-1}
        \right)^{{h\choose2}+\kappa}.
\label{eq:bounded-consecutive-terminal-b-dependent}
\end{equation}
In particular, for every fixed \(b\in(0,b_0)\), the above moment is bounded
uniformly in \(\varepsilon\) and uniformly over all intervals
\(Q\subset[1,N_{\varepsilon,b}]\) with \(|Q|\le \Gamma\).
\end{enumerate}
\end{lemma}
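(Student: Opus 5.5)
The plan is to reduce both parts to a single-interval SHF moment at macroscopic time by Brownian scaling, and then invoke the quasi-critical moment bound \eqref{Lbound} for (i) and the shrinking-ball bound Lemma \ref{shrinking} for (ii).

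\textbf{Scaling reduction.} By diffusive scaling of the SHF, the law of $\mathscr Z^\vartheta_{t_{r-1},t_s}$ rescaled by $t_s$ is that of $\mathscr Z^{\vartheta'}_{0,1-t_{r-1}/t_s}$ with an effective coupling
\[
\vartheta'=\vartheta-\log(1/t_s)+o(1)
\]
(the standard scaling covariance of the SHF, as used in \cite{loglog}; this is the origin of the parametrization $-L\log r^{-1}$ in \eqref{Lbound}). Under this rescaling the initial kernel $p_{r-1}$ becomes $p(\rho^2)$ with $\rho^2=t_{r-1}/t_s=b^{2(s-r+1)}=b^{2|Q|}$. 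Hence
\[
Z_Q \overset{d}{=} p(\rho^2)\blacktriangleleft\mathscr Z^{\vartheta'}_{0,1-\rho^2}\blacktriangleright 1 .
\]
Since time $1-\rho^2$ versus $1$ only shifts $\vartheta'$ by $o(1)$ and changes the kernel by a harmless factor, I would treat this as time $1$. Writing $\log\rho^{-1}=|Q|\log b^{-1}$, the effective coupling is
\[
\vartheta'=\vartheta-2(N_{\varepsilon,b}-s)\log b^{-1}+O(1)=-L\log\rho^{-1}+O(1),\qquad L\asymp \frac{N_{\varepsilon,b}-s}{|Q|}.
\]

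\textbf{Part (i).} If $s\le N_{\varepsilon,b}-\tau$ with $\tau\ge c_{**}\Gamma$, then $L\ge c_{**}$. Writing $Z_Q=1+\bigl(p(\rho^2)\blacktriangleleft\mathscr W\blacktriangleright1\bigr)$, the bound \eqref{Lbound} gives
\[
\mathbb E Z_Q^h\le 2^{h}\bigl(1+CL^{-h/2}\bigr)\le C(h).
\]
The $O(1)$ shift in $\vartheta'$ can only increase the coupling, and this is absorbed by the monotonicity in Lemma \ref{lem:moment-monotonicity-theta} (comparing with a slightly larger coupling, equivalently a slightly smaller $L$, still $\ge c_{**}$ after enlarging $\tau$).

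\textbf{Part (ii).} Here $\vartheta'\le\vartheta+O(1)=:c_0$, bounded above uniformly. Lemma \ref{lem:moment-monotonicity-theta} lets us replace $\vartheta'$ by $c_0$, and Lemma \ref{shrinking} then applies with $\rho=b^{|Q|}$, which is small once $b<b_0(h,\kappa)$. This gives
\[
\mathbb E Z_Q^h\le(|Q|\log b^{-1})^{\binom h2+\kappa}\le(\Gamma\log b^{-1})^{\binom h2+\kappa}.
\]
The smallness requirement on $\rho$ is uniform because $|Q|\ge1$.

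\textbf{Main obstacle.} I expect the main difficulty to be making the scaling identity rigorous, with the precise $\vartheta$ shift, including the discrepancy between $t_s-t_{r-1}$ and $t_s$ and the possibly negative effective couplings. The plan is to import the scaling relation for the SHF (as in \cite{loglog}/\cite{tsai}) and to control the $O(1)$ errors purely through the monotonicity lemma, so that only upper bounds on the coupling are ever needed.
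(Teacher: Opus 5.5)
Your proposal is correct and follows the paper's proof. In both, exact SHF scaling reduces $Z_Q$ to $p(\rho_Q^2)\blacktriangleleft\mathscr Z^{\Theta_Q}_{0,1}\blacktriangleright 1$, after which (i) follows from \eqref{Lbound} and (ii) from Lemma \ref{shrinking}. The paper avoids your time-$(1-\rho^2)$ approximation by scaling with $\Delta_Q=t_s-t_{r-1}$ directly. This gives exactly $\rho_Q^2=b^{2|Q|}/(1-b^{2|Q|})$ and $\Theta_Q=\vartheta+\log\Delta_Q\le\vartheta-2(N_{\varepsilon,b}-s)\log b^{-1}$, so the $o(1)$ and $O(1)$ corrections you planned to track never arise; they in fact have a favorable sign.
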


\begin{remark} \label{remark74}
We mention some aspects of the above lemma.

\begin{enumerate}
    \item 
It is crucial that the Part \({\rm (i)}\) estimate is uniform in \(b\), but it requires the block \(Q\) to stay
a fixed number of scales (depending on $\Gamma)$ away from the terminal endpoint.   Part \({\rm (ii)}\) allows \(Q\) to touch the terminal
scale, and this produces the polynomial loss in \(\log b^{-1}\).

\item   Taking a single-element interval  $Q$ in  Part \({\rm (ii)}\) with $\Gamma=1$ and recalling $G_i = p_{i-1} \blacktriangleleft  Z_i \blacktriangleright 1$, we deduce that for every $\kappa>0$,  for any small enough constant $b >0$,
\begin{align} \label{singleend}
     \mathbb E\left[  
        G_i^h
        \right]
        \le
        \left( \log b^{-1}
        \right)^{{h\choose2}+\kappa},\qquad \forall i\in [1,N_{\varepsilon,b}].
\end{align}

\item {The estimate \eqref{eq:bounded-consecutive-terminal-b-dependent} is consistent with the shrinking-ball (of radius $\rho$) moment asymptotics in \cite{shrinking}, which derives \(h\)-th moments of order
\((\log \rho^{-1})^{\binom h2}\).  Indeed, by the scaling relation of the SHF,
$Z_Q$ corresponds to a unit-time SHF smoothed at an effective
radius \(\rho\asymp b^k\) where $|Q|=k$. Note that by hypothesis, $k\le  \Gamma.$}

\item  {The proof below shows that the  bounds \eqref{eq:bounded-consecutive-nonterminal}  and \eqref{eq:bounded-consecutive-terminal-b-dependent} remain valid for}
\(\varepsilon\)-dependent parameters \(b=b_\varepsilon\) and
\(\vartheta=\vartheta_\varepsilon\), provided that $\vartheta_\varepsilon$ is bounded from above. {Indeed, the SHF scale invariance (see
\eqref{eq:SHF-scaling-measure-current} below) remains exact, and the key estimates
\eqref{312} and \eqref{1423} in the proof below remain valid uniformly in \(\varepsilon\).  No lower bound on \(\vartheta_\varepsilon\) is needed, since
decreasing the coupling parameter only decreases the relevant nonnegative
integer moments.}

\end{enumerate}

\end{remark}

\begin{proof}

\textbf{Step 1. Setup.}
Let   \(\Delta_Q:=t_s-t_{r-1}\). By time-translation invariance,
\(\mathscr{Z}^\vartheta_{t_{r-1},t_s}\) has the same law as \(\mathscr{Z}^\vartheta_{0,\Delta_Q}\). We use
the scaling invariance of the critical SHF \cite{shfcritical} in the following form: for
\(\lambda>0\),
\begin{equation}
        \mathscr{Z}^\vartheta_{0,\lambda}\bigl(d(\sqrt\lambda\,x),d(\sqrt\lambda\,y)\bigr)
        \stackrel{\textup{d}}{=}
        \lambda\,\mathscr Z^{\vartheta+\log\lambda}_{0,1}(dx,dy).
\label{eq:SHF-scaling-measure-current}
\end{equation}
Here the LHS denotes the pullback of the measure under
\((x,y)\mapsto(\sqrt\lambda x,\sqrt\lambda y)\), so the Jacobian is already
included in the notation, i.e.
 for every function \(F\),
\begin{equation}
        \int F(x,y)\,
        \mathscr{Z}^\vartheta_{0,\lambda}
        \bigl(d(\sqrt\lambda x),d(\sqrt\lambda y)\bigr)
        \stackrel{\textup{d}}{=}
        \lambda
        \int F(x,y)\,
        \mathscr Z^{\vartheta+\log\lambda}_{0,1}(dx,dy).
\label{eq:scaling-tested-against-F}
\end{equation}
Setting $ F(x,y):=f(\sqrt\lambda x)g(\sqrt\lambda y)$ with nonnegative test functions \(f,g\),
\begin{equation}
\begin{aligned}
&\int f(\sqrt\lambda x)g(\sqrt\lambda y)\,
        \mathscr{Z}^\vartheta_{0,\lambda}
        \bigl(d(\sqrt\lambda x),d(\sqrt\lambda y)\bigr)  =
        \int f(u)g(v)\,
        \mathscr{Z}^\vartheta_{0,\lambda}(du,dv)                        =
        f\blacktriangleleft \mathscr{Z}^\vartheta_{0,\lambda}\blacktriangleright g .
\end{aligned}
\label{eq:left-side-pullback-test}
\end{equation} 
Hence by \eqref{eq:scaling-tested-against-F},
\begin{equation}
        f\blacktriangleleft \mathscr{Z}^\vartheta_{0,\lambda}\blacktriangleright g
        \stackrel{\textup{d}}{=}
        \lambda\int_{\mathbb R^4}
        f(\sqrt\lambda x)g(\sqrt\lambda y)
        \mathscr Z^{\vartheta+\log\lambda}_{0,1}(dx,dy).
\label{eq:SHF-scaling-test-current}
\end{equation}   
Applying this with  $ f=p(t_{r-1},\cdot)$, $g\equiv 1$ and $\lambda=\Delta_Q$, 
\[
\begin{aligned}
        p_{r-1}\blacktriangleleft \mathscr{Z}^\vartheta_{0,\Delta_Q}\blacktriangleright 1
        &\stackrel{\textup{d}}{=}
        \Delta_Q
        \int_{\mathbb R^4}
        p(t_{r-1},\sqrt{\Delta_Q}x)
        \mathscr Z^{\vartheta+\log\Delta_Q}_{0,1}(dx,dy) \\ 
        &=
        \int_{\mathbb R^4}
        p\left(\frac{t_{r-1}}{\Delta_Q},x\right)
        \mathscr Z^{\vartheta+\log\Delta_Q}_{0,1}(dx,dy),
\end{aligned}
\]
where we used  $  p(t_{r-1},\sqrt{\Delta_Q}x)
        =
        \Delta_Q^{-1}
        p(\frac{t_{r-1}}{\Delta_Q},x)$ (a consequence of diffusive invariance of Brownian motion).
Thus, setting
\begin{align} \label{321}
        \rho_Q^2:=\frac{t_{r-1}}{\Delta_Q},
        \qquad
        \Theta_Q:=\vartheta+\log\Delta_Q,
\end{align}
 and recalling  $  \mathscr{Z}^\vartheta_{t_{r-1},t_s}
        \stackrel{\textup{d}}{=}
        \mathscr{Z}^\vartheta_{0,\Delta_Q},$
we obtain
\begin{align}\label{si}
     Z_Q=     p_{r-1}\blacktriangleleft \mathscr{Z}^\vartheta_{t_{r-1},t_s}\blacktriangleright 1
        \stackrel{\textup{d}}{=}
        p(\rho_Q^2)\blacktriangleleft \mathscr Z^{\Theta_Q}_{0,1}\blacktriangleright 1.
\end{align}
{Then upon introducing the centered kernel $ \mathscr W_{t,t'}^\vartheta
    :=\mathscr Z_{t,t'}^\vartheta-P_{t,t'}$, and using the mass-preserving property of the heat kernel, i.e.,
\[
    p_{r-1}\blacktriangleleft P_{t_{r-1},t_s}
    \blacktriangleright 1=1,
    \qquad
    p(\rho_Q^2)\blacktriangleleft P_{0,1}
    \blacktriangleright 1=1,
\]
we may rewrite \eqref{si} as}
\begin{align} \label{315}
     Z_Q
        = 1+  p_{r-1}\blacktriangleleft \mathscr W^\vartheta_{t_{r-1},t_s}\blacktriangleright 1  \stackrel{\textup{d}}{=}
        1+ p(\rho_Q^2)\blacktriangleleft \mathscr W^{\Theta_Q}_{0,1}\blacktriangleright 1 .
\end{align}
Note that 
{since $t_{r-1} = t_s b^{2|Q|} $ (recall $|Q| = s-r+1$)}, we have
\begin{align} \label{322}
       \Delta_Q=t_s(1-b^{2|Q|}),
        \qquad
        \rho_Q^2=\frac{b^{2|Q|}}{1-b^{2|Q|}}.
\end{align}
{Since $b$ is assumed to be less than $1/10,$ and hence in particular less than $1,$} for \(1\le |Q|\le\Gamma\), 
\begin{align} \label{311}
     \log\rho_Q^{-1}
        \le  |Q|\log b^{-1}\le 
        {\Gamma}\log b^{-1}.
\end{align}

\nin
\textbf{Step 2. Proof of (i).}
Recalling \(t_s=\varepsilon^2 b^{-2s}\) and $  N_{\varepsilon,b}
        =
        \left\lfloor
        \frac{\log\varepsilon^{-1}}{\log b^{-1}}
        \right\rfloor,$ we get
\[
        -\log t_s
        =
        2\log\varepsilon^{-1}-2s\log b^{-1}   \ge
        2(N_{\varepsilon,b}-s)\log b^{-1}.
\]   
Hence by \eqref{321},
\begin{align} \label{312}
        -\Theta_Q =  - \vartheta - \log\Delta_Q \overset{\eqref{322}}{\ge} 
        -\vartheta-\log t_s   \ge 
        -\vartheta+2(N_{\varepsilon,b}-s)\log b^{-1}.
\end{align}
Thus, setting
\begin{equation}\label{def53}
        L_Q:=\frac{-\Theta_Q}{\log\rho_Q^{-1}},
\end{equation}
 by \eqref{311} and \eqref{312}, using $b\in (0,1/10)$, for {sufficiently large constant $\tau$} {(depending on $\Gamma$),}
\begin{align} \label{1423}
        L_Q\ge \frac{1}{\Gamma} (N_{\varepsilon,b}-s)- \frac{\vartheta}{\Gamma\log b^{-1}}   \ge \frac{\tau}{\Gamma}   - \frac{|\vartheta|}{\Gamma}\ge c_{**}
\end{align}
where $c_{**}>0$ is the constant from  \eqref{Lbound}. {As $\rho_Q < 1/2$ (see \eqref{322} and recall the condition $b<1/10$),} by \eqref{Lbound} again,  
\[
        \mathbb E\left[
        \left\vert
         p(\rho_Q^2)\blacktriangleleft \mathscr W^{\Theta_Q}_{0,1}\blacktriangleright 1 
        \right\vert^h
        \right] =   \mathbb E\left[
        \left\vert
         p(\rho_Q^2)\blacktriangleleft \mathscr W^{-L_Q \log\rho_Q^{-1}}_{0,1}\blacktriangleright 1 
        \right\vert^h
        \right] 
        \le
        C L_Q^{- h/2 }
        \le
       Cc_{**}^{- h/2 }.
\]
Therefore applying this to \eqref{315}, we conclude the proof of the first part.

~

\nin
\textbf{Step 3. Proof of (ii).}
Note that by \eqref{322}, we have $        \rho_Q^2
        \le
        2b^2.$
Thus, by taking \(b_0>0\) sufficiently small depending on
\(h,\kappa \), we may ensure that \(\rho_Q\) lies in the range where
Lemma~\ref{shrinking} applies, uniformly in all such \(Q\). Hence, noting that  $\Theta_Q \le \vartheta$ (see \eqref{321} with $\Delta_Q \le 1$) 
\begin{align}
        \mathbb E\left[
      Z_Q ^h
        \right]
        & \overset{\eqref{si}}{=}
        \mathbb E\left[
        \left(
        p(\rho_Q^2)
        \blacktriangleleft
        \mathscr Z^{\Theta_Q}_{0,1}
        \blacktriangleright1
        \right)^h
        \right] \nonumber \\
        &
    \le 
        \left(
        \log\rho_Q^{-1}
        \right)^{{h\choose2}+\kappa} \overset{\eqref{311}}{\le}   
         \left(
        |Q| \log b^{-1}
    \right)^{{h\choose2}+\kappa}\le  \left(
        \Gamma\log b^{-1}
    \right)^{{h\choose2}+\kappa}.
\label{eq:terminal-version-apply-shrinking}
\end{align}
Note that by the first inequality in \eqref{311} (which is valid for any block $Q\subset [1,N_{\varepsilon,b}]$),  we generally have
\begin{align} \label{anyblock}
     \mathbb E\left[
      Z_Q ^h \right] \le   \left(
        |Q| \log b^{-1}
    \right)^{{h\choose2}+\kappa} ,\qquad \forall  Q \subset [1,N_{\varepsilon,b}].
\end{align}
   
\end{proof}

\begin{remark}
Using the argument in the proof of Lemma \ref{lem:bounded-consecutive-fourth},  {we obtain a similar moment bound as in Lemma \ref{shrinking}} for the partition function $\mathcal Z_{\varepsilon,b}$ as well. 
Set $\Delta_{\varepsilon,b}
        :=
        t_{N_{\varepsilon,b}-1}-\varepsilon^2 .$
By time-translation invariance of the SHF, 
\[
        \mathcal Z_{\varepsilon,b}
        \stackrel{\mathrm d}{=}
        p(\varepsilon^2)
        \blacktriangleleft
        \mathscr Z^\vartheta_{0,\Delta_{\varepsilon,b}}
        \blacktriangleright1 .
\]
By a scale invariance of SHF, similarly as in \eqref{si},  
\begin{equation}
        \mathcal Z_{\varepsilon,b}
        \stackrel{\mathrm d}{=}
        p(\rho_{\varepsilon,b}^2)
        \blacktriangleleft
        \mathscr Z^{\vartheta_{\varepsilon,b}}_{0,1}
        \blacktriangleright1,
\label{eq:Zepsb-scaled-to-unit-time}
\end{equation}
where
\[
        \rho_{\varepsilon,b}^2
        :=
        \frac{\varepsilon^2}{\Delta_{\varepsilon,b}},
        \qquad
        \vartheta_{\varepsilon,b}
        :=
        \vartheta+\log\Delta_{\varepsilon,b}.
\]
 As $ b^4
        <
        t_{N_{\varepsilon,b}-1}
        \le
        b^2 ,$ we have $\frac{b^4}{2}
        \le
        \Delta_{\varepsilon,b}
        \le
        b^2 $
     for all sufficiently small
\(\varepsilon>0\). 
Consequently,  
\begin{equation}
        \log\rho_{\varepsilon,b}^{-1}
        =
        \log\varepsilon^{-1}+O_b(1).
\label{eq:rho-log-comparison}
\end{equation}
Moreover,   $\vartheta_{\varepsilon,b}
        \le
        \vartheta+\log b^2.$ 
 Hence by Lemma~\ref{shrinking}, for any  $\kappa>0$, for all sufficiently small \(\varepsilon>0\),
\begin{align}
        \mathbb E\left[
        \mathcal Z_{\varepsilon,b}^h
        \right]
        \le
        \left(
        \log\rho_{\varepsilon,b}^{-1}
        \right)^{{h \choose 2}+\kappa} \overset{\eqref{eq:rho-log-comparison}}{\le} 
(\log\varepsilon^{-1})^{{h \choose 2}+2\kappa}.\label{adaptmoment}
\end{align}
\end{remark}

\subsection{Ignoring cancellation for small collections}\label{cancel654}

As indicated earlier in the roadmap, the next lemma is used to control products of $D_I$ when the
total length of the collection is small.   
The following estimate is intentionally crude.  It does not use cancellation in
\(D_i\).  Instead, each \(D_i\) is dominated by a sum of two positive kernels  $Z_i$ and $(Z_i\blacktriangleright1)p_i $,
and the resulting products are factorized into independent blocks.  This
bound is sufficient because it will be applied only to  control $D_I$ with short intervals $I$.

\begin{lemma} 
\label{lem:short-fourth-product}
Let \(\kappa \in (0,1)\) and $\Gamma \ge 10$  be any constants. There exist  constants 
\(b_0=b_0(\kappa )>0\), $C = C(\Gamma )>0$ and  $c_0  >0$   such that for {any
\(\widetilde I\subset [1,N_{\varepsilon,b} - 1]\)},
\begin{equation}
        \left\|
        \prod_{I\in\widetilde I}|D_I|
        \right\|_{L^6}
        \le
       \left(\Gamma \log b^{-1}
        \right)^{C}  \exp\left\{
    c_0|\widetilde I|
        +
        \frac{15+\kappa}{6\Gamma}
        (\log\log b^{-1})|\widetilde I|
        \right\}
\label{eq:short-fourth-moment-refined-K}
\end{equation}
for all $b\in (0,b_0)$ and sufficiently small \(\varepsilon>0\). 
\end{lemma}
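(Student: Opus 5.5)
Since the intervals in $\widetilde I$ are disjoint and each $D_I$ depends only on the noise in $[t_{i_I-1},t_{j_I}]$, the factors $D_I$, $I\in\widetilde I$, are independent. Hence $\|\prod_I|D_I|\|_{L^6}=\prod_I\|D_I\|_{L^6}$, and it suffices to show, for a single interval $I$ with $|I|=u\ge2$,
\[
\|D_I\|_{L^6}\le (\Gamma\log b^{-1})^{C'}\exp\Big\{c_0u+\tfrac{15+\kappa}{6\Gamma}(\log\log b^{-1})\,u\Big\}
\]
with $(\Gamma\log b^{-1})^{C'}$ appearing only for the (at most one) interval touching the terminal scales. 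No cancellation is used: write $D_i=Z_i-(Z_i\blacktriangleright1)p_i$ and bound $|D_I|$ pointwise by
\[
p_{i_I-1}\blacktriangleleft (Z_{i_I}+(Z_{i_I}\blacktriangleright1)p_{i_I})\bullet\cdots\bullet(Z_{j_I-1}+(Z_{j_I-1}\blacktriangleright1)p_{j_I-1})\bullet Z_{j_I}\blacktriangleright1 .
\]
Expanding yields $2^{u-1}$ nonnegative terms. In each term, a choice of $(Z_k\blacktriangleright1)p_k$ cuts the chain. The term therefore factorizes as a product over maximal runs $Q=[r,s]$ of consecutive indices, each contributing a factor bounded by $Z_Q$-type quantities such as $p_{r-1}\blacktriangleleft Z_r\bullet\cdots\bullet Z_s\blacktriangleright1=Z_Q$ (the run's starting kernel is either $p_{i_I-1}$ or $p_{k}$ from the previous cut, which matches $p_{r-1}$). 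Different runs are independent, so each term's $L^6$ norm is $\prod_Q\|Z_Q\|_{L^6}$. The $2^{u-1}$ count is absorbed into $e^{c_0u}$.

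Next, control each run. A run of length $\le\Gamma$ lying in $[1,N_{\varepsilon,b}-\tau(\Gamma)]$ has $\|Z_Q\|_{L^6}\le C$ by Lemma \ref{lem:bounded-consecutive-fourth}(i), and this is absorbed into $e^{c_0u}$. Runs of length $>\Gamma$ are split into consecutive sub-blocks of length $\Gamma$. This is the main obstacle, because $Z_Q$ for a long run does not factor exactly into sub-blocks. I would use the same cut-expansion once more, writing $Z_s=(Z_s\blacktriangleright1)p_s+D_s$ at the sub-block boundaries. Alternatively, I would argue directly via Lemma \ref{lem:bounded-consecutive-fourth}(ii) applied with $|Q|$ arbitrary, i.e.\ \eqref{anyblock}: $\|Z_Q\|_{L^6}\le(|Q|\log b^{-1})^{(15+\kappa)/6}$. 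Subadditivity then has to be arranged so that each group of $\Gamma$ scales pays only a factor $(\Gamma\log b^{-1})^{(15+\kappa)/6}$, i.e.\ $\exp\{\tfrac{15+\kappa}{6\Gamma}(\log\Gamma+\log\log b^{-1})\}$ per scale. The $\log\Gamma$ part goes into $c_0$, giving exactly the stated $\frac{15+\kappa}{6\Gamma}\log\log b^{-1}$ rate, with $\binom62=15$ explaining the constant. Concretely, I would partition each long run into blocks of length $\Gamma$ and invoke \eqref{anyblock} per block after reducing via the cut expansion; the extra $2^{\#\text{blocks}}$ terms are again absorbed into $e^{c_0u}$.

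Finally, the runs lying within $\tau(\Gamma)$ of $N_{\varepsilon,b}$ are handled by Lemma \ref{lem:bounded-consecutive-fourth}(ii). There are at most $\tau(\Gamma)/\Gamma+1$ such blocks in total across $\widetilde I$, since the intervals are disjoint and only one region is terminal. They contribute the prefactor $(\Gamma\log b^{-1})^{C}$ with $C=C(\Gamma)$. Taking $b_0(\kappa)$ small enough makes Lemma \ref{lem:bounded-consecutive-fourth}(ii) applicable with this $\kappa$. Multiplying the per-interval bounds over $I\in\widetilde I$, with $\sum_I|I|=|\widetilde I|$, gives \eqref{eq:short-fourth-moment-refined-K}.
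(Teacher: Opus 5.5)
Your overall architecture matches the paper's. You dominate each $D_i$ by $Z_i+(Z_i\blacktriangleright1)p_i$, expand into nonnegative terms, and factor each term exactly into independent blocks $Z_Q$ via the rank-one cutting identity. Short bulk blocks are bounded by Lemma \ref{lem:bounded-consecutive-fourth}(i), short terminal blocks by part (ii), and the $2^{|\widetilde I|}$ entropy is absorbed into $e^{c_0|\widetilde I|}$. Factoring $\|\prod_I|D_I|\|_{L^6}=\prod_I\|D_I\|_{L^6}$ first is equivalent to the paper's global expansion.

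The problem is the step you flag as the main obstacle. Your concrete plan is to re-expand $Z_s=(Z_s\blacktriangleright1)p_s+D_s$ at sub-block boundaries, so that a long run reduces to products of length-$\Gamma$ blocks. This does not work. The term carrying $D_s$ is signed, and no $L^6$ cancellation estimate is available; only the $L^2$ bound \eqref{eq:GT-cor35-current} exists, and only for the specific form $D_I$. The only way to bound that term is then $|D_s|\le Z_s+(Z_s\blacktriangleright1)p_s$, which reproduces $Z_Q$ itself, so the argument is circular. In short, $Z_Q$ is not dominated by products of sub-block partition functions.

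The fix is that no splitting, and no subadditivity, is needed. Apply \eqref{anyblock} with $h=6$ to each long run as a whole:
\[
\|Z_Q\|_{L^6}\le |Q|^{p_\kappa}(\log b^{-1})^{p_\kappa},\qquad p_\kappa=\tfrac{15+\kappa}{6}.
\]
The polynomial factor satisfies $|Q|^{p_\kappa}\le e^{p_\kappa|Q|}$ and goes into $e^{c_0|\widetilde I|}$. The factor $(\log b^{-1})^{p_\kappa}$ is paid once per long run. The runs are disjoint with length $>\Gamma$, so there are at most $|\widetilde I|/\Gamma$ of them, which gives exactly $\exp\{\frac{15+\kappa}{6\Gamma}(\log\log b^{-1})|\widetilde I|\}$. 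This is what the paper does; because $|Q|>\Gamma$, the per-scale cost is even below the rate you were trying to engineer.

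Two counting slips are harmless. First, several intervals of $\widetilde I$ can lie in the terminal region, not at most one. Second, short runs can have length one, so the number of runs not contained in $[1,N_{\varepsilon,b}-\tau]$ is not bounded by $\tau/\Gamma+1$. The correct bound is $\tau$, since the runs are disjoint and each must contain one of the last $\tau$ indices. This yields the prefactor $(\Gamma\log b^{-1})^{3\tau}$, i.e.\ $C=3\tau(\Gamma)$.
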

It turns out that the \(L^6\)-norm plays no special role: an analogous \(L^p\)-bound holds
for every integer \(p\ge2\), with a \(p\)-dependent coefficient in front of
\(\log\log b^{-1}\). However later arguments involving the H\"older inequality will demand bounds on $L^6$ norms and simple $L^2$ norms will not suffice.

~

The parameter \(\Gamma\) is an auxiliary cutoff  which separates
the   blocks arising from the expansion of \(\prod_{I\in\widetilde I}|D_I|\)
into bounded-size blocks (i.e. size at most $\Gamma$), for which we use the uniform moment bound from Lemma \ref{lem:bounded-consecutive-fourth}, and blocks of length larger than \(\Gamma\), for which we use  Lemma \ref{shrinking}. Since every block in the latter class has
length \(>\Gamma\), the number of such blocks is at most \(|\widetilde I|/\Gamma\).
This is why the only \(b\)-dependent exponential loss in
\eqref{eq:short-fourth-moment-refined-K} has coefficient inversely proportional to $\Gamma.$ 
{Later, in the proof of
Proposition~\ref{prop:second-moment-small-total-length-full}, we choose
\(\Gamma\)  large and \(\kappa\)   small so that the
exponential growth in the total length of the short side arising from
\eqref{eq:short-fourth-moment-refined-K} is dominated by the exponential
decay in the total length of the long side from the \(L^2\)-estimate
\eqref{eq:DI-L2-bound-current}.}

\begin{proof}

Set
\[
        M_i^0:=Z_i,
        \qquad
        M_i^1:=(Z_i\blacktriangleright1)p_i .
\]
Observe that both $M_i^0$ and $M_i^1$ are positive kernels.
Then,
\[
        D_i=Z_i-(Z_i\blacktriangleright1)p_i =  M_i^0- M_i^1.
\]
For \(I=[i_I,j_I]\), multiplying this over all
\(i\in[i_I,j_I-1]\), appending the terminal factor \(Z_{j_I}\), and applying
the triangle inequality yields
\begin{equation}
        |D_I|
        \le
        \sum_{\sigma_I\in\{0,1\}^{[i_I,j_I-1]}}
        p_{i_I-1}
        \blacktriangleleft
        M_{i_I}^{\sigma_I(i_I)}
        \bullet\cdots\bullet
        M_{j_I-1}^{\sigma_I(j_I-1)}
        \bullet Z_{j_I}
        \blacktriangleright1 .
\label{eq:DI-positive-expansion}
\end{equation}
We now explain how each summand above factorizes. Consecutive \(M^0\)-blocks    can be concatenated into a
single SHF kernel. More precisely, by the flow property of the
  SHF,
\[
        Z_u\bullet Z_{u+1}\bullet\cdots\bullet Z_v
        =
        \mathscr{Z}^\vartheta_{t_{u-1},t_v},
        \qquad u\le v .
\] 
On the other hand,  $M_k^1$ cuts the product: the part before the cut is
closed by integrating \(Z_k\) against \(1\), while the part after the cut is
restarted from the  heat kernel \(p_k\).

To be precise, for  \(\sigma_I\in\{0,1\}^{[i_I,j_I-1]}\), define the cut set
\[
        \mathrm{Cut}_I(\sigma_I)
        :=
        \{i_I-1\}
        \cup
        \{k\in[i_I,j_I-1]:\sigma_I(k)=1\}
        \cup
        \{j_I\}.
\]
We write its elements in
increasing order as
\[
        c_0^{ \sigma_I}
        <
        c_1^{ \sigma_I}
        <
        \cdots
        <
        c_{\nu(\sigma_I)}^{ \sigma_I}.
\] 
The identity
\begin{equation}
        p_i\blacktriangleleft B_1\bullet M_j^1\bullet B_2\blacktriangleright 1
        =
        \bigl(p_i\blacktriangleleft B_1\bullet Z_j\blacktriangleright1\bigr)
        \bigl(p_j\blacktriangleleft B_2\blacktriangleright 1 \bigr),
\label{eq:rank-one-cutting-identity}
\end{equation}
valid for arbitrary positive kernels \(B_1,B_2\), shows that every index
\(k\) with \(\sigma_I(k)=1\) creates a cut. Applying
\eqref{eq:rank-one-cutting-identity} at all such indices, and using the flow
property on the stretches where \(\sigma_I=0\),  we get
\begin{equation}
 p_{i_I-1}
        \blacktriangleleft
        M_{i_I}^{\sigma_I(i_I)}
        \bullet\cdots\bullet
        M_{j_I-1}^{\sigma_I(j_I-1)}
        \bullet Z_{j_I}
        \blacktriangleright1                                                 =
        \prod_{r=1}^{\nu(\sigma_I)}
 Z_{\bigl[
        c_{r-1}^{ \sigma_I}+1,\,
        c_r^{ \sigma_I}
        \bigr]},
\label{eq:sigmaI-factorization}
\end{equation}
where, for any interval \(Q=[u,v]\) {(here, unlike in the definition of the error term \(D_I\), in which \(I\) is
assumed to consist of at least two consecutive time intervals, \(Q\) is
allowed to consist of a single interval),}
\begin{align}\label{zq}
        Z_Q
        :=
        p_{u-1}
        \blacktriangleleft
        Z_u\bullet Z_{u+1}\bullet\cdots\bullet Z_v
        \blacktriangleright1   =
        p_{u-1}
        \blacktriangleleft
        \mathscr{Z}^\vartheta_{t_{u-1},t_v}
        \blacktriangleright1.
\end{align} 
Therefore, multiplying \eqref{eq:DI-positive-expansion} over all intervals \(I\in\widetilde I\), we
obtain
\begin{equation}
        \prod_{I\in\widetilde I}|D_I|
        \le
        \sum_{\sigma\in\{0,1\}^{\mathscr D(\widetilde I)}}
        \overline Z_\sigma 
\label{eq:product-positive-expansion}
\end{equation}
where the term $ \overline Z_\sigma $ is defined shortly in  \eqref{eq:Zsigma-block-decomposition}, and \[
        \mathscr D(\widetilde I)
        :=
        \bigcup_{I=[i_I,j_I]\in\widetilde I}
        [i_I,j_I-1].
\]
For a global choice
\(\sigma\in\{0,1\}^{\mathscr D(\widetilde I)}\), we denote by
\(\sigma_I\) its restriction to \([i_I,j_I-1]\). 
We define \(\mathcal B_\sigma\) to be the collection of all subintervals
produced by these cut sets, i.e.,
\begin{equation}
        \mathcal B_\sigma
        :=
        \left\{
        \left[
        c_{r-1}^{ \sigma_I}+1,\,
        c_r^{ \sigma_I}
        \right]:
        I\in\widetilde I,\ 
        1\le r\le \nu(\sigma_I)
        \right\}.
\label{eq:B-sigma-definition}
\end{equation} 
Then the factorization \eqref{eq:sigmaI-factorization}, applied to every
\(I\in\widetilde I\), gives
\begin{equation}
        \overline Z_\sigma
        =
        \prod_{Q\in\mathcal B_\sigma}Z_Q 
\label{eq:Zsigma-block-decomposition}
\end{equation}
where $Z_Q $ appears above in \eqref{zq}.
Since the original intervals \(I\in\widetilde I\) are disjoint, and $[
        c_{r-1}^{ \sigma_I}+1,\,
        c_r^{ \sigma_I}]$ ($  1\le r\le \nu(\sigma_I)$) 
form a partition of \(I\), the intervals \(Q\in\mathcal B_\sigma\) are  
disjoint as well. 
In addition,
\begin{align} \label{346}
     \#\{0,1\}^{\mathscr D(\widetilde I)}
        =
        2^{|\mathscr D(\widetilde I)|}
        =
        2^{\sum_{I\in\widetilde I}(|I|-1)}
        \le
        2^{|\widetilde I|}.
\end{align}

We now split the produced blocks according to their length.  Bounded-size
blocks are controlled by Lemma~\ref{lem:bounded-consecutive-fourth}, and  large blocks (whose size is greater than $\Gamma$) are controlled by Lemma \ref{lem:short-fourth-product}.  Since there can be at most
\(|\widetilde I|/\Gamma\) such long blocks, their total logarithmic loss is
proportional to \(|\widetilde I|/\Gamma\).
More precisely, we split
\[
        \mathcal B_\sigma^{\le \Gamma}
        :=
        \{Q\in\mathcal B_\sigma:|Q|\le \Gamma\},
        \qquad
        \mathcal B_\sigma^{>\Gamma}
        :=
        \{Q\in\mathcal B_\sigma:|Q|>\Gamma\}.
\]
We first control $\mathcal B_\sigma^{\le \Gamma}$ part. Then  by the first part of  
Lemma~\ref{lem:bounded-consecutive-fourth},  there is $\tau>0$ such that for $b\in (0,1/10),$
\begin{align} \label{376}
        \|Z_Q\|_{L^6}\le C,
        \qquad
       \forall Q\in\mathcal B_\sigma^{\le \Gamma}\ \text{  and } \ Q\subset [1,N_{\varepsilon,b}-\tau].
\end{align}
In addition   by the second part of  
Lemma~\ref{lem:bounded-consecutive-fourth},   for any    small enough constant $b>0$,
\begin{align} \label{377}
      \|Z_Q\|_{L^6}\le   \left(\Gamma \log b^{-1}
        \right)^{3}, \qquad
       \forall Q\in\mathcal B_\sigma^{\le \Gamma}\ \text{  and } \ Q\subset [1,N_{\varepsilon,b}] 
\end{align}
We multiply \eqref{376} and \eqref{377} for all  $Q\in\mathcal B_\sigma^{\le \Gamma}$. Precisely, we use  \eqref{376}  when $Q\subset [1,N_{\varepsilon,b}-\tau],$ and  we use  \eqref{377} otherwise. Note that the number of intervals $Q\in\mathcal B_\sigma^{\le \Gamma}$ that belong to the latter case is at most $\tau.$  
Thus by independence, using  $|\mathcal B_\sigma^{\le \Gamma}|\le |\widetilde I|,$
\begin{equation}
        \prod_{Q\in\mathcal B_\sigma^{\le \Gamma}}
        \|Z_Q\|_{L^6}
        \le
        C^{|\mathcal B_\sigma^{\le \Gamma}|}   \cdot \left(\Gamma \log b^{-1}
        \right)^{3\tau} =   \left(\Gamma \log b^{-1}
        \right)^{3\tau}
        \exp\{\log C \cdot |\widetilde I|\}.
\label{eq:bounded-blocks-L6}
\end{equation} 
We next control the  $ \mathcal B_\sigma^{>\Gamma}$ part. For $  Q=[r,s]\in\mathcal B_\sigma^{>\Gamma},$  by \eqref{anyblock} with $h=6,$ 
\[
\begin{aligned}
  \|Z_Q\|_{L^6}
\le     \bigl(|Q|\log b^{-1}\bigr)^{(15+\kappa)/6}.
\end{aligned}
\label{eq:ZQ-L6-long-block}
\]
We now multiply this estimate for all
\(Q\in\mathcal B_\sigma^{>\Gamma}\). Setting $ p_\kappa:=\frac{15+\kappa}{6},$ we have 
\begin{align} \label{366}
        \prod_{Q\in\mathcal B_\sigma^{>\Gamma}}
        \|Z_Q\|_{L^6}
        \le
2^{p_\kappa|\mathcal B_\sigma^{>\Gamma}|}
        \left(\log b^{-1}\right)^{
        p_\kappa|\mathcal B_\sigma^{>\Gamma}|}
        \prod_{Q\in\mathcal B_\sigma^{>\Gamma}}|Q|^{p_\kappa}.
\end{align}
Since $x> \log x$ for $x \ge 10$, recalling that  $\Gamma >10$ (by choice in the statement of the lemma) 
\[
        \prod_{Q\in\mathcal B_\sigma^{>\Gamma}}|Q|^{p_\kappa}
       = \exp\Big\{{p_\kappa} \sum_{Q\in\mathcal B_\sigma^{>\Gamma}}\log |Q|\Big\} \le \exp\Big\{{p_\kappa} \sum_{Q\in\mathcal B_\sigma^{>\Gamma}}|Q|\Big\}    \le
        \exp\{{p_\kappa} |\widetilde I|\}.
\]
In addition, as each block in \(\mathcal B_\sigma^{>\Gamma}\) has length
larger than \(\Gamma\), we have $  |\mathcal B_\sigma^{>\Gamma}|
        \le
        {|\widetilde I|}/{\Gamma}.$ Hence,
\[
        \left(\log b^{-1}\right)^{
        p_\kappa|\mathcal B_\sigma^{>\Gamma}|}
        \le
        \exp\left\{
        \frac{15+\kappa}{6\Gamma}
        (\log\log b^{-1})|\widetilde I|
        \right\}.
\]
Applying the above two estimates  to \eqref{366}, noting that $2^{p_\kappa} \le  e^{p_\kappa} \le e^3$ (since $\kappa \in (0,1)$),  
\begin{equation}
        \prod_{Q\in\mathcal B_\sigma^{>\Gamma}}
        \|Z_Q\|_{L^6}
        \le
        \exp\left\{
      {6|\widetilde I|}
        +
        \frac{15+\kappa}{6\Gamma}
        (\log\log b^{-1})|\widetilde I|
        \right\}.
\label{eq:long-blocks-L6}
\end{equation}
Combining this with \eqref{eq:bounded-blocks-L6}, and using
the independence of the disjoint blocks \(Q\in\mathcal B_\sigma\),  
\[
\begin{aligned}
        \|\overline Z_\sigma\|_{L^6}
         =
        \prod_{Q\in\mathcal B_\sigma}
        \|Z_Q\|_{L^6}                                 \le
    \left(\Gamma \log b^{-1}
        \right)^{3\tau}     \exp\left\{
   (\log C + 4) |\widetilde I|
        +
        \frac{15+\kappa}{6\Gamma}
        (\log\log b^{-1})|\widetilde I|
        \right\}.
\end{aligned}
\label{eq:Zsigma-L6-final}
\]
Note that the above bound is independent of $\sigma\in \{0,1\}^{\mathscr D(\widetilde I)}$.

Recall that the claim was 

\begin{equation}
        \left\|
        \prod_{I\in\widetilde I}|D_I|
        \right\|_{L^6}
        \le
       \left(\Gamma \log b^{-1}
        \right)^{C}  \exp\left\{
    c_0|\widetilde I|
        +
        \frac{15+\kappa}{6\Gamma}
        (\log\log b^{-1})|\widetilde I|
        \right\}.
\end{equation}
To obtain this we simply apply the $L^6$-norm Minkowski's inequality and the entropy bound $ \#\{0,1\}^{\mathscr D(\widetilde I)}
        \le
        2^{|\widetilde I|}$ (see \eqref{346}) to the inequality \eqref{eq:product-positive-expansion}. {Note that the above bound holds with $c_0=\log 2 +  (\log C + 4 )$ and $C=3\tau$.}
\end{proof}

\subsection{Conditional densities projected onto small number of coordinates.}\label{rn535} This is the central ingredient in the proof of Proposition \ref{prop:second-moment-small-total-length-full} and one of the main technical underpinnings of this paper. While the estimate recorded here will be a consequence of a much more elaborate statement recorded and proved later in Section \ref{rnproof34}, we extract the input we need for the upcoming proof of Proposition \ref{prop:second-moment-small-total-length-full}. 
We start with a bit of notational preparation.
Recall from \eqref{utdef}, \begin{align} \label{utdef21}
\mathrm{UT}_{\varepsilon,\eta}(\alpha)
        :=
        \left\{
        G_{\eta;1}        \ge
      N_{\varepsilon,b}^{-\frac{1-\eta}{2} + \alpha}
        \right\}.
\end{align}

Note that the upper-tail event $\mathrm{UT}_{\varepsilon,\eta}(\alpha)$ is imposed on $\prod_{i=1}^{M_{\varepsilon, \eta}} G_i,$ or equivalently, the full bulk sum
\[
 \sum_{i=1}^{M_{\varepsilon, \eta}}\log G_i,
\]
whereas in our 
applications we will only need to track the conditional law of $G_{i}$ for $i$ in a small subset \(U\) of the
coordinates. {To this end, we write $    G|_U:=(G_i)_{i\in U}$.}

\begin{proposition}
\label{prelimcond}
Let \(\alpha>0\). 
Then there exists \(b_0=b_0(\alpha )>0\) such that the following holds for
any constant \(b\in(0,b_0)\). Let $\eta \in (0,1/10)$ and $  U\subset\{1,2,\dots,M_{\varepsilon, \eta}\}$. Define the Radon-Nikodym derivative
\begin{equation}
        \mathsf{RN}_{U}^{(\alpha)}
        :=
        \frac{
        d\textup{Law}(G|_U\mid  \mathrm{UT}_{\varepsilon,\eta}(\alpha))
        }{
        d\textup{Law}(G|_U).
        }
\label{eq:projected-G-RN-definition}
\end{equation}  
That is, \(\mathsf{RN}_{U}^{(\alpha)}\) is the Radon--Nikodym derivative
of the marginal law of the \(U\)-coordinates of \((G_i)_{1\le i\le M_{\varepsilon,\eta}}\) under the conditional measure
\(\mathbb P(\,\cdot\,\mid \mathrm{UT}_{\varepsilon,\eta}(\alpha))\), with respect to
their original marginal law (note that \(\mathrm{UT}_{\varepsilon,\eta}(\alpha)\) is measurable with respect to
\((G_i)_{1\le i\le M_{\varepsilon,\eta}}\)). Then for any integer \(p\ge2\),   as \(\varepsilon \rightarrow 0\),
if $|U| = O(\log \log \varepsilon^{-1}),$ 
\begin{equation}
\mathbb E\bigl[|\mathsf{RN}^{(\alpha)}_U-1|^p\bigr]
\le  
       (\log \varepsilon^{-1})^{-p\eta/2 + o(1)}.
\label{rnspecial32}
\end{equation}
\end{proposition}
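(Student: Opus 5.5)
The Radon--Nikodym derivative is explicit. By independence of the $G_i$, for $g\in(0,\infty)^U$,
\[
\mathsf{RN}_U^{(\alpha)}(g)=\frac{\mathbb P\bigl(\textstyle\sum_{i\in [1,M_{\varepsilon,\eta}]\setminus U}Y_i\ge a_\varepsilon-\sum_{i\in U}\log g_i\bigr)}{\mathbb P\bigl(\sum_{i\le M_{\varepsilon,\eta}}Y_i\ge a_\varepsilon\bigr)},
\]
where $Y_i:=\log G_i$ and $a_\varepsilon:=\log\bigl(N_{\varepsilon,b}^{-\frac{1-\eta}{2}+\alpha}\bigr)$. Write $S_U:=\sum_{i\in U}Y_i$. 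The plan is to show that, on a high-probability set of $g$, this ratio equals $\exp\bigl(\theta^* S_U-\sum_{i\in U}\lambda_i(\theta^*)\bigr)(1+o(1))$. Here $\theta^*\asymp\alpha/(1-\eta)$ is the tilt parameter solving $\sum_i\lambda_i'(\theta^*)=a_\varepsilon$, and $\lambda_i(\theta)=\log\mathbb E G_i^\theta$. This is the rigorous version of the Gaussian heuristic in Section~\ref{iop}. The bound \eqref{rnspecial32} then reduces to a moment estimate for $e^{\theta^*S_U-\Lambda_U}-1$.

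\textbf{Step 1: sharp tail asymptotics with shifts.} I would run the exponential-tilting argument from the proof of Proposition~\ref{largedeviation45} uniformly over shifts. Under the tilted measure $d\mathbb Q=\prod_i G_i^{\theta^*}e^{-\lambda_i(\theta^*)}\,d\mathbb P$, the variance of $\sum_i Y_i$ is $\sigma^2\asymp\log N_{\varepsilon,b}$. A quantitative CLT (Berry--Esseen, with third moments controlled by \eqref{eq:centered-G-moment-B3}) then gives, uniformly in shifts $x$ with $|x|\le \sigma^{1-\delta'}$,
\[
\mathbb P\Bigl(\textstyle\sum_{i\notin U}Y_i\ge a_\varepsilon-x\Bigr)=e^{-\sum_{i\notin U}(\theta^*\lambda_i'-\lambda_i)(\theta^*)}\,e^{\theta^* x}\,\frac{1+O(\sigma^{-\delta''})}{\theta^*\sigma\sqrt{2\pi}}.
\]
Removing the $|U|=O(\mathcal L_\varepsilon)$ coordinates changes $\sigma^2$ only by $O(|U|/\ell_{\varepsilon,\eta})$, which is negligible. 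Taking the ratio gives
\[
\mathsf{RN}_U=e^{\theta^*S_U-\sum_{i\in U}\lambda_i(\theta^*)}\bigl(1+O((\log\varepsilon^{-1})^{-c})\bigr)
\]
on the event $\{|S_U|\le \sigma^{1-\delta'}\}$. The crude part is the Taylor expansion of $\lambda_i(\theta)$ around $0$ used to control the tilt. Since $G_i$ is concentrated near $1$ for bulk $i$, one gets $\lambda_i(\theta)=\frac{\theta(\theta-1)}{2}\mathbb E W_i^2+O((N_{\varepsilon,b}-i)^{-3/2})$.

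\textbf{Step 2: moments of the main term.} Let $X:=\theta^*S_U-\Lambda_U$ with $\Lambda_U:=\sum_{i\in U}\lambda_i(\theta^*)$. Then $\mathbb E|e^X-1|^p$ is controlled through $\mathbb E e^{kX}$ for $k\le p$. Each factor gives
\[
\mathbb E e^{k(\theta^*Y_i-\lambda_i(\theta^*))}=\exp\bigl(\lambda_i(k\theta^*)-k\lambda_i(\theta^*)\bigr)=1+O\bigl(1/(N_{\varepsilon,b}-i)\bigr).
\]
For bulk $i$ we have $N_{\varepsilon,b}-i\ge\ell_{\varepsilon,\eta}$, so $\mathbb E|e^X-1|^p\lesssim (\sum_{i\in U}\mathbb E W_i^2)^{p/2}$. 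This uses a Rosenthal-type inequality on $X$ together with $|e^X-1|\le |X|e^{|X|}$ and Hölder. Since $|U|=O(\mathcal L_\varepsilon)$, the resulting bound is
\[
\bigl(|U|/\ell_{\varepsilon,\eta}\bigr)^{p/2}=(\log\varepsilon^{-1})^{-p\eta/2+o(1)}.
\]
The $1+O((\log\varepsilon^{-1})^{-c})$ error from Step 1 must also be absorbed. This requires $c>\eta/2$, which I would arrange from the Berry--Esseen rate, using that $\sigma^{-1}$ dominated errors are actually of relative size $|U|/\ell$ after cancellation in the ratio. If that fails, the error would instead be handled by an Edgeworth correction.

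\textbf{Step 3: the atypical region.} On $\{|S_U|>\sigma^{1-\delta'}\}$ I would use only crude bounds. First, $\mathsf{RN}_U\le 1/\mathbb P(\mathrm{UT})\le N_{\varepsilon,b}^{C}$ by the lower bound in \eqref{dec}. Second, the event has probability at most $(\log\varepsilon^{-1})^{-A}$ for any $A$, by the high moments \eqref{eq:centered-G-moment-B3} and Markov, exactly as in Lemma~\ref{goodevent}. Lower deviations of $Y_i$ (small $G_i$) are handled similarly via $\Omega_{\varepsilon,\eta}$.

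\textbf{Main obstacle.} I expect the hardest part to be Step 1: obtaining the local ratio asymptotics with a relative error genuinely smaller than $(\log\varepsilon^{-1})^{-\eta/2}$, uniformly in the shift. The removed coordinates have the largest variances, of order $1/\ell$. This is why I would compare the two tail probabilities directly by conditioning on $S_U$ under the same tilted measure, rather than estimating each tail separately.
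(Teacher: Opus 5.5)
Your architecture matches the paper's. You write $\mathsf{RN}_U=q_{U^c}(S_U)/\mathbb P(\mathrm{UT}_{\varepsilon,\eta}(\alpha))$ with $q_{U^c}(s)=\mathbb P(S_{U^c}\ge t_{\eta,\alpha}-s)$, use tilted tail asymptotics via Berry--Esseen, use crude bounds off the typical region, and (as you hint at the end) normalize through $\mathbb P(\mathrm{UT}_{\varepsilon,\eta}(\alpha))=\mathbb E[q_{U^c}(S_U)]$. However, the step you flag as the main obstacle is left open, and your diagnosis of it is wrong.

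\textbf{The main gap.} You treat the Berry--Esseen error as $\sigma^{-1}$-type, i.e.\ a negative power of $\log N_{\varepsilon,b}\asymp\log\log\varepsilon^{-1}$. A relative error $O(\sigma^{-\delta''})$ would only give $\mathbb E|\mathsf{RN}_U-1|^p\lesssim(\log\log\varepsilon^{-1})^{-p\delta''/2}$. The remedies you propose (cancellation in the ratio down to $|U|/\ell_{\varepsilon,\eta}$, or an Edgeworth correction) are not established, and they are not what is needed.

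The missing point is that the summands are individually small, so the Lyapunov ratio is not of order $\sigma^{-1}$. By Lemma \ref{lem:tilted-third-moment-full}, under the tilt $\sum_{j\in U^c}\mathbb E_{\mathbb Q}|Y_j-\mathbb E_{\mathbb Q}Y_j|^3\le C\sum_{m\ge\ell_{\varepsilon,\eta}}m^{-3/2}\le C(\log\varepsilon^{-1})^{-\eta/2}$. Hence the Kolmogorov distance is at most $C(\log\varepsilon^{-1})^{-\eta/2}(\log N_{\varepsilon,b})^{-3/2}$. The test function $z\mapsto e^{-\theta\sqrt v z}\mathbf 1_{\{z\ge0\}}$ has total variation at most $2$, and the Gaussian prefactor is $\asymp(\log N_{\varepsilon,b})^{-1/2}$. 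So this is already a relative error $(\log\varepsilon^{-1})^{-\eta/2+o(1)}$, uniformly over bounded shifts; this is \eqref{eq:K-Gaussian-relative-error}.

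Your requirement $c>\eta/2$ is also mistaken. The error enters $\mathbb E|\mathsf{RN}_U-1|^p$ through its $p$-th power, so $c=\eta/2-o(1)$ already gives $(\log\varepsilon^{-1})^{-p\eta/2+o(1)}$. That is exactly the target, and it agrees up to $(\log\varepsilon^{-1})^{o(1)}$ with your main term $(|U|/\ell_{\varepsilon,\eta})^{p/2}$.

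\textbf{Two further repairs.}

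First, with the single tilt $\theta^*$, your uniform claim $\mathsf{RN}_U=e^{\theta^*S_U-\Lambda_U}(1+O((\log\varepsilon^{-1})^{-c}))$ on $\{|S_U|\le\sigma^{1-\delta'}\}$ is false. At shift $x$ the Gaussian prefactor is $\sigma^{-1}\int_0^\infty e^{-\theta^*u}\phi((u-y)/\sigma)\,du$, with $\phi$ the standard normal density and $y=x-\sum_{i\in U}\lambda_i'(\theta^*)$. This carries a relative dependence of order $(y^2+|y|)/\sigma^2$ that does not cancel against the denominator, and it reaches $\sigma^{-2\delta'}$ at the edge of your window. There are two ways out:
\begin{itemize}
\item Keep the error $x$-dependent. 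Its moments are negligible, since $S_U$ is typically at most of size $|U|\ell_{\varepsilon,\eta}^{-1/2}$.
\item Do as the paper does and use the sharp estimate only for $|s|\le1$. Write $\mathsf{RN}_U=H_{U^c}(S_U)/\mathbb E[H_{U^c}(S_U)]$ with $H_{U^c}(s):=q_{U^c}(s)/q_{U^c}(0)$. With the shift-dependent saddle point $\theta_s$ and Lemma \ref{lem:rate-difference-saddle-curve}, one only needs $|\log H_{U^c}(s)|\le C(|s|+(\log\varepsilon^{-1})^{-\eta/2+o(1)})$. For $s\ge1$ use Chernoff, and for $s\le-1$ use $0\le H_{U^c}\le1$. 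Your sharper identification $\mathsf{RN}_U\approx e^{\theta^*S_U-\Lambda_U}$ is more than is required.
\end{itemize}

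Second, in Step~2, Rosenthal for $X$ and the bound $|e^X-1|\le|X|e^{|X|}$ with H\"older require $\mathbb E|\log G_i|^q$ and negative moments $\mathbb E G_i^{-c\theta^*}$, and neither is available. There are two ways out:
\begin{itemize}
\item Run Step~2 on $\bigcap_{i\in U}\{G_i\ge1/2\}$. Its complement has probability at most $(\log\varepsilon^{-1})^{-A}$, while there $\mathsf{RN}_U\le1/\mathbb P(\mathrm{UT}_{\varepsilon,\eta}(\alpha))\le N_{\varepsilon,b}^{C}$ by Proposition \ref{largedeviation45}.
\item Bound only positively tilted moments $\mathbb E[|S_U|^pe^{\theta S_U}]\le C(|U|\sum_{i\in U}m_i^{-1})^{p/2}$, as the paper does.
\end{itemize}
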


\vspace{.1in}
Given the preparation we now proceed with proving Proposition \ref{prop:second-moment-small-total-length-full}.

\subsection{Proof of Proposition \ref{prop:second-moment-small-total-length-full}}\label{centralsec} 
Fix any $\kappa\in (0,1)$. Then we take $\Gamma > 10$ large enough  such that 
\begin{equation}
        \frac15>\frac{15+\kappa}{6\Gamma}.
\label{eq:K0-kappa-full-choice}
\end{equation}
Then choose a constant \(b>0\) sufficiently small so that Lemma \ref{lem:short-fourth-product} holds and further satisfies
\begin{equation}
        \left(
        \frac15-\frac{15+\kappa}{6\Gamma}
        \right)\log\log b^{-1}>
      c_0 + {2\log 2}, 
\label{eq:Gamma-b-full}
\end{equation}
where $c_0$ is the constant from Lemma \ref{lem:short-fourth-product}.

We begin with the   expansion of $\mathcal R_{\eta,\le\delta}  $ in \eqref{short}. On the event \(\Omega_{\varepsilon,\eta}\), we have $ (G_I^{\rm bulk})^{-1}
        \le
        2^{|I^{\rm bulk}|}
        \le
        2^{|I|}.$ Thus by triangle inequality,
\begin{equation}
\begin{aligned}
&\mathbb E\left[
\mathbf 1_{\Omega_{\varepsilon,\eta}}
\left|
\mathcal R_{\eta,\le\delta}  
\right|^2
\,\middle|\,
\mathrm{UT}_{\varepsilon,\eta}(\alpha)
\right]                                          \le
\sum_{\substack{
\widetilde I,\widetilde J\in\mathfrak I_{\varepsilon,b}  \\
{2}\le |\widetilde I|,|\widetilde J|\le\delta \mathcal L_\varepsilon}}
\mathbb E\left[
\mathcal Y(\widetilde I,\widetilde J)
\,\middle|\,
\mathrm{UT}_{\varepsilon,\eta}(\alpha)
\right],
\end{aligned}
\label{eq:full-second-moment-expanded-direct}
\end{equation}
where
\begin{align} \label{404}    
        \mathcal Y(\widetilde I,\widetilde J)
        :=
        2^{|\widetilde I|+|\widetilde J|}
        \prod_{I\in\widetilde I}|D_I|\cdot 
        \prod_{J\in\widetilde J}|D_J|
      \cdot           \widehat G_{\widetilde I}^{\rm tail}
               \widehat G_{\widetilde J}^{\rm tail} \ge 0.
\end{align} 
Recall that by \eqref{defii} every $I\in \wt I$ and similarly $J\in \wt J$ are subsets of $[1, N_{\e,b}-1]$ and are of size at least $2.$
Now for $\widetilde I,\widetilde J\in\mathfrak I_{\varepsilon,b}  $ with $
{2}\le |\widetilde I|,|\widetilde J|\le\delta \mathcal L_\varepsilon$, let $ \widetilde K:=\widetilde K(\widetilde I,\widetilde J)$ 
be the collection of  connected components (intervals) $K$ generated by the
intervals in \(\widetilde I\cup\widetilde J\) in  
\([1,N_{\varepsilon,b}-1]\). First note that each connected component is of size at least $2.$ Further, let us  
write $\widetilde K^{\text{bulk}}:=
     \widetilde K  \cap[1,M_{\varepsilon,\eta}],$  as the collection of intervals $K^{\text{bulk}}=K \cap[1,M_{\varepsilon,\eta}].$ Now  define the $\sigma$-algebra
\[
        \mathcal H_{\widetilde K}
        :=
        \sigma\bigl((Z_i)_{i\in \widetilde K^{\text{bulk}}}\bigr)
        \vee
        \sigma\bigl((Z_i)_{M_{\varepsilon,\eta}<i<N_{\varepsilon,b}}\bigr).
\]
Here, recall that $Z_i$ is a random element in 
 \(\mathcal M_+ \), i.e.  the space of positive locally finite Borel
measures, equipped with the Borel \(\sigma\)-field induced by the
vague topology.
Then \(\mathcal Y(\widetilde I,\widetilde J)\) is
\(\mathcal H_{\widetilde K}\)-measurable. Indeed,  \(D_I\) and
\(D_J\) (for $I\in \widetilde I$ and $J\in \widetilde J$) use only the blocks lying in the intervals of
\(\widetilde I\) and \(\widetilde J\), respectively; their bulk indices are
therefore contained in \(\widetilde K^{\text{bulk}}\). The terminal indices in  \(D_I\) and
\(D_J\), along with
\(        \widehat G_{\widetilde I}^{\rm tail}\) and
\(       \widehat G_{\widetilde J}^{\rm tail}\), only use the tail variables
\((Z_i)_{M_{\varepsilon,\eta}<i< N_{\varepsilon,b}}\).  Hence \(\mathcal Y(\widetilde I,\widetilde J)\) is
\(\mathcal H_{\widetilde K}\)-measurable by 
Corollary \ref{cormeasure}.

Recall from \eqref{eq:projected-G-RN-definition} that \(\mathsf{RN}_{U}^{(\alpha)}\) is 
 the marginal law of the \(U\)-coordinates of \((G_i)_{1\le i\le M_{\varepsilon,\eta}}\) under the conditional measure
\(\mathbb P(\,\cdot\,\mid \mathrm{UT}_{\varepsilon,\eta}(\alpha))\), with respect to
their original marginal law (note that \(\mathrm{UT}_{\varepsilon,\eta}(\alpha)\) is measurable with respect to
\((G_i)_{1\le i\le M_{\varepsilon,\eta}}\)). Thus this conditioning only affects the distributions of the bulk variables. In particular,  by Lemma \ref{lem:projected-RN-coarse-conditioning}, writing $ \widetilde K=\widetilde K(\widetilde I,\widetilde J)$,
\begin{align} \label{rnrn}
\mathbb E\left[
\mathcal Y(\widetilde I,\widetilde J)
\,\middle|\,
 \mathrm{UT}_{\varepsilon,\eta}(\alpha)
\right]  =
\mathbb E\left[
\mathcal Y(\widetilde I,\widetilde J) \cdot 
\mathsf{RN}_{\widetilde K^{\text{bulk}}}^{(\alpha)}
\right].
\end{align}

Note above that the Radon-Nikodym factor only involves the bulk variables.
Moreover, noting that $\widetilde K^{\text{bulk}} \subset \{1,2,\ldots,M_{\varepsilon,  \eta} \} $ and  $ |\widetilde K^{\text{bulk}}|
        \le  |\widetilde K| \le 
        |\widetilde I|+|\widetilde J|
        \le
        2\delta \log \log\varepsilon^{-1} $,
 by the bound on   the Radon-Nikodym derivative recorded in \eqref{rnspecial32}, for any small enough constant $b>0$ (depending on $\alpha$) and an integer $p \ge 2$, for sufficiently small $\varepsilon>0,$
\begin{equation}
        \left\|\mathsf{RN}_{\widetilde K^{\text{bulk}}}^{(\alpha)}\right\|_{L^p}
        \le 1+\left\|\mathsf{RN}_{\widetilde K^{\text{bulk}}}^{(\alpha)} - 1  \right\|_{L^p}\le 1+C
        (\log \varepsilon^{-1})^{- \eta/2 + o(1)} \le 2.
\label{eq:RN-density-L6-bounded}
\end{equation} 
We next estimate the terminal product in \eqref{404}.   
By  \eqref{eq:G-tail-product-moment-consequence}, for any $p \ge 2,$ for sufficiently small $\varepsilon>0,$
\begin{align} \label{eq:tail-product-L6-eta}
\left\|
        \widehat G_{\widetilde I}^{\rm tail}
       \widehat G_{\widetilde J}^{\rm tail}
\right\|_{L^p}
&\le
\left(
\prod_{i\in \mathcal  T(\widetilde I)}
\mathbb E G_i^{2p} \cdot 
\prod_{j\in \mathcal  T(\widetilde J)}
\mathbb E G_j^{2p}
\right)^{1/(2p)}     \le  
        (\log\varepsilon^{-1})^{ C\eta}.
\end{align}

Now, 
the following long/short split is the main device in the proof.  In each
connected component, the side with larger total length is estimated in \(L^2\) using the estimate recorded in \eqref{eq:DI-L2-bound-current},
where each \(D_I\) essentially contributes an exponentially decaying factor
\((\log b^{-1})^{-(|I|-1)/2}\). The shorter side is  controlled via the simpler bound in Lemma \ref{lem:short-fourth-product},
but its total length is dominated by the long side.  By choosing \(b\) sufficiently small, the
\(L^2\)-decay on the longer side becomes strong enough to absorb the 
loss from the shorter side.
Precisely,  for each interval
\(K\in\widetilde K=\widetilde K(\widetilde I,\widetilde J)\), define
\[
        \widetilde I_K:=\{I\in\widetilde I:I\subset K\},
        \qquad
        \widetilde J_K:=\{J\in\widetilde J:J\subset K\}.
\]
If
\[
       \sum_{I\in\widetilde I_K}|I| \ge 
 \sum_{J\in\widetilde J_K}|J| 
\]
then we set
\begin{align} \label{441}
        \text{Long}_K:=\widetilde I_K,
        \qquad
        \text{Short}_K:=\widetilde J_K,
\end{align}
 otherwise we set 
\begin{align} 
        \text{Long}_K:=\widetilde J_K,
        \qquad
        \text{Short}_K:=\widetilde I_K.
\end{align}
{Note that,  in contrast to  $\text{Long}_K$, the set $  \text{Short}_K$ may be empty.}
In other words, \(\text{Long}_K\) is the longer local side and \(\text{Short}_K\) is the shorter
local side  in $K$. Define
\begin{align}  \label{deflong}
        m_{\text{long}}:=
        \sum_{K\in\widetilde K}\sum_{A\in\text{Long}_K}|A|,
        \qquad
        m_{\text{short}}:=
        \sum_{K\in\widetilde K}\sum_{A\in\text{Short}_K}|A|.
\end{align}
Then
\begin{equation}
        m_{\text{short}}\le m_{\text{long}},
        \qquad
        |\widetilde K|
        =
        \sum_{K\in\widetilde K}|K|
        \le
        m_{\text{long}}+m_{\text{short}}
        \le
        2m_{\text{long}}.
\label{eq:long-short-full-relation}
\end{equation}
Now, we set
\begin{align}
        Y_{\widetilde K}^{\rm long}
        :=
        \prod_{K\in\widetilde K}
        \prod_{A\in\text{Long}_K}|D_A|,
        \qquad
        Y_{\widetilde K}^{\rm short}
        :=
        \prod_{K\in\widetilde K}
        \prod_{A\in\text{Short}_K}|D_A|.
\end{align}
{Here, we adopt the convention that a product indexed by the empty set is equal to 1.}
Then
\begin{align} \label{444}
        \prod_{I\in\widetilde I}|D_I| \cdot 
        \prod_{J\in\widetilde J}|D_J|
        =
        Y_{\widetilde K}^{\rm long}\cdot 
        Y_{\widetilde K}^{\rm short}.    
\end{align}
Thus by \eqref{rnrn} and the definition of $\mathcal Y(\widetilde I,\widetilde J)$ in  \eqref{404}, applying Hölder's inequality with
exponents \(2,6,6,6\), i.e., using the fact that for random variables $X_1, X_2, X_3, X_4$ on the same probability space, $$\E(|X_1X_2X_3X_4|)\le (\E X_1^{2})^{1/2} (\E X_2^{6})^{1/6} (\E X_3^{6})^{1/6} (\E X_4^{6})^{1/6},$$ we get
\begin{equation}
\begin{aligned}
\mathbb E\left[
\mathcal Y(\widetilde I,\widetilde J)
\,\middle|\,
\mathrm{UT}_{\varepsilon,\eta}(\alpha)
\right]
                                & =   2^{|\widetilde I|+|\widetilde J|} \mathbb E\left[
        \prod_{I\in\widetilde I}|D_I|\cdot 
        \prod_{J\in\widetilde J}|D_J|\cdot 
                \widehat G_{\widetilde I}^{\rm tail} 
               \widehat G_{\widetilde J}^{\rm tail}   \cdot \mathsf{RN}_{\widetilde K^{\text{bulk}}}^{(\alpha)} \right] \\
        &=2^{|\widetilde I|+|\widetilde J|} \mathbb E\left[
       Y_{\widetilde K}^{\rm long} \cdot 
        Y_{\widetilde K}^{\rm short} \cdot  
                \widehat G_{\widetilde I}^{\rm tail}
               \widehat G_{\widetilde J}^{\rm tail}\cdot  \mathsf{RN}_{\widetilde K^{\text{bulk}}}^{(\alpha)} \right]\\
& \le
2^{|\widetilde I|+|\widetilde J|}
\left\|Y_{\widetilde K}^{\rm long}\right\|_{L^2} 
\left\|Y_{\widetilde K}^{\rm short}\right\|_{L^6}
\left\|
        \widehat G_{\widetilde I}^{\rm tail}
       \widehat G_{\widetilde J}^{\rm tail}
\right\|_{L^6}
\left\|\mathsf{RN}_{\widetilde K^{\text{bulk}}}^{(\alpha)}\right\|_{L^6}.
\end{aligned}
\label{eq:holder-direct-full}
\end{equation}
We estimate the ``long'' factor first. By  independence, using the decay estimate \eqref{eq:DI-L2-bound-current},
\[
\left\|Y_{\widetilde K}^{\rm long}\right\|_{L^2} =   \prod_{K\in\widetilde K}
\prod_{A\in\text{Long}_K}  \left\|D_A\right\|_{L^2} 
\le
\prod_{K\in\widetilde K}
\prod_{A\in\text{Long}_K}
\frac{\mathsf c^{|A|/2}}
{(\log b^{-1})^{(|A|-1)/2}}
\frac1{N_{\varepsilon,b}-j_A},
\]
where $A=[i_A,j_A].$
Since each interval $A$ has length at least \(2\), using $(|A|-1)/2 \ge |A|/4$ and recalling the definition of $m_{\text{long}}$ in \eqref{deflong}, for any small enough constant  \(b>0\),
\begin{equation}
        \left\|Y_{\widetilde K}^{\rm long}\right\|_{L^2}
        \le
        \exp\left\{
        -\frac15(\log\log b^{-1})m_{\text{long}}
        \right\}
        \prod_{K\in\widetilde K}
        \frac1{N_{\varepsilon,b}-j_K},
\label{eq:long-L2-full}
\end{equation} 
where $K=[i_K,j_K].$ This is because for a fixed $K\in \widetilde K$,
the
product of all the factors  $  \frac1{N_{\varepsilon,b}-j_A}$ over  $A\in\text{Long}_K$ is bounded by a single
factor $\frac1{N_{\varepsilon,b}-j_K}$, since $A\subseteq K$ implies $j_A\le j_K$ and all remaining 
factors are at most one. Here it is crucially used  that  $\text{Long}_K$ is non-empty.

For the ``short'' factor, by Lemma \ref{lem:short-fourth-product},  for any small enough constant $b>0,$
\begin{equation}
        \left\|Y_{\widetilde K}^{\rm short}\right\|_{L^6}
        \le       \left(\Gamma \log b^{-1}
        \right)^{C}  
        \exp\left\{
        c_0m_{\text{short}}
        +
        \frac{15+\kappa}{6\Gamma}
        (\log\log b^{-1})m_{\text{short}}
        \right\}.
\label{eq:short-L6-full}
\end{equation}
For the factors, $\left\|
        \widehat G_{\widetilde I}^{\rm tail}
       \widehat G_{\widetilde J}^{\rm tail}
\right\|_{L^6}$ and $
\left\|\mathsf{RN}_{\widetilde K^{\text{bulk}}}^{(\alpha)}\right\|_{L^6}$ we apply \eqref{eq:tail-product-L6-eta} and  \eqref{eq:RN-density-L6-bounded} respectively, with $p=6$ to obtain the bounds $(\log\varepsilon^{-1})^{ C\eta}$ and $2$.
Putting everything together and using
\[
         m_{\text{short}}\le m_{\text{long}},\qquad |\widetilde I|+|\widetilde J|
        =
        m_{\text{long}}+m_{\text{short}}
        \le
        2m_{\text{long}},       
\]
we get
for any small enough constant $b>0$ satisfying the condition \eqref{eq:Gamma-b-full}, for all sufficiently small $\varepsilon>0,$
\begin{equation}
\mathbb E\left[
\mathcal Y(\widetilde I,\widetilde J)
\,\middle|\,
\mathrm{UT}_{\varepsilon,\eta}(\alpha)
\right]
\le 
(\log\varepsilon^{-1})^{ C\eta}
\prod_{K\in\widetilde K(\widetilde I,\widetilde J)}
\frac1{N_{\varepsilon,b}-j_K}.
\label{eq:component-bound-before-sum}
\end{equation} Here we absorbed the factor $   \left(\Gamma \log b^{-1}
        \right)^{C} $ into the term  $(\log\varepsilon^{-1})^{ C\eta}$ (by increasing the value of $C$), as we take $\varepsilon \rightarrow 0$ after $b>0$ is taken to be a small constant.
{Indeed, for each fixed \(b\), the former factor is independent of
\(\varepsilon\), whereas \((\log\varepsilon^{-1})^\eta\to\infty\), and
hence the desired absorption holds for all sufficiently small
\(\varepsilon\), with the threshold depending on \(b\).}
Observe that the bound obtained above depends on \((\widetilde I,\widetilde J)\) only
through the connected components \(\widetilde K=\widetilde K(\widetilde I,\widetilde J)\)
and their right endpoints.  We therefore regroup the sum in \eqref{eq:full-second-moment-expanded-direct} according to
\(\widetilde K\).   Let
\(\mathfrak K_{\varepsilon,b}^{\delta}\) be the set of all
collections \(\widetilde K\) of the union of disjoint intervals
\(K\subset[1,N_{\varepsilon,b}-1] \), each satisfying \(|K|\ge2\), such that
\[
        |\widetilde K|
        :=
        \sum_{K\in\widetilde K}|K|
        \le
        2\delta \mathcal L_\varepsilon .
\]
For \(\widetilde K\in\mathfrak K_{\varepsilon,b}^{\delta}\), define
\[
        \mathfrak S_{\widetilde K}^{2,\delta}
        :=
        \left\{
        (\widetilde I,\widetilde J)\in
        (\mathfrak I_{\varepsilon,b}  )^2:
        \widetilde K(\widetilde I,\widetilde J)=\widetilde K,\ 
        {2}\le |\widetilde I|,|\widetilde J|\le\delta \mathcal L_\varepsilon
        \right\}.
\] 
We regroup the sum according to the  map $     (\widetilde I,\widetilde J)
        \longmapsto
        \widetilde K(\widetilde I,\widetilde J),$ i.e. we write
\begin{equation}
\begin{aligned}
&\sum_{\substack{
\widetilde I,\widetilde J\in\mathfrak I_{\varepsilon,b}  \\
2\le|\widetilde I|,|\widetilde J|\le\delta \mathcal L_\varepsilon}}
\mathbb E\left[
\mathcal Y(\widetilde I,\widetilde J)
\,\middle|\,
\mathrm{UT}_{\varepsilon,\eta}(\alpha)
\right]                                            =
\sum_{\widetilde K\in\mathfrak K_{\varepsilon,b}^{\delta}}
\sum_{(\widetilde I,\widetilde J)\in\mathfrak S_{\widetilde K}^{2,\delta}}
\mathbb E\left[
\mathcal Y(\widetilde I,\widetilde J)
\,\middle|\,
\mathrm{UT}_{\varepsilon,\eta}(\alpha)
\right].
\end{aligned}
\label{eq:double-counting-by-K-full}
\end{equation} 
We bound the cardinality of $\mathfrak S_{\widetilde K}^{2,\delta}$.  Every local interval collection \(\widetilde I_K\) is determined by the
set of its left endpoints and the set of its right endpoints.  Each of these
sets is a subset of \(K\).  Thus the number of possible local interval
collections inside \(K\) is at most $ 2^{|K|}\cdot 2^{|K|}
        =
        2^{2|K|}.$ 
Hence, the number of possible local pairs
\((\widetilde I_K,\widetilde J_K)\) is  at most $(2^{2|K|})^2 = 16^{|K|},$ implying that  $ |  \mathfrak S_{\widetilde K}^{2,\delta}|
        \le
        \prod_{K\in\widetilde K}16^{|K|}
        =
        16^{|\widetilde K|}.$ Thus
using \eqref{eq:component-bound-before-sum} and \eqref{eq:double-counting-by-K-full},
\begin{equation}
\begin{aligned}
&\sum_{\substack{
\widetilde I,\widetilde J\in\mathfrak I_{\varepsilon,b}  \\
2\le|\widetilde I|,|\widetilde J|\le\delta \mathcal L_\varepsilon}}
\mathbb E\left[
\mathcal Y(\widetilde I,\widetilde J)
\,\middle|\,
\mathrm{UT}_{\varepsilon,\eta}(\alpha)
\right]                   \le
 (\log\varepsilon^{-1})^{ C\eta}
\sum_{\widetilde K\in\mathfrak K_{\varepsilon,b}^{\delta}} 16^{|\widetilde K|} 
\prod_{K\in\widetilde K}
\frac1{N_{\varepsilon,b}-j_K}.
\end{aligned}
\label{eq:sum-over-K-before-fourfold}
\end{equation}
For \(\widetilde K\in
\mathfrak K_{\varepsilon,b}^{\delta}\) consisting of disjoint intervals $K_1,\ldots,K_q$, with $K_\ell=[i_\ell,j_\ell]$ for $1\le \ell \le q,$ we set
\[
        m:=|\widetilde K|=\sum_{r=1}^q |K_r| \le 2\delta \mathcal L_\varepsilon,
        \qquad
        v_r:=|K_r|,
        \qquad
        h_r:=j_{K_r}
        \quad (\text{for}  \ r=1,\ldots,q).
\] 
Then as before (see after \eqref{11correspond}), the data $(m,q,(v_1,\ldots,v_q),(h_1,\ldots,h_q))$ 
determine \(\widetilde K\) uniquely.  
Conversely, every \(\widetilde K\in \mathfrak K_{\varepsilon,b}^{\delta}\)
gives such a tuple, subject to the admissibility conditions.  Dropping the admissibility
constraints, the RHS of  \eqref{eq:sum-over-K-before-fourfold} is bounded by
\begin{equation}
\begin{aligned}  
  (\log\varepsilon^{-1})^{ C\eta}
\sum_{m=1}^{2\delta \mathcal L_\varepsilon}
16^m
\sum_{q=1}^{m}
\sum_{\substack{v_1,\ldots,v_q\ge2\\
v_1+\cdots+v_q=m}}
\sum_{  1\le  h_1<\cdots<  h_q <  N_{\varepsilon,b}}
\prod_{r=1}^{q}
\frac1{N_{\varepsilon,b}-  h_r}.
\end{aligned}
\label{eq:fourfold-sum-full}
\end{equation}
For fixed \(q\), recalling $N_{\varepsilon,b}\le \frac{1}{10} \log \varepsilon^{-1}$ for a small constant $b>0$ (see \eqref{ndef}) and $\mathcal L_\varepsilon = \log \log \varepsilon^{-1},$ 
\begin{equation}
        \sum_{ 1\le  h_1<\cdots<  h_q< N_{\varepsilon,b}}
        \prod_{r=1}^{q}
        \frac1{N_{\varepsilon,b}-  h_r}
        \le
        \frac1{q!}
        \left(
        \sum_{h=1}^{N_{\varepsilon,b}-1}
        \frac1{N_{\varepsilon,b}-h}
        \right)^q
        \le
        \frac{ \mathcal L_\varepsilon^q}{q!}.
\label{eq:endpoint-sum-full}
\end{equation}
For fixed \(m,q\), the number of component-length profiles is bounded by (see \eqref{combinatorics})
\[
        \#\{(v_1,\ldots,v_q):v_r\ge2,\ v_1+\cdots+v_q=m\}
        \le
        2^m.
\]
Applying the above two bounds to \eqref{eq:fourfold-sum-full}, we obtain
\begin{equation}
\begin{aligned}
&\sum_{\substack{
\widetilde I,\widetilde J\in\mathfrak I_{\varepsilon,b}  \\
{2}\le|\widetilde I|,|\widetilde J|\le\delta \mathcal L_\varepsilon}}
\mathbb E\left[
\mathcal Y(\widetilde I,\widetilde J)
\,\middle|\,
\mathrm{UT}_{\varepsilon,\eta}(\alpha)
\right]     \le
  (\log\varepsilon^{-1})^{ C\eta}
\sum_{m=1}^{2\delta \mathcal L_\varepsilon}
16^m\cdot 2^m
\sum_{q=1}^{m}
\frac{\mathcal L_\varepsilon^q}{q!}.
\end{aligned}
\label{eq:entropy-before-final-full}
\end{equation}
We now estimate the last  sum.  
Fix \(1\le m\le2\delta \mathcal L_\varepsilon < \mathcal L_\varepsilon\) (recall $\delta \in (0,1/10)$). As $ q\mapsto \frac{ \mathcal L_\varepsilon^q}{q!}$
is increasing for \(1\le q\le m\) (since the ratio of two adjacent terms is $ \frac{\mathcal L_\varepsilon}{q+1} \ge 1$), 
\begin{align}\label{349}
        32^m \sum_{q=1}^{m}
        \frac{ \mathcal L_\varepsilon^q}{q!}
        \le
        32^m\cdot  m\frac{ \mathcal L_\varepsilon^m}{m!}
        \le
        m\left(\frac{32e \mathcal L_\varepsilon}{m}\right)^m.
\end{align}
{In the last inequality we used that $m ! \ge \left(\frac{m}{e}\right)^m.$}
Taking logarithm, and using $m\le \cL_{\e}$ 
\[
     \log m+ m\log\left(\frac{32e \mathcal L_\varepsilon}{m}\right)
        \le \log \mathcal L_\varepsilon +
        2\delta \mathcal L_\varepsilon
        \log\left(\frac{32e}{2\delta}\right)
        \le   C\delta \mathcal L_\varepsilon\log(1/\delta),
\]
for all sufficiently small \(\delta>0\). {Above we used that $x\log(1/x)$ is monotonically increasing for $x\in (0,2\delta)$ for a small enough $\delta>0.$} Thus,
uniformly over \(1\le m\le2\delta \mathcal L_\varepsilon\), the quantity \eqref{349} is at most $ \exp\left\{
        C\delta \mathcal L_\varepsilon\log(1/\delta)
        \right\}.$ 
Summing over  \(1\le m\le2\delta \mathcal L_\varepsilon\), and
absorbing the subexponential factor, we deduce that for all sufficiently small $\varepsilon>0$,
\[
        \sum_{m=1}^{2\delta \mathcal L_\varepsilon}
   32^m
        \sum_{q=1}^{m}
        \frac{ \mathcal L_\varepsilon^q}{q!}
        \le
        \exp\left\{
        C\delta \mathcal L_\varepsilon\log(1/\delta)
        \right\}   \le
        (\log\varepsilon^{-1})^{C\delta\log(1/\delta)}.
\]
Therefore applying this to \eqref{eq:entropy-before-final-full}, and recalling \eqref{eq:full-second-moment-expanded-direct} we conclude the proof.
  
\qed

We end this section by recording an unconditional statement.

\subsection{Unconditional second moment estimate}
We need an unconditional version of the decoupling estimate.  The
conditional estimates are effective on upper-tail events of the bulk product
\(G_{\eta;1}\), but later we must also control contributions coming from the
complement of those events.  The unconditional estimate below provides this
control.  Its proof is essentially the same as the conditional small- and
large-length arguments, except that it is simpler since the projected Radon--Nikodym factor is
absent.  

\begin{proposition} [Unconditional decoupling of the partition function] \label{unconditionalprop}
 There exists $C>0$ such that the following holds.
Let \(\eta \in (0,1/10)\) be any constant. 
Then for any small enough constant \(b>0\) (depending only on $\eta$),
\begin{equation}
\mathbb E\left[
\mathbf 1_{\Omega_{\varepsilon,\eta}}
\left|
\frac{\mathcal Z_{\varepsilon,b}}{G_{\eta;1}}
-
G_{\eta;2}
\right|^2 
\right] \le 
   C 
        (\log\varepsilon^{-1})^{C
 \eta }  
\end{equation}
for all 
sufficiently small \(\varepsilon>0\).
\end{proposition}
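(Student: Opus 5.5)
We start from the exact expansion of Lemma~\ref{expansion234}, which gives
\[
\frac{\mathcal Z_{\varepsilon,b}}{G_{\eta;1}}-G_{\eta;2}=\mathcal R_{\eta,\le\delta}+\mathcal R_{\eta,>\delta}
\]
with the decomposition \eqref{decomp12}. Here we fix $\delta\in(0,1/10)$ with $\delta\log(1/\delta)<\eta$. We bound the unconditional second moment of each piece on $\Omega_{\varepsilon,\eta}$ separately and combine them using $(x+y)^2\le 2x^2+2y^2$.

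\textbf{Large-total-length piece.} Proposition~\ref{Unconditional second moment: large-total-length} already gives the bound $C(\log\varepsilon^{-1})^{C\eta-\Lambda}$ for any $\Lambda>0$. We take, say, $\Lambda=1$. This fixes a smallness requirement on $b$ that depends only on $\delta$, hence only on $\eta$.

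\textbf{Small-total-length piece.} We repeat the proof of Proposition~\ref{prop:second-moment-small-total-length-full} verbatim, with the conditional expectation replaced by the plain expectation. This amounts to setting $\mathsf{RN}\equiv1$, so Proposition~\ref{prelimcond} is not needed and the dependence on $\alpha$ disappears.

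Concretely, on $\Omega_{\varepsilon,\eta}$ we bound $(G_I^{\rm bulk})^{-1}\le 2^{|I|}$ and expand the square into the sum over pairs $(\widetilde I,\widetilde J)$ of $\mathbb E[\mathcal Y(\widetilde I,\widetilde J)]$. We form the connected components $\widetilde K(\widetilde I,\widetilde J)$ and perform the long/short split \eqref{441}. Hölder's inequality is now applied with exponents $(2,4,4)$ instead of $(2,6,6,6)$, since there is no density factor; alternatively we keep the $L^6$ bounds and insert the trivial factor $1$.

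The ingredients for each factor are as follows:
\begin{itemize}
\item[(a)] for the long side, the $L^2$ bound \eqref{eq:DI-L2-bound-current}, yielding \eqref{eq:long-L2-full};
\item[(b)] for the short side, Lemma~\ref{lem:short-fourth-product}; its $L^6$ bound dominates the $L^4$ norm, so the lemma can be used unchanged;
\item[(c)] for the tail products, \eqref{eq:G-tail-product-moment-consequence}, which gives $(\log\varepsilon^{-1})^{C\eta}$.
\end{itemize}
We choose $\Gamma,\kappa,b$ as in \eqref{eq:K0-kappa-full-choice}--\eqref{eq:Gamma-b-full}, using $m_{\rm short}\le m_{\rm long}$. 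The long-side decay then absorbs the short-side growth together with the factor $2^{|\widetilde I|+|\widetilde J|}$, which yields the analogue of \eqref{eq:component-bound-before-sum}.

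We then regroup by $\widetilde K$ and use the combinatorial bounds: $16^{|\widetilde K|}$ for the pairs, $2^m$ for the length profiles, and $\mathcal L_\varepsilon^q/q!$ for the endpoint sums. This bounds the small-length contribution by
\[
(\log\varepsilon^{-1})^{C\eta+C\delta\log(1/\delta)}\le(\log\varepsilon^{-1})^{C\eta}.
\]

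\textbf{Conclusion and main obstacle.} Adding the two pieces yields $C(\log\varepsilon^{-1})^{C\eta}$. There is no genuine obstacle. The only point needing care is bookkeeping: we must check that every smallness requirement on $b$ (from Lemma~\ref{lem:short-fourth-product}, from \eqref{eq:Gamma-b-full}, and from Proposition~\ref{Unconditional second moment: large-total-length}) depends only on $\eta$ through $\delta$. This holds because $\alpha$ entered the earlier proofs solely through Propositions~\ref{largedeviation45} and~\ref{prelimcond}, and neither is used here.
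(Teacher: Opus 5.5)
Your proposal is correct and matches the paper's proof: the paper also combines the unconditional large-total-length estimate (Proposition~\ref{Unconditional second moment: large-total-length}) with a rerun of the small-total-length argument, chooses $\delta$ so that $\delta\log(1/\delta)<\eta$, and simply drops the Radon--Nikodym factor. The only difference is the H\"older exponents: the paper uses $2,6,6$ and remarks that your choice $2,4,4$ would work equally well.
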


As in the proof of Proposition~\ref{prop:terminal-block-extension}, 
Proposition~\ref{unconditionalprop} is  immediately obtained by combining the following \emph{unconditional}
small-total-length estimate with the already unconditional large-total-length
estimate from Proposition~\ref{Unconditional second moment: large-total-length}. 

\begin{proposition}[Unconditional second moment: small-total-length]
\label{prop:unconditional-second-moment-small-total-length-full12}
 There exists $C>0$ such that for any $\eta,\delta \in (0,1/10)$ and a small enough constant  \(b>0\),  
\begin{equation}
        \mathbb E\left[
        \mathbf 1_{\Omega_{\varepsilon,\eta}}
        \left|
        \mathcal R_{\eta,\le\delta}
        \right|^2
        \right]
        \le
        C
        (\log\varepsilon^{-1})^{
        C\eta+C\delta\log(1/\delta)} 
\label{eq:unconditional-full-small-second-moment-polylog}
\end{equation}
for all 
sufficiently small \(\varepsilon>0\).
\end{proposition}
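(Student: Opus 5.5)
The plan is to rerun the proof of Proposition~\ref{prop:second-moment-small-total-length-full} verbatim, with the conditional expectation replaced by the ordinary expectation. This is equivalent to replacing the Radon--Nikodym factor $\mathsf{RN}^{(\alpha)}_{\widetilde K^{\rm bulk}}$ by the constant $1$. No large-deviation input (Proposition~\ref{largedeviation45} or Proposition~\ref{prelimcond}) is then needed.

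First, on $\Omega_{\varepsilon,\eta}$ we use $(G_I^{\rm bulk})^{-1}\le 2^{|I|}$ and the triangle inequality. This bounds $\mathbb E[\mathbf 1_{\Omega_{\varepsilon,\eta}}|\mathcal R_{\eta,\le\delta}|^2]$ by $\sum_{\widetilde I,\widetilde J}\mathbb E[\mathcal Y(\widetilde I,\widetilde J)]$, with $\mathcal Y$ as in \eqref{404} and the sum over $2\le|\widetilde I|,|\widetilde J|\le\delta\mathcal L_\varepsilon$. For each pair we form the connected components $\widetilde K=\widetilde K(\widetilde I,\widetilde J)$ and perform the same long/short split \eqref{441}. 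We then apply H\"older's inequality, now with exponents $2,4,4$, or keep $2,6,6,6$ with the last factor equal to $1$. This gives
\[
\mathbb E[\mathcal Y(\widetilde I,\widetilde J)]\le 2^{|\widetilde I|+|\widetilde J|}\,\|Y^{\rm long}_{\widetilde K}\|_{L^2}\,\|Y^{\rm short}_{\widetilde K}\|_{L^6}\,\|\widehat G^{\rm tail}_{\widetilde I}\widehat G^{\rm tail}_{\widetilde J}\|_{L^6}.
\]

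Next we bound the three factors exactly as before. The long factor is controlled by \eqref{eq:long-L2-full}, which uses \eqref{eq:DI-L2-bound-current} and independence of the disjoint blocks. The short factor is controlled by Lemma~\ref{lem:short-fourth-product}, giving \eqref{eq:short-L6-full}. The tail factor is controlled by \eqref{eq:tail-product-L6-eta}, which contributes $(\log\varepsilon^{-1})^{C\eta}$. We fix $\kappa$ and choose $\Gamma$ as in \eqref{eq:K0-kappa-full-choice}, then $b$ as in \eqref{eq:Gamma-b-full}. With $m_{\rm short}\le m_{\rm long}$ and $|\widetilde I|+|\widetilde J|\le 2m_{\rm long}$, the decay $e^{-\frac15(\log\log b^{-1})m_{\rm long}}$ absorbs $2^{2m_{\rm long}}e^{(c_0+\frac{15+\kappa}{6\Gamma}\log\log b^{-1})m_{\rm long}}$. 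The factor $(\Gamma\log b^{-1})^C$ is absorbed into $(\log\varepsilon^{-1})^{C\eta}$ for $\varepsilon$ small. The result is the analogue of \eqref{eq:component-bound-before-sum}, namely $\mathbb E[\mathcal Y]\le(\log\varepsilon^{-1})^{C\eta}\prod_{K\in\widetilde K}(N_{\varepsilon,b}-j_K)^{-1}$.

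Finally we regroup the sum by $\widetilde K$ as in \eqref{eq:double-counting-by-K-full}. The entropy bound $|\mathfrak S^{2,\delta}_{\widetilde K}|\le16^{|\widetilde K|}$, the profile count \eqref{combinatorics}, and the endpoint sum \eqref{eq:endpoint-sum-full} reduce the total to $(\log\varepsilon^{-1})^{C\eta}\sum_{m\le2\delta\mathcal L_\varepsilon}32^m\sum_{q\le m}\mathcal L_\varepsilon^q/q!$. The estimate \eqref{349} bounds this by $(\log\varepsilon^{-1})^{C\eta+C\delta\log(1/\delta)}$, which is the claim.

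There is no real obstacle, since every step is a specialization of the conditional argument. The only point to check is that the measurability and independence claims used in the H\"older step still hold without conditioning. Here $Y^{\rm long}$ is a product over disjoint blocks of independent $D_A$, and no change of measure is involved. Note also that the choice of $b$ no longer depends on $\alpha$, so $b$ may depend only on the universal constants in Lemma~\ref{lem:short-fourth-product} and \eqref{eq:DI-L2-bound-current}.
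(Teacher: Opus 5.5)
Your proposal is correct and is essentially the paper's proof. You drop the Radon--Nikodym factor, apply H\"older to the long, short and tail factors (the paper likewise keeps exponents $2,6,6$ and remarks that $2,4,4$ would also work), obtain the unconditional analogue of \eqref{eq:component-bound-before-sum}, and then sum over $\widetilde K$ exactly as in the conditional case.
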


\begin{proof}
The proof is the same as the proof of Proposition \ref{prop:second-moment-small-total-length-full}, except that no
Radon--Nikodym derivative appears. For completeness we provide the details but only at places where
the argument differs slightly.

Choose \(\Gamma\) and \(\kappa>0\) so that $ \frac15>\frac{15+\kappa}{6\Gamma}.$  Then  take a small enough constant \(b>0\)    (see \eqref{eq:Gamma-b-full} for the explicit condition). 
Expanding the square, we have an unconditional analog of \eqref{eq:full-second-moment-expanded-direct}:
\begin{equation}
\begin{aligned}
&\mathbb E\left[
\mathbf 1_{\Omega_{\varepsilon,\eta}}
\left|
\mathcal R_{\eta,\le\delta}
\right|^2
\right]                                        \le
\sum_{\substack{
\widetilde I,\widetilde J\in\mathfrak I_{\varepsilon,b}\\
1\le |\widetilde I|,|\widetilde J|\le \delta \mathcal L_\varepsilon}}
\mathbb E\left[
\mathcal Y(\widetilde I,\widetilde J)
\right],
\end{aligned}
\label{eq:unconditional-second-moment-expanded-short}
\end{equation}
where, as in \eqref{404},
\[
        \mathcal Y(\widetilde I,\widetilde J)
        :=
        2^{|\widetilde I|+|\widetilde J|}
        \prod_{I\in\widetilde I}|D_I|
        \cdot \prod_{J\in\widetilde J}|D_J| \cdot 
                \widehat G_{\widetilde I}^{\rm tail}
               \widehat G_{\widetilde J}^{\rm tail}.
\]
For \((\widetilde I,\widetilde J) \in (\mathfrak I_{\varepsilon,b})^2\),  let $ \widetilde K:=\widetilde K(\widetilde I,\widetilde J)$ 
be the corresponding collection of connected components. We split the
\(D\)-factors inside each component into long and short sides as before (see \eqref{441}-\eqref{444}). Then,
\[
       \mathcal Y(\widetilde I,\widetilde J)
        =
        2^{|\widetilde I|+|\widetilde J|}\cdot
        Y_{\widetilde K}^{\rm long} \cdot
        Y_{\widetilde K}^{\rm short} \cdot
                \widehat G_{\widetilde I}^{\rm tail}
               \widehat G_{\widetilde J}^{\rm tail}.    
\]
In the   proof in Proposition \ref{prop:second-moment-small-total-length-full},  we applied Hölder's
inequality to the four factors (see \eqref{eq:holder-direct-full})
\[
        Y_{\widetilde K}^{\rm long},
        \qquad
        Y_{\widetilde K}^{\rm short},
        \qquad
                \widehat G_{\widetilde I}^{\rm tail}
               \widehat G_{\widetilde J}^{\rm tail},
        \qquad
        \mathsf{RN}_{\widetilde K^{\text{bulk}}}^{(\alpha)} .
\]
Here the last factor is absent, or equivalently is equal to \(1\). Hence Hölder's
inequality gives
\begin{equation}
\begin{aligned}
\mathbb E\left[
\mathcal Y(\widetilde I,\widetilde J)
\right]
&\le
2^{|\widetilde I|+|\widetilde J|}
\left\|Y_{\widetilde K}^{\rm long}\right\|_{L^2}
\left\|Y_{\widetilde K}^{\rm short}\right\|_{L^6}
\left\|
        \widehat G_{\widetilde I}^{\rm tail}
       \widehat G_{\widetilde J}^{\rm tail}
\right\|_{L^6}.
\end{aligned}
\label{eq:unconditional-holder-key-difference}
\end{equation}
This is the only difference from the conditional argument. {We could have improved the above bound using $2,4,4$ as the exponents but the gain will be insignificant for our purposes and hence we refrain from doing so.}
The three factors on the RHS are controlled exactly as before (see \eqref{eq:long-L2-full}, \eqref{eq:short-L6-full} and \eqref{eq:tail-product-L6-eta} respectively). 
By the choice of \(\Gamma,\kappa\), and  \(b\) (see the condition \eqref{eq:Gamma-b-full}), we have an unconditional analog of  \eqref{eq:component-bound-before-sum}:
\begin{equation}
        \mathbb E\left[
        \mathcal Y(\widetilde I,\widetilde J)
        \right]
        \le
        C
        (\log\varepsilon^{-1})^{C\eta}
        \prod_{K\in\widetilde K(\widetilde I,\widetilde J)}
        \frac1{N_{\varepsilon,b}-j_{K}} .
\label{eq:unconditional-component-bound-short}
\end{equation}
It remains to sum   over
\((\widetilde I,\widetilde J)\), which can be done exactly as before (see the discussion below \eqref{eq:component-bound-before-sum}).  
\end{proof}

\begin{remark} \label{remarkvary3}
While {the main propositions in this section are stated for fixed \(b\) and a
fixed coupling parameter \(\vartheta\),} the same conclusions remain valid for  \(\varepsilon\)-dependent parameters $b=b_\varepsilon$ and $\vartheta=\vartheta_\varepsilon$ 
provided that, for   
some \(\vartheta^\star\in\mathbb R\) and small enough \(b^\star>0\),
\begin{align} \label{general}
        b_\varepsilon\to b^\star,
        \qquad
        \vartheta_\varepsilon\to\vartheta^\star
        \qquad
        \text{as }\varepsilon\rightarrow0.
\end{align}
Indeed, the moment bounds in Lemmas~\ref{lem:bounded-consecutive-fourth} and
\ref{lem:short-fourth-product} do not rely on \(b\) being fixed independently
of \(\varepsilon\); they only require \(b\) to lie in the admissible small-\(b\)
range.  Moreover, these bounds hold uniformly whenever the coupling parameter \(\vartheta\) 
is bounded above.  This follows from the monotonicity of the SHF moments in
\(\vartheta\); see Lemma~\ref{lem:moment-monotonicity-theta}. Moreover, the key input \eqref{rnspecial32} remains valid under the  assumption \eqref{general} (we elaborate on this later in Remark \ref{remarkvary5} when we state and prove Proposition \ref{thm:direct-Lp-no-split-corrected}, a general version of Proposition \ref{prelimcond}).  Thus the proofs apply without any further
modification in this varying-parameter setting.

\end{remark}

\section{Level sets of the SHF and mass concentration}\label{masconc4567}
In this section we prove Proposition \ref{prop:Z-mass-concentration-from-Geta1-Geta2}. We start by recalling the statement.
\begin{proposition}
\label{prop:Z-mass-concentration-from-Geta1-Geta234} 
Let \(\zeta\in(0,1/10)\) be an arbitrary constant. 
Then there exists $\upsilon_\zeta>0$ such that the following holds 
 for  any small  constant $b>0$: For all  sufficiently small $\varepsilon>0,$
\begin{equation}
        \mathbb E\left[
        \mathcal Z_{\varepsilon,b}\mathbf 1_{\{\mathcal Z_{\varepsilon,b}<N_{\varepsilon,b}^{1/2-\zeta}\}}
        \right]
        +
        \mathbb E\left[
        \mathcal Z_{\varepsilon,b}\mathbf 1_{\{\mathcal Z_{\varepsilon,b}>N_{\varepsilon,b}^{1/2+\zeta}\}}
        \right] \le (\log \varepsilon^{-1})^{-\upsilon_\zeta}. 
\end{equation} 
\end{proposition}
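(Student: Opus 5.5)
We work with the decomposition $\mathcal Z_{\varepsilon,b}=G_{\eta;1}\,(G_{\eta;2}+\mathcal R)$, where $\mathcal R:=\mathcal Z_{\varepsilon,b}/G_{\eta;1}-G_{\eta;2}$ is the decoupling error controlled by Propositions \ref{prop:terminal-block-extension} and \ref{unconditionalprop}. We fix $\eta$ and $\delta$ small relative to $\zeta$, and work throughout on $\Omega_{\varepsilon,\eta}$. The complement $\Omega_{\varepsilon,\eta}^c$ is discarded via Cauchy--Schwarz: \eqref{adaptmoment} gives $\E\mathcal Z_{\varepsilon,b}^2\le(\log\varepsilon^{-1})^{1+\kappa}$, and Lemma \ref{goodevent} gives an arbitrarily large negative power for $\P(\Omega_{\varepsilon,\eta}^c)$.

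The first step is to slice according to the size of the bulk product. Write $G_{\eta;1}=N_{\varepsilon,b}^{-\frac{1-\eta}{2}+a}$ and partition the range of $a$ into a grid of mesh $\gamma\ll\zeta$: $a\in[k\gamma,(k+1)\gamma)$. On the slice $\{a\approx\alpha\}$ we use
\[
\E\bigl[\mathcal Z_{\varepsilon,b}\mathbf 1_{\Omega}\mathbf 1_{\{a\approx\alpha\}}\bigr]\le N_{\varepsilon,b}^{-\frac{1-\eta}{2}+\alpha+\gamma}\,\P(\mathrm{UT}_{\varepsilon,\eta}(\alpha))\,\E\bigl[\mathbf 1_\Omega|G_{\eta;2}+\mathcal R|\,\big|\,\mathrm{UT}_{\varepsilon,\eta}(\alpha)\bigr].
\]
By Proposition \ref{prop:terminal-block-extension}, and because $G_{\eta;2}$ is independent of the bulk with moments bounded by \eqref{eq:G-tail-product-moment-consequence}, the conditional expectation is at most $(\log\varepsilon^{-1})^{C\eta}$. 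Proposition \ref{largedeviation45} then bounds the slice's contribution by
\[
N_{\varepsilon,b}^{-\frac{(\alpha-(1-\eta))^2}{2(1-\eta)}+\gamma+\widetilde\nu_b+C\eta+o(1)}.
\]
This is polynomially small unless $\alpha\approx1-\eta$, i.e.\ unless $G_{\eta;1}\approx N_{\varepsilon,b}^{\frac{1-\eta}{2}}$. Only finitely many slices matter, but the $b$-dependence of the error terms must be handled with care. Large $\alpha$ (say $\alpha\ge3$) is handled crudely by Markov's inequality with higher moments of $G_{\eta;1}$ via \eqref{eq:G-full-product-moment-consequence}. Small and negative $\alpha$ are handled directly, since there $G_{\eta;1}\le N_{\varepsilon,b}^{-\frac{1-\eta}{2}+\alpha_0}$ is tiny and the unconditional Proposition \ref{unconditionalprop} gives $\E[\mathbf 1_\Omega|G_{\eta;2}+\mathcal R|]\le(\log\varepsilon^{-1})^{C\eta}$.

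The second step is to show that on the dominant slices $\alpha\in[1-\eta-\zeta',\,1-\eta+\zeta']$, with $\zeta'$ a small multiple of $\zeta$, we typically have $\mathcal Z_{\varepsilon,b}\in[N^{1/2-\zeta},N^{1/2+\zeta}]$. On these slices $G_{\eta;1}=N^{\frac{1-\eta}{2}+O(\zeta')}$, so it suffices to show that the factor $|G_{\eta;2}+\mathcal R|$ lies in $[N^{-\zeta/2},N^{\zeta/2}]$ except on a set whose contribution to the expectation is small.

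For the upper side we use conditional Cauchy--Schwarz:
\[
\E\bigl[|G_{\eta;2}+\mathcal R|\,\mathbf 1_{\{|G_{\eta;2}+\mathcal R|>N^{\zeta/2}\}}\,\big|\,\mathrm{UT}\bigr]\le N^{-\zeta/2}\,\E\bigl[|\cdot|^2\,\big|\,\mathrm{UT}\bigr]\le N^{-\zeta/2+C\eta}.
\]
For the lower side the contribution is trivially at most $N^{-\zeta/2}$ times the slice weight. Summing over the finitely many slices and taking $\eta,\gamma,b$ small relative to $\zeta$ then yields the exponent $\upsilon_\zeta$.

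The main obstacle is the gap between the $\widetilde\nu_b$ and $(\log\varepsilon^{-1})^{C\eta}$ losses and the Gaussian gain $(\alpha-(1-\eta))^2/2$ near the dominant slice. Near $\alpha\approx1-\eta$ there is no decay at all, so the statement must really come from the second step, while slices at distance $\ge\zeta'$ decay only like $N^{-\zeta'^2/2}$. This forces the order of choices $\zeta\to\zeta'\to(\eta,\gamma)\to b$, with $C\eta+\gamma+\widetilde\nu_b\ll\zeta'^2$. A further subtlety is that the proposition claims uniformity over all small $b$ with $\upsilon_\zeta$ independent of $b$; I would establish it first for $b\le b_0(\zeta)$ and then handle the remaining range via the scaling argument of Corollary \ref{corcorcor}.
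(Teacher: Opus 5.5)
Your proposal is correct and uses the same ingredients as the paper: you discard $\Omega_{\varepsilon,\eta}^c$, use the unconditional and conditional second-moment bounds on $R_\eta=\mathcal Z_{\varepsilon,b}/G_{\eta;1}$, the upper-tail bound of Proposition \ref{largedeviation45}, and H\"older plus moment bounds for very large $G_{\eta;1}$; the only difference is ordering, since the paper first removes $\{R_\eta>N_{\varepsilon,b}^{\beta}\}$ and $\{R_\eta<N_{\varepsilon,b}^{-\beta}\}$ and then applies the size-biased concentration of $G_{\eta;1}$ (Lemma \ref{gconcen}) to the rest, whereas you fold the conditional moment of $R_\eta$ into each $G_{\eta;1}$-slice. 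Two small corrections: the crude cutoff for large $\alpha$ must be a constant $B$ chosen large relative to the unspecified exponent in \eqref{eq:G-full-product-moment-consequence}, so $3$ may not suffice; and no scaling argument for a ``remaining'' range of $b$ is needed, because the statement only asserts the bound for $b<b_0(\zeta)$.
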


\nin
As highlighted in Section \ref{iop}, the comparison between  $\mathcal Z_{\varepsilon,b}$
and the decoupled product $G_{\eta; 1}$ developed in the previous sections through Propositions \ref
{prop:terminal-block-extension} and \ref{unconditionalprop} allows us to transfer estimates about the latter to the former.  
Thus the following  corresponding statement for $G_{\eta;1}$ whose proof will  be deferred to Section \ref{sec5} will serve as our key input. 
\begin{lemma} \label{gconcen}
There exists
\(\widetilde\nu_b \ge0\)  with \(\widetilde\nu_b\to0\) as \(b\downarrow0\)  such that the following holds.
For \(\eta \in (0,1/10)\) and \(\gamma\in(0,1/10)\), define the event
\[
        \mathcal V_{\eta,\gamma}
        :=
        \left\{
         N_{\varepsilon,b}^{1/2-\eta/2-\gamma}
        \le
        G_{\eta;1}
        \le
         N_{\varepsilon,b}^{1/2-\eta/2+\gamma}
        \right\}.
\]
Then for any constant  \(\xi>0\), the following holds for any small enough constant $b>0$. For all sufficiently small $\varepsilon>0,$
\begin{equation}
        \mathbb E\left[
        G_{\eta;1}\mathbf 1_{\mathcal V_{\eta,\gamma}^c}
        \right]
        \le
         N_{\varepsilon,b}^{-\frac{\gamma^2}{2(1-\eta)}+\xi}.
\label{eq:size-biased-Geta1-concentration}
\end{equation}
\end{lemma}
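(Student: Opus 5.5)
The plan is to work with the sum $S:=\sum_{i=1}^{M_{\varepsilon,\eta}}Y_i$, $Y_i:=\log G_i$, and to turn the size-biased expectation into an exponential tilt at $\theta=1$. Since $\mathbb E G_i=1$, the measure $d\mathbb Q:=G_{\eta;1}\,d\mathbb P$ is a probability measure, and the $Y_i$ remain independent under $\mathbb Q$. The left side of \eqref{eq:size-biased-Geta1-concentration} is exactly $\mathbb Q(\mathcal V_{\eta,\gamma}^c)$. So it suffices to prove two-sided large deviations for $S$ under $\mathbb Q$ around the centre $(\tfrac12-\tfrac\eta2)\log N_{\varepsilon,b}$, at distance $\gamma\log N_{\varepsilon,b}$.

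The first step is to set up the log-Laplace transforms. The approach sketched in Section \ref{iop} for Proposition \ref{largedeviation45} gives, for each bulk index $i$ and each $\theta$ in a compact range,
\[
\lambda_i(\theta)=\log\mathbb E[G_i^\theta]=\frac{\theta(\theta-1)}{2(N_{\varepsilon,b}-i)}\bigl(1+O(\nu_b)\bigr)+O\bigl((N_{\varepsilon,b}-i)^{-3/2}\bigr).
\]
To get it, I would Taylor expand $G_i^\theta=(1+W_i)^\theta$ on $\{|W_i|\le 1/2\}$, using the second moment \eqref{2moment} and the higher moments \eqref{eq:centered-G-moment-B3}. The complement $\{|W_i|>1/2\}$ is handled by Lemma \ref{goodevent}-type tail bounds together with \eqref{singleend}.

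Summing over $i\le M_{\varepsilon,\eta}$ and using $\sum_{i\le M}(N_{\varepsilon,b}-i)^{-1}=(1-\eta)\log N_{\varepsilon,b}+O(1)$, I get the total transform
\[
\Lambda(\theta)=\frac{\theta(\theta-1)}{2}(1-\eta)\log N_{\varepsilon,b}\,(1+O(\nu_b))+o(\log N_{\varepsilon,b}).
\]
The error term $\sum_i (N_{\varepsilon,b}-i)^{-3/2}$ is $O(1)$, since the indices satisfy $N_{\varepsilon,b}-i\ge \ell_{\varepsilon,\eta}$.

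Next come the Chernoff bounds under $\mathbb Q$. For $\lambda>0$ and the upper side,
\[
\mathbb Q\bigl(S\ge (\tfrac{1-\eta}{2}+\gamma)\log N_{\varepsilon,b}\bigr)\le \exp\{\Lambda(1+\lambda)-\Lambda(1)-\lambda(\tfrac{1-\eta}{2}+\gamma)\log N_{\varepsilon,b}\}.
\]
Since $\Lambda(1+\lambda)-\Lambda(1)\approx[\lambda(1+\lambda)/2](1-\eta)\log N_{\varepsilon,b}$, the optimal choice is $\lambda=\gamma/(1-\eta)$. This gives the bound $N_{\varepsilon,b}^{-\gamma^2/(2(1-\eta))+O(\nu_b)+o(1)}$. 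The lower side is identical with $\lambda<0$, that is, $\theta=1-\gamma/(1-\eta)\in(0,1)$; this is why non-integer $\theta$ is needed. Choosing $b$ small so that the $O(\nu_b)$ loss, which is multiplied by bounded $\lambda$, is below $\xi/2$ gives \eqref{eq:size-biased-Geta1-concentration}.

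The main obstacle is making the expansion of $\lambda_i(\theta)$ uniform for all $\theta\in[1-\gamma/(1-\eta),\,1+\gamma/(1-\eta)]$, with relative error only $O(\nu_b)$ and summable absolute errors. In particular, the bad event $\{G_i<1/2\}$ must not spoil $\mathbb E[G_i^\theta]$ when $\theta<1$. Here I would use that $\theta>0$ makes $G_i^\theta\le 1+G_i^2$, together with the superpolynomial tail $\mathbb P(|W_i|>1/2)\le \ell_{\varepsilon,\eta}^{-p/2}$, which gives a negligible contribution. The dependence of the constant in \eqref{2moment} on $b$ is what produces $\widetilde\nu_b$.
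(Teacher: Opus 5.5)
Your argument is correct, and it follows a genuinely different and lighter route than the paper's.

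\textbf{The paper's route.} The paper deduces the lemma from the sharp $\mathbb P$-upper-tail estimate of Proposition \ref{cor:full-sum-upper-tail-from-local-ratio-relaxed}. It cuts the range of $G_{\eta;1}$ into finitely many bins of small mesh. Each bin is bounded by $N_{\varepsilon,b}^{\rho_k+h_0}\,\mathbb P(G_{\eta;1}>N_{\varepsilon,b}^{\rho_k})$, and the square is completed via $\rho-\frac{(\rho+1/2-\eta/2)^2}{2(1-\eta)}=-\frac{(\rho-(1/2-\eta/2))^2}{2(1-\eta)}$. The region $G_{\eta;1}>N_{\varepsilon,b}^{B}$ is handled by a fourth-moment cutoff, and $G_{\eta;1}<N_{\varepsilon,b}^{-1/2+\eta/2+a_0}$ is bounded trivially. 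The mesh sizes and the $\alpha$-dependent $\widetilde\nu_b$ are absorbed into $\xi$; finitely many bins make $\widetilde\nu_b$ uniform. That tail estimate is itself proved by tilting at the saddle point of Lemma \ref{lem:compact}, plus a Berry--Esseen bound for the prefactor.

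\textbf{Your route.} You size-bias by $G_{\eta;1}$. Equivalently, you use $\mathbb E[G_{\eta;1}\mathbf 1_{\{G_{\eta;1}\ge x\}}]\le x^{-\lambda}\mathbb E[G_{\eta;1}^{1+\lambda}]$ and $\mathbb E[G_{\eta;1}\mathbf 1_{\{G_{\eta;1}\le x\}}]\le x^{\lambda}\mathbb E[G_{\eta;1}^{1-\lambda}]$. For this you only need \eqref{eq:Mi0-full} from Lemma \ref{lem:Mi-expansion-full} on the compact range $\theta\in[1-\gamma/(1-\eta),1+\gamma/(1-\eta)]\subset(0,2)$, together with \eqref{eq:V-asymptotic-with-b-gap}. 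Your bookkeeping checks out. With $a_{\varepsilon,b}=V/\log N_{\varepsilon,b}$, the upper-side exponent is $\frac{\lambda^2}{2}a_{\varepsilon,b}\log N_{\varepsilon,b}+\frac{\lambda}{2}\bigl(a_{\varepsilon,b}-(1-\eta)\bigr)\log N_{\varepsilon,b}-\lambda\gamma\log N_{\varepsilon,b}+O(1)$. At $\lambda=\gamma/(1-\eta)$ this gives the power $-\frac{\gamma^2}{2(1-\eta)}+O(\nu_b)+o(1)$. The lower side is the mirror image, with the sign of the middle term flipped, and the tilt $\theta=1-\gamma/(1-\eta)\in(0,1)$ is covered by the same expansion.

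\textbf{What each route buys.} Since only upper bounds are needed, and Chernoff is already sharp to leading order in the exponent of $N_{\varepsilon,b}$, you need no Berry--Esseen, no saddle-point uniqueness, no binning, and no cutoffs. Your loss $O(\nu_b)$ is uniform in $\gamma<1/10$. In the form above, you do not even need strict positivity of the $G_i$. The paper's route has the advantage of reusing machinery it needs anyway. The lower bound in \eqref{dec} feeds Proposition \ref{prop:first-moment-large-total-length}, and Proposition \ref{prop:local-ratio-BE-correct} feeds the Radon--Nikodym estimate. For this particular lemma, though, your argument is the more economical one.
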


Given the above, we now embark on proving Proposition \ref{prop:Z-mass-concentration-from-Geta1-Geta234}. The idea is simply that Propositions \ref
{prop:terminal-block-extension} and \ref{unconditionalprop} ensure that the part of the space where $\frac{\cZ_{\e,b}}{G_{\eta,1}}$ is much larger than $1$ (quantified as a small power of $\log \e^{-1}$) can be ignored. On the other hand since both ${G_{\eta,1}}$ and ${\cZ_{\e,b}}$ have expectation one, the ratio also cannot be much smaller than $1$ on the relevant part of the probability space. This allows to conclude that on the latter, $G_{\eta;1}$ and $\cZ_{\e,b}$ are comparable and hence the exact $\cZ_{\e,b}$ counterpart of \eqref{eq:size-biased-Geta1-concentration} holds which is indeed the statement of Proposition \ref{prop:Z-mass-concentration-from-Geta1-Geta234}. 

\begin{proof}[Proof of Proposition \ref{prop:Z-mass-concentration-from-Geta1-Geta234}]
Recalling the event $\Omega_{\e,\eta}$ from \eqref{nice}, note that  by  Lemma \ref{goodevent} for any {small enough constant} $\eta>0$, and for all sufficiently small $\varepsilon>0,$ we have $\mathbb P( \Omega_{\varepsilon,\eta}^c)\le (\log \varepsilon^{-1})^{-10}.$
Further, by \eqref{adaptmoment} with $h=2$, we have $\mathbb  E[\mathcal Z_{\varepsilon,b}^2]\le (\log \varepsilon^{-1})^{1.1}.$
 Combining the above with Cauchy-Schwarz inequality we get 
\begin{align*}
     \mathbb E\left[\mathbf 1_{\Omega_{\varepsilon,\eta}^c}  
       \mathcal  Z _{\varepsilon,b}
        \right] \le   \mathbb P( \Omega_{\varepsilon,\eta}^c)  ^{1/2}  (\mathbb  E[\mathcal Z_{\varepsilon,b}^2])^{1/2} \le  (\log \varepsilon^{-1})^{-5} \cdot  ((\log \varepsilon^{-1})^{1.1})^{1/2} \le (\log \varepsilon^{-1})^{-4}.
        \end{align*} 
        Hence
it suffices to prove that  for   small enough constants $b>0$ and $\eta>0,$
\begin{equation}
        \mathbb E\left[\mathbf 1_{\Omega_{\varepsilon,\eta}}  
        \mathcal Z_{\varepsilon,b}\mathbf 1_{\{\mathcal Z_{\varepsilon,b}<N_{\varepsilon,b}^{1/2-\zeta}\}}
        \right]
        +
        \mathbb E\left[\mathbf 1_{\Omega_{\varepsilon,\eta}}  
        \mathcal Z_{\varepsilon,b}\mathbf 1_{\{\mathcal Z_{\varepsilon,b}>N_{\varepsilon,b}^{1/2+\zeta}\}}
        \right]  \le (\log \varepsilon^{-1})^{-\upsilon_\zeta}. 
\end{equation}
The proof consists of several steps. The role of the parameter \(\eta\) is to separate the bulk and terminal
scales.  We first prove estimates with a fixed small \(\eta \in (0,1)\), and at the end
we choose \(\eta\) sufficiently small dictated by the target exponent
\(\zeta\).  \\

\nin
\textbf{Step 1.(Setup)} Define the ratio
\begin{equation}\label{ratio12}
        R_\eta:=\frac{\mathcal Z_{\varepsilon,b}}{G_{\eta;1}}.
\end{equation}
Then by Proposition \ref{unconditionalprop} and \eqref{eq:G-tail-product-moment-consequence}, the following   bounds hold for any  small enough constant $b>0$ (depending on $\eta \in (0,1)$): For all sufficiently small $\varepsilon>0,$
\begin{equation}
        \mathbb E\left[
      \mathbf 1_{\Omega_{\varepsilon,\eta}}  \left(R_\eta-G_{\eta;2}\right)^2
        \right]
        \le
        CN_{\varepsilon,b}^{C\eta},
        \qquad
        \mathbb E[G_{\eta;2}^2]\le CN_{\varepsilon,b}^{C\eta},
\label{eq:unconditional-Reta-Geta2-assumption}
\end{equation}
{where we used $\eta<1$ and the fact that $ \log \varepsilon^{-1} \le C_b N_{\varepsilon,b}$ for some $C_b>0$, to  absorb all
multiplicative constants into a single constant \(C\).} Using $ R_\eta^2
        \le
        2(R_\eta-G_{\eta;2})^2+2G_{\eta;2}^2,$  we have 
\begin{equation}
        \mathbb E \left[\mathbf 1_{\Omega_{\varepsilon,\eta}} R_\eta^2 \right]
        \le
        C N_{\varepsilon,b}^{C\eta} .
\label{eq:Reta-unconditional-second-moment}
\end{equation}
  Moreover by Proposition \ref{prop:terminal-block-extension}, for every \(\alpha>0\), for  any  small enough constant $b>0$ (depending on $\alpha$ and  $\eta$),
\begin{equation}
        \mathbb E\left[ \mathbf 1_{\Omega_{\varepsilon,\eta}} 
        \left(R_\eta-G_{\eta;2}\right)^2
        \,\middle|\,
        G_{\eta;1} \ge N_{\varepsilon,b}^{-\frac{1-\eta}{2}+\alpha}
        \right]
        \le
        CN_{\varepsilon,b}^{C\eta} .
\label{eq:conditional-Reta-Geta2-assumption}
\end{equation} 
Using again $ R_\eta^2
        \le
        2(R_\eta-G_{\eta;2})^2+2G_{\eta;2}^2$ and noting that  \(G_{\eta;2}\) is independent of \(G_{\eta;1}\), we get similarly, 
\begin{equation}
        \mathbb E\left[ \mathbf 1_{\Omega_{\varepsilon,\eta}}
        R_\eta^2
        \,\middle|\,
        G_{\eta;1} \ge N_{\varepsilon,b}^{-\frac{1-\eta}{2}+\alpha}
        \right]
        \le
        C N_{\varepsilon,b}^{C\eta} .
\label{eq:Reta-conditional-second-moment}
\end{equation}
We may assume that the constant $C>0$ appearing in \eqref{eq:unconditional-Reta-Geta2-assumption}-\eqref{eq:Reta-conditional-second-moment} is the same.
We next choose some of the parameters as follows.  First choose $0<\gamma<\zeta,$  and then choose \(\eta\in(0,1/(10C))\) sufficiently small so that there exists
\(\beta>0\) satisfying
\begin{equation}
        10C\eta
        <
        \beta
        <
        \min\left\{
        \zeta-\frac{\eta}{2}-\gamma,\,
        \frac{\gamma^2}{4(1-\eta)}
        \right\}.
\label{eq:beta-choice-Geta}
\end{equation} 

Towards proving that  $\cZ_{\e,b}$ and $G_{\eta; 1}$ are comparable, we first prove that the contribution of $\cZ_{\e,b}$ when $R_{\eta}$ from \eqref{ratio12} is larger than a small power of $\log \e^{-1}$, given by $\beta,$ is negligible. \\

\nin
\textbf{Step 2.} More precisely we have the following.
\begin{equation}
  \mathbb E\left[ \mathbf 1_{\Omega_{\varepsilon,\eta}}
        \mathcal Z_{\varepsilon,b} \mathbf 1_{\{R_\eta>N_{\varepsilon,b}^\beta\}}
        \right] \le (\log \varepsilon^{-1})^{-\upsilon_\zeta}.
\label{eq:upper-ratio-tail-target-Geta}
\end{equation}
Let \(a>0\) be fixed so small that
\begin{equation}
        a+ \frac{C+1}{2} \eta<\frac14.
\label{eq:a-choice-Geta}
\end{equation}
Let \(B>0\) be a large constant  which will be chosen later.  We now split the LHS in \eqref{eq:upper-ratio-tail-target-Geta} according to the value of the bulk product \(G_{\eta;1}\).  Namely, \begin{equation}
\begin{aligned}
\mathbb E\left[\mathbf 1_{\Omega_{\varepsilon,\eta}}
\mathcal Z_{\varepsilon,b} \mathbf 1_{\{R_\eta>N_{\varepsilon,b}^\beta\}}
\right]
&\le
\mathbb E\left[\mathbf 1_{\Omega_{\varepsilon,\eta}}
\mathcal Z_{\varepsilon,b} \mathbf 1_{\{G_{\eta;1}<N_{\varepsilon,b}^{-\frac{1-\eta}{2}+a}\}}
\right]                                                   \\
&\quad+
\mathbb E\left[\mathbf 1_{\Omega_{\varepsilon,\eta}}
\mathcal Z_{\varepsilon,b} \mathbf 1_{\{R_\eta>N_{\varepsilon,b}^\beta\}}
\mathbf 1_{\{
N_{\varepsilon,b}^{-\frac{1-\eta}{2}+a}
\le G_{\eta;1}\le N_{\varepsilon,b}^B
\}}
\right]                                                   \\
&\quad+
\mathbb E\left[\mathbf 1_{\Omega_{\varepsilon,\eta}}
\mathcal Z_{\varepsilon,b} \mathbf 1_{\{G_{\eta;1}>N_{\varepsilon,b}^B\}}
\right].
\end{aligned}
\label{eq:ratio-tail-three-region-decomp-Geta}
\end{equation}
\emph{Sub-step 2-1.}
 Using \(\mathcal Z_{\varepsilon,b}=R_\eta G_{\eta;1}\), for any  small enough constant $b>0$,
\begin{align} \label{eq:low-Geta1-negligible}
&\mathbb E\left[\mathbf 1_{\Omega_{\varepsilon,\eta}}
\mathcal Z_{\varepsilon,b}    \mathbf 1_{\{G_{\eta;1}<N_{\varepsilon,b}^{-\frac{1-\eta}{2}+a}\}}
\right]
 =
\mathbb E\left[\mathbf 1_{\Omega_{\varepsilon,\eta}}
R_\eta G_{\eta;1}
\mathbf 1_{\{G_{\eta;1}<N_{\varepsilon,b}^{-\frac{1-\eta}{2}+a}\}}
\right]                                                   \nonumber \\
&\qquad \qquad \qquad \le
N_{\varepsilon,b}^{-\frac{1-\eta}{2}+a}\mathbb E [\mathbf 1_{\Omega_{\varepsilon,\eta}}R_\eta]    \overset{\eqref{eq:Reta-unconditional-second-moment}}{\le} 
C N_{\varepsilon,b}^{-1/2+a+(C+1)\eta/2} \overset{\eqref{eq:a-choice-Geta}}{\le}  (\log \varepsilon^{-1})^{-1/10}
\end{align}
{for sufficiently small $\varepsilon.$ Note that we used again $N_{\varepsilon,b} \le \log \varepsilon^{-1}$ for small $b>0$, in the last inequality.}\\

\nin
\emph{Sub-step 2-2.} In this step, we split the term $\mathbb E\left[\mathbf 1_{\Omega_{\varepsilon,\eta}}
\mathcal Z_{\varepsilon,b} \mathbf 1_{\{R_\eta>N_{\varepsilon,b}^\beta\}}
\mathbf 1_{\{
N_{\varepsilon,b}^{-\frac{1-\eta}{2}+a}
\le G_{\eta;1}\le N_{\varepsilon,b}^B
\}}
\right]$ further based on the value of $G_{\eta;1}.$ 
 Setting
$        h:=\frac{\beta}{2}$, 
we take finitely many exponents
\[
        a_j:=a+jh,
        \qquad
        j=0,1,\ldots,J,
\]
where \(J\) is chosen so that $-\frac{1-\eta}{2}+a_J < B \le -\frac{1-\eta}{2}+a_{J+1}$. 
For a fixed \(j = 0,1,\ldots,J,\) using $\mathcal Z_{\varepsilon,b} = R_\eta G_{\eta;1},$ for any  small enough constant $b>0$,
\begin{equation}
\begin{aligned}
&\mathbb E\left[\mathbf 1_{\Omega_{\varepsilon,\eta}}
\mathcal Z_{\varepsilon,b} \mathbf 1_{\{R_\eta>N_{\varepsilon,b}^\beta\}}
\mathbf 1_{\{
N_{\varepsilon,b}^{-\frac{1-\eta}{2}+a_j}
\le
G_{\eta;1}
\le
N_{\varepsilon,b}^{-\frac{1-\eta}{2}+a_j+h}
\}}
\right]         \\
&\qquad\le
N_{\varepsilon,b}^{-\frac{1-\eta}{2}+a_j+h}
\mathbb E\left[\mathbf 1_{\Omega_{\varepsilon,\eta}}
R_\eta\mathbf 1_{\{R_\eta>N_{\varepsilon,b}^\beta\}}
\mathbf 1_{\{
G_{\eta;1}>N_{\varepsilon,b}^{-\frac{1-\eta}{2}+a_j}
\}} 
\right]      \\
&\qquad\le N_{\varepsilon,b}^{-\frac{1-\eta}{2}+a_j+h}\cdot 
N_{\varepsilon,b}^{-\beta}
\mathbb E\left[\mathbf 1_{\Omega_{\varepsilon,\eta}}
R_\eta^2
\mathbf 1_{\{
G_{\eta;1}>N_{\varepsilon,b}^{-\frac{1-\eta}{2}+a_j}
\}}
\right]                                   \\ 
&\qquad \overset{\eqref{eq:Reta-conditional-second-moment} }{\le} 
N_{\varepsilon,b}^{-\frac{1-\eta}{2}+a_j+h}\cdot  N_{\varepsilon,b}^{-\beta+C\eta}
\mathbb P\left(
G_{\eta;1}>N_{\varepsilon,b}^{-\frac{1-\eta}{2}+a_j}
\right) \\
&\qquad \le   N_{\varepsilon,b}^{-\frac{1-\eta}{2}+a_j+h}\cdot  N_{\varepsilon,b}^{-\beta+C\eta} \cdot  N_{\varepsilon,b}^{ \frac{1-\eta}{2}-a_j} = 
C N_{\varepsilon,b}^{-\beta/2+ C\eta},
\end{aligned}
\label{eq:middle-bin-first-bound-Geta}
\end{equation}  
where we used  Markov's inequality and \(\mathbb E G_{\eta;1}=1\)  to bound $\mathbb P\left(
G_{\eta;1}>N_{\varepsilon,b}^{-\frac{1-\eta}{2}+a_j}
\right)$. Finally, the last equality uses $h=\beta/2.$
Note that we could have used a sharper bound on the latter using the precise large deviation tail estimate from Proposition \ref{largedeviation45}, but the crude bound above deduced from Markov's inequality will suffice.

Since \(10C\eta<\beta\) (see  the condition \eqref{eq:beta-choice-Geta})  and  there are only finitely many bins, summing \eqref{eq:middle-bin-first-bound-Geta} over the bins, we deduce that for any  small enough constant $b>0$,
\begin{equation}
\mathbb E\left[\mathbf 1_{\Omega_{\varepsilon,\eta}}
\mathcal Z_{\varepsilon,b} \mathbf 1_{\{R_\eta>N_{\varepsilon,b}^\beta\}}
\mathbf 1_{\{
N_{\varepsilon,b}^{-\frac{1-\eta}{2}+a}
\le
G_{\eta;1}
\le
N_{\varepsilon,b}^B
\}}
\right] \le 
 N_{\varepsilon,b}^{-\beta/10}.
\label{eq:middle-region-negligible-Geta}
\end{equation}
\emph{Sub-step 2-3.}
For the high-\(G_{\eta;1}\) term, by H\"older inequality,\begin{equation}
\begin{aligned}
\mathbb E\left[\mathbf 1_{\Omega_{\varepsilon,\eta}}
\mathcal Z_{\varepsilon,b} \mathbf 1_{\{G_{\eta;1}>N_{\varepsilon,b}^B\}}
\right]
&=
\mathbb E\left[\mathbf 1_{\Omega_{\varepsilon,\eta}}
R_\eta G_{\eta;1}
\mathbf 1_{\{G_{\eta;1}>N_{\varepsilon,b}^B\}}
\right]                                                   \\
&\le
\left(\mathbb E  [\mathbf 1_{\Omega_{\varepsilon,\eta}}R_\eta^2]\right)^{1/2}
\left(\mathbb E G_{\eta;1}^4\right)^{1/4}
\left(
\mathbb P(G_{\eta;1}>N_{\varepsilon,b}^B)
\right)^{1/4} \\
&\le  C  N_{\varepsilon,b}^{C\eta/2} N_{\varepsilon,b}^{C} N_{\varepsilon,b}^{-B/4},
\end{aligned}
\label{eq:high-Geta1-holder-244}
\end{equation}  
where  we used  
\eqref{eq:Reta-unconditional-second-moment}, \eqref{eq:G-full-product-moment-consequence}  and Markov's inequality (note that $\mathbb E G_{\eta;1}=1$) respectively. Hence by taking $B>0$ large enough, the above quantity is at most  $(\log \varepsilon^{-1})^{-10}.$

Applying the above estimates to
\eqref{eq:ratio-tail-three-region-decomp-Geta},   we deduce that there exists $\upsilon_\zeta>0$ such that for any  small enough constant $b>0$,
\begin{equation}
              \mathbb E\left[\mathbf 1_{\Omega_{\varepsilon,\eta}}
        \mathcal Z_{\varepsilon,b} \mathbf 1_{\{R_\eta>N_{\varepsilon,b}^\beta\}}
        \right]\le (\log \varepsilon^{-1})^{-\upsilon_\zeta}.
\label{eq:upper-ratio-tail-proved-Geta}
\end{equation}
\textbf{Step 3.}
The analysis of the first term in \eqref{eq:ratio-tail-three-region-decomp-Geta} is immediate:
\begin{equation}
\begin{aligned}
\mathbb E\left[\mathbf 1_{\Omega_{\varepsilon,\eta}}
\mathcal Z_{\varepsilon,b} \mathbf 1_{\{R_{\eta}<N_{\varepsilon,b}^{-\beta}\}}
\right]
&=
\mathbb E\left[\mathbf 1_{\Omega_{\varepsilon,\eta}}
R_\eta G_{\eta;1}
\mathbf 1_{\{R_\eta<N_{\varepsilon,b}^{-\beta}\}}
\right]                                                   \\
&\le
N_{\varepsilon,b}^{-\beta}\mathbb E G_{\eta;1}
=
N_{\varepsilon,b}^{-\beta} \le (\log \varepsilon^{-1})^{-\beta/2}.
\end{aligned}
\label{eq:lower-ratio-tail-Geta}
\end{equation}
\textbf{Conclusion.}
Define the event
\[
        \mathcal V_{\eta,\gamma} :
        =
        \left\{
        N_{\varepsilon,b}^{1/2-\eta/2-\gamma}
        \le
        G_{\eta;1}
        \le
        N_{\varepsilon,b}^{1/2-\eta/2+\gamma}
        \right\}.
\]
Recalling the condition \eqref{eq:beta-choice-Geta},  we have the implication
\[
        \left\{
        N_{\varepsilon,b}^{-\beta}
        \le
        \frac{\mathcal Z_{\varepsilon,b}}{G_{\eta;1}}
        \le
        N_{\varepsilon,b}^\beta
        \right\}
        \cap \mathcal V_{\eta,\gamma}\Rightarrow      \left\{N_{\varepsilon,b}^{1/2-\zeta}
      \le
        \mathcal Z_{\varepsilon,b}
        \le
        N_{\varepsilon,b}^{1/2+\zeta} \right\}.
\]   
Therefore,
\begin{align*}
&\quad\quad\quad \mathbb E\left[\mathbf 1_{\Omega_{\varepsilon,\eta}}
\mathcal Z_{\varepsilon,b} \mathbf 1_{\{\mathcal Z_{\varepsilon,b}<N_{\varepsilon,b}^{1/2-\zeta}\}}
\right]
+
\mathbb E\left[\mathbf 1_{\Omega_{\varepsilon,\eta}}
\mathcal Z_{\varepsilon,b} \mathbf 1_{\{\mathcal Z_{\varepsilon,b}>N_{\varepsilon,b}^{1/2+\zeta}\}}
\right]                                                   \\
&\le
\mathbb E\left[\mathbf 1_{\Omega_{\varepsilon,\eta}}
\mathcal Z_{\varepsilon,b} \mathbf 1_{\{\mathcal Z_{\varepsilon,b}/G_{\eta;1}>N_{\varepsilon,b}^\beta\}}
\right]
+
\mathbb E\left[\mathbf 1_{\Omega_{\varepsilon,\eta}}
\mathcal Z_{\varepsilon,b} \mathbf 1_{\{\mathcal Z_{\varepsilon,b}/G_{\eta;1}<N_{\varepsilon,b}^{-\beta}\}}
\right]     +
\mathbb E\left[
\mathcal Z_{\varepsilon,b} \mathbf 1_{\mathcal V_{\eta,\gamma}^c}
\mathbf 1_{\{N_{\varepsilon,b}^{-\beta}\le \mathcal Z_{\varepsilon,b}/G_{\eta;1}\le N_{\varepsilon,b}^\beta\}}
\right]\\
&\le \mathbb E\left[\mathbf 1_{\Omega_{\varepsilon,\eta}}
\mathcal Z_{\varepsilon,b} \mathbf 1_{\{\mathcal Z_{\varepsilon,b}/G_{\eta;1}>N_{\varepsilon,b}^\beta\}}
\right]
+
\mathbb E\left[\mathbf 1_{\Omega_{\varepsilon,\eta}}
\mathcal Z_{\varepsilon,b} \mathbf 1_{\{\mathcal Z_{\varepsilon,b}/G_{\eta;1}<N_{\varepsilon,b}^{-\beta}\}}
\right]     +
\mathbb E\left[
\mathcal Z_{\varepsilon,b} \mathbf 1_{\mathcal V_{\eta,\gamma}^c}
\mathbf 1_{\{\mathcal Z_{\varepsilon,b}/G_{\eta;1}\le N_{\varepsilon,b}^\beta\}}
\right].
\end{align*} 
The first and second terms are controlled by
\eqref{eq:upper-ratio-tail-proved-Geta}  and   \eqref{eq:lower-ratio-tail-Geta} respectively.  For the third term, 
\begin{equation*}
\mathbb E\left[
\mathcal Z_{\varepsilon,b} \mathbf 1_{\mathcal V_{\eta,\gamma}^c}
\mathbf 1_{\{\mathcal Z_{\varepsilon,b}/G_{\eta;1}\le N_{\varepsilon,b}^\beta\}}
\right] \le N_{\varepsilon,b}^\beta
\mathbb E\left[
G_{\eta;1}\mathbf 1_{\mathcal V_{\eta,\gamma}^c}
\right].
\end{equation*}

The RHS is now bounded by  Lemma  \ref{gconcen} which states the following.
For any constant  \(\xi>0\), for any small enough constant $b>0$, for all sufficiently small $\varepsilon>0,$
\begin{equation}
        \mathbb E\left[
        G_{\eta;1}\mathbf 1_{\mathcal V_{\eta,\gamma}^c}
        \right]
        \le
         N_{\varepsilon,b}^{-\frac{\gamma^2}{2(1-\eta)}+\xi}.
\label{eq:size-biased-Geta1-concentration00}
\end{equation}
Thus,
\begin{equation}
\begin{aligned}
\mathbb E\left[
\mathcal Z_{\varepsilon,b} \mathbf 1_{\mathcal V_{\eta,\gamma}^c}
\mathbf 1_{\{\mathcal Z_{\varepsilon,b}/G_{\eta;1}\le N_{\varepsilon,b}^\beta\}}
\right]
&\le
N_{\varepsilon,b}^\beta
\mathbb E\left[
G_{\eta;1}\mathbf 1_{\mathcal V_{\eta,\gamma}^c}
\right]     \le
N_{\varepsilon,b}^{
\beta-\frac{\gamma^2}{2(1-\eta)}+\xi
}.
\end{aligned}
\label{eq:window-error-Geta}
\end{equation}
By the condition \eqref{eq:beta-choice-Geta}, taking $\xi>0$ small, the exponent above is strictly negative for any small enough constant $b>0$.  Putting things together finishes the proof.
\end{proof}

\begin{remark} 
\label{allow}  {The conclusion of Proposition~\ref{prop:Z-mass-concentration-from-Geta1-Geta234} also} 
holds under deterministic \(\varepsilon\)-dependent choices of the
parameters.  More precisely, it remains
valid if \(b=b_\varepsilon\) and \(\vartheta=\vartheta_\varepsilon\) satisfy that, for   
some \(\vartheta^\star\in\mathbb R\) and small enough \(b^\star>0\),
\[
        b_\varepsilon\to b^\star,
        \qquad
        \vartheta_\varepsilon\to\vartheta^\star
        \qquad
        \text{as }\varepsilon\rightarrow0.
\]In particular, the key inputs used---namely the propositions in
Section~\ref{sec3} and Corollary~\ref{gconcen}---remain valid in this
varying-parameter setting; see Remarks~\ref{remarkvary3} and
\ref{remarkvary5} for details.
\end{remark}

Proposition \ref{prop:Z-mass-concentration-from-Geta1-Geta234} 
was about $\cZ_{\e,b}$. The following corollary transfers the result to $\cZ_{\e}$ using an exact scaling identity. 
\begin{corollary} \label{corcorcor}
Let \(\zeta\in(0,1/10)\) be an arbitrary constant. 
Then there exists $\upsilon_\zeta>0$ such that for all  sufficiently small $\varepsilon>0,$
\begin{equation}
        \mathbb E\left[
        \mathcal Z_{\varepsilon} \mathbf 1_{ \mathcal Z_{\varepsilon} < (\log \varepsilon^{-1})^{1/2-\zeta}}
        \right]
        +
        \mathbb E\left[
        \mathcal Z_{\varepsilon} \mathbf 1_{ \mathcal Z_{\varepsilon} > (\log \varepsilon^{-1})^{1/2+\zeta}} 
        \right] \le (\log \varepsilon^{-1})^{-\upsilon_\zeta}.
\label{eq:Z-mass-concentration-final-Geta}
\end{equation}
\end{corollary}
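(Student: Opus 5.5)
The plan is to reduce Corollary~\ref{corcorcor} to Proposition~\ref{prop:Z-mass-concentration-from-Geta1-Geta234}, in the varying-parameter form allowed by Remark~\ref{allow}, via the exact SHF scaling relation \eqref{eq:SHF-scaling-test-current}.

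\textbf{Step 1: rescale $\mathcal Z_\e$ into the form of $\mathcal Z_{\e',b_\e}$.} Given $\e$, I would choose a small constant $b^\star$ and an $\e$-dependent $b_\e\to b^\star$ so that $N:=\log\e^{-1}/\log b_\e^{-1}$ is an integer. Then I would pick $\lambda>0$ so that the time interval $[0,1]$ rescales to $[\e'^2,t_{N-1}]$, i.e. $\lambda=(t_{N-1}-\e'^2)^{-1}$ with the new microscopic scale $\e'$ satisfying $\e'^2\lambda\approx\e^2$.

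Concretely, I would set $\e'=\e$ and use time translation, so that $\mathscr Z^\vartheta_{\e^2,t_{N-1}}\stackrel{d}{=}\mathscr Z^{\vartheta}_{0,\Delta}$ with $\Delta=t_{N-1}-\e^2\asymp b^2$. By \eqref{eq:SHF-scaling-test-current},
\[
p(\e^2)\blacktriangleleft\mathscr Z^{\vartheta}_{0,\Delta}\blacktriangleright1\stackrel{d}{=}p(\e^2/\Delta)\blacktriangleleft\mathscr Z^{\vartheta+\log\Delta}_{0,1}\blacktriangleright1 .
\]
Reading this identity backwards shows that $\mathcal Z_\e$, computed with coupling $\vartheta$, equals in law $\mathcal Z_{\e',b}$ computed with coupling $\vartheta_\e:=\vartheta-\log\Delta'$ and $\e'^2=\e^2\Delta'$, where $\Delta'=t'_{N'-1}-\e'^2$. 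This is an implicit equation, but since $t'_{N'-1}=\e'^2b^{-2(N'-1)}$ it can be solved by tuning $b_\e$ within $[b^\star/2,b^\star]$ to absorb the floor in \eqref{ndef}. Then $\Delta'\to b^{\star2}$ (up to a constant), $\vartheta_\e\to\vartheta-\log b^{\star 2}$ is convergent, $b_\e\to b^\star$, and $\log\e'^{-1}=\log\e^{-1}+O(1)$.

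\textbf{Step 2: apply Proposition~\ref{prop:Z-mass-concentration-from-Geta1-Geta234}.} The parameters from Step~1 satisfy the hypotheses of Remark~\ref{allow}, so I would apply that proposition with $\zeta/2$ in place of $\zeta$. It gives
\[
\mathbb E\bigl[\mathcal Z_{\e',b_\e}\mathbf 1_{\{\mathcal Z_{\e',b_\e}\notin[N^{1/2-\zeta/2},N^{1/2+\zeta/2}]\}}\bigr]\le(\log\e'^{-1})^{-\upsilon}.
\]
Since $N=\log\e'^{-1}/\log b_\e^{-1}$ and $b_\e$ is bounded away from $0$ and $1$, we have $N^{1/2\pm\zeta/2}=(\log\e^{-1})^{1/2\pm\zeta/2}\cdot O_{b^\star}(1)$. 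Hence, for small $\e$,
\[
[N^{1/2-\zeta/2},N^{1/2+\zeta/2}]\subset[(\log\e^{-1})^{1/2-\zeta},(\log\e^{-1})^{1/2+\zeta}],
\]
so the truncations in \eqref{eq:Z-mass-concentration-final-Geta} are dominated by those in the proposition. Equality in law transfers the expectation bounds, and $(\log\e'^{-1})^{-\upsilon}\le 2(\log\e^{-1})^{-\upsilon}$ then gives the claim after shrinking $\upsilon$.

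\textbf{Main obstacle.} The delicate point is Step~1: matching the unit-time object $\mathcal Z_\e$ exactly to the $[\e^2,t_{N-1}]$ object $\mathcal Z_{\e,b}$. This requires simultaneously an integer number of scales, the right microscopic scale, and convergent parameters $(b_\e,\vartheta_\e)$, which is why the varying-parameter version in Remark~\ref{allow} is essential. I would first try fixing $N$ and solving continuously for $b_\e$ from $\e'^2 b_\e^{-2(N-1)}-\e'^2=\e'^2/\e^2$, checking that the solution converges. If exact matching is awkward, a fallback is to use the flow property to split off a short initial time piece of length $O(\e^2)$, but I expect the scaling route to work directly.
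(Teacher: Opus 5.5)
Your proposal is correct and follows the paper's proof. Your equation $\e'^2 b_\e^{-2(N-1)}-\e'^2=\e'^2/\e^2$ reduces to $b_\e^{N-1}=\e/\sqrt{1+\e^2}=\bar\e$, which is exactly the paper's choice $b_\e=\bar\e^{1/(N_\varepsilon^\star-1)}$. Taking the free microscopic scale to be $\widehat\e=b_\e^{N_\varepsilon^\star}$ makes the floor in \eqref{ndef} exact and gives $\Delta_\e=b_\e^2(1-\bar\e^2)\to b^{\star2}$ and $\widehat\vartheta_\e\to\vartheta-2\log b^\star$. The paper then applies Proposition~\ref{prop:Z-mass-concentration-from-Geta1-Geta2} with $\zeta/2$ via Remark~\ref{allow} and compares thresholds just as you do, so the fallback of splitting off an initial time piece is not needed.
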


\begin{proof} 
Let  \(b^\star>0\) be a small enough constant.
Given the  smoothing scale \(\varepsilon>0\), define
\begin{align} \label{barvar}
        \bar\varepsilon
        :=
        \frac{\varepsilon}{\sqrt{1+\varepsilon^2}},
\end{align}
and set
\begin{equation}
        N_\varepsilon^\star
        :=
        1+
        \left\lfloor
        \frac{\log\bar\varepsilon^{-1}}{\log (b^\star)^{-1}}
        \right\rfloor ,\qquad  b_\varepsilon
        :=
        \bar\varepsilon^{1/(N_\varepsilon^\star-1)}.
\label{eq:Nstar-definition-endpoint-excluded}
\end{equation}   Then as $\varepsilon \rightarrow 0,$
\begin{align}\label{eq:beps-compact-neighborhood-endpoint-excluded}
        b_\varepsilon
        =
        \exp\left\{
        -\frac{\log\bar\varepsilon^{-1}}{N_\varepsilon^\star-1}
        \right\}
        \longrightarrow
        e^{-\log (b^\star)^{-1}}
        =
        b^\star.
\end{align} 
Now
define the auxiliary microscopic scale
\begin{equation}
        \widehat\varepsilon
        :=
        b_\varepsilon\bar\varepsilon  \overset{\eqref{eq:Nstar-definition-endpoint-excluded}}{=} b_\varepsilon^{N_\varepsilon^\star}. 
\label{eq:epshat-definition-endpoint-excluded}
\end{equation} 
Hence, if we define the exponential time scale
\[
        \widehat t_i
        :=
        \widehat\varepsilon^2 b_\varepsilon^{-2i},
        \qquad
        i=0,1,\ldots,N_\varepsilon^\star,
\]
then $ \widehat t_0=\widehat\varepsilon^2$ and $  \widehat t_{N_\varepsilon^\star-1}=b_\varepsilon^2.$ 
Set
\begin{equation}
        \Delta_\varepsilon
        :=
        \widehat t_{N_\varepsilon^\star-1}
        -
        \widehat\varepsilon^2
        =
        b_\varepsilon^2-\widehat\varepsilon^2  =
        b_\varepsilon^2(1-\bar\varepsilon^2).
\label{eq:lambda-eps-endpoint-excluded}
\end{equation} 
Consequently,
\begin{equation}
        \frac{\widehat\varepsilon^2}{\Delta_\varepsilon}
        =
        \frac{b_\varepsilon^2\bar\varepsilon^2}
        {b_\varepsilon^2(1-\bar\varepsilon^2)}
        =
        \frac{\bar\varepsilon^2}{1-\bar\varepsilon^2}
         \overset{\eqref{barvar}}{=} 
        \varepsilon^2.
\label{eq:epshat-over-lambda-equals-eps}
\end{equation}
Define the coupling parameter
\begin{equation}
        \widehat \vartheta_\varepsilon
        :=
        \vartheta-\log\Delta_\varepsilon \overset{\eqref{eq:lambda-eps-endpoint-excluded}}{=}  
        \vartheta-2\log b_\varepsilon+\log(1+\varepsilon^2) \overset{\varepsilon \rightarrow 0}{\longrightarrow} \vartheta - 2\log b^\star.
\label{eq:vartheta-prime-endpoint-excluded}
\end{equation}  
Now define the auxiliary partition function
\begin{equation}
        \mathcal Z_{\widehat\varepsilon,b_\varepsilon}^{\rm aux}
        :=
        p(\widehat\varepsilon^2)
        \blacktriangleleft
        \mathscr Z^{\widehat \vartheta_\varepsilon}_
        {\widehat\varepsilon^2,\widehat t_{N_\varepsilon^\star-1}}
        \blacktriangleright1 .
\label{eq:Z-aux-definition-endpoint-excluded}
\end{equation}  
{By the scaling relation (see \eqref{eq:SHF-scaling-measure-current})} and time-translation of the  SHF, {$ \mathcal Z_{\widehat\varepsilon,b_\varepsilon}^{\rm aux}$ is equal in distribution to
\[
        p\left(\frac{\widehat\varepsilon^2}{\Delta_\varepsilon}\right)
        \blacktriangleleft
        \mathscr Z^{\widehat\vartheta_\varepsilon+\log\Delta_\varepsilon}_{0,1}
        \blacktriangleright1.
\]
The identities \eqref{eq:epshat-over-lambda-equals-eps} and
\eqref{eq:vartheta-prime-endpoint-excluded} identify the smoothing scale and
coupling parameter above with \(\varepsilon^2\) and \(\vartheta\), respectively.} Hence, we obtain that 
\begin{align} \label{eq:aux-truncated-equals-original-law}
           p(\widehat\varepsilon^2)
        \blacktriangleleft
        \mathscr Z^{\widehat \vartheta_\varepsilon}_
        {\widehat\varepsilon^2,\widehat t_{N_\varepsilon^\star-1}}
        \blacktriangleright1  
        \stackrel{\mathrm d}{=}
        p(\varepsilon^2)
        \blacktriangleleft
        \mathscr Z^\vartheta_{0,1}
        \blacktriangleright1 = \mathcal{Z}_\varepsilon.
\end{align}   
Now we apply Proposition~\ref{prop:Z-mass-concentration-from-Geta1-Geta2} to the auxiliary partition function \(\mathcal Z_{\widehat\varepsilon,b_\varepsilon}^{\rm aux}\).
Although the parameters \(b_\varepsilon\) and \(\widehat \vartheta_\varepsilon\) are not constants, this causes no difficulty.  Indeed, by
\eqref{eq:beps-compact-neighborhood-endpoint-excluded} and
\eqref{eq:vartheta-prime-endpoint-excluded},   
Proposition~\ref{prop:Z-mass-concentration-from-Geta1-Geta2} applies to
\(\mathcal Z_{\widehat\varepsilon,b_\varepsilon}^{\rm aux}\) as well; see
Remark~\ref{allow}.  Thus there exists \(\upsilon_{\zeta/2}>0\) such that 
the following holds for  any small  constant $b^\star>0$:
For all sufficiently small \(\widehat \varepsilon>0\),
\begin{equation}
\begin{aligned}
        &\mathbb E\left[
        \mathcal Z_{\widehat\varepsilon,b_\varepsilon}^{\rm aux}
        \mathbf 1_{\{
        \mathcal Z_{\widehat\varepsilon,b_\varepsilon}^{\rm aux}
        <
        (N_\varepsilon^\star)^{1/2-\zeta/2}
        \}}
        \right]  +
        \mathbb E\left[
        \mathcal Z_{\widehat\varepsilon,b_\varepsilon}^{\rm aux}
        \mathbf 1_{\{
        \mathcal Z_{\widehat\varepsilon,b_\varepsilon}^{\rm aux}
        >
        (N_\varepsilon^\star)^{1/2+\zeta/2}
        \}}
        \right]
        \le
        (\log\widehat\varepsilon^{-1})^{-\upsilon_{\zeta/2}}.
\end{aligned}
\label{eq:aux-size-biased-concentration-endpoint-excluded}
\end{equation}
Using  
\eqref{eq:aux-truncated-equals-original-law} and noting that $ (\log\varepsilon^{-1})^{1/2-\zeta}
        \le
      (N_\varepsilon^\star)^{1/2-\zeta/2} $    for all sufficiently small
\(\varepsilon>0\),
\[
\begin{aligned}
        &\mathbb E\left[
        \mathcal Z_{\varepsilon} 
        \mathbf 1_{\{
        \mathcal Z_{\varepsilon} 
        <
        (\log\varepsilon^{-1})^{1/2-\zeta}
        \}}
        \right]  =
        \mathbb E\left[
        \mathcal Z_{\widehat\varepsilon,b_\varepsilon}^{\rm aux} 
        \mathbf 1_{\{
        \mathcal Z_{\widehat\varepsilon,b_\varepsilon}^{\rm aux} 
        <
        (\log\varepsilon^{-1})^{1/2-\zeta}
        \}}
        \right]      \le
        \mathbb E\left[
        \mathcal Z_{\widehat\varepsilon,b_\varepsilon}^{\rm aux} 
        \mathbf 1_{\{
        \mathcal Z_{\widehat\varepsilon,b_\varepsilon}^{\rm aux} 
        <
       (N_\varepsilon^\star)^{1/2-\zeta/2}
        \}}
        \right].
\end{aligned}
\]
Similarly, using $ (\log\varepsilon^{-1})^{1/2+\zeta}
        \ge
      (N_\varepsilon^\star)^{1/2+\zeta/2} $    for all sufficiently small
\(\varepsilon>0\),
\[
\begin{aligned}
        &\mathbb E\left[
        \mathcal Z_{\varepsilon} 
        \mathbf 1_{\{
        \mathcal Z_{\varepsilon} 
        >
        (\log\varepsilon^{-1})^{1/2+\zeta}
        \}}
        \right]   =
        \mathbb E\left[
        \mathcal Z_{\widehat\varepsilon,b_\varepsilon}^{\rm aux} 
        \mathbf 1_{\{
        \mathcal Z_{\widehat\varepsilon,b_\varepsilon}^{\rm aux} 
        >
        (\log\varepsilon^{-1})^{1/2+\zeta}
        \}}
        \right]     \le
        \mathbb E\left[
        \mathcal Z_{\widehat\varepsilon,b_\varepsilon}^{\rm aux} 
        \mathbf 1_{\{
        \mathcal Z_{\widehat\varepsilon,b_\varepsilon}^{\rm aux} 
        >
         (N_\varepsilon^\star)^{1/2+\zeta/2}
        \}}
        \right].
\end{aligned}
\]
Applying the above two bounds to
\eqref{eq:aux-size-biased-concentration-endpoint-excluded}, as $   \log\widehat\varepsilon^{-1}
        =
        \log\varepsilon^{-1}+O_{b^\star}(1),$  concludes the proof.

\end{proof}

\section{Minkowski dimension and logarithmic correction}\label{min567891}
We are now ready to prove Theorem \ref{main1}.  
We first smoothen SHF at scale \(\varepsilon\), then apply the mass-concentration estimate Corollary \ref{corcorcor} pointwise in space, and  use the Borel--Cantelli lemma along a suitably sparse sequence of scales followed by a sandwiching argument which controls oscillations between scales.

Let us begin with a brief discussion of the argument which is simply an application of the first moment method given Corollary \ref{corcorcor}. Namely, we know that the total SHF mass in non-zero, and  simply by taking expectations one can argue that balls of radius $\e$ where the mass (relative to Lebesgue measure) is not close to $\sqrt{N_{\e,b}}$ do not contribute. Thus, there must be approximately $\frac{1}{\e^2\sqrt{N_{\e,b}}}$ many balls of radius $\e$ whose SHF mass is close to $\e^2 \sqrt{N_{\e,b}}$ that support almost the entirety of the SHF mass. 

We will actually prove the result first along a sparse sequence
\begin{align}\label{verysparse}
        r_m:=e^{-e^m},
        \qquad \forall m\ge1.
\end{align} 
which will make the relevant error
probabilities  summable, and hence allow an application of the Borel--Cantelli lemma to obtain an almost sure statement.
Once the result is established along the sparse scales, we extend it to an
arbitrary decreasing sequence \(\varepsilon_n\downarrow0\) by a covering
argument. That is, for any $\e$, we will choose \(m=m(\e)\) such that $ r_{m+1}<\varepsilon\le r_m .$ 
Then the covering at scale \(\varepsilon\) can be compared with coverings at the
neighboring sparse scales \(r_m\) and \(r_{m+1}\). The key property that allows this is that the logarithmic correction remains stable between $r_m$ and $r_{m+1}$ since $  \log r_{m+1}^{-1}
        =
        e \log r_m^{-1}$.  

\begin{proof}[Proof of Theorem \ref{main1}]
As in some of the earlier proofs, to ease readability, we break the argument into multiple steps.  \\

\noindent
\textbf{Step 1.}
For notational brevity, we set
\begin{align*}
   \mathscr Z  :=  \mathscr Z_{0,1}^\vartheta  \blacktriangleright 1 \in \mathcal M_+(\mathbb R^2)
\end{align*}
(we suppress the parameter $\vartheta$ in $   \mathscr Z $).
For \(a>0\), define the smoothed density
\begin{align}\label{sden}
        F_{\varepsilon,a}(x)
        :=
        \int_{\mathbb R^2}
        p(a^2\varepsilon^2,x-y)\,\mathscr Z (dy) = p(a^2 \varepsilon^2 , \cdot-x) \blacktriangleleft \mathscr Z_{0,1}^\vartheta \blacktriangleright 1  .
\end{align}
which is exactly the quantity $\cZ_{a\e}$ defined in \eqref{ori} centered at $x$ instead of the origin. 
Observe then that 
by  (spatial) translation invariance of SHF,  Corollary \ref{corcorcor}  holds for any reference point $x\in \mathbb R^2$ (i.e. for the translated version $p(\varepsilon^2, \cdot -x )\blacktriangleleft \mathscr{Z}^\vartheta_{0,1}\blacktriangleright 1$). Integrating it over a bounded set \(D\subset\mathbb R^2\), we deduce that for every constant 
\(a>0\), for all sufficiently small $\varepsilon>0,$  
\begin{align}
        \mathbb E\int_D
        F_{\varepsilon,a}(x)
        \mathbf 1_{\{
        F_{\varepsilon,a}(x)<(\log  \varepsilon^{-1})^{1/2-\xi/2}
        \}}
        \,dx
        & +
        \mathbb E\int_D
        F_{\varepsilon,a}(x)
        \mathbf 1_{\{
        F_{\varepsilon,a}(x)>(\log  \varepsilon^{-1})^{1/2+\xi/2}
        \}}
        \,dx \nonumber\\
    &  \qquad \qquad \qquad   \le
        C(\log  \varepsilon^{-1})^{-\upsilon_{\xi/2}}
\label{eq:quantitative-density-concentration}
\end{align}
{for some constant $C>0$ depending on $a$.}

~

We now begin with proving the statement for the sequence $(r_n)_{n \ge 1}$ defined in \eqref{verysparse}.

~

\noindent
\textbf{Step 2.} 
For \(A\subset\mathbb R^2\) and \(r>0\), we write $   A^{(r)}
        :=
        \{x\in\mathbb R^2:\operatorname{dist}(x,A)\le r\}.$ 
Recalling from the statement of the theorem that $\mathsf{Q}=[0,1]^2$, define
\[
        A_{n,\xi}
        :=
        \left\{
        x\in \mathsf Q^{(r_n/2)}:
        F_{r_n,1}(x)\ge (\log r_n^{-1})^{1/2-\xi/2}
        \right\}.
\]
We claim that \(A_{n,\xi}^{(r_n/2)}\) carries asymptotically all the
\(\mathscr Z\)-mass in \(\mathsf Q\).
Indeed, if \(y\in \mathsf Q\setminus A_{n,\xi}^{(r_n/2)}\), then $B(y,r_n/2)
        \subset
        \mathsf Q^{(r_n/2)}\setminus A_{n,\xi}.$ This implies that for any \(y\in \mathsf  Q\setminus A_{n,\xi}^{(r_n/2)}\),
\begin{align}
        \int_{\mathsf Q^{(r_n/2)}\setminus A_{n,\xi}}
        p(r_n^2,x-y)\,dx
        &\ge
        \int_{B(y,r_n/2)}
        p(r_n^2,x-y)\,dx  =
        \int_{B(0,1/2)}
        \frac{1}{2\pi}e^{-|u|^2/2}\,du
        =:c_1>0.
\label{eq:kernel-mass-lower-upper-cover}
\end{align}
Hence using Fubini's theorem, recalling the definition of  $ F_{\varepsilon,a}(x)$  in \eqref{sden},
\begin{align}
        c_1\,
        \mathscr Z\bigl(\mathsf Q\setminus A_{n,\xi}^{(r_n/2)}\bigr)
        &=   \int_{\mathsf Q\setminus A_{n,\xi}^{(r_n/2)}} c_1 \, \mathscr Z (dy) \notag \\
        &\le
       \int_{\mathsf Q\setminus A_{n,\xi}^{(r_n/2)}}\int_{\mathsf Q^{(r_n/2)}\setminus A_{n,\xi}}
        p(r_n^2,x-y)\,dx\,\mathscr Z(dy) \notag\\
        &\le
        \int_{\mathsf Q^{(r_n/2)}\setminus A_{n,\xi}}
        F_{r_n,1}(x)\,dx \le
        \int_{\mathsf Q^{(1)}}
        F_{r_n,1}(x)
        \mathbf 1_{\{
        F_{r_n,1}(x)<(\log r_n^{-1})^{1/2-\xi/2}
        \}}
        \,dx
\label{eq:mass-outside-good-upper}
\end{align}
Taking the expectation and summing over all $n \in \mathbb N$, by
\eqref{eq:quantitative-density-concentration} and noting that $\log r_n^{-1} = e^n$,  
\[
        \sum_{n=1}^\infty
        \mathbb E\,
        \mathscr Z\bigl(\mathsf Q\setminus A_{n,\xi}^{(r_n/2)}\bigr)
        <\infty.
\]
Since the summands are nonnegative, it follows that
\begin{align}
        \mathscr Z\bigl(\mathsf Q\setminus A_{n,\xi}^{(r_n/2)}\bigr)
        \longrightarrow0
        \qquad\text{almost surely}.
\label{eq:good-set-covers-mass-as}
\end{align}
It remains to cover \(A_{n,\xi}^{(r_n/2)}\). Let $x_1,\dots,x_{M_n}$ 
be a maximal \((r_n/5)\)-separated subset of \(A_{n,\xi}\). Then the
balls \(B(x_k,r_n/10)\) for $1\le k \le M_n$ are disjoint, and by maximality, $\bigcup_{k=1}^{M_n}B(x_k,r_n/5)$ covers $ A_{n,\xi}.$ 
Consequently,
\begin{align}
        A_{n,\xi}^{(r_n/2)}
        \subset
        \bigcup_{k=1}^{M_n}
        B(x_k,r_n).
\label{eq:cover-good-set-neighborhood}
\end{align}
We now upper bound \(M_n\). We use the following heat kernel comparison:  for some constant  $c>0$,
\begin{align}
          |y-x|\le \frac{r_n}{10} \Rightarrow p(4r_n^2,y-z)
        \ge
        c\,p(r_n^2,x-z) \qquad \forall z\in\mathbb R^2.
\label{eq:heat-kernel-comparison-upper}
\end{align} {That this holds is because  the displacement of the center,
\(\lvert y-x\rvert\le r_n/10\), is comparable to the diffusive fluctuation scale \(\sqrt{ r_n^2}= r_n\).} More precisely, writing \(\Delta:=y-x\), we have $|\Delta| \le r_n/10$ and thus
{\[
        \frac{p(4r_n^2,y-z)}
             {p(r_n^2,x-z)}
        =
        \frac14
        \exp\left\{
        -\frac{|x-z+\Delta|^2}{8r_n^2}
        +
        \frac{|x-z|^2}{2r_n^2}
        \right\} 
\ge 
        \frac14
        \exp\left\{
        -C\frac{|\Delta|^2}{r_n^2}
        \right\}
        \ge \frac14 \exp\{-C/100 \}.
\]}
Thus, for every \(y\in B(x_k,r_n/10)\), using \eqref{eq:heat-kernel-comparison-upper} and recalling $x_k \in A_{n,\xi}$,
\[
    F_{r_n,2}(y)
    \ge cF_{r_n,1}(x_k)
        \ge
        c(\log r_n^{-1})^{1/2-\xi/2}.
\]
Since the balls \(B(x_k,r_n/10)\)   for $1\le k \le M_n$ are disjoint and are contained in
\(\mathsf Q^{(2)}\),
\begin{align}
        \int_{\mathsf Q^{(2)}}F_{r_n,2}(y)\,dy
        &\ge
        \sum_{k=1}^{M_n}
        \int_{B(x_k,r_n/10)}
        F_{r_n,2}(y)\,dy\ge
        C M_nr_n^2(\log r_n^{-1})^{1/2-\xi/2}.
\label{eq:number-good-balls-bound-as}
\end{align} 
Since $ \mathbb E F_{r_n,2}(y)=1$ for all $y\in \mathbb R^2,$  
we get $\mathbb E M_n
        \le
        Cr_n^{-2}(\log r_n^{-1})^{-1/2+\xi/2},$ and thus by Markov's inequality,
\[
        \mathbb P\left(
        M_n>
        r_n^{-2}(\log r_n^{-1})^{-1/2+\xi}
        \right)
        \le
        C (\log r_n^{-1})^{-\xi/2}.
\]
By  the Borel--Cantelli lemma, almost surely,
\begin{align}
        M_n
        \le
        r_n^{-2}(\log r_n^{-1})^{-1/2+\xi}\qquad \text{for all large \(n\).}
\label{eq:number-good-balls-as}
\end{align}
Combining  this with \eqref{eq:good-set-covers-mass-as} and
\eqref{eq:cover-good-set-neighborhood}, we obtain the upper covering
bound for the sequence $(r_n)_{n \ge 1}$.

~

\noindent
\textbf{Step 3. Lower covering bound.} Let \({\mathcal C}_n\) be any union of \(m_n\) balls of
radius \(r_n\), where
\begin{align}    \label{mnlower}
m_n\le
        r_n^{-2}(\log r_n^{-1})^{-1/2-\xi}.  
\end{align}
If \(y\in {\mathcal C}_n\cap \mathsf  Q\), then $B(y,r_n/10)
        \subset {\mathcal C}_n^{(r_n/10)}.$ 
Thus as in \eqref{eq:kernel-mass-lower-upper-cover}, for any \(y\in {\mathcal C}_n\cap \mathsf Q\),
\begin{align}
        \int_{{\mathcal C}_n^{(r_n/10)}}
        p(4r_n^2,x-y)\,dx
        &\ge
        \int_{B(y,r_n/10)}
        p(4r_n^2,x-y)\,dx =
        \int_{B(0,1/20)}
        \frac1{2\pi}e^{-|u|^2/2}\,du
        =:c_2>0.
\label{eq:kernel-mass-lower-lower-cover}
\end{align}
Hence similarly as in \eqref{eq:mass-outside-good-upper}, using Fubini's theorem,
\begin{align}
        c_2\,\mathscr Z({\mathcal C}_n\cap \mathsf Q)
        &\le
        \int_{{\mathcal C}_n\cap \mathsf Q}
        \int_{{\mathcal C}_n^{(r_n/10)}}
        p(4r_n^2,x-y)\,dx\,\mathscr Z(dy) \le
        \int_{{\mathcal C}_n^{(r_n/10)}\cap \mathsf Q^{(1)}}
        F_{r_n,2}(x)\,dx .
\label{eq:small-cover-mass-by-density}
\end{align}
The intersection with \(\mathsf Q^{(1)}\) is harmless, since \(y\in \mathsf Q\) and
\(x\in B(y,r_n/10)\) imply \(x\in \mathsf Q^{(1)}\) for 
large \(n\).
Now we split the last integral into low- and high-density parts of $ F_{r_n,2}$:
\begin{align*}
        \mathscr Z({\mathcal C}_n\cap \mathsf Q)
        &\le
        C (\log r_n^{-1})^{1/2+\xi/2}
        |{\mathcal C}_n^{(r_n/10)}\cap \mathsf Q^{(1)}|+
        C
        \int_{\mathsf Q^{(1)}}
        F_{r_n,2}(x)
        \mathbf 1_{\{
        F_{r_n,2}(x)>(\log r_n^{-1})^{1/2+\xi/2}
        \}}
        \,dx. 
\end{align*}
Since \({\mathcal C}_n\) is a union of \(m_n\) balls of radius \(r_n\), we have $|{\mathcal C}_n^{(r_n/10)}\cap \mathsf  Q^{(1)}|
        \le
        C m_nr_n^2.$ Hence, using \eqref{mnlower},
\begin{align} \label{unifc}
    \mathbb E \mathscr Z({\mathcal C}_n\cap \mathsf Q)
        &\le
        C   (\log r_n^{-1})^{-\xi/2} +
        C \mathbb E
        \int_{\mathsf Q^{(1)}}
        F_{r_n,2}(x)
        \mathbf 1_{\{
        F_{r_n,2}(x)>(\log r_n^{-1})^{1/2+\xi/2}
        \}}
        \,dx.
\end{align}
By \eqref{eq:quantitative-density-concentration},  we have   $\sum_n  \mathbb E \mathscr Z({\mathcal C}_n\cap \mathsf Q)<\infty$,  and thus
\[
        \mathscr Z({\mathcal C}_n\cap \mathsf Q)\longrightarrow0
        \qquad\text{almost surely}.
\]Therefore the lower bound follows for the sequence $(r_n)_{n \ge 1}$.

~

\noindent
\textbf{Step 4. Extension to general sequences.}
Now   let
\((\varepsilon_n)_{n\ge1}\) be arbitrary decreasing sequence such that $\varepsilon_n\downarrow0.$  
For each   large enough \(n\), let  \(m=m(n)\)  be a unique integer such that
\begin{equation}
        r_{m+1}<\varepsilon_n\le r_m.
\label{eq:sandwich-scale-choice}
\end{equation}
Since \(\varepsilon_n\downarrow0\), we have \(m(n)\to\infty\). Also recalling  $r_m = e^{-e^m}$ (see \eqref{verysparse}),
\begin{equation}
        e^m
        =
        \log r_m^{-1}
        \le
        \log\varepsilon_n^{-1}
        <
        \log r_{m+1}^{-1}
        =
        e^{m+1}.
\label{eq:sandwich-log-comparison-direct}
\end{equation}
By Step 2,
for
each \(m\), there exists a collection \(\mathcal B_m\) of balls of radius
\(r_m\) such that
\begin{align} \label{911}
        |\mathcal B_m|
        \le
        r_m^{-2}
        (\log r_m^{-1})^{-1/2+\xi}
        =
        r_m^{-2}
        (e^m)^{-1/2+\xi},
\end{align}
and
\begin{align} \label{912}
      \mathscr Z
        \left(
        \mathsf Q\setminus\bigcup_{B\in\mathcal B_m}B
        \right)
        \longrightarrow0.
\end{align}
Every ball of
radius \(r_m\) in \(\mathbb R^2\) can be covered by at most $  C(\frac{r_m}{\varepsilon_n})^2$ 
balls of radius \(\varepsilon_n\). Replacing each ball in
\(\mathcal B_m\) by such a cover, we obtain a collection \(\widetilde {\mathcal B}_n\) of
balls of radius \(\varepsilon_n\) such that $\cup_{B\in\mathcal B_m}B
        \subset
        \cup_{B\in \widetilde {\mathcal B}_n }B$ 
and
\begin{align*}
        |\widetilde {\mathcal B}_n|
        &\le
        C\left(\frac{r_m}{\varepsilon_n}\right)^2
        |\mathcal B_m|  \overset{\eqref{911}}{\le} 
        C\left(\frac{r_m}{\varepsilon_n}\right)^2
        r_m^{-2}
        (e^m)^{-1/2+\xi} =
        C\varepsilon_n^{-2}
        (e^m)^{-1/2+\xi} \overset{\eqref{eq:sandwich-log-comparison-direct}}{\le} C\varepsilon_n^{-2}
        (\log\varepsilon_n^{-1})^{-1/2+\xi}.
\end{align*}
In addition, \eqref{912} along with $\cup_{B\in\mathcal B_m}B
        \subset
        \cup_{B\in\widetilde {\mathcal B}_n}B$ implies \eqref{900}, 
since \(m(n)\to\infty\)  as $n\rightarrow \infty$.

Similarly for the lower bound, 
note that every ball of
radius \(\varepsilon_n\)  can be covered by at most $  C(\frac{\varepsilon_n}{r_{m+1}})^2$ 
balls of radius \(r_{m+1}\). Replacing each ball in
\(\mathcal C_n\) by such a cover, we obtain a collection 
\(\widetilde{\mathcal C}_{m+1}\) of balls of radius \(r_{m+1}\) such that $\cup_{B\in\mathcal C_n}B
        \subset
        \cup_{B\in \widetilde{\mathcal C}_{m+1}}B$  and
\begin{align*}
        |\widetilde{\mathcal C}_{m+1}|
        &\le
        C\left(\frac{\varepsilon_n}{r_{m+1}}\right)^2
        \varepsilon_n^{-2}
        (\log\varepsilon_n^{-1})^{-1/2-\xi}=
        C r_{m+1}^{-2}
        (\log\varepsilon_n^{-1})^{-1/2-\xi} \overset{\eqref{eq:sandwich-log-comparison-direct}}{\le}
        Cr_{m+1}^{-2}
        (e^{m+1})^{-1/2-\xi}.
\end{align*} 
As $\cup_{B\in\mathcal C_n}B
        \subset
        \cup_{B\in \widetilde{\mathcal C}_{m+1}}B$, we have  
\[
      \mathscr Z ({\mathcal C}_n\cap  \mathsf  Q)
        \le
      \mathscr Z (\widetilde{\mathcal C}_{m+1}\cap  \mathsf  Q) 
\]
and the right-hand-side converges to 0 almost surely as $m\rightarrow \infty$ by Step 3. As \(m(n)\to\infty\) as $n\rightarrow \infty$,  
this proves \eqref{901}.
\end{proof}

\section{Decoupled partition function: Large deviations} \label{sec5}
In this section we prove Proposition \ref{largedeviation45} recalled again to aid reading. 
\begin{proposition}  
\label{cor:full-sum-upper-tail-from-local-ratio-relaxed} 
For any \(\alpha>0\) and \(\eta\in(0,1/10)\),    there exists
\(\widetilde \nu_b\ge0\) (depending on $\alpha$) with \(\widetilde \nu_b\to0\) as \(b\downarrow0\)  such that the following holds: For any small enough constant $b>0$ (depending on $\alpha$), as $\varepsilon \rightarrow 0,$
\begin{equation}\label{sharptail676}
        N_{\varepsilon,b}^{-\frac{\alpha^2}{2(1-\eta)}-\widetilde  \nu_b+o(1)}
        \le
        \mathbb P\left(G_{\eta;1}
        \ge  N_{\varepsilon,b}^{-\frac{1-\eta}{2}+\alpha} 
        \right)
        \le
        N_{\varepsilon,b}^{-\frac{\alpha^2}{2(1-\eta)}+ \widetilde  \nu_b+o(1)}.
\end{equation}
 
\end{proposition}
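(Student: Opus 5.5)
Set $Y_i:=\log G_i$ for $1\le i\le M_{\varepsilon,\eta}$ and $S:=\sum_{i\le M_{\varepsilon,\eta}}Y_i$. The event in \eqref{sharptail676} is $\{S\ge (\alpha-\frac{1-\eta}{2})\log N_{\varepsilon,b}\}$. We work on $\Omega_{\varepsilon,\eta}$, whose complement has probability $(\log\varepsilon^{-1})^{-A}$ by Lemma~\ref{goodevent}; this is negligible against any polynomial in $N_{\varepsilon,b}$. On this event $Y_i$ is well defined.

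\textbf{Step 1: log-Laplace transforms.} For each bulk index we aim to show that, for $\theta$ in a compact set $[0,\theta_0]$ with $\theta_0$ slightly larger than $\alpha/(1-\eta)$,
\begin{equation*}
\lambda_i(\theta):=\log\mathbb E\big[G_i^\theta\mathbf 1_{\{G_i\ge 1/2\}}\big]=\frac{\theta(\theta-1)}{2}\,\frac{1+O(\nu_b)}{N_{\varepsilon,b}-i}+O\big((N_{\varepsilon,b}-i)^{-3/2}\big).
\end{equation*}
The proof is a Taylor expansion of $(1+W_i)^\theta$ on $\{|W_i|\le 1/2\}$. The zeroth and first order terms give $1+\theta\,\mathbb E W_i=1$. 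The quadratic term gives $\frac{\theta(\theta-1)}{2}\mathbb E W_i^2$, which \eqref{2moment} identifies as $(1+o_b(1))/(N_{\varepsilon,b}-i)$. The cubic remainder is controlled by \eqref{eq:centered-G-moment-B3} and is $O((N_{\varepsilon,b}-i)^{-3/2})$. The truncation error is super-polynomially small by the Markov argument of Lemma~\ref{goodevent}.

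Summing over $i\le M_{\varepsilon,\eta}$ uses two facts. First, $\sum_i (N_{\varepsilon,b}-i)^{-1}=\log(N_{\varepsilon,b}/\ell_{\varepsilon,\eta})+O(1)=(1-\eta)\log N_{\varepsilon,b}+O(1)$. Second, $\sum_i (N_{\varepsilon,b}-i)^{-3/2}=O(1)$, since every bulk index satisfies $N_{\varepsilon,b}-i\ge\ell_{\varepsilon,\eta}$. Together these give
\begin{equation*}
\Lambda(\theta):=\sum_{i\le M_{\varepsilon,\eta}}\lambda_i(\theta)=\frac{\theta(\theta-1)}{2}(1-\eta)(1\pm\nu_b)\log N_{\varepsilon,b}+O(1).
\end{equation*}

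\textbf{Step 2: upper bound.} Apply the exponential Chebyshev inequality with $\theta=\alpha/(1-\eta)+\frac12$, which is the minimiser of $\frac{\theta(\theta-1)}{2}(1-\eta)-\theta(\alpha-\frac{1-\eta}{2})$. The minimum value is $-\frac{\alpha^2}{2(1-\eta)}$, and the $\nu_b$ errors contribute $\widetilde\nu_b$ to the exponent. This gives the upper bound in \eqref{sharptail676}.

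\textbf{Step 3: lower bound.} Define the tilted measure $d\mathbb Q=\prod_i G_i^{\theta}\mathbf 1_{\{G_i\ge1/2\}}e^{-\lambda_i(\theta)}\,d\mathbb P$, with $\theta$ chosen slightly larger than the optimiser, say $\theta+\delta'$. Under $\mathbb Q$ the mean of $S$ is $\Lambda'(\theta)$. Differentiating the expansion of Step 1 gives the derivative estimate $\lambda_i'(\theta)=(\theta-\frac12)\frac{1+O(\nu_b)}{N_{\varepsilon,b}-i}+\dots$, so the $\mathbb Q$-mean of $S$ slightly exceeds the threshold. The variance of $S$ under $\mathbb Q$ is $O(\log N_{\varepsilon,b})$, which follows from second-moment bounds under the tilt, again obtained by Taylor expansion. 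Chebyshev's inequality then shows that $\mathbb Q$ places probability at least $1/2$ on a window
\begin{equation*}
\Big\{\,0\le S-\big(\alpha-\tfrac{1-\eta}{2}\big)\log N_{\varepsilon,b}\le C\delta'\log N_{\varepsilon,b}\,\Big\}.
\end{equation*}
Change measure back: on this window, $d\mathbb P/d\mathbb Q=e^{-\theta S+\Lambda(\theta)}$. This yields the lower bound in \eqref{sharptail676} up to an error $C\delta'$ in the exponent. Since $\delta'$ is arbitrary, this error is absorbed into $o(1)$, and the remaining $b$-dependence into $\widetilde\nu_b$. No quantitative CLT is needed at this polynomial precision.

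\textbf{Main obstacle.} The hard step is Step 1. We need the expansion of $\lambda_i(\theta)$ with relative error $o_b(1)$ uniformly over non-integer $\theta$ and over all bulk $i$. For the tilt in Step 3 we also need the analogous bounds on $\lambda_i'$ and $\lambda_i''$. Only integer moments of $G_i$ are available, so all of this must come from the concentration $W_i=O((N_{\varepsilon,b}-i)^{-1/2})$ in every $L^p$. We also must check that the multiplicative errors $1+O(\nu_b)$ from \eqref{2moment} accumulate only into the exponent as $\widetilde\nu_b$.
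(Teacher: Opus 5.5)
Your argument is correct in substance but follows a different, more elementary route than the paper. The paper deduces \eqref{sharptail676} from the much sharper Proposition~\ref{prop:local-ratio-BE-correct}, taking $U=\varnothing$ and $s=0$. There it tilts at the exact saddle point and proves a Berry--Esseen bound under the tilted law, using Lemma~\ref{lem:tilted-third-moment-full}. This gives the exact form $\mathbb P(S\ge t_{\eta,\alpha})=e^{-\mathcal I(t_{\eta,\alpha})}K(0)$ with $K(0)\asymp(\log N_{\varepsilon,b})^{-1/2}$. It then evaluates $\mathcal I(t_{\eta,\alpha})=\frac{(t_{\eta,\alpha}+V/2)^2}{2V}+O(1)$ by Legendre duality, where $V=\sum_{i\le M_{\varepsilon,\eta}}\mathbb E W_i^2$.

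You instead run the classical Cram\'er argument. The upper bound is Chernoff. The lower bound uses a tilt pushed slightly past the threshold, plus Chebyshev on a window of width $\delta'\log N_{\varepsilon,b}$. The tilted variance is $O(\log N_{\varepsilon,b})$, so the window has tilted probability $1-o(1)$ and costs only $N_{\varepsilon,b}^{-O(\delta')}$. That loss is invisible at the precision of \eqref{sharptail676}, so no CLT is needed. Your Step~1 is essentially Lemma~\ref{lem:Mi-expansion-full}. Your truncation to $\{G_i\ge 1/2\}$, paid for by Lemma~\ref{goodevent}, even spares you the strict positivity of $G_i$ and the control of $G^\theta|\log G|^k$ near $G=0$.

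What the paper's heavier route buys is the exact $(\log N_{\varepsilon,b})^{-1/2}$ prefactor and the regularity \eqref{eq:local-ratio-final-correct} in the shift $s$, uniformly over small $U$. The Radon--Nikodym bound of Proposition~\ref{thm:direct-Lp-no-split-corrected} relies on exactly these, and a loss of $N_{\varepsilon,b}^{O(\delta')}$ there would be fatal. So your proof suffices for this proposition but could not replace Proposition~\ref{prop:local-ratio-BE-correct} in the paper.

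\textbf{Step 2: the minimiser is wrong.} The exponent is $\frac{1-\eta}{2}\theta^2-\alpha\theta=\frac{1-\eta}{2}\big(\theta-\frac{\alpha}{1-\eta}\big)^2-\frac{\alpha^2}{2(1-\eta)}$. So the minimiser is $\theta=\alpha/(1-\eta)$, which is the paper's saddle point $\theta_0\approx\frac12+t_{\eta,\alpha}/V$. It is not $\alpha/(1-\eta)+\frac12$; that choice only gives the exponent $-\frac{\alpha^2}{2(1-\eta)}+\frac{1-\eta}{8}$. Your Step~1 range is already consistent with the correct value.

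\textbf{Step 3: choice of $\delta'$.} Perturbing the idealised optimiser by $\delta'$ gives tilted mean $t_{\eta,\alpha}+\big[\delta' a+\big(\frac{\alpha}{1-\eta}-\frac12\big)(a-(1-\eta))\big]\log N_{\varepsilon,b}+O(1)$, with $a:=V/\log N_{\varepsilon,b}$. For fixed $b$ the bracket can be negative unless $\delta'\gtrsim\nu_b$. That loss does legitimately go into $\widetilde\nu_b$. It is cleaner, though, to pick $\theta$ solving $\Lambda'(\theta)=t_{\eta,\alpha}+\delta'\log N_{\varepsilon,b}$, as in Lemma~\ref{lem:compact}. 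Then $\delta'$ is genuinely arbitrary and the lower bound reads $e^{-\mathcal I(t_{\eta,\alpha})-2\theta\delta'\log N_{\varepsilon,b}}(1-o(1))$.

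\textbf{Step 3: the tilted mean.} An expansion with $O(m_i^{-3/2})$ error cannot be differentiated termwise. You need the separate expansion of $\lambda_i'$ that you flag, as the paper does for $M_i'$. Alternatively, use convexity of $\Lambda$: difference quotients with a small fixed step $h$ give $|\Lambda'(\theta)-(\theta-\frac12)V|\le hV+O(1/h)$, which already suffices at this precision.
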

This statement will follow from a more general large deviation statement recorded next (Proposition \ref{prop:local-ratio-BE-correct}). 
To achieve the desired level of generality, the following statement is presented as a sharp upper-tail large deviation estimate for the contribution
of the coordinates outside a small set \(U\).  This
will be a key input for controlling Radon--Nikodym
derivatives projected on the coordinates in \(U\)  (see Proposition \ref{thm:direct-Lp-no-split-corrected} later). As already alluded to in Section \ref{iop}, the main ingredient is a sharp moment-generating-function estimate for the logarithm of $G_i$, which allows us to identify the correct
tilt for the upper-tail event.\\

\nin
We start with some notational preparation and definitions. 
For \(i\in \{1,2,\dots,M_{\varepsilon, \eta}\}\), recalling \eqref{sing34} we write  
\[
G_i=1+W_i,
\qquad
Y_i:=\log G_i,
\qquad
m_i:=N_{\varepsilon,b}-i.
\]
Recalling from \eqref{ellm}, $M_{\varepsilon, \eta}=N_{\varepsilon,b}-\ell_{\varepsilon,\eta} $, for \(i\in \{1,2,\dots,M_{\varepsilon, \eta}\}\),
\begin{align} \label{mimin}
    m_i \ge  \ell_{\varepsilon,\eta} = (\log \varepsilon^{-1})^\eta.
\end{align}

\begin{remark} 
\label{lem:Gi-strictly-positive} To ensure that \(\log G_i\), and hence \(Y_i\), is well defined almost surely, we again invoke  the strict positivity of \(G_i\). As already indicated in Remark \ref{re13}, this is a consequence of \cite{positive}. To be completely precise, the positivity result in \cite{positive} is formulated for a closely
related partition function involving different test functions and below we quickly explain
 why it applies in the present setting as well.
Let \(B,B'\subset\mathbb R^2\) be  open balls. Set $ c_{B,i}:=\inf_{x\in B}p(t_{i-1},x)>0.$ Then,
\begin{align}\label{positive}
        G_i 
        &=
        \int_{\mathbb R^2\times\mathbb R^2}
        p(t_{i-1},x)\,
        \mathscr Z^\vartheta_{t_{i-1},t_i}(dx,dy) \nonumber \\
        &\ge
        \int_{B\times B'}
        p(t_{i-1},x)\,
        \mathscr Z^\vartheta_{t_{i-1},t_i}(dx,dy)  \ge
        c_{B,i}\,
        \mathscr Z^\vartheta_{t_{i-1},t_i}(B\times B').
\end{align}
{We claim that the last term is strictly positive almost surely, as a consequence of \cite[Theorem~1.4]{positive}.}  Choose a  nonnegative continuous compactly supported function \(u_0\in C_c(\mathbb R^2)\) (which is not identically zero) with
\(\operatorname{supp}(u_0)\subset B\).  By the strict local positivity  result ~\cite[Theorem~1.4]{positive},    for any $T>0,$
\begin{align} \label{pos}
       u_0 \blacktriangleleft    \mathscr Z^\vartheta_{0,T} \blacktriangleright \mathbf{1}_{B'}
        = \int_{\mathbb R^2\times B'}
        u_0(x)\,
        \mathscr Z^\vartheta_{0,T}(dx,dy) > 0 \qquad \text{almost surely.}
\end{align}
Since \(u_0\) is supported in \(B\) and bounded, the above quantity is at most $ \|u_0\|_\infty\,
        \mathscr Z^\vartheta_{0,T}(B\times B').$ 
This  along with \eqref{pos} imply that $ \mathscr Z^\vartheta_{0,T}(B\times B')>0$ almost surely. {Hence applying this (with $T = t_i - t_{i-1}$) to \eqref{positive}, using time-translation invariance of SHF,} we conclude that $G_i>0$ almost surely.
\end{remark}

\nin
\textbf{Moment generating function:}
We next define the following moment generating function,
\begin{equation}\label{keydef34}
M_i(\theta):=\mathbb E[e^{\theta Y_i}]=\mathbb E[G_i^\theta],\qquad  \forall \theta>0.
\end{equation}
Note that we only consider  $\theta>0$. Indeed, this will be the regime of interest to study upper-tail probabilities. It is also worth pointing out that 
 in the upper-tail estimates below, the tilting parameter $\theta$  lies in a compact  interval of \((0,\infty)\).  Moreover, since \(G_i\) may take (positive) values close to \(0\),
negative powers \(G_i^\theta\) with \(\theta<0\) would require inverse-moment
bounds which are not available and are not needed for our argument.   \\

\nin
Given the above, let $\lambda_i(\theta)$  denote $\log M_{i}(\theta)$ for any $i\in \{1,2,\dots,M_{\varepsilon, \eta}\}$ and more generally for any $U\subset\{1,2,\dots,M_{\varepsilon, \eta}\}$ define

\begin{equation}\label{def2356}
\lambda _{U}(\theta)
        :=\sum_{i\in U}
        \lambda_i(\theta).
\end{equation}
For a subset \(U\subset\{1,2,\dots,M_{\varepsilon, \eta}\}\), we write
\begin{align}\label{complement}
        U^c:=
        \{1,2,\dots,M_{\varepsilon, \eta}\}\setminus U 
\end{align}
and set 
\begin{align}
      S_U:=\sum_{i\in U}Y_i .
\label{eq:t-eps-eta-local}
\end{align} 
Also, we set 
\begin{align} \label{deft}
    t_{\eta,\alpha}     :=
        \left(
        -\frac{1-\eta}{2}+\alpha
        \right)\log N_{\varepsilon,b}.
\end{align}
Note that the above definition is dictated by \eqref{eq:V-asymptotic-with-b-gap} below which shows that the typical behavior of 
\begin{equation}\label{intuit}
\sum_{i\le M_{\e, \eta}}{Y_i} \approx \sum_{i\le M_{\e, \eta}}{W_i-\frac{W_i^2}{2}} \approx -\Big(\frac{1-\eta}{2}\Big)\log N_{\varepsilon,b}.
\end{equation}
 This shows that \eqref{deft} is indeed a large deviation threshold. \\

\nin
We now arrive at our key technical large deviation statement.

\begin{proposition}[Tail probability of product of blocks]
\label{prop:local-ratio-BE-correct}
Let $\alpha>0$.
Then there exists \(b_0=b_0(\alpha )>0\) such that the following holds for
any constant \(b\in(0,b_0)\).
Let \(A >0\) and  \(\eta\in(0,1/10)\) be constants, and  $  U\subset\{1,2,\dots,M_{\varepsilon, \eta}\}$ 
be any subset such that
\begin{align}
        \Delta_U:= \sum_{j\in U}\frac1{m_j} \le 100,
\label{condit}
\end{align} 
(the above is the smallness criterion for the set $U$).
Define
\begin{align}\label{uppertailprob345}
        q_{U^c}(s)
        :=
        \mathbb P
        \left(
        S_{U^c}
        \ge
        t_{\eta,\alpha}-s
        \right),\qquad s\in \mathbb R.
\end{align}
Then for all sufficiently small $\varepsilon>0$ (uniformly in \(|s|\le A\) and $U$ satisfying \eqref{condit}),
\begin{align}
        q_{U^c}(s)
        =
        e^{-\mathcal I_{U^c}(t_{\eta,\alpha}-s)}
        K_{U^c}(s),
\label{eq:q-local-asym-correct}
\end{align}
where
\begin{align} \label{supsup}
      {  \mathcal I_{U^c}(x)
        :=
        \sup_{\theta > 0}
        \left\{
        \theta x-\lambda _{U^c}(\theta)
        \right\},}
        \qquad
        \lambda _{U^c}(\theta)
        :=
        \log\mathbb E
        \left[
        e^{\theta S_{U^c}}
        \right] \ (\forall \theta>0),
\end{align} 
and $ K_{U^c}(s)$ is a function such that as $\varepsilon \rightarrow 0,$
\begin{align}
        K_{U^c}(s)
       \asymp_\alpha
        \frac{1}{\sqrt{\log N_{\varepsilon,b}}},
        \qquad  |s|\le A .
\label{eq:Keps-prefactor-bound-A}
\end{align}
In addition, as $\varepsilon \rightarrow 0$ (uniformly in \(|s|\le A\) and $U$ satisfying \eqref{condit}),
\begin{align}
        \left|
        \log\frac{q_{U^c}(s)}{q_{U^c}(0)}
        \right|
        \le
        C
        \left(|s|+ 
       (\log \varepsilon^{-1})^{-\eta/2 + o(1)}
        \right).
\label{eq:local-ratio-final-correct}
\end{align}  
\end{proposition}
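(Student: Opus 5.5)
The plan is the classical Bahadur--Rao route: compute the cumulants of each \(Y_i\), tilt exponentially at the saddle point, and prove a local central limit theorem under the tilted measure. \textbf{Step 1 (cumulants).} For \(\theta\) in a compact subset \(\Theta\) of \((0,\infty)\) (containing the eventual saddle \(\theta^*\approx \alpha/(1-\eta)\)) and \(i\le M_{\varepsilon,\eta}\), we establish
\[
\lambda_i(\theta)=\frac{\theta(\theta-1)}{2}\,\mathbb E W_i^2+O\big(m_i^{-3/2}\big),
\]
with matching bounds for \(\lambda_i'\), \(\lambda_i''\), \(\lambda_i'''\). Since the argument needs real \(\theta\) and not only integer moments, we use a Taylor expansion. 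On \(\{|W_i|\le 1/2\}\) we expand
\[
(1+W_i)^\theta=1+\theta W_i+\binom{\theta}{2}W_i^2+O\big(|W_i|^3\big),
\]
and we use \(\mathbb E W_i=0\) and the moment bounds \eqref{eq:centered-G-moment-B3}. On the complement \(\{|W_i|>1/2\}\), whose probability is \(O(m_i^{-p/2})\) for every \(p\), we apply Cauchy--Schwarz with \eqref{singleend} to control \(\mathbb E[G_i^\theta\mathbf 1_{|W_i|>1/2}]\). Here \(\theta>0\) is essential, since the \(G_i\) near \(0\) are then harmless.

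Combined with \eqref{2moment}, this gives \(\mathbb E W_i^2=(1+o_b(1))/m_i\), uniformly in \(i\). Summing over \(U^c\) and using \(\sum_{i\le M_{\varepsilon,\eta}} 1/m_i=(1-\eta)\log N_{\varepsilon,b}+O(1)\) together with \(\Delta_U\le 100\), we obtain
\[
\lambda_{U^c}(\theta)=\frac{\theta(\theta-1)}{2}\,(1-\eta)\log N_{\varepsilon,b}\,\big(1+O(\nu_b)\big)+O(1),
\]
with a uniform error. The cubic error terms sum to \(\sum_i m_i^{-3/2}\le C\ell_{\varepsilon,\eta}^{-1/2}\), which is \((\log\varepsilon^{-1})^{-\eta/2}\). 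This is precisely the error appearing in \eqref{eq:local-ratio-final-correct}.

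\textbf{Step 2 (tilt and saddle).} Let \(\theta^*_s\) solve \(\lambda_{U^c}'(\theta)=t_{\eta,\alpha}-s\). By Step 1, \(\lambda_{U^c}''\asymp \log N_{\varepsilon,b}\), so \(\theta^*_s\) exists, lies in \(\Theta\) for \(b\) small (depending on \(\alpha\)), and is unique; moreover it achieves the supremum in \eqref{supsup}. Define the tilted measure \(d\mathbb Q=e^{\theta^*S_{U^c}-\lambda_{U^c}(\theta^*)}d\mathbb P\). Under \(\mathbb Q\) the \(Y_i\) remain independent, \(S_{U^c}\) has mean \(t_{\eta,\alpha}-s\) and variance \(\sigma^2=\lambda''_{U^c}(\theta^*)\asymp\log N_{\varepsilon,b}\), and the third absolute moments are bounded by \(\sum_i m_i^{-3/2}\), by the same Taylor/moment argument now applied with \(G_i^{\theta}|\log G_i|^3\). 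The tilting identity then gives
\[
q_{U^c}(s)=e^{-\mathcal I_{U^c}(t_{\eta,\alpha}-s)}\;\mathbb E_{\mathbb Q}\!\left[e^{-\theta^*(S_{U^c}-\mathbb E_{\mathbb Q}S_{U^c})}\mathbf 1_{\{S_{U^c}\ge \mathbb E_{\mathbb Q}S_{U^c}\}}\right],
\]
and we take the last expectation as the definition of \(K_{U^c}(s)\).

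\textbf{Step 3 (the prefactor; main obstacle).} It remains to show \(K_{U^c}(s)\asymp_\alpha \sigma^{-1}\). A Berry--Esseen bound under \(\mathbb Q\) has error \(\sigma^{-3}\sum_i m_i^{-3/2}=o(\sigma^{-1})\), which is not enough on its own, since the main term \(\int_0^\infty e^{-\theta^* x}\,d\Phi(x/\sigma)\approx(\theta^*\sigma\sqrt{2\pi})^{-1}\) is itself of order \(\sigma^{-1}\). I would therefore integrate by parts,
\[
K=\theta^*\int_0^\infty e^{-\theta^*x}\,\mathbb Q\big(0\le S-\mathbb E_{\mathbb Q}S\le x\big)\,dx,
\]
and bound the small-window probabilities \(\mathbb Q(0\le S-\mathbb E_{\mathbb Q}S\le x)\) by \(\asymp (x\wedge 1)/\sigma\) for \(x\le 1\) (and \(\lesssim x/\sigma\) in general). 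The difficulty is that the summands are individually tiny, so there is no lattice or smoothness to exploit. My first attempt would be to isolate a block of \(O(1)\) late indices with \(m_i\) of order \(\ell_{\varepsilon,\eta}\), whose variances are not negligible, or else to use a smoothing inequality; the Berry--Esseen error \(o(\sigma^{-1})\) is acceptable at windows of length \(x\gtrsim\sigma\cdot o(1)\). For the lower bound only windows with \(x\asymp 1\) are needed, where Berry--Esseen with \(\sigma^{-1}\) accuracy suffices after a slight enlargement. For the upper bound, a concentration (anti-concentration) bound for sums of independent variables suffices.

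\textbf{Step 4 (ratio).} Finally, \eqref{eq:local-ratio-final-correct} follows from convexity:
\[
\mathcal I(t-s)-\mathcal I(t)=-\theta^*_0 s+O\big(s^2/\log N_{\varepsilon,b}\big),
\]
which is \(O(|s|)\). For the prefactor ratio \(K(s)/K(0)\) I would compare the two tilts directly, since \(\theta^*_s-\theta^*_0=O(|s|/\log N_{\varepsilon,b})\), and estimate the ratio by \(e^{C|s|}\), absorbing the cumulant errors of size \((\log\varepsilon^{-1})^{-\eta/2+o(1)}\). The constants throughout are uniform in \(U\) with \(\Delta_U\le100\) and in \(|s|\le A\).
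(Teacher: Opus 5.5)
Your architecture is the paper's: a Taylor expansion of $M_i(\theta)$ for real $\theta>0$, a saddle-point tilt with $\theta^*_s$ in a fixed compact interval, a CLT under the tilted law, and convex duality for $\mathcal I_{U^c}(t_{\eta,\alpha})-\mathcal I_{U^c}(t_{\eta,\alpha}-s)$. Steps 1--2 are fine. The gap is in Steps 3--4, and it comes from misjudging Berry--Esseen.

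First, an error of $o(\sigma^{-1})$ against a main term $\asymp\sigma^{-1}$ already gives $K_{U^c}(s)\asymp\sigma^{-1}$. So you do not need small-window bounds as $x\to0$, which would require a local limit theorem you have no tools for. Nor do you need to isolate ``$O(1)$ late indices whose variances are not negligible'': every $Y_i$ with $i\le M_{\varepsilon,\eta}$ has variance $O(\ell_{\varepsilon,\eta}^{-1})\to0$.

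Second, and more importantly, the error is far better than $o(\sigma^{-1})$. The tilted third-moment sum is at most $C\sum_{m\ge\ell_{\varepsilon,\eta}}m^{-3/2}\le C(\log\varepsilon^{-1})^{-\eta/2}$, while $\sigma_s^2=\lambda''_{U^c}(\theta^*_s)\asymp\log N_{\varepsilon,b}$. Hence the Kolmogorov distance is $d_K=O\bigl((\log\varepsilon^{-1})^{-\eta/2}\sigma_s^{-3}\bigr)$. The map $z\mapsto e^{-\theta^*_s\sigma_s z}\mathbf 1_{\{z\ge0\}}$ has total variation at most $2$, so this gives the \emph{relative} asymptotic $K_{U^c}(s)=\mathfrak G(s)(1+\iota_s)$. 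Here $\mathfrak G(s)=e^{(\theta^*_s\sigma_s)^2/2}\,\mathbb P(\mathsf Z>\theta^*_s\sigma_s)\asymp(\log N_{\varepsilon,b})^{-1/2}$ and $|\iota_s|\le(\log\varepsilon^{-1})^{-\eta/2+o(1)}$.

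This relative precision is exactly what \eqref{eq:local-ratio-final-correct} needs, and your Step 4 does not supply it. Two-sided bounds $K_{U^c}(s),K_{U^c}(0)\asymp\sigma^{-1}$, which is all your Step 3 targets, only give $|\log(K_{U^c}(s)/K_{U^c}(0))|=O(1)$. The claimed bound must instead vanish as $s\to0$. Saying ``compare the two tilts and estimate the ratio by $e^{C|s|}$'' has no mechanism behind it. Also, the additive term $(\log\varepsilon^{-1})^{-\eta/2+o(1)}$ is not a cumulant error: $\mathcal I_{U^c}$, $\theta^*_s$ and $\sigma_s^2$ are exact. It is precisely the CLT error $\iota_s$.

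The paper finishes by proving $|\log(\mathfrak G(s)/\mathfrak G(0))|\le C|s|/\log N_{\varepsilon,b}$. It uses:
\begin{itemize}
\item $|\theta^*_s-\theta^*_0|\le C|s|/\log N_{\varepsilon,b}$;
\item $|\sigma_s^2-\sigma_0^2|\le C|s|/\log N_{\varepsilon,b}$, which follows from $|\lambda'''_{U^c}|\le C$, i.e.\ the tilted third moments;
\item the Mills-ratio representation $e^{x^2/2}\mathbb P(\mathsf Z>x)=(\sqrt{2\pi}x)^{-1}(1+\Lambda(x))$ with $0<\Lambda'(x)\le Cx^{-3}$.
\end{itemize}

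Your direct-comparison idea can also be completed, but again only with the quantitative $d_K$. Changing measure to $\mathbb Q_{\theta^*_0}$ gives exactly
\[
\frac{q_{U^c}(s)}{q_{U^c}(0)}=\frac{\mathbb E_{\mathbb Q_{\theta^*_0}}\bigl[e^{-\theta^*_0(S_{U^c}-t_{\eta,\alpha})}\mathbf 1_{\{S_{U^c}\ge t_{\eta,\alpha}-s\}}\bigr]}{K_{U^c}(0)}.
\]
Its distance from $1$ is at most $e^{\theta^*_0|s|}\,\mathbb Q_{\theta^*_0}(|S_{U^c}-t_{\eta,\alpha}|\le|s|)/K_{U^c}(0)\le C(|s|+\sigma_0 d_K)$. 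This handles small $|s|$. For $|s|$ bounded below, the crude $\asymp$ bounds together with $|\mathcal I_{U^c}(t_{\eta,\alpha})-\mathcal I_{U^c}(t_{\eta,\alpha}-s)|\le C|s|$ suffice.
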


{
\begin{remark} We briefly mention some remarks regarding this proposition.\\

\nin
$(1)$
By standard convex analysis (see Lemma \ref{lem:interior-legendre-duality} below), the supremum for $\mathcal I_{U^c}(x)$ in \eqref{supsup} is attained at a point $\theta_x >0$ at which $\lambda_{U^c}'(\theta_x) = x$.\\

\nin
$(2)$ The estimate \eqref{eq:local-ratio-final-correct} is a regularity (in $s$) estimate of the large deviation probability \eqref{uppertailprob345}. This will be a key input in comparing conditional and unconditional distributions of $ (G_i)_{i\in U}$. This is carried out in Section \ref{rnproof34} where further explanation is provided. 
\end{remark}
}

\nin
Before proceeding with the rest of the section we include a roadmap to help guide the reader.
The proof of our large deviation result relies on exponential tilting and hence demands sharp estimates on the moment generating functions of $Y_i=\log G_i.$
{In Subsection \ref{momentsingle} we provide such estimates relying on Taylor expansions. This crucially allows us to go beyond integer moments usually computable for partition functions via connections to random walk or Brownian motion local times. Further, we will also require and deliver regularity and smoothness estimates.} Armed with this, in Subsection \ref{keytechnical123} we prove Proposition \ref{prop:local-ratio-BE-correct}. 
In Subsection \ref{sharptail23}, we analyze the rate function provided by the latter to prove Proposition \ref{cor:full-sum-upper-tail-from-local-ratio-relaxed}. Finally, in Subsection \ref{sizebias23} we use the above to quickly deduce which level sets contribute to the expectation of 
$G_{\eta;1}.$

 \subsection{Moment generating function of a single block}\label{momentsingle}

We begin by recalling that 
{for every integer \(k\ge1\) and a constant $b\in (0,1/2]$},  {by \eqref{eq:centered-G-moment-B3} along with the fact $   M_{\varepsilon, \eta} = N_{\varepsilon, b} - 
        \left\lfloor
        (\log\varepsilon^{-1})^\eta
        \right\rfloor,
$ for all sufficiently small $\varepsilon>0$ (depending on $\eta$)},
\begin{equation}
\mathbb E[W_i]=0,
\qquad
\mathbb E|W_i|^k\le C_k m_i^{-k/2},\qquad \forall i \in [1,M_{\varepsilon, \eta}].
\label{eq:assump-allmom-direct-Lp-final}
\end{equation}
In addition by the second moment estimate \eqref{2moment}, 
there exists   \(\nu_b\ge0\), depending only on \(b\), such
that $\nu_b\rightarrow0$ as $  b\downarrow0,$ 
and the following holds:  For fixed \(b>0\), as \(\varepsilon \rightarrow 0\),
\begin{equation}
        V
        :=
        \sum_{j=1}^{N_{\varepsilon,b}-\ell_{\varepsilon,\eta}}
        \mathbb E[W_j^2]
        =
        a_{\varepsilon,b}\log N_{\varepsilon,b},
        \qquad
        |a_{\varepsilon,b}-(1-\eta)|\le \nu_b+o(1)
\label{eq:V-asymptotic-with-b-gap}
\end{equation}
(since $\sum_{i= \ell_{\varepsilon,\eta}}^{N_{\varepsilon,b}-1} \frac{1}{i} = (1-\eta + o(1)) \log N_{\varepsilon,b}$).\\

 The next lemma is based on Taylor expansion. It provides
a basic expansion, control of \(M_i\) and its first two derivatives, uniformly in \(\theta\) in
compact subsets of \((0,\infty)\).  
\begin{lemma} \label{lem:Mi-expansion-full}
Let $b\in (0,1/2]$ be a constant.
Fix \(0<\Theta_1<\Theta_2\). 
Then, uniformly in \(\theta\in (\Theta_1,\Theta_2)\) and  \(i\in \{1,2,\dots,M_{\varepsilon, \eta}\}\),  as $\varepsilon \rightarrow 0,$ 
\begin{align}
M_i(\theta)
&=
1+\frac{\theta(\theta-1)}{2} \mathbb E[W_i^2]
+O_{\Theta_1,\Theta_2}(m_i^{-3/2}), \label{eq:Mi0-full}\\
M_i'(\theta)
&=
\left(\theta-\frac12\right)\mathbb E[W_i^2]
+O_{\Theta_1,\Theta_2}(m_i^{-3/2}), \label{eq:Mi1-full}\\
M_i''(\theta)
&=
\mathbb E[W_i^2]
+O_{\Theta_1,\Theta_2}(m_i^{-3/2}). \label{eq:Mi2-full}
\end{align} 
In addition, {for every nonnegative integer $r$,} $M_i(\theta)$ is $C^r$ on \((\Theta_1,\Theta_2)\) for all sufficiently small $\varepsilon>0$,  uniformly in \(\theta\in (\Theta_1,\Theta_2)\) and  \(i\in \{1,2,\dots,M_{\varepsilon, \eta}\}\).  {More precisely, for every fixed
integer \(r\ge0\) and a compact interval
\([\Theta_1,\Theta_2]\subset(0,\infty)\), there exists
\(\varepsilon_0=\varepsilon_0(r,\Theta_1,\Theta_2)>0\) such that, whenever
\(0<\varepsilon<\varepsilon_0\),  the function
\(\theta\mapsto M_i(\theta)\) is \(C^r\) on
\((\Theta_1,\Theta_2)\) for every
\(1\le i\le M_{\varepsilon,\eta}\).} 
%\red{order of quantifiers, what does this mean, maybe one should say for a given interval uniform control on all derivatives for all small $\e$}
\end{lemma}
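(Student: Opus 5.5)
The plan is a Taylor expansion of $x\mapsto (1+x)^\theta$ around $x=0$, split according to whether $|W_i|$ is small or not. We have $G_i^\theta=(1+W_i)^\theta$, and $W_i\ge -1$ since $G_i>0$ almost surely (Remark \ref{lem:Gi-strictly-positive}). On the event $\{|W_i|\le 1/2\}$ we write
\[
(1+W_i)^\theta=1+\theta W_i+\tfrac{\theta(\theta-1)}{2}W_i^2+R_\theta(W_i),
\qquad |R_\theta(x)|\le C_{\Theta_1,\Theta_2}|x|^3,
\]
with the constant uniform for $\theta\in[\Theta_1,\Theta_2]$ and $|x|\le 1/2$. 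Taking expectations and using $\mathbb E W_i=0$ together with $\mathbb E|W_i|^3\le C m_i^{-3/2}$ from \eqref{eq:assump-allmom-direct-Lp-final} produces the main terms of \eqref{eq:Mi0-full}. What remains is the contribution of the complementary event $\{|W_i|>1/2\}$, which has to be absorbed into the error.

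\textbf{The bad event.} On $\{|W_i|>1/2\}$ we bound both sides of the expansion crudely. For the left side, $(1+W_i)^\theta\le 1+(1+|W_i|)^{\lceil\Theta_2\rceil}$ (the extra $1$ covers $G_i<1$). The polynomial terms are bounded by $C(1+|W_i|^2)$. The bad event itself satisfies $\mathbb P(|W_i|>1/2)\le 2^k\mathbb E|W_i|^k\le C_k m_i^{-k/2}$ for every $k$. Applying Cauchy--Schwarz, together with the moment bounds on $W_i$, gives a bad-event contribution of $O(m_i^{-k/4})$ for arbitrary $k$, in particular $O(m_i^{-3/2})$. The same reasoning shows that $\mathbb E[W_i\mathbf 1_{|W_i|\le1/2}]$ and $\mathbb E[W_i^2\mathbf 1_{|W_i|>1/2}]$ are $O(m_i^{-3/2})$. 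This yields \eqref{eq:Mi0-full}. Since $m_i\ge \ell_{\varepsilon,\eta}\to\infty$ uniformly in $i\le M_{\varepsilon,\eta}$, all errors are uniform in $i$.

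\textbf{Derivatives.} For the derivatives we differentiate under the expectation. Formally,
\[
\partial_\theta^r G_i^\theta = G_i^\theta(\log G_i)^r.
\]
The main obstacle is here: $\log G_i$ can be very negative when $G_i$ is near $0$, and no inverse moments are available. The fix is the elementary bound $x^\theta|\log x|^r\le C_{r,\Theta_1}$ for $x\in(0,1]$ and $\theta\ge\Theta_1/2>0$, which is exactly why $\theta$ is kept away from $0$. For $x\ge1$ we use $x^\theta|\log x|^r\le C x^{\Theta_2+1}$. Hence, on a neighborhood of $[\Theta_1,\Theta_2]$,
\[
\sup_\theta |\partial_\theta^r G_i^\theta|\le C_r\bigl(1+G_i^{\Theta_2+1}\bigr),
\]
which is integrable by \eqref{eq:assump-allmom-direct-Lp-final}. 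Dominated convergence then shows that $M_i$ is $C^r$ for every $r$, uniformly in $i$ and for all small $\varepsilon$.

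\textbf{Expansions of $M_i'$ and $M_i''$.} Repeat the small/large split for $\mathbb E[G_i^\theta\log G_i]$ and $\mathbb E[G_i^\theta(\log G_i)^2]$. On $\{|W_i|\le1/2\}$ we Taylor expand $(1+x)^\theta\log(1+x)$ and $(1+x)^\theta\log^2(1+x)$ to second order with cubic remainder, whose coefficients are the $\theta$-derivatives of $1+\theta x+\frac{\theta(\theta-1)}2x^2$, namely $x+(\theta-\tfrac12)x^2$ and $x^2$. On the bad event we use the domination bound above together with Cauchy--Schwarz and the super-polynomial smallness of $\mathbb P(|W_i|>1/2)$. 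This gives \eqref{eq:Mi1-full} and \eqref{eq:Mi2-full}.
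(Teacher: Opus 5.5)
Your proposal is correct and follows essentially the paper's route: Taylor expansions of $(1+w)^\theta$, $(1+w)^\theta\log(1+w)$ and $(1+w)^\theta\log^2(1+w)$ with cubic remainders on $|w|\le 1/2$, polynomial control off that set via the moment bounds on $W_i$, and dominated convergence with $\theta\ge\Theta_1>0$ used to tame $G_i$ near $0$. The only difference is cosmetic: the paper bounds the global remainder by $C|w|^q$ on $|w|>1/2$ and takes expectations directly, whereas you split into the events $\{|W_i|\le 1/2\}$ and $\{|W_i|>1/2\}$ and use Cauchy--Schwarz together with the smallness of $\mathbb P(|W_i|>1/2)$.
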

By the second moment estimate   \eqref{2moment}, we have $\mathbb E [ W_i^2] \ge 1/(2m_i)$ for any small enough constant $b>0$, and thus  $O_{\Theta_1,\Theta_2}(m_i^{-3/2})$  may and will be regarded as an error term.

\begin{proof}
%\red{Note that the above claims implicitly imply that $M_i(\theta)$ is at least twice differentiable. \blue{We will in fact prove that for every nonnegative integer $r$, the map $\theta \mapsto M_i(\theta) $ is $ C^{r}$ on \((\Theta_1,\Theta_2)\)  for all sufficiently small $\varepsilon>0$.}}
We first record some consequences of Taylor expansion.\\

Fix \(i\in \{1,2,\dots,M_{\varepsilon, \eta}\}\), and abbreviate
\[
m:=m_i,\qquad W:=W_i,\qquad G:=1+W,\qquad Y:=\log G,\qquad M(\theta):=M_i(\theta).
\]

\noindent
\textbf{Step 1. Taylor expansion.}
For \(w>-1\), define
\begin{align*}
R_0(\theta,w)
&:=(1+w)^\theta-1-\theta w-\frac{\theta(\theta-1)}{2} w^2,\\
R_1(\theta,w)
&:=(1+w)^\theta\log(1+w)-w-\Big(\theta-\frac12\Big) w^2,\\
R_2(\theta,w)
&:=(1+w)^\theta(\log(1+w))^2-w^2.
\end{align*}
We first prove the  remainder bounds
\begin{equation}\label{eq:R-small-full}
|R_j(\theta,w)|\le C_{\Theta_1,\Theta_2} |w|^3,
\qquad
\theta\in[\Theta_1,\Theta_2],\quad |w|\le \frac12,\quad j=0,1,2,
\end{equation}
and for some integer \(q=q(\Theta_2)\ge 3\),
\begin{equation}\label{eq:R-large-full}
|R_j(\theta,w)|\le C_{\Theta_1,\Theta_2} |w|^q,
\qquad
\theta\in[\Theta_1,\Theta_2],\quad w\in \Big(-1,-\frac12 \Big)\cup \Big(\frac12, \infty\Big),\quad j=0,1,2.
\end{equation}

\medskip
\noindent
\textit{Proof of \eqref{eq:R-small-full}.}
Set
\[
f_\theta(w):=(1+w)^\theta,
\qquad
g_\theta(w):=(1+w)^\theta\log(1+w),
\qquad
h_\theta(w):=(1+w)^\theta(\log(1+w))^2.
\] 
{For each fixed \(\theta\in[\Theta_1,\Theta_2]\), we apply Taylor's theorem in the variable \(w\), expanded around \(w=0\).
To make the remainder estimate uniform in \(\theta\), observe that the third
\(w\)-derivatives of the above three functions are jointly continuous in \((\theta,w)\) on the compact set
$        [\Theta_1,\Theta_2]\times[-1/2,1/2].$
Consequently,
\[
        \max_{F\in\{f,g,h\}}
        \sup_{\substack{\theta\in[\Theta_1,\Theta_2]\\ |w|\le1/2}}
        \left|\partial_w^3 F_\theta(w)\right|
       <\infty.
\]
Now fix \(\theta\in[\Theta_1,\Theta_2]\). Since
\[
\begin{gathered}
        f_\theta(0)=1,\qquad
        f_\theta'(0)=\theta,\qquad
        f_\theta''(0)=\theta(\theta-1),\\
        g_\theta(0)=0,\qquad
        g_\theta'(0)=1,\qquad
        g_\theta''(0)=2\theta-1,\\
        h_\theta(0)=0,\qquad
        h_\theta'(0)=0,\qquad
        h_\theta''(0)=2,
\end{gathered}
\] 
 Taylor's
theorem in the variable \(w\)  proves  \eqref{eq:R-small-full}.}

\medskip
\noindent
\textit{Proof of \eqref{eq:R-large-full}.}
Choose an integer $q>\Theta_2\vee 3.$ 
If \(|w|>1/2\), then
\[
1\le 2^q |w|^q,\qquad |w|\le 2^{q-1}|w|^q,\qquad |w|^2\le 2^{q-2}|w|^q.
\]
Hence it suffices to bound
\[
(1+w)^\theta,\qquad
(1+w)^\theta |\log(1+w)|,\qquad
(1+w)^\theta (\log(1+w))^2
\]
by \(C |w|^q\) on  $ (-1,-\frac12  )\cup (\frac12, \infty ).$
\begin{enumerate}
    \item 
If \(-1<w<-1/2\), then \(1+w\in(0,1/2)\). Recalling  the condition $\Theta_1>0$, the functions
\[x\mapsto x^\theta ,
\qquad
x\mapsto x^\theta |\log x|,
\qquad
x\mapsto x^\theta (\log x)^2
\]
are bounded on \(x\in(0,1/2]\), uniformly in \(\theta\in[\Theta_1,\Theta_2]\). Since \(|w|^q\asymp 1\), the desired bounds follow.
\item 
If \(w> 1/2\), then \(1+w\le 3w\), hence $(1+w)^\theta\le C w^{\Theta_2} .$ 
Also \((\log(1+w))^2\le C w^{q-\Theta_2}\) and \(|\log(1+w)|\le C w^{q-\Theta_2}\). Thus the desired bounds hold. 
\end{enumerate}

\vspace{.1in}

\nin
Combining \eqref{eq:R-small-full} and \eqref{eq:R-large-full}, we obtain
\[
|R_j(\theta,W)|
\le
C |W|^3 \mathbf 1_{\{|W|\le 1/2\}}
+
C |W|^q \mathbf 1_{\{|W|>1/2\}},
\qquad j=0,1,2.
\]
Taking expectations and using the moment condition \eqref{eq:assump-allmom-direct-Lp-final}, we conclude that {for some constant $C$ (depending on $\Theta_1$ and $\Theta_2$),}
\begin{equation}\label{eq:Rj-mean-full}
\sup_{\theta\in[\Theta_1,\Theta_2]}\mathbb E|R_j(\theta,W)|
\le C m^{-3/2},
\qquad j=0,1,2.
\end{equation}

\noindent
\textbf{Step 2. Differentiability of $M(\theta)$.} Note that the lemma in fact claims that {for every nonnegative integer $r$, the function  $\theta \mapsto M(\theta)$ is $C^r$ on \((\Theta_1,\Theta_2)\) for all sufficiently small $\varepsilon>0$.} We will rely on a DCT argument. 
Fix an integer \(r\ge0\).
For every \(0\le k\le r+1\), there exists a positive integer \(q\) {(depending on  $\Theta_1,\Theta_2$ and $k$)} such that
% \red{again where is the smallness of $\e$ coming in? It is probably not the case that we can control all derivatives simultaneously for all small $\e$ or is that given $r$, there is a small enough $\e$ dependent on $r$ beyond which we can control the first $r$ derivatives}
\begin{align} \label{diff1}
        \sup_{\theta\in (\Theta_1,\Theta_2)}
        G^\theta|\log G|^k
        \le
        C(1+G^q),\qquad \forall G>0.
\end{align} 
Now define, 
\[
        F_k(\theta)
        :=
        \mathbb E\left[
        G^\theta(\log G)^k
        \right],
        \qquad \forall \theta\in (\Theta_1,\Theta_2).
\]  Fix
\(\theta\in(\Theta_1,\Theta_2)\), and choose \(h>0\)  small so that
\(\theta+h\in (\Theta_1,\Theta_2)\).  By the mean value theorem, for each realization of \(G\),
\[
        \frac{G^{\theta+h}-G^\theta}{h}
        =
        G^\xi\log G,\qquad \exists \xi=\xi(h,G) \in (\theta, \theta+h).
\]
Hence, as $\xi \in (\Theta_1,\Theta_2),$  
\[
        \left|
        \frac{
        G^{\theta+h}(\log G)^k
        -
        G^\theta(\log G)^k
        }{h}
        \right|
        =
        G^\xi|\log G|^{k+1}  \overset{\eqref{diff1}}{\le}
        C(1+G^q)
\] 
{for all $G>0$.}
Note that the RHS above is integrable for {sufficiently small $\varepsilon>0$ (depending on $q$)}, due to 
\eqref{eq:assump-allmom-direct-Lp-final} and recalling \(G=1+W \). Moreover, pointwisely as
\(h\downarrow 0\),
\[
        \frac{
        G^{\theta+h}(\log G)^k
        -
        G^\theta(\log G)^k
        }{h}
        \longrightarrow
        G^\theta(\log G)^{k+1}.
\]
{Similarly this holds when \(h\uparrow 0\).}
Thus, by dominated convergence theorem,  \(F_k\) is differentiable on \((\Theta_1,\Theta_2)\) and with $  F_k'(\theta)=F_{k+1}(\theta).$ Hence
 we conclude inductively that \(F_0=M\)
is \(C^r\) on \((\Theta_1,\Theta_2)\), and
\begin{align} \label{diff2}
        M^{(k)}(\theta)=F_k(\theta)
        =
        \mathbb E\left[
        G^\theta(\log G)^k
        \right],
        \qquad 0\le k\le r.
\end{align}

\noindent
\textbf{Step 3. Conclusion.}  The above preparation now allows us to conclude the proof of the lemma rather quickly. 
Note that
\[
(1+W)^\theta = 1+\theta W + \frac{\theta(\theta-1)}{2} W^2 + R_0(\theta,W).
\]
Taking expectation and using \(\mathbb E[W]=0\),
\[
M(\theta)
=
1+\frac{\theta(\theta-1)}{2} \mathbb E[W^2]+\mathbb E[R_0(\theta,W)].
\]
By \eqref{eq:Rj-mean-full}, we have \eqref{eq:Mi0-full}.
Next, considering the derivative in $\theta$ (see \eqref{diff2}),
\[
M'(\theta)=\mathbb E\big[(1+W)^\theta \log(1+W)\big].
\]
From the definition of \(R_1\),
\[
(1+W)^\theta \log(1+W)
=
W+\Big(\theta-\frac12\Big) W^2+R_1(\theta,W).
\]
Taking expectation and using \(\mathbb E[W]=0\), we have \eqref{eq:Mi1-full}.
Finally, taking the second derivative in $\theta$  as recorded in \eqref{diff2},
\[
M''(\theta)=\mathbb E\big[(1+W)^\theta (\log(1+W))^2\big].
\]
From the definition of \(R_2\),
\[
(1+W)^\theta (\log(1+W))^2 = W^2+R_2(\theta,W).
\]
Taking expectation, we obtain \eqref{eq:Mi2-full}.
\end{proof}

\nin
Another key input in the analysis of the large deviation behavior will be a quantitative central limit theorem under exponential tilting of the $G_i$s. An ingredient for the latter will be the following uniform bound on the higher (in particular, third) centered moment of $   Y_i =\log G_i$, which is provided in the following lemma. 

\begin{lemma} \label{lem:tilted-third-moment-full} 
Let $b\in (0,1/2]$ be a constant.
Let $p \ge 2$ be an  integer and $0<\Theta_1<\Theta_2<\infty.$ 
For  \(i\in \{1,2,\dots,M_{\varepsilon, \eta}\}\) and \(\theta\in (\Theta_1,\Theta_2)\), define the tilted probability measure \(\mathbb Q_{i,\theta}\) by
\[
\frac{d\mathbb Q_{i,\theta}}{d\mathbb P}
=
\frac{G_i^\theta}{M_i(\theta)}, 
\qquad
M_i(\theta):=\mathbb E[G_i^\theta].
\] 
Then there exists a constant \(C=C(p,\Theta_1,\Theta_2)>0\) such that  for all sufficiently small $\varepsilon>0$ (uniformly in  \(\theta\in (\Theta_1,\Theta_2)\) and \(i\in \{1,2,\dots,M_{\varepsilon, \eta}\}\)),
\[
\mathbb E_{\mathbb Q_{i,\theta}}
\left|
Y_i-\mathbb E_{\mathbb Q_{i,\theta}}Y_i
\right|^p
\le C m_i^{-p/2}.
\]
\end{lemma}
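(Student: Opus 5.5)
The plan is to reduce the tilted centered moment to an untilted moment of $|Y_i|^p G_i^\theta$, and then bound that by a small/large decomposition in $W_i$ of the same kind as in the proof of Lemma~\ref{lem:Mi-expansion-full}.

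\textbf{Step 1: drop the centering.} Since $\E_{\mathbb Q}|Y-\E_{\mathbb Q}Y|^p\le 2^{p}\,\E_{\mathbb Q}|Y|^p$ (Jensen applied to $|\E_{\mathbb Q}Y|^p$), it suffices to prove
\[
\E_{\mathbb Q_{i,\theta}}|Y_i|^p=\frac{\E[G_i^\theta|\log G_i|^p]}{M_i(\theta)}\le C m_i^{-p/2}.
\]

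\textbf{Step 2: bound the normalization.} By \eqref{eq:Mi0-full} and $\E W_i^2\le C/m_i$, we have $M_i(\theta)=1+O(m_i^{-1})\ge 1/2$ uniformly in $\theta\in(\Theta_1,\Theta_2)$ and $i\le M_{\varepsilon,\eta}$. This uses $m_i\ge \ell_{\varepsilon,\eta}\to\infty$ from \eqref{mimin}, and holds for all sufficiently small $\varepsilon$. So the denominator is harmless.

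\textbf{Step 3: bound the numerator.} Write $G=1+W$ and split according to whether $|W|\le 1/2$ or not.

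On $\{|W|\le1/2\}$ we have $(1+W)^\theta\le (3/2)^{\Theta_2}$ and $|\log(1+W)|\le 2|W|$. The contribution is therefore at most $C\,\E|W|^p\le C m_i^{-p/2}$ by \eqref{eq:assump-allmom-direct-Lp-final}.

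On $\{|W|>1/2\}$ there are two ranges.
\begin{itemize}
\item If $W\in(-1,-1/2)$, the function $x\mapsto x^\theta|\log x|^p$ is bounded on $(0,1/2]$ uniformly in $\theta\ge\Theta_1>0$.
\item If $W>1/2$, the integrand is at most $C|W|^{q}$ for some integer $q=q(p,\Theta_2)$, for instance any $q>\Theta_2+p$.
\end{itemize}
In both ranges the integrand is bounded by $C(1+|W|^{q})\mathbf 1_{\{|W|>1/2\}}\le C'|W|^{q'}$, with $q'\ge \max(q,p)$ chosen as an integer. Its expectation is then $\le C\E|W|^{q'}\le C m_i^{-q'/2}\le C m_i^{-p/2}$, again by \eqref{eq:assump-allmom-direct-Lp-final}.

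\textbf{Uniformity.} All constants depend only on $p,\Theta_1,\Theta_2$. The moment bound \eqref{eq:assump-allmom-direct-Lp-final} with exponent $q'$ is valid uniformly over $i\le M_{\varepsilon,\eta}$ for all sufficiently small $\varepsilon$; this is the only place the restriction to small $\varepsilon$ enters, together with Step 2.

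The only mildly delicate point is the lower range $G_i\to0$. There $|\log G_i|$ blows up, but the tilt factor $G_i^\theta$ with $\theta\ge\Theta_1>0$ suppresses it. This is exactly why the lemma restricts to positive tilts bounded away from zero, and why no inverse moments of $G_i$ are needed.
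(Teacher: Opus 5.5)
Your proof is correct and follows essentially the same route as the paper: bound $M_i(\theta)$ from below, then control $\mathbb E[G_i^\theta|Y_i|^p]$ by the same three-region split in $W_i$ (namely $|W_i|\le 1/2$, $W_i>1/2$, $-1<W_i<-1/2$), using \eqref{eq:assump-allmom-direct-Lp-final}. The differences are minor. You drop the centering by Jensen ($|\mathbb E_{\mathbb Q}Y_i|^p\le \mathbb E_{\mathbb Q}|Y_i|^p$), which is slightly cleaner than the paper's separate bound $|\mathbb E_{\mathbb Q}Y_i|=|M_i'(\theta)/M_i(\theta)|\le Cm_i^{-1}$ via \eqref{eq:Mi1-full}. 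You also obtain $M_i(\theta)\ge 1/2$ from \eqref{eq:Mi0-full}, where the paper uses the direct estimate $M_i(\theta)\ge 2^{-\Theta_2}\,\mathbb P(|W_i|\le 1/2)$.
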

{We remark that in the above the centering plays no role.  Indeed, we will apply  triangle
inequality, and show that both \(Y_i\) and
\(\mathbb E_{\mathbb Q_{i,\theta}}Y_i\), are small.}

\begin{proof}Fix \(i\in \{1,2,\dots,M_{\varepsilon, \eta}\}\), and abbreviate
\begin{align*}
        m:=m_i,\qquad
        W:=W_i=G_i-1,\qquad
        G:=G_i , \qquad
        Y:=Y_i ,\qquad
        M(\theta):=M_i(\theta),
        \qquad
        \mathbb Q_\theta:=\mathbb Q_{i,\theta}.
\end{align*}

\noindent
\textbf{Step 1: Lower bound on \(M(\theta)\).}  In fact, a crude argument will suffice. 
Let
\begin{align*}
      \mathcal   E:=\{|W|\le 1/2\}.
\end{align*}
On \( \mathcal  E\), we have \(G\in[1/2,3/2]\), hence
\(G^\theta\ge 2^{-\Theta_2}\). Thus
\begin{align*}
        M(\theta)
        =
        \mathbb E[G^\theta]
        \ge
        \mathbb E[G^\theta; \mathcal E]
        \ge
        2^{-\Theta_2}\mathbb P( \mathcal E).
\end{align*}
Also for sufficiently small $\varepsilon>0,$
\begin{align*}
        \mathbb P( \mathcal E^c)
        =
        \mathbb P(|W|>1/2)
        \le
        4\mathbb E[W^2]
        \overset{\eqref{eq:assump-allmom-direct-Lp-final}}{\le} 
        4C m^{-1} \overset{\eqref{mimin}}{\le} \frac12.
\end{align*}
Thus,  
\begin{align}
        M(\theta)\ge 2^{-\Theta_2-1}.
\label{eq:M-lower-bound-tilted-third}
\end{align}

\noindent
\textbf{Step 2: Bound on \(\mathbb E_{\mathbb Q_{\theta}}|Y|^p\).}
Recall $Y = \log G$. Note that
\begin{align*}
        \mathbb E_{\mathbb Q_{\theta}}|Y|^p
        =
        \frac{\mathbb E[G^\theta |Y|^p]}{M(\theta)}.
\end{align*}
We claim that, uniformly in \(\theta\in (\Theta_1,\Theta_2)\),
\begin{align}
        \mathbb E[G^\theta |Y|^p]
        \le
        C m^{-p/2}.
\label{eq:GY-third-moment-bound}
\end{align}
We split according to the three regions
\begin{align*}
        |W|\le 1/2,
        \qquad
        W\ge 1/2,
        \qquad
        -1<W<-1/2.
\end{align*}
\begin{enumerate}
    \item 
{If \(|W|\le 1/2\), then $  |Y|
        =
        |\log(1+W)|
        \le
      2 |W|$} and  $  G^\theta = (1+W)^\theta \le (3/2)^{\Theta_2},$ 
so
\begin{align*}
        \mathbb E[G^\theta |Y|^p;\ |W|\le 1/2]
        \le
        C\mathbb E|W|^p
   \overset{\eqref{eq:assump-allmom-direct-Lp-final}}{\le} 
        C m^{-p/2}.
\end{align*}

\item  
If \(W\ge 1/2\), choose an integer \(q>\Theta_2+p\). Then
\begin{align*}
        x^\theta(\log x)^p
        \le
        Cx^q
        \qquad
        (x\ge 3/2,\ \theta\in[\Theta_1,\Theta_2]).
\end{align*}
Thus recalling \(G=1+W\) and \(Y=\log(1+W)\), we have $  G^\theta |Y|^p
        \le
        C G^q
        \le
        C|W|^q$ for \( W\ge 1/2\),  
implying that 
\begin{align*}
        \mathbb E[G^\theta |Y|^p;\ W\ge 1/2]
        \le
        C\mathbb E|W|^q
   \overset{\eqref{eq:assump-allmom-direct-Lp-final}}{\le} 
        C m^{-q/2}
        \le
        C m^{-p/2}.
\end{align*}

\item 
If \(-1<W<-1/2\), then \(G\in(0,1/2)\). Since
\(\theta\ge \Theta_1>0\), the function
\(x\mapsto x^{\theta}|\log x|^p\) is bounded on \((0,1/2]\), uniformly in
\(\theta\in[\Theta_1,\Theta_2]\). Thus
\begin{align*}
        \mathbb E[G^\theta |Y|^p;\ -1<W<-1/2]
        \le
        C\mathbb P(W<-1/2).
\end{align*}
Using Markov's inequality,
\begin{align*}
        \mathbb P(W<-1/2)
        \le
        2^p\mathbb E|W|^p
  \overset{\eqref{eq:assump-allmom-direct-Lp-final}}{\le} 
        C m^{-p/2}.
\end{align*}
\end{enumerate}
Combining the three regions proves \eqref{eq:GY-third-moment-bound}. Hence,
combining this with \eqref{eq:M-lower-bound-tilted-third}, uniformly in \(\theta\in (\Theta_1,\Theta_2)\),
\begin{align}
        \mathbb E_{\mathbb Q_{\theta}}|Y|^p
        \le
        C m^{-p/2}.
\label{eq:tilted-Y-third-moment-bound}
\end{align}

\noindent
\textbf{Step 3: Bound on  tilted mean.}
Note that
\begin{align*}
        \mathbb E_{\mathbb Q_{\theta}}Y
        =
        \frac{\mathbb E[G^\theta Y]}{M(\theta)}
        =
        \frac{M'(\theta)}{M(\theta)}.
\end{align*}
As
\(\mathbb E[W^2]\le C m^{-1}\),  by \eqref{eq:Mi1-full} in Lemma~\ref{lem:Mi-expansion-full},  $  |M'(\theta)|
        \le
        C m^{-1} $ 
uniformly in \(\theta\in (\Theta_1,\Theta_2)\).  
Combining this with \eqref{eq:M-lower-bound-tilted-third}, we obtain $  \left|
        \mathbb E_{\mathbb Q_{\theta}}Y
        \right|
        \le
        C m^{-1}.$ 
Therefore, this along with \eqref{eq:tilted-Y-third-moment-bound} implies
\begin{align*}
        \mathbb E_{\mathbb Q_{\theta}}
        \left|
        Y-\mathbb E_{\mathbb Q_{\theta}}Y
        \right|^p
        &\le
        C\mathbb E_{\mathbb Q_{\theta}}|Y|^p
        +
        C\left|
        \mathbb E_{\mathbb Q_{\theta}}Y
        \right|^p  \le
        C m^{-p/2}.
\end{align*}
\end{proof}

With the above preparation, we are now in a position to prove Proposition \ref{prop:local-ratio-BE-correct}.

\subsection{The proof of Proposition \ref{prop:local-ratio-BE-correct}}\label{keytechnical123}
We divide the proof into five steps.
In Step 1 we provide sharp estimates for the moment generating function for the sum of $Y_i$s with $i\in U^c.$
In Step 2 we exponentially tilt this sum appropriately so that the mean now shifts to the vicinity of $t_{\eta,\alpha}.$ In Step 3, we prove a Berry-Esseen quantitative central limit theorem for the sum of $Y_i$s under the tilted measure. In Step 4, we use the tilted measure to compute the large deviation probability and use the last step to reduce things to a Gaussian computation. Finally, in Step 5, we prove a regularity estimate (see \eqref{eq:local-ratio-final-correct}) for the large deviation probabilities at various levels. 

\medskip
\noindent
\textbf{Step 1. Moment generating function of the complement sum.}
 Define
\begin{align}
       V_{U^c}
        :=
        \sum_{j\in U^c}
        \mathbb E[W_j^2]
\label{eq:lambda-Uc-def}
\end{align}
(see \eqref{complement} for the definition of $U^c$).
Since \(\mathbb E[W_j^2]\le C m_j^{-1}\) and recalling $\eta \in (0,1/10)$, by \eqref{eq:V-asymptotic-with-b-gap} and the assumption
\eqref{condit}, for any small enough constant $b>0,$
\begin{align}
        V_{U^c} \in  \left(\frac{3}{4}\log N_{\varepsilon,b}, \frac{5}{4}\log N_{\varepsilon,b}\right).
\label{eq:V-Uc-asymptotic}
\end{align} 
By independence,
\begin{align} \label{add}
      \lambda _{U^c}(\theta)
      =
        \log\mathbb E
        \left[
        e^{\theta S_{U^c}}
        \right] = \log \prod_{j\in U^c}  \mathbb E
        \left[
        e^{\theta Y_j}
        \right]  =
        \sum_{j\in U^c}
        \log M_j(\theta).
\end{align}
Note that $   \lambda _{U^c}(\theta)$ is convex in $(0,\infty)$, i.e. as a consequence of H\"older's inequality,
\begin{align}
    \lambda _{U^c}((1-a)\theta_1+a\theta_2)
        \le
        (1-a)\lambda _{U^c}(\theta_1)+a\lambda _{U^c}(\theta_2),
        \qquad
        \theta_1,\theta_2>0,\quad a\in[0,1].
\end{align}
We claim that for every fixed \(\Theta_2>\Theta_1>0\), as $\varepsilon \rightarrow 0$ (uniformly in
\(\theta\in (\Theta_1,\Theta_2)\)),
\begin{align}
         \lambda _{U^c} '(\theta)
        &=
        \left(\theta-\frac12\right)
        V_{U^c}
        +
        O (1),
\label{eq:Lambda1-local-correct}\\
         \lambda _{U^c} ''(\theta)
        &=
        V_{U^c}
        +
        O (1).
\label{eq:Lambda2-local-correct}
\end{align}
{Note that by Lemma~\ref{lem:Mi-expansion-full} and the additive structure of $  \lambda _{U^c} $ (see \eqref{add}), $  \lambda _{U^c} $ is $C^2$ on $(\Theta_1,\Theta_2)$ for sufficiently small $\varepsilon>0.$}
By \eqref{eq:Mi0-full} in  Lemma~\ref{lem:Mi-expansion-full} and that \(\mathbb E[W_j^2]\le C m_j^{-1}\), we have $   M_j(\theta)
        =
        1+O(m_j^{-1})$ as $\varepsilon \rightarrow 0$, uniformly in
\(\theta\in (\Theta_1,\Theta_2)\) and $j\in \{1,2,\dots,M_{\varepsilon, \eta}\}$. As $m_j^{-1} \rightarrow 0$ (see  \eqref{mimin}), 
\begin{align}
        \frac1{M_j(\theta)}
        =
        1+O(m_j^{-1}).
\label{eq:Mj-inverse-close-one}
\end{align}
Hence, using \eqref{eq:Mi1-full} in  Lemma~\ref{lem:Mi-expansion-full}, as  $\mathbb E[W_j^2]\le C m_j^{-1},$
\begin{align}
        \frac{d}{d\theta}\log M_j(\theta)  = \frac{M_j'(\theta)}{M_j(\theta)}
        &=
        \left[
        \left(\theta-\frac12\right)\mathbb E[W_j^2]
        +
        O(m_j^{-3/2})
        \right]
        \left[
        1+O(m_j^{-1})
        \right] \notag\\
        &=
        \left(\theta-\frac12\right)\mathbb E[W_j^2]
        +
        O(m_j^{-3/2}).
\label{eq:Mj-log-derivative-1}
\end{align} 
Summing over $j\in U^c,$
we obtain
\eqref{eq:Lambda1-local-correct}.
Similarly,   using \eqref{eq:Mi2-full} in  Lemma~\ref{lem:Mi-expansion-full},  as  $\mathbb E[W_j^2]\le C m_j^{-1},$
\begin{align} 
        \frac{M_j''(\theta)}{M_j(\theta)}
        &=
        \left[
        \mathbb E[W_j^2]
        +
        O(m_j^{-3/2})
        \right]
        \left[
        1+O(m_j^{-1})
        \right] =
        \mathbb E[W_j^2]
        +
        O(m_j^{-3/2}).
\label{eq:Mj-second-over-Mj}
\end{align} 
This and \eqref{eq:Mj-log-derivative-1} {along with  the bound  $\mathbb E[W_j^2]\le C m_j^{-1}$} imply that 
\begin{align}
        \frac{d^2}{d\theta^2}\log M_j(\theta)
        &=
        \frac{M_j''(\theta)}{M_j(\theta)}
        -
        \left(
        \frac{M_j'(\theta)}{M_j(\theta)}
        \right)^2=  \mathbb E[W_j^2]
        +
        O(m_j^{-3/2}).
\label{eq:Mj-log-derivative-2}
\end{align}
Summing over $j\in U^c,$
we obtain \eqref{eq:Lambda2-local-correct}.

\medskip
\noindent
\textbf{Step 2. Tilted law at level \(t_{\eta,\alpha}-s\).}
Set  \begin{align} \label{defj}
       J:=[\theta_-,\theta_+],\qquad  \theta_-:=
        \frac{\alpha}{2(1-\eta)},
        \qquad
        \theta_+:=
        \frac{2\alpha}{1-\eta}.
\end{align}
The above choice is guided by the following fact proved later in Lemma \ref{lem:compact}. Namely, for any small enough constant $b>0$ and \(|s|\le A\), for all sufficiently small \(\varepsilon>0\) {(uniformly in
\(|s|\le A\) and subsets \(U\) satisfying \eqref{condit}),} there is a unique solution  \(\theta=\theta_s \in J\) to   the  equation 
% \red{where is the $\e$ quantification, why is there no A dependence}
\begin{align}
         \lambda _{U^c} '(\theta)
        =
        t_{\eta,\alpha}-s
\label{eq:saddle-eq}
\end{align} 
{By Legendre duality} (see Lemma \ref{lem:interior-legendre-duality} below),
\begin{align}\label{legend1}
  \mathcal I_{U^c}(t_{\eta,\alpha}-s)+\lambda_{U^c}(\theta_s ) =    \theta_s (t_{\eta,\alpha}-s).
\end{align} 
Also, {recalling that $\theta_s$ belongs to the compact interval $J \subset (0,\infty)$ once $|s|\le A,$ by \eqref{eq:Lambda2-local-correct},   as $\varepsilon \rightarrow 0$ (uniformly in $|s|\le A$ and subsets \(U\) satisfying \eqref{condit}),}
\begin{align}
        v_s
        &:=
         \lambda _{U^c} ''
        (\theta_s) =
        V_{U^c}
        +
        O(1) \overset{\eqref{eq:V-Uc-asymptotic}}{\in}
   \left(\frac{1}{2}\log N_{\varepsilon,b}, \frac{3}{2}\log N_{\varepsilon,b}\right). 
\label{eq:v-close-V-local-correct}
\end{align}
Now define the tilted law \(\mathbb Q_{ \theta_s}\) on
\(\sigma(Y_j:j\in U^c)\) by
\begin{align}
        \frac{d\mathbb Q_{ \theta_s}}{d\mathbb P}
        =
        \exp
        \left(
        \theta_sS_{U^c}
        -
        \lambda _{U^c}(\theta_s)
        \right).
\end{align}
Under \(\mathbb Q_{ \theta_s}\), the variables \((Y_j)_{j\in U^c}\)
remain independent, and
\begin{align}
        \mathbb E_{\mathbb Q_{ \theta_s}}
        [S_{U^c}]
        = \sum_{i\in U^{c}} \mathbb E_{\mathbb Q_{ \theta_s}}(Y_i)=\sum_{i\in U^{c}}\lambda_{i}'(\theta_s)
        = \lambda _{U^c} '
        (\theta_s)
        =
        t_{\eta,\alpha}-s,
\end{align}
and similarly
\begin{align}
        \operatorname{Var}_{\mathbb Q_{ \theta_s}}
        (S_{U^c})
        =
         \lambda _{U^c} ''
        (\theta_s)
        =
        v_s.
\end{align}
Hence, setting
\begin{align}\label{ldef}
        S_{U^c}^{(s)}
        :=
        S_{U^c}
        -
        (t_{\eta,\alpha}-s),
\end{align}
we have 
\begin{align}
        \mathbb E_{\mathbb Q_{ \theta_s}}
        [S_{U^c}^{(s)}]
        =
        0,
        \qquad
        \operatorname{Var}_{\mathbb Q_{ \theta_s}}
        (S_{U^c}^{(s)})
        =
        v_s.
\end{align}
Since $\theta_s \in J$,  by Lemma~\ref{lem:tilted-third-moment-full},  for all sufficiently small $\varepsilon>0$ (uniformly in $|s|\le A$ and \(j\in \{1,2,\dots,M_{\varepsilon, \eta}\}\)),
\begin{align} \label{9755}
        \mathbb E_{\mathbb Q_{ \theta_s}}
        \left|
        Y_j-\mathbb E_{\mathbb Q_{ \theta_s}}Y_j
        \right|^3
        \le
        C m_j^{-3/2}.
\end{align}
This implies that
\begin{align}
        \sum_{j\in U^c}
        \mathbb E_{\mathbb Q_{ \theta_s}}
        \left|
        Y_j-\mathbb E_{\mathbb Q_{ \theta_s}}Y_j
        \right|^3
        \le
        C\sum_{j=\ell_{\varepsilon,\eta} }^\infty j^{-3/2}
        \le C(\log \varepsilon^{-1})^{-\eta/2}.
\label{eq:third-moment-sum-local}
\end{align}

\medskip

We next prove a central limit theorem for the $Y_i$s under the tilting from the preceding discussion.\\

\nin
\textbf{Step 3. Berry--Esseen bound under the tilted law.}
Define
\begin{align} \label{defz}
       \mathsf{Z}_{ s}
        :=
        \frac{S_{U^c}^{(s)}}{\sqrt{v_s}} = \frac{S_{U^c}-\mathbb E_{\mathbb Q_{ \theta_s}}
        (S_{U^c})}{\sqrt{\operatorname{Var}_{\mathbb Q_{ \theta_s}}
        (S_{U^c})}}.
\end{align}
Let \(\mathsf{Z} \) be a standard Gaussian. 
Recalling that   \((Y_j)_{j\in U^c}\)
are  independent under \(\mathbb Q_{ \theta_s}\), 
by the Berry--Esseen theorem  (see \cite{berry1941accuracy,esseen1942liapounoff}) {which says that for independent centered random variables $X_1,\cdots,X_n,$
\[
    \sup_{x\in\mathbb R}
    \left|
        \mathbb P\left(
            \frac{\sum_{i=1}^n X_i}{\sqrt{\sum_{i=1}^n \sigma_i^2}}\le x
        \right)
        -
       \mathbb P(\mathsf{Z}\le x) 
    \right|
    \le
    \frac{C  }{(\sum_{i=1}^n \sigma_i^2)^{3/2}}
    \sum_{i=1}^n \mathbb E |X_i|^3
\]
(here, $\sigma_i^2 := \mathbb E |X_i|^2>0$),} we obtain
 \begin{align} \label{eq:BE-uniform-local-correct}
        \sup_{x\in\mathbb R}
        \left|
        \mathbb Q_{ \theta_s}
        ( \mathsf{Z}_{ s}\le x)
        -
   \mathbb P(\mathsf{Z}\le x) 
        \right|
        &\le
        C
        \frac{
        \sum_{j\in U^c}
        \mathbb E_{\mathbb Q_{ \theta_s}}
        \left|
        Y_j-\mathbb E_{\mathbb Q_{ \theta_s}}Y_j
        \right|^3
        }{
        v_s^{3/2}
        } \notag \\
        &\le C
        \frac{(\log \varepsilon^{-1})^{-\eta/2}}{(\log N_{\varepsilon,b})^{3/2}} \le (\log \varepsilon^{-1})^{-\eta/2},
\end{align}
where we used \eqref{eq:v-close-V-local-correct} and
\eqref{eq:third-moment-sum-local} in the second last inequality. 

\medskip
\noindent\medskip
\noindent
\textbf{Step 4. Tilted representation.} We now compute the upper-tail probability using the tilted measure. Namely, using \eqref{legend1} and \eqref{ldef},
\begin{align}
        q_{U^c}(s)
        &=
        \mathbb P
        \left(
        S_{U^c}
        \ge
        t_{\eta,\alpha}-s
        \right) \notag\\
        &=
        \mathbb E_{\mathbb Q_{\theta_s}}
        \left[
        \exp
        \left(
        -\theta_sS_{U^c}
        +
        \lambda_{U^c}(\theta_s)
        \right)
        \mathbf 1_{\{
        S_{U^c}
        \ge
        t_{\eta,\alpha}-s
        \}}
        \right] \notag\\
        &=
        e^{-\mathcal I_{U^c}(t_{\eta,\alpha}-s)}
        \mathbb E_{\mathbb Q_{\theta_s}}
        \left[
        e^{-\theta_sS_{U^c}^{(s)}}
        \mathbf 1_{\{S_{U^c}^{(s)}\ge0\}}
        \right].
\label{eq:q-exact-local-correct}
\end{align}
Drawing parallel between the above expression and \eqref{eq:q-local-asym-correct} we define the pre-factor
\begin{align}
        K_{U^c}(s)
        :=
        \mathbb E_{\mathbb Q_{\theta_s}}
        \left[
        e^{-\theta_sS_{U^c}^{(s)}}
        \mathbf 1_{\{S_{U^c}^{(s)}\ge0\}}
        \right].
\label{eq:Keps-def}
\end{align}
The preceding central limit theorem will now allow us to essentially perform a Gaussian computation for the RHS.
To this end set $ g_{\theta,v}(z):=
        e^{-\theta\sqrt v z}\mathbf 1_{\{z\ge0\}},$ 
and recalling \eqref{defz}, we have
\begin{align}
        K_{U^c}(s)
        =
        \mathbb E_{\mathbb Q_{\theta_s}}
        \left[
        g_{\theta_s,v_s}(\mathsf Z_s)
        \right].
\end{align}
Since \(z\mapsto g_{\theta_s,v_s}(z)\) vanishes on \((-\infty,0)\), jumps
from \(0\) to \(1\) at \(0\), and decreases monotonically from \(1\) to \(0\)
on \([0,\infty)\), we have $ \|g_{\theta_s,v_s}\|_{\mathrm{TV}}\le2$ where $\|\cdot\|_{\mathrm{TV}}$ denotes the total variation semi-norm. Hence, by Lemma~\ref{tv} below (a straightforward consequence of integration by parts) 
\[
        \Big|
        \mathbb E_{\mathbb Q_{\theta_s}}[g_{\theta_s,v_s}(\mathsf Z_s)]-\mathbb E[g_{\theta_s,v_s}(\mathsf Z)]]
        \Big|
        \le
        \|g_{\theta_s,v_s}\|_{\mathrm{TV}}
          \sup_{x\in\mathbb R}
        \left|
        \mathbb Q_{ \theta_s}
        ( \mathsf{Z}_{ s}\le x)
        -
   \mathbb P(\mathsf{Z}\le x) 
        \right|.
\]
Thus by 
\eqref{eq:BE-uniform-local-correct},
\begin{align}
        \left|
        K_{U^c}(s)
        -
        \mathbb E
        \left[
        g_{\theta_s,v_s}(\mathsf Z)
        \right]
        \right| =           \left|  \mathbb E_{\mathbb Q_{\theta_s}}
        \left[
        g_{\theta_s,v_s}(\mathsf Z_s)
        \right] -   \mathbb E
        \left[
        g_{\theta_s,v_s}(\mathsf Z)
        \right]
        \right| 
        \le
      C(\log \varepsilon^{-1})^{-\eta/2},
        \qquad |s|\le A.
\label{eq:Keps-Gaussian-comparison}
\end{align} 
{Note that the restriction \(|s|\le A\) enters through the Berry--Esseen estimate under
the tilted measure.  Indeed, the centered third
moment bound \eqref{9755} is obtained when the tilting parameter \(\theta_s\) remains in the
fixed compact interval \(J\subset(0,\infty)\), which is
guaranteed for bounded shifts \(|s|\le A\).} 

{It is not hard to see and we will show below that  $\mathbb E\left[g_{\theta_s,v_s}(\mathsf Z)\right]
    \asymp \frac{1}{\sqrt{\log N_{\varepsilon,b}}}$. This is because $v_s$ is up to constants $\log N_{\varepsilon,b}$ and hence the entire contribution of the expectation of $e^{-\theta\sqrt v z}\mathbf 1_{\{z\ge0\}}$ comes from when $\mathsf Z \approx \frac{1}{\sqrt{\log N_{\varepsilon,b}}}$ and the integrand is of unit order.  Thus, it is crucial that the error in \eqref{eq:Keps-Gaussian-comparison} is poly-logarithmic in $\e$ and hence is negligible compared
to this quantity.} It remains to estimate the Gaussian  factor $ \mathfrak G(s)
        :=
        \mathbb E
        \left[
        g_{\theta_s,v_s}(\mathsf Z)
        \right] .$ 
Then setting $\Phi(x):=\mathbb P(\mathsf Z>x),$
\begin{align}
        \mathfrak G(s)
        &=
        \frac1{\sqrt{2\pi}}
        \int_0^\infty
        \exp
        \left(
        -\theta_s\sqrt{v_s}z-\frac{z^2}{2}
        \right)\,dz \nonumber\\
        &= 
        e^{(\theta_s\sqrt{v_s})^2/2}
        \frac1{\sqrt{2\pi}}
        \int_0^\infty
        e^{-(z+\theta_s\sqrt{v_s})^2/2}\,dz  =
        e^{\theta_s^2v_s/2}
        \Phi(\theta_s\sqrt{v_s}).
\label{eq:Gaussian-integral-exact}
\end{align}  
Since \(\theta_s \in J\), using \eqref{eq:v-close-V-local-correct} {and the fact that $e^{x^2/2}\Phi(x)
      \asymp
        \frac1{ x} $ for $x>1$  ({see e.g. \cite[(7) and (8)]{gordon1941values}}),}
\begin{align}
        \mathfrak G(s)
        \asymp_\alpha
        \frac1{\sqrt{\log N_{\varepsilon,b}}},
        \qquad |s|\le A.
\label{eq:Gaussian-prefactor-asymp}
\end{align}
Combining this and \eqref{eq:Keps-Gaussian-comparison}, recalling $N_{\varepsilon,b} \asymp \log \varepsilon^{-1},$ as $\varepsilon \rightarrow 0$ {(uniformly in $|s|\le A$ and a subset $U$ satisfying \eqref{condit})},
\begin{align}
        K_{U^c}(s)
        =
        \mathfrak G(s)(1+\iota_s),
        \qquad
        |\iota_s|
        \le
          (\log \varepsilon^{-1})^{-\eta/2 + o(1)} .
\label{eq:K-Gaussian-relative-error}
\end{align}
In particular,
\begin{align}
        K_{U^c}(s)
       \asymp_\alpha
        \frac1{\sqrt{\log N_{\varepsilon,b}}},
        \qquad |s|\le A.
\label{eq:prefactor-local-bounds-correct}
\end{align}
Applying this to \eqref{eq:q-exact-local-correct},  we establish
\eqref{eq:q-local-asym-correct} with the property \eqref{eq:Keps-prefactor-bound-A}.\\

We now estimate the ratio \(K_{U^c}(s)/K_{U^c}(0)\).  By
\eqref{eq:K-Gaussian-relative-error},
\begin{align}
        \left|
        \log\frac{K_{U^c}(s)}{K_{U^c}(0)}
        -
        \log\frac{\mathfrak G(s)}{\mathfrak G(0)}
        \right|
        &=
        \left|
        \log(1+\iota_s)
        -
        \log(1+\iota_0)
        \right|  \le
          (\log \varepsilon^{-1})^{-\eta/2 + o(1)}.
\label{eq:K-ratio-vs-G-ratio}
\end{align} 
Thinking of the above as negligible we next estimate \(\log \frac{\mathfrak G(s)}{\mathfrak G(0)}\). 
To obtain a useful expression, for \(x>0\) we use the  representation
\begin{align}
        e^{x^2/2}\Phi(x)
        &=
        \frac1{\sqrt{2\pi}}
        \int_0^\infty
        e^{-xy-y^2/2}\,dy =
        \frac1{\sqrt{2\pi}x}
        \int_0^\infty
        e^{-t}
        e^{-t^2/(2x^2)}
        \,dt=  
        \frac1{\sqrt{2\pi}x}
        \left(1+\Lambda(x)\right),
\label{eq:Mills-integral-refined}
\end{align} 
where $\Lambda(x)
        :=
        \int_0^\infty
        e^{-t}
    (
        e^{-t^2/(2x^2)}-1
   )\,dt.$ Here, the first identity follows from $  \Phi(x)
        =
        \frac1{\sqrt{2\pi}}
        \int_x^\infty e^{-u^2/2}\,du$ 
by the change of variables \(u=x+y\), and the second identity follows from the
change of variables \(t=xy\).
Thus, the above along with  \eqref{eq:Gaussian-integral-exact} implies \begin{align}
        \mathfrak G(s)
        =
        \frac1{\sqrt{2\pi}\,\theta_s\sqrt{v_s}}
        \left(1+\Lambda(\theta_s\sqrt{v_s})\right),
        \qquad
        \mathfrak G(0)
        =
        \frac1{\sqrt{2\pi}\,\theta_0\sqrt{v_0}}
        \left(1+\Lambda(\theta_0\sqrt{v_0})\right).
\end{align}
Hence we write
\begin{align}
        \log\frac{\mathfrak G(s)}{\mathfrak G(0)}
        &=
        -\log\frac{\theta_s}{\theta_0}
        -
        \frac12\log\frac{v_s}{v_0}
        +
        \log(1+\Lambda(\theta_s\sqrt{v_s}))
        -
        \log(1+\Lambda(\theta_0\sqrt{v_0})).
\label{eq:G-ratio-log-expanded}
\end{align}
We will now estimate the above terms by several applications of the mean value theorem.
First,
\begin{align}
        |s|
        \overset{\eqref{eq:saddle-eq}}{=}
        \left|
        \lambda_{U^c}'(\theta_s)
        -
        \lambda_{U^c}'(\theta_0)
        \right|
        =
        \lambda_{U^c}''(\xi_s)
        |\theta_s-\theta_0|,\qquad \text{ for some } \xi_s \in (\theta_s,\theta_0).
\label{eq:theta-s-theta-0-MVT-explain}
\end{align} Using this along with \eqref{eq:V-Uc-asymptotic} and \eqref{eq:Lambda2-local-correct},  
\begin{align}
      |\theta_s-\theta_0|
        \le
        C
        \frac{|s|}{\log N_{\varepsilon,b}}, \qquad   \left|
        \log {\theta_s} - \log {\theta_0}
        \right|
        \le
        C
        \frac{|s|}{\log N_{\varepsilon,b}} ,
\label{eq:log-theta-ratio}
\end{align}
where the latter inequality follows from local Lipschitz property of $x \mapsto \log x$ along with $\theta_0,\theta_s \in J.$
Next, 
by the mean value theorem again,
\[
        |v_s-v_0|
        \overset{\eqref{eq:v-close-V-local-correct}}{=}
        \left|
        \lambda_{U^c}''(\theta_s)
        -
        \lambda_{U^c}''(\theta_0)
        \right|
        =    
        |\lambda_{U^c}'''(\xi_s)|
        \,|\theta_s-\theta_0|,\qquad \exists \xi_s \in (\theta_s,\theta_0).
\]
Now the third derivative $\lambda'''_{j}(\theta)$ is given by the third central moment $\E_{\mathbb Q_{j,\theta}}
        \left(
        Y_j-\mathbb E_{\mathbb Q_{j,\theta}}Y_j
        \right)^3.$ {More generally, the $k^{th}$ derivative of the logarithm of the moment generating function is given by the $k^{th}$ cumulant. While they are the same as the $k^{th}$ central moment for $k=2,3$, the expressions begin to differ from $k=4$ onwards. See e.g., \cite[Appendix B.8.3]{FriedliVelenik2017}.}
Thus,  
\begin{align}
        |\lambda_{U^c}'''(\theta)|     \le
        \sum_{j\in U^c}
        |\lambda_j'''(\theta)|   =    \sum_{j\in U^c}  \left| \mathbb E_{\mathbb Q_{j,\theta}}
        \left(
        Y_j-\mathbb E_{\mathbb Q_{j,\theta}}Y_j
        \right)^3
        \right |
        &\le
        \sum_{j\in U^c}
        \mathbb E_{\mathbb Q_{j,\theta}}
        \left|
        Y_j-\mathbb E_{\mathbb Q_{j,\theta}}Y_j
        \right|^3     
\overset{\eqref{eq:third-moment-sum-local}}{\le} 
        C.
\label{eq:lambda-third-derivative-bound-explain}
\end{align}
Hence, using  \eqref{eq:log-theta-ratio},
\begin{align} \label{eq:vs-v0-comparison}
    |v_s-v_0|
        \le
        C
        \frac{|s|}{\log N_{\varepsilon,b}},\qquad  \left|
        \log {v_s} - \log {v_0}
        \right|
        \le
        C
        \frac{|v_s-v_0|}{\log N_{\varepsilon,b}}
        \le
        C
        \frac{|s|}{(\log N_{\varepsilon,b})^2} ,
\end{align} 
where the latter inequality follows from the mean value theorem along with  $v_s,v_0 > (\log N_{\varepsilon,b})/2$ (see \eqref{eq:v-close-V-local-correct}). This covers the first two terms on the RHS of \eqref{eq:G-ratio-log-expanded} and hence it remains to analyze
$$ \log(1+\Lambda(\theta_s\sqrt{v_s}))
        -
        \log(1+\Lambda(\theta_0\sqrt{v_0})).$$

\nin
Using the triangle inequality and the preceding bounds along with the fact that $\theta_0, \theta_s \in J$ and hence are bounded,
\begin{align}
        \left|
        \theta_s\sqrt{v_s}
        -
        \theta_0\sqrt{v_0}
        \right| 
        &\le
        \sqrt{v_s}\,|\theta_s-\theta_0|
        +
        \theta_0
        \left|
        \sqrt{v_s}-\sqrt{v_0}
        \right| \notag\\
        &\le
        C\sqrt{\log N_{\varepsilon,b}}\,
        \frac{|s|}{\log N_{\varepsilon,b}}
        +
        C
        \frac{|v_s-v_0|}
        {\sqrt{v_s}+\sqrt{v_0}}  \le
        C
        \frac{|s|}{\sqrt{\log N_{\varepsilon,b}}}.
\label{eq:xs-x0-bound}
\end{align} 
 Observe that 
 $$ 0< \Lambda'(x)
        =
        \int_0^\infty
        e^{-t}
        e^{-t^2/(2x^2)}
        \frac{t^2}{x^3}
        \,dt 
        \le
        \frac1{x^3}
        \int_0^\infty t^2e^{-t}\,dt
        \le
        Cx^{-3}.$$ 
Thus noting that 
\(\theta_s\sqrt{v_s},\theta_0\sqrt{v_0}\asymp\sqrt{\log N_{\varepsilon,b}}\) (since \(\theta_s,\theta_0 \in J\) 
and \(v_s,v_0\asymp\log N_{\varepsilon,b}\)),  by the mean value theorem,
\begin{align}
        |\Lambda(\theta_s\sqrt{v_s})-\Lambda(\theta_0\sqrt{v_0})|
        &\le
        C
        (\log N_{\varepsilon,b})^{-3/2}
        |\theta_s\sqrt{v_s}-\theta_0\sqrt{v_0}| \overset{\eqref{eq:xs-x0-bound}}{\le} 
        C
        \frac{|s|}
        {(\log N_{\varepsilon,b})^2}.
\label{eq:delta-xs-difference}
\end{align} 
Since \(1-e^{-u}\le u\) for \(u\ge0\), we have $$ e^{-t^2/(2x^2)}-1
        \ge
        -\frac{t^2}{2x^2}.$$ 
Thus, for $x>10$, we have $$ \Lambda(x)
        \ge
        -\frac1{2x^2}
        \int_0^\infty t^2e^{-t}\,dt
        =
        -\frac1{x^2} \ge -\frac{1}{100}.$$
Since $\theta_s\sqrt{v_s},\theta_0\sqrt{v_0}\asymp\sqrt{\log N_{\varepsilon,b}}$ the above lower bound applies to $\Lambda(\theta_0\sqrt{v_0})$ and 
$\Lambda(\theta_s\sqrt{v_s}).$
This along with \eqref{eq:delta-xs-difference} and the mean value theorem implies that
\begin{align*}
   \big\vert  \log(1+\Lambda(\theta_s\sqrt{v_s}))
        -
        \log(1+\Lambda(\theta_0\sqrt{v_0})) \big\vert  \le    C
        \frac{|s|}
        {(\log N_{\varepsilon,b})^2}.
\end{align*}
Applying this together with \eqref{eq:log-theta-ratio} and  \eqref{eq:vs-v0-comparison} to \eqref{eq:G-ratio-log-expanded}, we obtain
\begin{align}
        \left|
        \log\frac{\mathfrak G(s)}{\mathfrak G(0)}
        \right|
        \le
        C
        \frac{|s|}{\log N_{\varepsilon,b}},
        \qquad |s|\le A.
\label{eq:Gaussian-ratio-bound-final}
\end{align}
 This along with  \eqref{eq:K-ratio-vs-G-ratio} yield that as $\varepsilon \rightarrow 0$ (uniformly in \(|s|\le A\) and $U$ satisfying \eqref{condit}),
\begin{align}
        \left|
        \log\frac{K_{U^c}(s)}{K_{U^c}(0)}
        \right|
        \le
        C
        \left(
        \frac{|s|}{\log N_{\varepsilon,b}}
        +
       (\log \varepsilon^{-1})^{-\eta/2 + o(1)}
        \right) .
\label{eq:K-ratio-final-with-log-error}
\end{align}

\medskip
\noindent
\textbf{Step 5. Local ratio estimate.}
From \eqref{eq:q-local-asym-correct},
\begin{align}
        \frac{q_{U^c}(s)}{q_{U^c}(0)}
        =
        \exp\left\{
        \mathcal I_{U^c}(t_{\eta,\alpha})
        -
        \mathcal I_{U^c}(t_{\eta,\alpha}-s)
        \right\}
        \frac{K_{U^c}(s)}{K_{U^c}(0)}.
\label{eq:q-ratio-step5}
\end{align}
By Lemma~\ref{lem:rate-difference-saddle-curve} below which is {another straightforward consequence of convex duality}
\[
        \left|
        \mathcal I_{U^c}(t_{\eta,\alpha})
        -
        \mathcal I_{U^c}(t_{\eta,\alpha}-s)
        \right|
        \le
        C|s|.
\]
Applying this and  \eqref{eq:K-ratio-final-with-log-error} to \eqref{eq:q-ratio-step5}, we obtain \eqref{eq:local-ratio-final-correct}.

 \qed

The proof of  Proposition \ref{cor:full-sum-upper-tail-from-local-ratio-relaxed} is now just based on computing $\mathcal I (t_{\eta,\alpha}).$

\subsection{Computing the rate function: Proof of Proposition \ref{cor:full-sum-upper-tail-from-local-ratio-relaxed}}\label{sharptail23}  

Set
\[
        S
        :=
        \sum_{j=1}^{N_{\varepsilon,b} - \ell_{\varepsilon,\eta}}Y_j,
        \qquad   \lambda (\theta)
        :=
        \log \mathbb E e^{\theta S} \ \  (\forall \theta>0),
        \qquad
        \mathcal I (x)
        :=
        \sup_{\theta>0}
        \{\theta x- \lambda (\theta)\}.
\]  
Then taking the logarithm, it suffices to estimate $ \mathbb P(S\ge t_{\eta,\alpha})$ (see \eqref{deft} for the definition of $    t_{\eta,\alpha}  $).
Taking \(U=\varnothing\) and \(s=0\) in Proposition~\ref{prop:local-ratio-BE-correct} gives
\begin{align} \label{555}
        \mathbb P(S\ge t_{\eta,\alpha})
        =
        \exp\{-\mathcal I (t_{\eta,\alpha})\}K (0),
\end{align}
with  $  
        K (0)
\asymp_\alpha
        \frac{1}{\sqrt{\log   N_{\varepsilon,b}}}.$ 
It remains to compute  
\(\mathcal I (t_{\eta,\alpha})\).   As in \eqref{defj}, set
\begin{align*}
      J:=[\theta_-,\theta_+],\qquad  \theta_-:=
        \frac{\alpha}{2(1-\eta)},
        \qquad
        \theta_+:=
        \frac{2\alpha}{1-\eta}.
\end{align*}
{By Lemma \ref{lem:Mi-expansion-full}, $\lambda(\theta)$ is $C^1$ in the interior of $J$ for sufficiently small $\varepsilon>0$.} For any small enough constant $b>0$, 
let \(\theta_0  \in J\) be the unique solution of $\lambda '(\theta )
        =
        t_{\eta,\alpha}$. This last claim is guaranteed by the  Lemma \ref{lem:compact} stated and proved below. By  \eqref{eq:Lambda1-local-correct} with  \(U=\varnothing\),
\[
         \lambda '(\theta_0)
        =
        \left(\theta_0-\frac12\right)V+O(1).
\]
The  definition and asymptotic of $V$ was recorded in \eqref{eq:V-asymptotic-with-b-gap}.
This implies that
\begin{align}
       \theta_0
        =
        \frac12+\frac{t_{\eta,\alpha}}{V}
        +
        O(V^{-1}).
\label{eq:theta0-before-b-gap}
\end{align}
In addition, by \eqref{eq:Mi0-full} in 
Lemma~\ref{lem:Mi-expansion-full} and using $  \mathbb E[W_j^2] \le C m_j^{-1} \rightarrow 0, $ as $\varepsilon \rightarrow
 0$ (uniformly in \(j \in \{1,2,\cdots,N_{\varepsilon,b} - \ell_{\varepsilon,\eta}\} \)) along with the fact that $\log(1+x)=x+O(x^2)$ for $|x|\le 1/2,$ we obtain 
\[
        \log M_j(\theta_0)
        =
        \frac{\theta_0(\theta_0-1)}2\mathbb E[W_j^2]
        +
        O(m_j^{-3/2}) .
\]
Summing over \(j \in \{1,2,\cdots,N_{\varepsilon,b} - \ell_{\varepsilon,\eta}\} \), noting that $\lambda(\theta_0)  = \sum_j \log M_j(\theta_0),$
\begin{align}
        \lambda (\theta_0)
        =
        \frac{\theta_0(\theta_0-1)}2V 
        +
        O(1) .
\label{eq:Lambda0-local-correct}
\end{align}
Thus  recalling $\lambda '(\theta_0 )
        =
        t_{\eta,\alpha}$, by Legendre duality (see Lemma \ref{lem:interior-legendre-duality} below),  
\begin{align}
        \mathcal I(t_{\eta,\alpha})
        &=
        \theta_0 t_{\eta,\alpha}
        -
        \lambda(\theta_0) \notag\\
        &\overset{\eqref{eq:Lambda0-local-correct}}{=}
        \theta_0 t_{\eta,\alpha}
        -
        \frac{\theta_0(\theta_0-1)}2V
        +
        O(1) \notag \\ 
        &\overset{\eqref{eq:theta0-before-b-gap}}{=}
        {\left(
        \frac12+\frac{t_{\eta,\alpha}}{V}
        \right)t_{\eta,\alpha}
        -
        \frac12
        \left[
        \left(
        \frac12+\frac{t_{\eta,\alpha}}{V}
        \right)
        \left(
        -\frac12+\frac{t_{\eta,\alpha}}{V}
        \right)
        \right]V
        +
        O(1)} \notag \\
        &=
        \frac{t_{\eta,\alpha}}2
        +
        \frac{t_{\eta,\alpha}^2}{V}
        -
        \frac12
        \left(
        \frac{t_{\eta,\alpha}^2}{V^2}
        -
        \frac14
        \right)V
        +
        O(1) =
        \frac{(t_{\eta,\alpha}+V/2)^2}{2V}
        +
        O(1).
\label{eq:I-before-completing-square}
\end{align}  
Recalling from \eqref{eq:V-asymptotic-with-b-gap}
that {$a_{\varepsilon,b}=\frac{V}{\log N_{\varepsilon,b}}$},\begin{align*}
        t_{\eta,\alpha}+\frac V2
        &\overset{\eqref{deft}}{=}
        \left(
        -\frac{1-\eta}{2}+\alpha
        \right)\log N_{\varepsilon,b}
        +
        \frac12a_{\varepsilon,b}\log N_{\varepsilon,b} =
        \left(
        \alpha+\frac12(a_{\varepsilon,b}-(1-\eta))
        \right)\log N_{\varepsilon,b}.
\end{align*}
Substituting this into \eqref{eq:I-before-completing-square}, we obtain
\begin{align}
        \frac{\mathcal I(t_{\eta,\alpha})}{\log N_{\varepsilon,b}} 
        &=
        \frac{
        \left(
        \alpha+\frac12(a_{\varepsilon,b}-(1-\eta))
        \right)^2
        }{
        2a_{\varepsilon,b}
        }
        +
        o(1).
\label{eq:I-coefficient-with-aepsb}
\end{align}
   Hence by \eqref{eq:V-asymptotic-with-b-gap},  there exists 
\(\widetilde \nu_b\)  (depending on $\alpha$)   with \(\widetilde \nu_b\to0\)  as \(b\downarrow0\)  such that  
\begin{align}
        \mathcal I(t_{\eta,\alpha})
        =
        \left(
        \frac{\alpha^2}{2(1-\eta)}
        +
      \widetilde \nu_b
        +
        o(1)
        \right)
        \log N_{\varepsilon,b}.
\label{eq:I-with-b-gap}
\end{align}
Applying this to \eqref{555} finishes the proof.
\qed

\vspace{.2in}

As a quick corollary we obtain Lemma \ref{gconcen} which was used in the proof of Proposition \ref{prop:Z-mass-concentration-from-Geta1-Geta2}.

\subsection{Size biased mass concentration}\label{sizebias23}
Let us begin by recalling the statement. 

\begin{lemma} \label{gconcen1}
There exists
\(\widetilde\nu_b \ge0\)  with \(\widetilde\nu_b\to0\) as \(b\downarrow0\)  such that the following holds.
For \(\eta \in (0,1/10)\) and \(\gamma\in(0,1/10)\), define the event
\[
        \mathcal V_{\eta,\gamma}
        :=
        \left\{
         N_{\varepsilon,b}^{1/2-\eta/2-\gamma}
        \le
        G_{\eta;1}
        \le
         N_{\varepsilon,b}^{1/2-\eta/2+\gamma}
        \right\}.
\]
Then for any constant  \(\xi>0\), the following holds for any small enough constant $b>0$. For all sufficiently small $\varepsilon>0,$
\begin{equation}
        \mathbb E\left[
        G_{\eta;1}\mathbf 1_{\mathcal V_{\eta,\gamma}^c}
        \right]
        \le
         N_{\varepsilon,b}^{-\frac{\gamma^2}{2(1-\eta)}+\xi}.
\label{eq:size-biased-Geta1-concentration123}
\end{equation}
\end{lemma}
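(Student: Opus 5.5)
The plan is to split $\mathcal V_{\eta,\gamma}^c$ into the lower region $\{G_{\eta;1}<N_{\varepsilon,b}^{1/2-\eta/2-\gamma}\}$ and the upper region $\{G_{\eta;1}>N_{\varepsilon,b}^{1/2-\eta/2+\gamma}\}$. On each region we bound the size-biased expectation by combining the sharp tail of Proposition~\ref{cor:full-sum-upper-tail-from-local-ratio-relaxed}, which we take with general $\alpha$, with a dyadic-in-exponent layer-cake decomposition. The heuristic is that $\log G_{\eta;1}$ behaves like $\mathcal N(-\tfrac{1-\eta}{2}\log N,(1-\eta)\log N)$, where $N=N_{\varepsilon,b}$. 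Under the size-biased law, $\log G_{\eta;1}$ is therefore centered at $+\tfrac{1-\eta}{2}\log N$, and a deviation of $\gamma\log N$ costs $N^{-\gamma^2/(2(1-\eta))}$.

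\textbf{Upper region.} Write $\alpha_0:=1-\eta+\gamma$, so the threshold is $N^{-\frac{1-\eta}{2}+\alpha_0}$. Fix a small mesh $h>0$ and set $\alpha_k:=\alpha_0+kh$. Then
\[
\mathbb E\big[G_{\eta;1}\mathbf 1_{\{G_{\eta;1}> N^{-\frac{1-\eta}{2}+\alpha_0}\}}\big]\le \sum_{k=0}^{K} N^{-\frac{1-\eta}{2}+\alpha_{k+1}}\,\mathbb P\big(G_{\eta;1}\ge N^{-\frac{1-\eta}{2}+\alpha_k}\big)+\mathbb E\big[G_{\eta;1}\mathbf 1_{\{G_{\eta;1}>N^{B}\}}\big].
\]
For the last term, take $B$ large and apply Cauchy--Schwarz together with \eqref{eq:G-full-product-moment-consequence} and Markov's inequality; this makes it $\le N^{-10}$, so $K$ is finite and depends only on $h$ and $B$. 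For each $k$, Proposition~\ref{cor:full-sum-upper-tail-from-local-ratio-relaxed} bounds the $k$-th summand by $N^{f(\alpha_k)+h+\tilde\nu_b(\alpha_k)+o(1)}$, where $f(\alpha):=-\frac{1-\eta}{2}+\alpha-\frac{\alpha^2}{2(1-\eta)}$. Completing the square gives $f(\alpha)=-\frac{(\alpha-(1-\eta))^2}{2(1-\eta)}$. Since $\alpha_k-(1-\eta)\ge\gamma$, this is $\le -\frac{\gamma^2}{2(1-\eta)}$. Choose $h<\xi/3$, then $b$ small so that $\max_k\tilde\nu_b(\alpha_k)<\xi/3$ (finitely many $\alpha_k$), then $\varepsilon$ small. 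Summing the finitely many terms gives the bound.

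\textbf{Lower region.} Here the threshold corresponds to $\alpha=1-\eta-\gamma$. Tail bounds alone do not suffice, because the mass sits below the threshold. I would use $\mathbb E[G_{\eta;1}\mathbf 1_{\{G_{\eta;1}<x\}}]=\mathbb E[G_{\eta;1}]-\mathbb E[G_{\eta;1}\mathbf 1_{\{G_{\eta;1}\ge x\}}]$, but a lower bound on the remaining mass does not follow from the stated tail estimates without more work. Instead I plan to go back to the tilting representation in the proof of Proposition~\ref{prop:local-ratio-BE-correct}. The size-biased law is the tilt at $\theta=1$; under it $S$ has mean $\lambda'(1)=\tfrac12V+O(1)$ and variance $\asymp\log N$, with independent summands. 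The event $\{S<(\tfrac{1-\eta}{2}-\gamma)\log N\}$ is then a lower deviation of size $\gamma\log N$ under $\mathbb Q_1$. A Chernoff bound with $\theta\in(1-c,1)$ under $\mathbb Q_1$ is equivalent to using $\mathbb E[G_{\eta;1}^{\theta}]$ with $\theta<1$. Since $\theta$ stays in a compact subset of $(0,\infty)$, the expansion \eqref{eq:Lambda1-local-correct}--\eqref{eq:Lambda2-local-correct} of $\lambda$ applies and gives
\[
\mathbb E\big[G_{\eta;1}\mathbf 1_{\{S<x\}}\big]\le \exp\big\{\theta x+\lambda(1-\theta')\ldots\big\},
\]
concretely $\le \exp\{\inf_{\theta\in(0,1)}[\lambda(\theta)-(\theta-1)x]\}$. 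Evaluating $\lambda(\theta)=\frac{\theta(\theta-1)}{2}V+O(1)$ at the optimizer $\theta=1-\gamma/(1-\eta)$, which lies in $(0,1)$ since $\gamma<1/10$, gives the exponent $-\frac{\gamma^2}{2a_{\varepsilon,b}}\log N+O(1)$. The $b$-dependence of $a_{\varepsilon,b}$ is absorbed into $\xi$.

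The main obstacle is this lower region. The published proposition controls only upper tails, so I need to check that the Chernoff step uses $\lambda$ only at $\theta\in(0,1)$ bounded away from $0$, where Lemma~\ref{lem:Mi-expansion-full} is valid. No negative moments are required.
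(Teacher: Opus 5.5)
Your proof is correct. On the upper side it is the paper's argument: a moment cutoff at $N_{\varepsilon,b}^B$, finitely many bins in the exponent, the tail upper bound of Proposition~\ref{cor:full-sum-upper-tail-from-local-ratio-relaxed} on each bin, and completing the square.

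On the lower side you take a different route. Your remark that tail bounds alone do not suffice there is mistaken, and it is exactly how the paper proceeds:
\begin{itemize}
\item The paper discards $\{G_{\eta;1}<N_{\varepsilon,b}^{-\frac{1-\eta}{2}+a_0}\}$ trivially, since its size-biased mass is at most $N_{\varepsilon,b}^{-\frac{1-\eta}{2}+a_0}$.
\item It then splits $[N_{\varepsilon,b}^{-\frac{1-\eta}{2}+a_0},N_{\varepsilon,b}^{\frac{1-\eta}{2}-\gamma}]$ into bins and bounds each bin by $N_{\varepsilon,b}^{\rho}\,\mathbb P(G_{\eta;1}>N_{\varepsilon,b}^{\rho-h_0'})$.
\item Every such level lies above the typical value $N_{\varepsilon,b}^{-\frac{1-\eta}{2}}$. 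So these are still upper-tail events, with $\alpha\ge a_0-h_0'>0$.
\item The same completing-the-square identity then gives exponent $-\frac{(\rho-\frac{1-\eta}{2})^2}{2(1-\eta)}\le-\frac{\gamma^2}{2(1-\eta)}$, up to $O(h_0')+\widetilde\nu_b+o(1)$.
\end{itemize}

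Your alternative is also valid. You use $e^{S}\mathbf 1_{\{S<x\}}\le e^{\theta S}e^{(1-\theta)x}$ with $\theta=1-\gamma/(1-\eta)\in(8/9,1)$, and $\lambda(\theta)=\frac{\theta(\theta-1)}{2}V+o(1)$ from Lemma~\ref{lem:Mi-expansion-full} summed as in \eqref{eq:Lambda0-local-correct}. That lemma covers any compact subinterval of $(0,\infty)$, and no negative moments enter.

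Comparing the two:
\begin{itemize}
\item Your Chernoff step is shorter. It gives the sharp lower-side exponent in one line, with no bins, no mesh losses, and no separate cutoff near the typical value. The cost is reopening the moment generating function expansion.
\item The paper's binning uses only the stated tail proposition as a black box, and treats both sides symmetrically.
\end{itemize}

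Two small corrections. At your $\theta$ the exponent is $-\frac{\gamma^2}{2(1-\eta)}\log N_{\varepsilon,b}+O\bigl(|a_{\varepsilon,b}-(1-\eta)|\log N_{\varepsilon,b}\bigr)+o(1)$, not $-\frac{\gamma^2}{2a_{\varepsilon,b}}\log N_{\varepsilon,b}$. The discrepancy is $O(\nu_b)+o(1)$ in the exponent and is absorbed into $\xi$ as you intended. Also, delete the garbled display that precedes ``concretely''.
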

This shows that the total (size-biased) mass of  \(G_{\eta;1}\) is concentrated on the scale $N_{\varepsilon,b}^{1/2-\eta/2}$ (recall that $\E G_{\eta;1}=1$). The proof will proceed by splitting the real line into small bins and using the probability bounds from \eqref{sharptail676}. 

\begin{proof} It will be convenient to first handle the case when $G_{\eta;1}$ is too large.
Let \(B >0\) be a large constant such that
\begin{equation}
        \mathbb E\left[
        G_{\eta;1}^2
        \mathbf 1_{\{G_{\eta;1}> N_{\varepsilon,b}^B\}}
        \right]
        \le
         N_{\varepsilon,b}^{-100}.
\label{eq:Geta1-high-tail-L22}
\end{equation}
We justify the existence of such a cutoff \(B\). By Lemma \ref{gmoment}, we have $\mathbb E\left[G_{\eta;1}^4\right]
        \le
        N_{\varepsilon,b}^{C }$ for some constant $C>0$. Hence,  for a large enough constant $B>0$,
\begin{align*}
        \mathbb E\left[
        G_{\eta;1}^2
        \mathbf 1_{\{G_{\eta;1}>N_{\varepsilon,b}^B\}}
        \right]
        &\le   N_{\varepsilon,b}^{-2B}
        \mathbb E\left[G_{\eta;1}^4  \mathbf 1_{\{G_{\eta;1}>N_{\varepsilon,b}^B\}}\right] \le 
        N_{\varepsilon,b}^{-2B}
        \mathbb E\left[G_{\eta;1}^4\right]  \le
        N_{\varepsilon,b}^{C  -2B}  \le
         N_{\varepsilon,b}^{-100}.
\end{align*} 
Now we fix a small mesh \(h_0>0\). First consider the upper side $  \{G_{\eta;1}> N_{\varepsilon,b}^{1/2-\eta/2+\gamma}\}.$ 
Let
\[
        \rho_k:=\frac12-\frac{\eta}{2}+\gamma+kh_0,
        \qquad k=0,1,\ldots,K,
\]
where \(K\) is chosen so that \(\rho_K\le B<\rho_{K+1}\). For the bin $   N_{\varepsilon,b}^{\rho_k}
        \le
        G_{\eta;1}
        \le
         N_{\varepsilon,b}^{\rho_k+h_0},$ by Corollary \ref{cor:full-sum-upper-tail-from-local-ratio-relaxed}, there exists
\(\widetilde \nu_b\ge0\) (depending on \(k=0,1,\ldots,K\)) with \(\widetilde \nu_b\to0\) as \(b\downarrow0\)  such that for any small enough constant $b>0$,
\begin{equation}
\begin{aligned}
\mathbb E\left[
G_{\eta;1}\mathbf 1_{\{
 N_{\varepsilon,b}^{\rho_k}
\le G_{\eta;1}\le
 N_{\varepsilon,b}^{\rho_k+h_0}
\}}
\right]
&\le
 N_{\varepsilon,b}^{\rho_k+h_0}
\mathbb P(G_{\eta;1}> N_{\varepsilon,b}^{\rho_k})         \le
        N_{\varepsilon,b}^{\rho_k+h_0} \cdot
        N_{\varepsilon,b}^{
        -\frac{(\rho_k+1/2-\eta/2)^2}{2(1-\eta)}
        + \widetilde \nu_b
        +o(1)
        } .
\end{aligned}
\label{eq:size-biased-upper-bin-eta}
\end{equation}  
Since there are only finitely many bins, we may choose \(\widetilde\nu_b\) large
enough so that the error bound is uniform in \(k=0,1,\ldots,K\); this does not
affect the property \(\widetilde\nu_b\to0\) as \(b\downarrow0\).
Since $$   \rho
        -
        \frac{(\rho+1/2-\eta/2)^2}{2(1-\eta)}
        =
        -\frac{(\rho-(1/2-\eta/2))^2}{2(1-\eta)}$$ 
and \(\rho_k\ge 1/2-\eta/2+\gamma\), the contribution from each upper-side bin is bounded by $
         N_{\varepsilon,b}^{-\frac{\gamma^2}{2(1-\eta)}+h_0+\widetilde\nu_b+o(1)}.$ 
The number of bins is fixed, and
the remaining part \(G_{\eta;1}> N_{\varepsilon,b}^B\) is negligible due to \eqref{eq:Geta1-high-tail-L22}. Hence the contribution from the upper 
side is at most
\begin{align} \label{upperside}
     N_{\varepsilon,b}^{-\frac{\gamma^2}{2(1-\eta)}+h_0+\widetilde\nu_b+o(1)}.
\end{align}
Next, consider the lower side $ \{G_{\eta;1}< N_{\varepsilon,b}^{1/2-\eta/2-\gamma}\}.$  Again we start from the case when $G_{\eta;1}$ is too small. 
Let \(a_0 \in (0,1/100)\) be  a small constant. Then
\[
        \mathbb E\left[
        G_{\eta;1}
        \mathbf 1_{\{G_{\eta;1}< N_{\varepsilon,b}^{-1/2+\eta/2+a_0}\}}
        \right]
        \le
         N_{\varepsilon,b}^{-1/2+\eta/2+a_0}.
\]
Let $h_0' \in (0,a_0 / 10)$ be a small mesh size similarly as before, and set
\[
        \rho'_k:=\frac12-\frac{\eta}{2}-\gamma-kh_0',
        \qquad k=0,1,\ldots,K',
\]
where \(K'\) is chosen so that 
$   \rho'_{K'+1}\le -\frac12+\frac{\eta}{2}+a_0<\rho'_{K'}.$  By Corollary \ref{cor:full-sum-upper-tail-from-local-ratio-relaxed} again, similarly as above,
\[
\begin{aligned}
\mathbb E\left[
G_{\eta;1}\mathbf 1_{\{
 N_{\varepsilon,b}^{\rho'_k-h_0'}
\le G_{\eta;1}\le
 N_{\varepsilon,b}^{\rho'_k}
\}}
\right]
&\le
 N_{\varepsilon,b}^{\rho'_k}
\mathbb P(G_{\eta;1}> N_{\varepsilon,b}^{\rho'_k-h_0'})   \le
 N_{\varepsilon,b}^{
\rho'_k
-
\frac{(\rho'_k-h_0'+1/2-\eta/2)^2}{2(1-\eta)} +\widetilde\nu_b
+o(1)
}.
\end{aligned}
\]
Using again $\rho
        -
        \frac{(\rho+1/2-\eta/2)^2}{2(1-\eta)}
        =
        -\frac{(\rho-(1/2-\eta/2))^2}{2(1-\eta)},$ 
the last exponent is at most $        -\frac{\gamma^2}{2(1-\eta)}+10 h_0'+\widetilde\nu_b+o(1)$ 
uniformly over the lower-side bins. Hence the contribution from the lower
side is at most
\begin{align}
         N_{\varepsilon,b}^{-\frac{\gamma^2}{2(1-\eta)}+10 h_0'+ \widetilde\nu_b+o(1)}
        +
         N_{\varepsilon,b}^{-1/2+\eta/2+a_0}.
\end{align}
Since   \(a_0,h_0,h_0'>0\)  are arbitrary small constants, recalling that $\widetilde\nu_b \rightarrow 0$ as $b\downarrow 0,$ this along with \eqref{upperside} conclude the proof.
\end{proof}

\nin
We end this section with the following lemma and its proof which was used in the proof of Proposition \ref{prop:local-ratio-BE-correct}.

\begin{lemma}\label{lem:compact}
Let \(\alpha,A>0\) and \(\eta\in(0,1/10)\) be constants.  Define
\begin{align}
        J:=[\theta_-,\theta_+],
        \qquad
        \theta_-:=
        \frac{\alpha}{2(1-\eta)},
        \qquad
        \theta_+:=
        \frac{2\alpha}{1-\eta}.
\label{eq:theta-pm}
\end{align}
Then there exists \(b_0=b_0(\alpha )>0\) such that the following holds for
every constant \(b\in(0,b_0)\).  Let $ U\subset\{1,2,\dots,M_{\varepsilon,\eta}\}$ 
be  such that $\sum_{j\in U}m_j^{-1}\le 100.$  
Then, for all sufficiently small \(\varepsilon>0\) (uniformly in
\(|s|\le A\) and uniformly over all such subsets \(U\)), \(\lambda_{U^c}'\) is strictly increasing on \(J\), and there exists a unique
\(\theta_s\in J\) satisfying
\begin{align}
         \lambda_{U^c}'(\theta_s)
        =
        t_{\eta,\alpha}-s.
\label{eq:saddle-eq11}
\end{align} 
\end{lemma}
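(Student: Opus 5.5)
The plan is to use the derivative asymptotics \eqref{eq:Lambda1-local-correct}--\eqref{eq:Lambda2-local-correct} from Step 1 of the proof of Proposition~\ref{prop:local-ratio-BE-correct}, applied with the compact interval $(\Theta_1,\Theta_2)$ chosen to contain $J$. For instance, take $\Theta_1=\theta_-/2$ and $\Theta_2=2\theta_+$. These asymptotics rely only on Lemma~\ref{lem:Mi-expansion-full}, the bound $\mathbb E[W_j^2]\le Cm_j^{-1}$, and the summability $\sum_j m_j^{-3/2}=O(1)$. None of this uses the lemma being proved, so there is no circularity. They hold uniformly over $U$ satisfying $\sum_{j\in U}m_j^{-1}\le 100$.

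\textbf{Monotonicity.} By \eqref{eq:Lambda2-local-correct} and \eqref{eq:V-Uc-asymptotic}, we have $\lambda_{U^c}''(\theta)=V_{U^c}+O(1)\ge \tfrac12\log N_{\varepsilon,b}>0$ on $J$ for small $\varepsilon$. Hence $\lambda_{U^c}'$ is strictly increasing and continuous on $J$, and any solution of \eqref{eq:saddle-eq11} in $J$ is unique. (Convexity via H\"older gives nondecreasing for free; the second-derivative bound upgrades this to strict.)

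\textbf{Existence.} By the intermediate value theorem, it suffices to show that
\[
\lambda_{U^c}'(\theta_-)<t_{\eta,\alpha}-s<\lambda_{U^c}'(\theta_+).
\]
By \eqref{eq:Lambda1-local-correct}, $\lambda_{U^c}'(\theta)=(\theta-\tfrac12)V_{U^c}+O(1)$. Write $V_{U^c}=a\log N_{\varepsilon,b}$. Here $|a-(1-\eta)|\le \nu_b+o(1)+O(1/\log N_{\varepsilon,b})$, using \eqref{eq:V-asymptotic-with-b-gap} together with the fact that removing $U$ changes $V$ by at most $C\sum_{j\in U}m_j^{-1}=O(1)$. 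Also $t_{\eta,\alpha}-s=(\alpha-\tfrac{1-\eta}{2})\log N_{\varepsilon,b}+O(A)$. Dividing by $\log N_{\varepsilon,b}$, the two inequalities reduce in the limit to
\[
\Big(\theta_\pm-\tfrac12\Big)a \;\gtrless\; \alpha-\tfrac{1-\eta}{2},
\]
that is, $\theta_\pm a \gtrless \alpha+\tfrac12(a-(1-\eta))$. At $a=1-\eta$ these become $\theta_-(1-\eta)=\alpha/2<\alpha$ and $\theta_+(1-\eta)=2\alpha>\alpha$. Both hold with a strict margin of order $\alpha/2$.

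\textbf{Choice of $b_0$.} Choose $b_0(\alpha)$ small enough that the perturbation $\nu_b(\theta_\pm+\tfrac12)$ is less than $\alpha/4$. This works because $\theta_\pm$ depend only on $\alpha$ (and on $\eta<1/10$, where $1/(1-\eta)\le 10/9$), and $\nu_b\to0$. Then take $\varepsilon$ small enough that the $o(1)$ and $O(A)/\log N_{\varepsilon,b}$ errors are also below $\alpha/4$.

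\textbf{Main obstacle.} The only delicate point is uniformity in $U$ and $s$. This requires checking that the $O(1)$ constants in \eqref{eq:Lambda1-local-correct}--\eqref{eq:Lambda2-local-correct} depend only on $\Theta_1,\Theta_2$ (hence only on $\alpha$) and not on $U$. That holds since they come from summing $O(m_j^{-3/2})$ over at most all indices. It also requires that $b_0$ depend only on $\alpha$, which follows from the bound above on $\theta_\pm$.
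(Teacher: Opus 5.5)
Your proposal is correct and follows essentially the same route as the paper. Both evaluate $\lambda_{U^c}'$ at $\theta_\pm$ via \eqref{eq:Lambda1-local-correct} together with the variance asymptotic for $V_{U^c}$, obtain strict monotonicity from the lower bound on $\lambda_{U^c}''$ given by \eqref{eq:Lambda2-local-correct}, and conclude with the intermediate value theorem. Your explicit margins ($\alpha/2$ at $\theta_-$, $\alpha$ at $\theta_+$) and your justification that $b_0$ depends only on $\alpha$ (via $\eta<1/10$) are somewhat more careful than the paper's write-up, but the argument is the same.
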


\begin{proof}
Since \(\alpha>0\), we have \(J\subset(0,\infty)\).  Choose an open interval
\(O\subset(0,\infty)\) such that \(J\subset O\).  By
Lemma~\ref{lem:Mi-expansion-full}, the functions \(M_j\), and therefore
\(\lambda_{U^c}\), are \(C^2\) on \(O\) for all sufficiently small
\(\varepsilon>0\).  Consequently, \(\lambda_{U^c}'\) and
\(\lambda_{U^c}''\) are well-defined and continuous in \(J\).

By the
variance estimate \eqref{eq:V-asymptotic-with-b-gap} and our assumption $\sum_{j\in U}m_j^{-1}\le 100$,  
\begin{equation}
        V_{U^c}
        =
        \left(
        1-\eta+{\nu_b}+o(1)
        \right)
        \log N_{\varepsilon,b}.
\label{eq:V-Uc-asymptotic-b-gap}
\end{equation}  
By \eqref{eq:Lambda1-local-correct} and recalling the definition of $  t_{\eta,\alpha}$ in  \eqref{deft}, for any small enough constant $b>0$  (depending on $\alpha$),
\begin{align}
         \lambda _{U^c} '(\theta_-)
        &=
        \left(\theta_- -\frac12\right)
        V_{U^c}
        +
        O(1) \notag\\
        &\overset{\eqref{eq:V-Uc-asymptotic-b-gap}}{=}
        \left(
        \left(
        \frac{\alpha}{2(1-\eta)}-\frac12
        \right)
        (1-\eta+\nu_b)
        +
        o(1)
        \right)
        \log N_{\varepsilon,b}  <
        t_{\eta,\alpha}-s
\label{eq:saddle-lower-trap}
\end{align}
for all sufficiently small \(\varepsilon\), uniformly in \(|s|\le A\) and
uniformly over such \(U\).   
Similarly,
\begin{align}
         \lambda _{U^c} '(\theta_+)
        &=
        \left(\theta_+ -\frac12\right)
        V_{U^c}
        +
        O(1) \notag\\
        &\overset{\eqref{eq:V-Uc-asymptotic-b-gap}}{=}
        \left(
        \left(
        \frac{2\alpha}{1-\eta}-\frac12
        \right)
        (1-\eta+\nu_b)
        +
        o(1)
        \right)
        \log N_{\varepsilon,b} >
        t_{\eta,\alpha}-s
\label{eq:saddle-upper-trap}
\end{align}
for all sufficiently small \(\varepsilon\), uniformly in \(|s|\le A\) and
uniformly over such \(U\).  

Note that by \eqref{eq:Lambda2-local-correct}, for any small enough constant $b>0 $, 
\[
        \lambda_{U^c}''(\theta)
        =
        V_{U^c}+O(1) \overset{\eqref{eq:V-Uc-asymptotic-b-gap}}{>} 0.1 \log N_{\varepsilon,b}
\] 
for all sufficiently small \(\varepsilon\), uniformly  in
\(\theta\in J=[\theta_-,\theta_+]\) and such \(U\).   Thus \(\lambda_{U^c}'\) is
continuous and strictly increasing on \(J\).
Hence by \eqref{eq:saddle-lower-trap} and \eqref{eq:saddle-upper-trap}, the
intermediate value theorem gives a solution \(\theta_s\in J\) of
\eqref{eq:saddle-eq11}.  Since \(\lambda_{U^c}'\) is strictly increasing on
\(J\), this solution is unique in \(J\).
\end{proof}

\section{Radon-Nikodym derivative}\label{rnproof34}
In this section, relying on Proposition \ref{prop:local-ratio-BE-correct}, we prove the all-important stability estimate 
 for the conditional law under projection onto a small set of coordinates, presented earlier as Proposition \ref{prelimcond}. We will in fact state and prove the following more general result. 
  Recall that the upper-tail event $$\textup{UT}_{\varepsilon,\eta}(\alpha)=\left\{\sum _{i=1}^{M_{\varepsilon, \eta}} \log G_i
\ge
\bigl( -\frac{1-\eta}{2}+\alpha \bigr)\log N_{\varepsilon,b}
\right\}$$  is imposed on the full bulk sum
$\sum_{i=1}^{M_{\varepsilon, \eta}}\log G_i,$
whereas in our
applications {(proof of Proposition \ref{prop:second-moment-small-total-length-full}, see \eqref{rnrn})} we needed to analyze the conditional law projected on a small subset \(U\) of the
coordinates.
 The following proposition shows that, if $\sum_{j\in U}\frac1{m_j}$ is small enough then conditioning on
the upper-tail event has a negligible effect on the
marginal law of \((G_i)_{i\in U}\). The reason why such a statement is expected to be true was outlined in Section \ref{iop}.

\begin{proposition}
\label{thm:direct-Lp-no-split-corrected}
Let \(\alpha>0\). 
Then there exists \(b_0=b_0(\alpha )>0\) such that the following holds for
any constant \(b\in(0,b_0)\). Let $\eta \in (0,1/10)$ and $  U\subset\{1,2,\dots,M_{\varepsilon, \eta}\}$ 
be a subset such that $ \Delta_U:= \sum_{j\in U} m_j^{-1}$ satisfies
\begin{align} \label{verysmallcon}
      |U| \cdot   \Delta_U = o\left(\frac{1}{\log N_{\varepsilon,b}}  \right) ,\qquad \varepsilon \rightarrow 0.
\end{align}
(This is our smallness hypothesis on $|U|.$)
Define the Radon-Nikodym derivative
\begin{align} \label{rnderiv}
\mathsf{RN}_U:=
\frac{
d \mathrm{Law}\bigl((G_i)_{i\in U}\mid \textup{UT}_{\varepsilon,\eta}(\alpha) \bigr)
}{
d \mathrm{Law}((G_i)_{i\in U})
}.  
\end{align}
Then for any integer \(p\ge2\),   as \(\varepsilon \rightarrow 0\),
\begin{equation}
\mathbb E|\mathsf{RN}_U-1|^p
\le
C(\log N_{\varepsilon,b})^{p/2}|U|^{p/2}\Delta_U^{p/2} + 
       (\log \varepsilon^{-1})^{-p\eta/2 + o(1)}.
\label{eq:Rminus1-Lp-final}
\end{equation}
In particular, when $|U| = O(\log \log \varepsilon^{-1}),$ the assumption \eqref{verysmallcon} is automatically satisfied and
\begin{equation}
\mathbb E\bigl[|\mathsf{RN}_U-1|^p\bigr]
\le  
       (\log \varepsilon^{-1})^{-p\eta/2 + o(1)}.
\label{rnspecial}
\end{equation}
\end{proposition}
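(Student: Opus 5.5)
By independence of $(G_i)_{i\in U}$ and $(G_i)_{i\in U^c}$, the Radon--Nikodym derivative has an explicit form. Write $S_U=\sum_{i\in U}Y_i$. Since $\textup{UT}_{\varepsilon,\eta}(\alpha)=\{S_U+S_{U^c}\ge t_{\eta,\alpha}\}$, conditioning on $(G_i)_{i\in U}$ gives
\[
\mathsf{RN}_U=\frac{q_{U^c}(S_U)}{\mathbb E\, q_{U^c}(S_U)},\qquad
q_{U^c}(s)=\mathbb P\bigl(S_{U^c}\ge t_{\eta,\alpha}-s\bigr),
\]
with $S_U$ an independent copy in the denominator. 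Setting $\Psi(s):=q_{U^c}(s)/q_{U^c}(0)$, we get $\mathsf{RN}_U=\Psi(S_U)/\mathbb E\Psi(S_U)$. The plan is to show that $\Psi(S_U)$ is close to $1$ in every $L^p$.

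The hypothesis \eqref{verysmallcon} implies $\Delta_U\le 100$ for small $\varepsilon$, so Proposition \ref{prop:local-ratio-BE-correct} applies. On the event $\{|S_U|\le A\}$, \eqref{eq:local-ratio-final-correct} gives
\[
|\log\Psi(S_U)|\le C\bigl(|S_U|+(\log\varepsilon^{-1})^{-\eta/2+o(1)}\bigr),
\]
and hence $|\Psi(S_U)-1|\le C'\bigl(|S_U|+(\log\varepsilon^{-1})^{-\eta/2+o(1)}\bigr)$. The moments of $S_U$ are controlled from \eqref{eq:assump-allmom-direct-Lp-final} via $|Y_i|\lesssim|W_i|$ on $\{|W_i|\le 1/2\}$ (the complement has super-polynomially small probability by Lemma \ref{goodevent}). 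Minkowski's inequality then gives $\|S_U\|_{L^p}\le\sum_{i\in U}Cm_i^{-1/2}\le C|U|^{1/2}\Delta_U^{1/2}$ by Cauchy--Schwarz. This is where $|U|\Delta_U$ enters; the extra $\log N_{\varepsilon,b}$ factor in \eqref{eq:Rminus1-Lp-final} is slack I would expect to absorb harmlessly.

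The main obstacle is the event $\{|S_U|>A\}$, where the local ratio estimate is unavailable. I would handle it using the trivial bound $\Psi(s)\le 1/q_{U^c}(0)\le N_{\varepsilon,b}^{C}$, which follows from the large deviation lower bound in Proposition \ref{prop:local-ratio-BE-correct} (\eqref{eq:q-local-asym-correct} together with the rate computation). Combining this with Hölder's inequality gives a contribution of at most $N_{\varepsilon,b}^{C}\,\mathbb P(|S_U|>A)^{1/2}$. The probability $\mathbb P(|S_U|>A)$ is bounded by a high moment: $\mathbb E|S_U|^{k}\le C_k(|U|\Delta_U)^{k/2}=o((\log N_{\varepsilon,b})^{-k/2})$ by \eqref{verysmallcon}. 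This decay alone may be too weak against $N_{\varepsilon,b}^{C}$, so I would first try to refine the bound on $q_{U^c}(s)$ for $s$ of polynomial size. For large positive $s$, $\Psi(s)\le e^{\theta_+ s}$ up to constants should follow from convexity of $\mathcal I_{U^c}$, and $\mathbb E e^{p\theta_+ S_U}=\prod_{i\in U}M_i(p\theta_+)\le e^{C\Delta_U}$ is bounded by Lemma \ref{lem:Mi-expansion-full}. With the tail of $S_U$ controlled exponentially, choosing $A$ growing slowly (uniformity in $|s|\le A$ for fixed but arbitrary $A$ and a diagonal argument) makes the remainder negligible.

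Finally, $\mathbb E\Psi(S_U)=1+O\bigl(|U|^{1/2}\Delta_U^{1/2}+(\log\varepsilon^{-1})^{-\eta/2+o(1)}\bigr)$ by the same estimates, so normalization does not affect the bound, and \eqref{eq:Rminus1-Lp-final} follows. For \eqref{rnspecial}: when $|U|=O(\log\log\varepsilon^{-1})$, every $i\in U$ has $m_i\ge(\log\varepsilon^{-1})^\eta$, so $|U|\Delta_U\le |U|^2(\log\varepsilon^{-1})^{-\eta}$. This verifies \eqref{verysmallcon}, and the first term of \eqref{eq:Rminus1-Lp-final} is then at most $(\log\varepsilon^{-1})^{-p\eta/2+o(1)}$.
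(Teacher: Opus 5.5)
Your architecture is the paper's: the identity $\mathsf{RN}_U=\Psi(S_U)/\mathbb E\Psi(S_U)$, the local ratio estimate \eqref{eq:local-ratio-final-correct} on a fixed window, an exponential bound on the positive tail, and a final normalization. The gap is in your treatment of $\{S_U>A\}$.

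Convexity of $\mathcal I_{U^c}$ (equivalently, Chernoff at the tilt $\theta_0$) controls only the exponent: it gives $q_{U^c}(s)\le e^{-\mathcal I_{U^c}(t_{\eta,\alpha})}e^{\theta_0 s}$. Since $q_{U^c}(0)=e^{-\mathcal I_{U^c}(t_{\eta,\alpha})}K_{U^c}(0)$ with $K_{U^c}(0)\asymp(\log N_{\varepsilon,b})^{-1/2}$, what you actually obtain is $\Psi(s)\le C\sqrt{\log N_{\varepsilon,b}}\,e^{\theta_0 s}$. You do not get $\Psi(s)\lesssim e^{\theta_+ s}$ with an $\varepsilon$-independent constant. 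Removing the $\sqrt{\log N_{\varepsilon,b}}$ would require sharp prefactor asymptotics for shifts $s$ far beyond a fixed window, and Proposition \ref{prop:local-ratio-BE-correct} does not provide these.

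With this loss, bounded exponential moments of $S_U$ and a diagonally chosen, slowly growing $A(\varepsilon)$ bound the remainder only by something like $(\log N_{\varepsilon,b})^{p/2}e^{-cA(\varepsilon)}$. That need not even tend to zero, let alone be bounded by the right-hand side of \eqref{eq:Rminus1-Lp-final}. So the factor $(\log N_{\varepsilon,b})^{p/2}$ there is not slack. It is exactly this loss, and it is why \eqref{verysmallcon} demands $|U|\Delta_U=o(1/\log N_{\varepsilon,b})$ rather than $o(1)$.

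The repair uses ingredients you already have, with $A=1$ fixed. Beat the polylogarithmic loss with the polynomial smallness in $|U|\Delta_U$, not with an exponential tail. You were right that this smallness loses against $N_{\varepsilon,b}^{C}$, but it is exactly enough against $(\log N_{\varepsilon,b})^{p/2}$. There are two ways to finish.
\begin{itemize}
\item The paper bounds $\mathbf 1_{\{S_U\ge 1\}}\le |S_U|^p$, so this region contributes at most $C(\log N_{\varepsilon,b})^{p/2}\mathbb E[|S_U|^p e^{p\theta_+ S_U}]$. It then controls the tilted moment by $C|U|^{p/2}\Delta_U^{p/2}$, using Minkowski's inequality under the product tilt and the tilted moment bounds of Lemma \ref{lem:tilted-third-moment-full}.
\item Alternatively, use Cauchy--Schwarz with $\mathbb E e^{2p\theta_+ S_U}=O(1)$ and $\mathbb P(S_U\ge 1)\le\mathbb P(\sum_{i\in U}W_i\ge 1)\le C(|U|\Delta_U)^{p}$, where the first inequality uses $\log(1+w)\le w$. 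This gives the same bound.
\end{itemize}

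A second, smaller point: $\|S_U\|_{L^p}\le C\sum_{i\in U}m_i^{-1/2}$ is not justified without tilting. There are no inverse-moment bounds on $G_i$, and an event of tiny probability can still carry infinite $\mathbb E|\log G_i|^p$. You only need the truncated quantities $\mathbb E[|S_U|^p;|S_U|\le 1]$ and $\mathbb P(S_U\le -1)$. Both follow by working on $\{\min_{i\in U}W_i>-1/2\}$, where $|\log(1+w)|\lesssim|w|$ and $\log(1+w)\ge w-w^2$, as the paper does. On $\{S_U\le -1\}$ you simply use $0\le\Psi\le 1$, which follows from the monotonicity of $q_{U^c}$.
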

Thus, in words, the above statement is a quantitative assertion of the closeness of the conditional and unconditional distributions, but crucially only when projected onto the coordinates in $U$.

\nin
The proof of the above relies on the sharp upper-tail probability at levels $t_{\eta,\alpha}-s$ for the sum $S_{U^c}$ as $s$ varies, i.e., $q_{U^c}(s)=\P(S_{U^c}\ge t_{\eta, \alpha}-s)$ defined in \eqref{uppertailprob345}.
The latter exactly determines the Radon-Nikodym derivative of the conditional distribution projected on $U$. If $q_{U^c}(s)$ did not change with $s$ (which is obviously false), then the conditional distribution on $U$ would be exactly the unconditional distribution. Nonetheless, the same intuition applies whenever it is regular enough and this is indeed what was already shown in \eqref{eq:local-ratio-final-correct} and will be our key input.  

\begin{proof} 
Set
\[
          H_{U^c}(s):=\frac{q_{U^c}(s)}{q_{U^c}(0)},\qquad \forall s\in \mathbb R.
\]

\medskip
\noindent
\textbf{Step 1.}
Since \(S_{U^c}\) is independent of \((G_i)_{i\in U}\), for every bounded
measurable \(f\),
\[
\begin{aligned}
\mathbb E\bigl[f((G_i)_{i\in U})\mathbf 1_{\textup{UT}_{\varepsilon,\eta}(\alpha)}\bigr]
&=
\mathbb E \left[
f((G_i)_{i\in U})
\mathbb P(S_{U^c}\ge t_{\eta,\alpha}-S_U\mid (G_i)_{i\in U})
\right]  =
\mathbb E\bigl[f((G_i)_{i\in U}) q_{U^c}(S_U)\bigr].
\end{aligned}
\]
Hence
\[
        \mathsf{RN}_U=\frac{q_{U^c}(S_U)}{\mathbb P(\textup{UT}_{\varepsilon,\eta}(\alpha))}.
\]
Since  $ \mathbb P(\textup{UT}_{\varepsilon,\eta}(\alpha))
        =
        \mathbb E[q_{U^c}(S_U)]
        =
        q_{U^c}(0)\mathbb E[H_{U^c}(S_U)],$ 
we have
\begin{equation}
\mathsf{RN}_U
=
\frac{H_{U^c}(S_U)}
{\mathbb E[H_{U^c}(S_U)]}.
\label{eq:R-normalized-H-Lp-final}
\end{equation}

\medskip
\noindent
\textbf{Step 2. Pointwise bounds for \(H_{U^c}(s)-1\).}
We claim the following bounds:  For any small enough $b>0,$ as $\varepsilon \rightarrow 0,$
\begin{align}
|H_{U^c}(s)-1|^p
&\le C |s|^p+  
       (\log \varepsilon^{-1})^{-p\eta/2 + o(1)},
&& |s|\le1,
\label{eq:Hminus1-local-p-final}
\\
|H_{U^c}(s)-1|^p
&\le
C(\log N_{\varepsilon,b})^{p/2}|s|^p e^{p\theta_{+} s},
&& s\ge1,
\label{eq:Hminus1-pos-p-final}
\\
|H_{U^c}(s)-1|^p
&\le1,
&& s\le -1.
\label{eq:Hminus1-neg-p-final}
\end{align}
Indeed, on \(|s|\le1\), {and this is where all the action lies},  \eqref{eq:local-ratio-final-correct} in Proposition~\ref{prop:local-ratio-BE-correct} implies $$  |\log H_{U^c}(s)|\le C(|s|+ 
       (\log \varepsilon^{-1})^{-\eta/2 + o(1)}),$$ thus
\[
|H_{U^c}(s)-1|
=
|e^{\log H_{U^c}(s)}-1| 
\le
C(|s|+ 
       (\log \varepsilon^{-1})^{-\eta/2 + o(1)}).
\]

\medskip

\nin
However to compute moments, we still need to use some crude bounds when $|s|\ge 1.$\\

\nin
The case $s\le -1,$ i.e., \eqref{eq:Hminus1-neg-p-final} is a consequence of $0\le H_{U^c}(s)\le1  \ (s\le 0)$ which follows from monotonicity of \(q_{U^c}\).\\

\nin
For the case $s\ge 1$ we need a bit more preparation.
For $\theta>0$, define
\[
        \lambda_{U^c}(\theta)
        :=
        \log\mathbb E[e^{\theta S_{U^c}}],\qquad  \mathcal I_{U^c}(x)
        :=
        \sup_{\theta>0}
        \left\{
        \theta x-\lambda_{U^c}(\theta)
        \right\}.
\]
Let $J  = [\theta_-,\theta_+] \subset (0,\infty)$ be a compact interval defined in \eqref{eq:theta-pm}.
By Lemma \ref{lem:compact} (with $s=0$), there exists a unique solution \(\theta_0 \in J \) to $\lambda_{U^c}'(\theta )
        =
        t_{\eta,\alpha}.$ 
Then by Legendre duality (see Lemma \ref{lem:interior-legendre-duality} below), 
\begin{align}\label{legend}
  \mathcal I_{U^c}(t_{\eta,\alpha})+\lambda_{U^c}(\theta_0 ) =    \theta_0 t_{\eta,\alpha}.
\end{align}
By Chernoff's inequality with the tilt \(\theta_0 >0\), for all
\(s\in\mathbb R\),  
\[
\begin{aligned}
q_{U^c}(s)
&=
\mathbb P(S_{U^c}\ge t_{\eta,\alpha}-s)  \le
e^{-\theta_0 (t_{\eta,\alpha}-s)}
\mathbb E[e^{\theta_0 S_{U^c}}]  =
e^{-\theta_0 t_{\eta,\alpha}
+\lambda_{U^c}(\theta_0 )}
e^{\theta_0 s} \overset{\eqref{legend}}{=}
e^{-\mathcal I_{U^c}(t_{\eta,\alpha})}
e^{\theta_0 s}.
\end{aligned}
\] 
On the other hand, Proposition~\ref{prop:local-ratio-BE-correct}, applied at
\(s=0\), gives that for any small enough $b>0,$ as $\varepsilon \rightarrow 0,$
\[
q_{U^c}(0)
=
e^{-\mathcal I_{U^c}(t_{\eta,\alpha})}
K_{U^c}(0),
\qquad
K_{U^c}(0)
\asymp_\alpha
\frac{1}{\sqrt{\log N_{\varepsilon,b}}}.
\]
Therefore for all
\(s\in\mathbb R\), 
\begin{align}\label{eq:H-global-positive}
H_{U^c}(s)
=
\frac{q_{U^c}(s)}{q_{U^c}(0)}
\le
\frac{
e^{-\mathcal I_{U^c}(t_{\eta,\alpha})}
e^{\theta_0 s}
}{
e^{-\mathcal I_{U^c}(t_{\eta,\alpha})}
K_{U^c}(0)
}
\le
C\sqrt{\log N_{\varepsilon,b}}\, e^{\theta_0 s} .
\end{align}
Thus for \(s\ge1\), by \eqref{eq:H-global-positive} and recalling $\theta_0 \in  J=[\theta_{-}, \theta_{+}]$,
\[
|H_{U^c}(s)-1|
\le
H_{U^c}(s)+1
\le
C\sqrt{\log N_{\varepsilon,b}} e^{\theta_{+} s}.
\]

\medskip
\noindent
\textbf{Step 3. Tilted \(p\)-moment of \(S_U\).} To bound moments of $|H_{U^c}(s)-1|$ using expressions as in \eqref{eq:Hminus1-local-p-final} we must first control moments of $S_U.$ Towards this we claim that, for any   \(\theta_*>0\) and \(p\ge2\),
\begin{equation}
\mathbb E\bigl[|S_U|^p e^{\theta_* S_U}\bigr]
\le
C_{p,\theta_*} |U|^{p/2} \Delta_U^{p/2}.
\label{eq:fixed-tilt-p-moment-self-final}
\end{equation}
{Due to the condition \(\theta_*>0\),} the factor
\(e^{\theta_* S_U}\) damps the singularity of \(Y_i=\log G_i\) near
\(G_i=0\). {While we state this general bound, the singularity at $0$ is not critical for us, since in the corresponding regime
the observable \(H_{U^c}(S_U)\) remains close to \(1\).} 
%when $s<-1$, i.e., $0\le H_{U^c}(s)\le1$  

Define, for $\theta>0$,
\[
M_{U} (\theta)
:=
\mathbb E[e^{\theta S_U}],
\qquad
\frac{d\mathbb Q_{\theta ; U}}{d\mathbb P}
: = 
\frac{e^{\theta S_U}}{M_{U} (\theta)}.
\]
Then
\begin{equation}
\mathbb E\bigl[|S_U|^p e^{\theta_* S_U}\bigr]
=
M_{U} (\theta_*)
\mathbb E_{\mathbb Q_{\theta_* ; U}}|S_U|^p.
\label{eq:tilted-factorization-fixed-final}
\end{equation} Since $      M_i(\theta_*)=1+O(m_i^{-1})$ by \eqref{eq:Mi0-full} in Lemma \ref{lem:Mi-expansion-full},  we have $ M_{U} (\theta_*) = \prod_{i\in U} M_i (\theta_*) = \Theta(1)$ (recall the condition on  \(\Delta_U \) in \eqref{verysmallcon}).\\

\nin
Note that
 under \(\mathbb Q_{\theta_* ; U}\), the variables
\((Y_i)_{i\in U}\) remain independent, and the Radon-Nikodym derivative with respect to $\mathbb P$ is $\frac{e^{\theta_*Y_i}}{M_i(\theta_*)}
=
\frac{G_i^{\theta_*}}{M_i(\theta_*)}.$ 
By the bound \eqref{eq:tilted-Y-third-moment-bound} in the proof of Lemma \ref{lem:tilted-third-moment-full},  
\begin{equation}
\mathbb E_{Q_{\theta_* ; U}}|Y_i|^p
\le
C m_i^{-p/2}.
\label{eq:one-coordinate-positive-tilt-final}
\end{equation} 
Thus  by Minkowski's inequality in \(L^p(\mathbb Q_{\theta_*;U})\),
\begin{align}\label{minp}
        \left(
        \mathbb E_{\mathbb Q_{\theta_*;U}}|S_U|^p
        \right)^{1/p}
      =
        \left\|
        \sum_{i\in U}Y_i
        \right\|_{L^p(\mathbb Q_{\theta_*;U})}                  \le
        \sum_{i\in U}
        \|Y_i\|_{L^p(\mathbb Q_{\theta_*;U})}                  \le
        C 
        \sum_{i\in U}m_i^{-1/2}\le    C   |U|^{1/2}\Delta_U^{1/2},
\end{align}  
where we used Cauchy-Schwarz inequality in the last inequality.
Applying this to  \eqref{eq:tilted-factorization-fixed-final}, recalling $ M_{U} (\theta_*) = \Theta(1)$, this proves
\eqref{eq:fixed-tilt-p-moment-self-final}.\\

The above preparation allows us to quickly deduce \(L^p\)-bounds for \(H_{U^c}(S_U)-1\).

\medskip
\noindent
\textbf{Step 4. \(L^p\)-bound for \(H_{U^c}(S_U)-1\).}
  We decompose
\begin{align*}
\mathbb E|H_{U^c}(S_U)-1|^p
&=
\mathbb E\bigl[|H_{U^c}(S_U)-1|^p;\ |S_U|\le1\bigr]
\\
&\quad+
\mathbb E\bigl[|H_{U^c}(S_U)-1|^p;\ S_U\ge1\bigr]
+
\mathbb E\bigl[|H_{U^c}(S_U)-1|^p;\ S_U\le-1\bigr].
\end{align*}

\nin
$(1)$
On \(\{|S_U|\le1\}\) which is the typical behavior as apparent from, say, \eqref{minp},  using \eqref{eq:Hminus1-local-p-final}, 
\begin{align} 
\mathbb E \bigl[&|H_{U^c}(S_U)-1|^p ;\ |S_U|\le1\bigr]
 \notag \\
       &\le
C\mathbb E\bigl[|S_U|^p;\ |S_U|\le1\bigr]+   
       (\log \varepsilon^{-1})^{-p\eta/2 + o(1)} \notag \\
       &\le
C\mathbb E\bigl[|S_U|^p e^{S_U}\bigr] + 
       (\log \varepsilon^{-1})^{-p\eta/2 + o(1)}\overset{\eqref{eq:fixed-tilt-p-moment-self-final}}{\le}
C|U|^{p/2}\Delta_U^{p/2}+  
       (\log \varepsilon^{-1})^{-p\eta/2 + o(1)}. \label{eq:Hp-local-expectation-final}
\end{align}

\nin
$(2)$
On \(\{S_U\ge1\}\),  using \eqref{eq:Hminus1-pos-p-final},
\begin{equation}
\mathbb E\bigl[|H_{U^c}(S_U)-1|^p;\ S_U\ge1\bigr]
\le
C(\log N_{\varepsilon,b})^{p/2}
\mathbb E\bigl[|S_U|^p e^{ p\theta_{+}S_U}\bigr]
\overset{\eqref{eq:fixed-tilt-p-moment-self-final}}{\le}
C(\log N_{\varepsilon,b})^{p/2} |U|^{p/2} \Delta_U^{p/2}.
\label{eq:Hp-positive-expectation-final}
\end{equation}

\nin
$(3)$
On \(\{S_U\le -1\}\),  using \eqref{eq:Hminus1-neg-p-final},
\begin{align}
\mathbb E\bigl[|H_{U^c}(S_U)-1|^p;\ S_U\le-1\bigr]
\le
\mathbb P(S_U\le-1).
\label{-1part}
\end{align}

\nin
Therefore it remains to bound \(\mathbb P(S_U\le-1)\). We claim that for
{any \(r > 0 \) for which $2r$ is integer,}
\begin{align}
\mathbb P(S_U\le -1)\le C|U|^r\Delta_U^{r}.
\label{-1claim}
\end{align}
Define the (rare) event
\[
\mathcal E
:=
\left\{\exists i\in U:\ W_i\le -\frac12\right\}.
\]
Then
\begin{align}
\mathbb P(S_U\le-1)
\le
\mathbb P(
\mathcal E)
+
\mathbb P(S_U\le-1,\ 
\mathcal E^c).
\label{531}
\end{align}
By the union bound and Markov's inequality,
\begin{align}
\mathbb P(
\mathcal E)
&\le
\sum_{i\in U}\mathbb P\left(W_i\le-\frac12\right)  \le
\sum_{i\in U}2^{2r}\mathbb E|W_i|^{2r}  \overset{\eqref{eq:assump-allmom-direct-Lp-final}}{\le}
2^{2r}C\sum_{i\in U}m_i^{-r}
\le C \Big(\sum_{i\in U}m_i^{-1}\Big)^r = 
C\Delta_U^{r}.
\label{eq:EN-bound}
\end{align}
On \(\mathcal E^c\), we have \(W_i>-1/2\) for every \(i\in U\).
Since $ \log(1+w)\ge w-w^2$ for $w>-1/2,$
on \(\mathcal E^c\),
\[
S_U
=
\sum_{i\in U}\log(1+W_i)
\ge
\sum_{i\in U}W_i-\sum_{i\in U}W_i^2.
\]
Hence the second term in \eqref{531} is bounded by
\begin{equation}
\mathbb P(S_U\le -1,\ 
\mathcal E^c)
\le
\mathbb P\left(\sum_{i\in U}W_i\le-\frac12\right)
+
\mathbb P\left(\sum_{i\in U}W_i^2\ge\frac12\right).
\label{eq:split-on-ENc}
\end{equation}
For the first term, by Markov's inequality and Minkowski's inequality,
\begin{align*}
\mathbb P\left(\sum_{i\in U}W_i\le-\frac12\right)
&\le
2^{2r}
\mathbb E\left|
\sum_{i\in U}W_i
\right|^{2r} \le
C
\left(
\sum_{i\in U}
\|W_i\|_{L^{2r}}
\right)^{2r}   \overset{\eqref{eq:assump-allmom-direct-Lp-final}}{\le}
C
\left(
\sum_{i\in U}m_i^{-1/2}
\right)^{2r} \le
C |U|^r\Delta_U^r,
\end{align*}
where we used Cauchy-Schwarz inequality in the last inequality. For the second term, again by Markov's inequality and Minkowski's inequality,
\begin{align*}
\mathbb P\left(\sum_{i\in U}W_i^2\ge\frac12\right)
&\le
2^r
\mathbb E\left(\sum_{i\in U}W_i^2\right)^r            
\le
C
\left(
\sum_{i\in U}
\|W_i^2\|_{L^r}
\right)^r   \overset{\eqref{eq:assump-allmom-direct-Lp-final}}{\le}
C
\left(
\sum_{i\in U}m_i^{-1}
\right)^r
=
C\Delta_U^r.
 \end{align*}
Together with \eqref{eq:EN-bound}, this proves
\eqref{-1claim}. Using \eqref{-1part} and \eqref{-1claim} with $r=p/2$, we deduce that
\begin{equation}
\mathbb E\bigl[|H_{U^c}(S_U)-1|^p;\ S_U\le-1\bigr]
\le
C|U|^{p/2}\Delta_U^{p/2}.
\label{eq:Hp-negative-expectation-final}
\end{equation}
Combining this with \eqref{eq:Hp-local-expectation-final} and
\eqref{eq:Hp-positive-expectation-final}, we conclude that
\begin{equation}
\mathbb E|H_{U^c}(S_U)-1|^p
\le
C(\log N_{\varepsilon,b})^{p/2}|U|^{p/2}\Delta_U^{p/2} +   
       (\log \varepsilon^{-1})^{-p\eta/2 + o(1)}. 
\label{eq:Hminus1-Lp-self-final}
\end{equation}

\medskip
\noindent
\textbf{Step 5: Conclusion.}
By Jensen's inequality,  as   $\log N_{\varepsilon,b}\cdot |U| \cdot \Delta_U = o(1)$  (by the assumption in \eqref{verysmallcon}), as $\varepsilon \rightarrow 0,$
\begin{align}
\bigl|\mathbb E[H_{U^c}(S_U)]-1\bigr|
&\le
\mathbb E|H_{U^c}(S_U)-1|  \le
\Bigl(\mathbb E|H_{U^c}(S_U)-1|^p\Bigr)^{1/p}
\overset{\eqref{eq:Hminus1-Lp-self-final}}{\longrightarrow}0.
\label{conc}
\end{align}
Hence, for all sufficiently small $\varepsilon>0$, we have $  \mathbb E[H_{U^c}(S_U)]\ge\frac12.$ 
Thus from \eqref{eq:R-normalized-H-Lp-final},
\[
  |\mathsf{RN}_U-1|^p  
=
\frac{
|H_{U^c}(S_U)-\mathbb E[H_{U^c}(S_U)]|^p
}{
(\mathbb E[H_{U^c}(S_U)])^p
}
\le
2^p
|H_{U^c}(S_U)-\mathbb E[H_{U^c}(S_U)]|^p.
\]
Also, by the inequality $ |x-y|^p\le2^{p-1}(|x-1|^p+|y-1|^p),$ 
we have
\[
\begin{aligned}
&|H_{U^c}(S_U)-\mathbb E[H_{U^c}(S_U)]|^p  \le
2^{p-1}
\Bigl(
|H_{U^c}(S_U)-1|^p
+
|\mathbb E[H_{U^c}(S_U)]-1|^p
\Bigr).
\end{aligned}
\]
Taking expectations and using \eqref{conc},
\[
\mathbb E
|H_{U^c}(S_U)-\mathbb E[H_{U^c}(S_U)]|^p
\le
C
\mathbb E|H_{U^c}(S_U)-1|^p.
\]
Thus combining the above inequalities,
\begin{align*}
    \mathbb E |\mathsf{RN}_U-1|^p \le   C
\mathbb E|H_{U^c}(S_U)-1|^p.
\end{align*}
Using \eqref{eq:Hminus1-Lp-self-final}, we establish \eqref{eq:Rminus1-Lp-final}. Finally,  recalling  \eqref{mimin}, the assumption \eqref{verysmallcon} is satisfied whenever $|U| = O(\log \log \varepsilon^{-1})$, because $\Delta_U \le |U|\cdot (\log \varepsilon^{-1})^{-\eta}$ and $N_{\varepsilon,b} \asymp_b  \log \varepsilon^{-1}$. Thus we  establish \eqref{rnspecial}  as a consequence of  \eqref{eq:Rminus1-Lp-final}.
\end{proof}

\begin{remark} \label{remarkvary5}
All the results in this section  remain
valid for \(\varepsilon\)-dependent parameters \(b=b_\varepsilon\) and
\(\vartheta=\vartheta_\varepsilon\), provided that for   
some \(\vartheta^\star\in\mathbb R\) and small enough \(b^\star>0\),
\[
        b_\varepsilon\to b^\star,
        \qquad
        \vartheta_\varepsilon\to\vartheta^\star
        \qquad\text{as }\varepsilon\rightarrow 0.
\]
Indeed the second-moment estimate \eqref{eq:V-asymptotic-with-b-gap} holds for such parameters as well: There exists 
\(\nu_{b^\star}\ge0\) (depending on $\vartheta^\star$) with $\nu_{b^\star} \rightarrow0$ as  $b^\star\downarrow0$ such that  as $\varepsilon \rightarrow 0,$
\begin{equation}
\sum_{j=1}^{N_{\varepsilon,b}-\ell_{\varepsilon,\eta}}
        \mathbb E\left[
        W_j^2
        \right]
        =
        a_\varepsilon\log N_{\varepsilon,b},
        \qquad
        \left|
        a_\varepsilon-(1-\eta)
        \right|
        \le
        \nu_{b^\star}+o(1),
\label{eq:V-asymptotic-varying-b-theta}
\end{equation}
We briefly explain why this holds.  By Lemma \ref{vary}, the above sum is written as
\begin{align} \sum_{m=\ell_{\varepsilon,\eta}}^{N_{\varepsilon,b}-1}
        \frac1m
        +
        O\left(
        \omega_{[\vartheta^\star-1,\vartheta^\star+1]}(2b^\star)
        \sum_{m=\ell_{\varepsilon,\eta}}^{N_{\varepsilon,b}-1}
        \frac1m
        \right).
\label{eq:V-varying-reduced-to-harmonic}
\end{align} 
It remains to estimate the harmonic sum.  Since \(b_\varepsilon\to b^\star\),  we have $N_{\varepsilon,b}
        \asymp_{b^\star}
        \log\varepsilon^{-1}.$ Thus
\begin{align}
        \sum_{m=\ell_{\varepsilon,\eta}}^{N_{\varepsilon,b}-1}
        \frac1m
        &=
        \log N_{\varepsilon,b}
        -
        \log \ell_{\varepsilon,\eta}
        +
        o(1) =
        (1-\eta+o(1))\log N_{\varepsilon,b} .
\label{eq:harmonic-varying-b}
\end{align}
Substituting this into
\eqref{eq:V-varying-reduced-to-harmonic},  as $\omega_{[\vartheta^\star-1,\vartheta^\star+1]}(2b^\star) \rightarrow 0$ as $b^\star \downarrow 0$, we obtain \eqref{eq:V-asymptotic-varying-b-theta}.
Given the variance-sum estimate
\eqref{eq:V-asymptotic-varying-b-theta} and the 
higher-moment bounds \eqref{eq:centered-G-moment-B3}, the arguments in this section work without any modifications.

\end{remark}

\appendix

\section{Monotonicity of SHF in $\vartheta$}

In this appendix, we prove Lemma \ref{lem:moment-monotonicity-theta} establishing the monotonicity of the moments of SHF in the coupling parameter $\vartheta$.

\begin{proof}[Proof of Lemma \ref{lem:moment-monotonicity-theta}]
We use the collision-diagram moment formula \cite{shfuppermoment}. For a nonnegative test function
\(\phi\), the \(h\)-th moment  of SHF  can be written as
\[
\mathbb E\left[
\left(
Z_t^\vartheta(\phi)
\right)^h
\right]
=
\sum_{m=0}^\infty (2\pi)^m
\sum_{\substack{
\{i_1,j_1\},\ldots,\{i_m,j_m\}\subset\{1,\ldots,h\}^2\\
\{i_k,j_k\}\neq\{i_{k+1},j_{k+1}\}
}}
\mathcal I_m^\vartheta
\bigl(
\{i_1,j_1\},\ldots,\{i_m,j_m\}
\bigr),
\]
where, for every integer \(m\ge1\), {writing $\textbf{x}  = (x^1,\ldots,x^h)$,}
\[
\begin{aligned}
\mathcal I_m^\vartheta
&=
\int_{(\mathbb R^2)^h} d\textbf{x}
\phi^{\otimes h}(\mathbf{x})
\int_{0\le a_1<b_1<\cdots<a_m<b_m\le t}
\int_{(\mathbb R^2)^{2m}}
g_{a_1/2}(x_1-x^{i_1})g_{a_1/2}(x_1-x^{j_1})            \\
&\qquad \times
\prod_{r=1}^{m}
\left[
G_\vartheta(b_r-a_r)
g_{(b_r-a_r)/4}(y_r-x_r)
\mathbf 1_{\mathsf{S}_{i_r,j_r}}
\right]                                                   \\
&\qquad \times
\prod_{1\le r\le m-1}
g_{(a_{r+1}-b_{\mathsf{p}(i_{r+1})})/2}
\bigl(x_{r+1}-y_{\mathsf{p}(i_{r+1})}\bigr)
g_{(a_{r+1}-b_{\mathsf{p}(j_{r+1})})/2}
\bigl(x_{r+1}-y_{\mathsf{p}(j_{r+1})}\bigr)
\,d\vec x\,d\vec y\,d\vec a\,d\vec b .
\end{aligned}
\] 
{Here, \(\mathsf{S}_{i_r,j_r}\) denotes the event that Brownian motions
indexed by \(i_r\) and \(j_r\) collide, and \(\mathsf{p}(i_r)\) denotes
the most recent time prior to \(r\) at which the Brownian motion indexed by
\(i_r\) participated in a collision (see \cite[Section 2]{shrinking} for details).}
The important point is that every factor in the integrand is nonnegative whenever
\(\phi\ge0\), and the parameter \(\vartheta\) appears only through the
 factors $G_\vartheta(b_r-a_r).$ 
By the explicit representation
\[
        G_\vartheta(t)
        =
        \int_0^\infty
        e^{(\vartheta-\gamma)s}
        \frac{t^{s-1}}{\Gamma(s)}\,ds,
        \qquad t>0
\]
($\gamma$ is the Euler constant and $\Gamma(s)$ is the Gamma function),
we have, for \(\vartheta_1\le\vartheta_2\),
\[
        G_{\vartheta_1}(t)
        \le
        G_{\vartheta_2}(t),
        \qquad t>0.
\] 
Since all other factors in the diagram integral are nonnegative and independent
of \(\vartheta\), we obtain 
\[
        \mathcal I_m^{\vartheta_1}
        \bigl(
        \{i_1,j_1\},\ldots,\{i_m,j_m\}
        \bigr)
        \le
        \mathcal I_m^{\vartheta_2}
        \bigl(
        \{i_1,j_1\},\ldots,\{i_m,j_m\}
        \bigr).
\]
Summing over \(m\) and over all diagrams proves monotonicity.

\end{proof}

Next, as a consequence of Lemma \ref{lem:moment-monotonicity-theta}, we record a uniform variance asymptotic estimate that applies when \(b\) and
\(\vartheta\) are allowed to depend on \(\varepsilon\), provided that both parameters
converge as \(\varepsilon\downarrow0\).
We write $\mathscr W_{i}^{\vartheta}
        :=
        \mathscr Z_{t_{i-1},t_i}^{\vartheta}
        -
        P_{t_{i-1},t_i}$  for the centered SHF. Note that
\[
        p_{i-1}         \blacktriangleleft         \mathscr W_i^{\vartheta}         \blacktriangleright1=G_i^{\vartheta}-1.
\]
For the moment, we keep the dependence on the coupling parameter $\vartheta$ explicit in the notation
\(G_i^\vartheta\).

\begin{lemma} 
\label{vary}   

For every compact interval
\(K=[\vartheta_-,\vartheta_+]\subset\mathbb R\), define
\[
        \omega_K(\bar b)
        :=
        \sup_{0<b\le \bar b}
        \sup_{\varepsilon\le1/2}
        \sup_{1\le i\le N_{\varepsilon,b}-1}
        \sup_{\vartheta\in K}
        \left|
        (N_{\varepsilon,b}-i)
        \mathbb E\left[
        \left(
        p_{i-1}         \blacktriangleleft         \mathscr W_i^{\vartheta}         \blacktriangleright1
        \right)^2
        \right]
        -1
        \right|.
\]
Then
\begin{equation}
        \omega_K(\bar b)\longrightarrow0
        \qquad\text{as }\bar b\downarrow0.
\label{eq:variance-envelope-small-b}
\end{equation}
In particular, if \(b_\varepsilon\to b^\star > 0 \) and
\(\vartheta_\varepsilon\to\vartheta^\star\), then,  for any compact
interval \(K\) containing \(\vartheta^\star\), the following holds: For all sufficiently
small \(\varepsilon>0\),
\begin{equation}
        \sup_{1\le i\le N_{\varepsilon,b_\varepsilon}-1}
        \left|
        (N_{\varepsilon,b_\varepsilon}-i)
        \mathbb E\left[
        \left(
       p_{i-1}         \blacktriangleleft         \mathscr W_i^{\vartheta_\varepsilon}         \blacktriangleright1 
        \right)^2
        \right]
        -1
        \right|
        \le
        \omega_K(2b^\star).
\label{eq:variance-varying-b-small-fixed}
\end{equation}
Note that the bound in RHS above tends to zero as  $b^\star\downarrow0.$ 
\end{lemma}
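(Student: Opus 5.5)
The plan is to reduce Lemma~\ref{vary} to the fixed-parameter statement \eqref{2moment}, upgraded to hold uniformly in $\vartheta\in K$. The key tool is the scaling identity \eqref{eq:SHF-scaling-measure-current}. It lets us trade the block parameters $(b,\varepsilon,i,\vartheta)$ for a unit-time SHF smoothed at a certain radius with a shifted coupling. Exactly as in \eqref{si}--\eqref{315}, one gets
\[
p_{i-1}\blacktriangleleft\mathscr W_i^{\vartheta}\blacktriangleright 1\stackrel{\mathrm d}{=} p(\rho^2)\blacktriangleleft\mathscr W^{\Theta}_{0,1}\blacktriangleright1,\qquad \rho^2=\frac{b^2}{1-b^2},\quad \Theta=\vartheta+\log\big(t_i(1-b^2)\big).
\]
Since $-\log t_i= 2(N_{\varepsilon,b}-i)\log b^{-1}+O(\log b^{-1})$, one has
\[
-\Theta/\log\rho^{-1}=2(N_{\varepsilon,b}-i)+O(1)+O\big(|\vartheta|/\log b^{-1}\big).
\]
So the quantity in $\omega_K$ depends on $(\varepsilon,i,\vartheta)$ only through the pair $(\rho,\Theta)$. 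The normalizing factor $N_{\varepsilon,b}-i$ equals $\tfrac12\,(-\Theta)/\log\rho^{-1}$ up to an error of relative size $O(1/(N_{\varepsilon,b}-i))+O(|\vartheta|/((N_{\varepsilon,b}-i)\log b^{-1}))$.

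\textbf{Step 1: explicit second moment.} Next, use the explicit second moment of the SHF, i.e.\ the $n=2$ delta-Bose kernel, which is a single collision diagram with weight $G_\vartheta$. This gives
\[
\mathbb E\big[(p(\rho^2)\blacktriangleleft\mathscr W^\Theta_{0,1}\blacktriangleright1)^2\big]=\int_{0<a<c<1} g(a,\rho)\,G_\Theta(c-a)\,da\,dc,
\]
with an explicit nonnegative heat-kernel factor $g$. The source of \eqref{2moment} in \cite{loglog} is an asymptotic analysis of this integral, $\approx \log\rho^{-1}/(-\Theta)$ as $-\Theta/\log\rho^{-1}\to\infty$. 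I would redo that analysis tracking the dependence on $\Theta$. Fix $\bar b$ small and $\vartheta\in K$, and consider two regimes.

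\emph{Regime (a): $N_{\varepsilon,b}-i\ge L$ for a large fixed $L$.} Here $-\Theta$ is large compared with $\log\rho^{-1}$, and the standard expansion $G_\Theta(t)\approx 1/(t\log^2(e^{-\Theta}/t))$ holds. Its error terms are controlled by $e^{\vartheta}$, which is uniformly bounded on $K$. This yields an error tending to $0$ as $L\to\infty$ and $\bar b\to0$, uniformly in $\vartheta\in K$.

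\emph{Regime (b): the remaining indices, $N_{\varepsilon,b}-i<L$.} Here $\rho\le 2\bar b$ is tiny, and $\Theta$ ranges over a window of width $O(L\log\bar b^{-1})$. One checks that the relative error is $O(\log\log\bar b^{-1}/\log\bar b^{-1})$, using the $G_\Theta$ asymptotics for $t\ll e^{\Theta}$. Alternatively, one can invoke the fixed-$\vartheta$ result \eqref{2moment} at the two endpoints of $K$. By Lemma~\ref{lem:moment-monotonicity-theta} (the second moment of $\mathscr W$ equals the second moment of $\mathscr Z$ minus $1$, hence is monotone), the value for any $\vartheta\in K$ is sandwiched between the endpoint values. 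Both endpoint values are $(1+o_{\bar b}(1))/(N-i)$, because the $\vartheta$-dependence only enters through $O(|\vartheta|/\log b^{-1})$ shifts of $-\Theta/\log\rho^{-1}$.

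\textbf{Step 2: the main obstacle.} I expect the main obstacle to be exactly this uniformity: \eqref{2moment} is quoted only for fixed $\vartheta$. The monotonicity sandwich is the trick I would try first, since it removes the supremum over $\vartheta\in K$ at no cost. The supremum over $b\le\bar b$ is also harmless, since \eqref{2moment} is a statement about $\lim_{b\to0}$ of a supremum over $\varepsilon$ and $i$. Hence $\omega_K(\bar b)\le \max_{\vartheta\in\{\vartheta_\pm\}}\sup_{b\le\bar b}(\cdots)\to0$, which proves \eqref{eq:variance-envelope-small-b}.

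\textbf{Step 3: varying parameters.} For \eqref{eq:variance-varying-b-small-fixed}, note that for small $\varepsilon$ we have $b_\varepsilon\le 2b^\star$ and $\vartheta_\varepsilon\in K$. The left side is therefore one of the terms inside the supremum defining $\omega_K(2b^\star)$, and the bound follows immediately.
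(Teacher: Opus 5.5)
Your final argument is correct and is exactly the paper's proof. You use Lemma~\ref{lem:moment-monotonicity-theta} with $h=2$, together with $\mathbb E G_i^\vartheta=1$, to sandwich the $\vartheta$-dependent quantity between its values at $\vartheta_\pm$, apply \eqref{2moment} at those two fixed endpoints, and then note that the varying-parameter term is one of the terms in the supremum defining $\omega_K(2b^\star)$. The explicit second-moment analysis in Step~1 (regimes (a) and (b), with their heuristic constants) is not needed and can be dropped.
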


\begin{proof}
For fixed \(\varepsilon,b,i\), we set
\[
        A_{\varepsilon,b,i}(\vartheta)
        :=
        (N_{\varepsilon,b}-i)
        \mathbb E\left[
        \left(
        p_{i-1}         \blacktriangleleft         \mathscr W_i^{\vartheta}         \blacktriangleright1
        \right)^2
        \right].
\]
Note that
since \(\mathbb E G_i^{\vartheta}=1\),  
\[
        \mathbb E\left[
        \left(
        p_{i-1}         \blacktriangleleft         \mathscr W_i^{\vartheta}         \blacktriangleright1
        \right)^2
        \right]
        =
        \mathbb E\left[
        \left(
        G_i^{\vartheta}-1
        \right)^2
        \right]
        =
        \mathbb E\left[
        \left(
        G_i^{\vartheta}
        \right)^2
        \right]-1.
\] 
Hence by Lemma \ref{lem:moment-monotonicity-theta}, \(A_{\varepsilon,b,i}(\vartheta)\) is nondecreasing in \(\vartheta\).
This implies that  for every \(\vartheta\in[\vartheta_-,\vartheta_+]\), 
\[
\begin{aligned}
        \sup_{\vartheta\in K}
        |A_{\varepsilon,b,i}(\vartheta)-1|
        &\le
        \max\left\{
        |A_{\varepsilon,b,i}(\vartheta_-)-1|,
        |A_{\varepsilon,b,i}(\vartheta_+)-1|
        \right\}.
\end{aligned}
\]
Taking the supremum over
\(0<b\le\bar b\), \(\varepsilon\le1/2\), and
\(1\le i\le N_{\varepsilon,b}-1\), we obtain  
\[
\begin{aligned}
        \omega_K(\bar b)
        &\le
        \max_{\sigma\in\{-,+\}}
        \sup_{0<b\le\bar b}
        \sup_{\varepsilon\le1/2}
        \sup_{1\le i\le N_{\varepsilon,b}-1}
        \left|
        (N_{\varepsilon,b}-i)
        \mathbb E\left[
        \left(
        p_{i-1}         \blacktriangleleft         \mathscr W_i^{\vartheta_\sigma}         \blacktriangleright1
        \right)^2
        \right]
        -1
        \right|.
\end{aligned}
\]
For the two fixed endpoint parameters \(\vartheta_-\) and
\(\vartheta_+\), the variance estimate \eqref{2moment} gives that the
RHS tends to \(0\) as \(\bar b\downarrow0\).  This proves
\eqref{eq:variance-envelope-small-b}.

Now suppose \(b_\varepsilon\to b^\star > 0\) and
\(\vartheta_\varepsilon\to\vartheta^\star\). Take a compact interval
\(K\subset\mathbb R\) such that $\vartheta_\varepsilon\in K$ 
for all sufficiently small \(\varepsilon\). As $ b_\varepsilon\le 2b^\star$  for all sufficiently small \(\varepsilon\), 
 we obtain \eqref{eq:variance-varying-b-small-fixed}.  
\end{proof}

\section{Radon--Nikodym identity for projected conditioning}

The next lemma was a key input in Section \ref{centralsec}. 
In summary, it shows that when an observable under consideration depends only on part of the
underlying randomness and on some independent auxiliary randomness, the effect of a global conditioning can be represented by a Radon--Nikodym derivative factor involving only the variables participating in the observable.

\begin{lemma} 
\label{lem:projected-RN-coarse-conditioning}
Let \(X_1,\ldots,X_m\) be independent random elements, possibly taking values
in abstract measurable spaces.  Let $ Y=(Y_1,\ldots,Y_{m'})$ 
be a random vector independent of \((X_1,\ldots,X_m)\).  For each
\(i=1,\ldots,m\), let
\[
        \widetilde X_i:=\psi_i(X_i)
\]
be a measurable real-valued function of \(X_i\).  Let $A\in\sigma(\widetilde X_1,\ldots,\widetilde X_m)$
be an event with \(\mathbb P(A)>0\).  For
\(K\subset\{1,\ldots,m\}\), write $ X_K:=(X_i)_{i\in K}$ and $\widetilde X_K:=(\widetilde X_i)_{i\in K}.$ 
Define the projected Radon--Nikodym derivative
\begin{equation}
        \mathsf{RN}_{K}^{A}
        :=
        \frac{
        d\textup{Law}(\widetilde X_K\mid A)
        }{
        d\textup{Law}(\widetilde X_K)
        } .
\label{eq:projected-RN-coarse-definition}
\end{equation}
 We use the convention $\mathsf{RN}_{\varnothing}^{A}\equiv1$ when \(K=\varnothing\).
Then, for every integrable random variable \(\Phi\) which is measurable with
respect to $\sigma(X_K)\vee\sigma(Y),$
we have
\begin{equation}
        \mathbb E[\Phi\mid A]
        =
        \mathbb E\left[
        \Phi\cdot \mathsf{RN}_{K}^{A}(\widetilde X_K)
        \right].
\label{eq:projected-RN-coarse-identity}
\end{equation}
Moreover,
\begin{equation}
        \mathsf{RN}_{K}^{A}(\widetilde X_K)
        =
        \mathbb E\left[
        \frac{\mathbf 1_A}{\mathbb P(A)}
        \,\middle|\,
        \sigma (\widetilde X_K)
        \right]
        =
        \mathbb E\left[
        \frac{\mathbf 1_A}{\mathbb P(A)}
        \,\middle|\,
        \sigma(X_K)\vee\sigma(Y)
        \right]
        \qquad\textup{a.s.}
\label{eq:projected-RN-coarse-cond-exp}
\end{equation}
\end{lemma}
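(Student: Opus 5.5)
The plan is to prove \eqref{eq:projected-RN-coarse-cond-exp} first, and then read off \eqref{eq:projected-RN-coarse-identity} from it by the tower property. Set $L:=\mathbf 1_A/\mathbb P(A)$, a nonnegative integrable random variable.

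\textbf{Identifying the derivative.} The first step identifies $\mathsf{RN}_K^A$. For any bounded measurable $f$ we have
\[
\mathbb E[f(\widetilde X_K)\mid A]=\mathbb E[f(\widetilde X_K)L]=\mathbb E\bigl[f(\widetilde X_K)\,\mathbb E[L\mid\sigma(\widetilde X_K)]\bigr].
\]
By the Doob--Dynkin lemma, $\mathbb E[L\mid\sigma(\widetilde X_K)]=g(\widetilde X_K)$ for some measurable $g\ge 0$. Hence $\mathrm{Law}(\widetilde X_K\mid A)$ is absolutely continuous with respect to $\mathrm{Law}(\widetilde X_K)$, with density $g$. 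This is the first equality in \eqref{eq:projected-RN-coarse-cond-exp}, understood up to null sets. When $K=\varnothing$ the same computation gives $g\equiv 1$, consistent with the stated convention.

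\textbf{Enlarging the conditioning $\sigma$-algebra.} This is the key step. We must show that $\mathbb E[L\mid\sigma(X_K)\vee\sigma(Y)]=\mathbb E[L\mid\sigma(\widetilde X_K)]$. The event $A$ lies in $\sigma(\widetilde X_1,\ldots,\widetilde X_m)\subset\sigma(\widetilde X_K)\vee\sigma(X_{K^c})$. Moreover, $X_{K^c}$ is independent of $\sigma(X_K)\vee\sigma(Y)$, by independence of the $X_i$ together with independence of $Y$ from the whole family $(X_1,\ldots,X_m)$. Write $\mathbf 1_A=F(\widetilde X_K,X_{K^c})$ for a measurable $F$. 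The standard freezing lemma for independent variables then gives
\[
\mathbb E[\mathbf 1_A\mid\sigma(X_K)\vee\sigma(Y)]=\bar F(\widetilde X_K),\qquad \bar F(u):=\mathbb E[F(u,X_{K^c})].
\]
The right-hand side is $\sigma(\widetilde X_K)$-measurable. Since $\sigma(\widetilde X_K)\subset\sigma(X_K)\vee\sigma(Y)$, the tower property shows it also equals $\mathbb E[\mathbf 1_A\mid\sigma(\widetilde X_K)]$. Dividing by $\mathbb P(A)$ yields the second equality in \eqref{eq:projected-RN-coarse-cond-exp}.

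To keep this step rigorous I would prove the freezing identity via a $\pi$-$\lambda$ argument: it suffices to verify $\mathbb E[\mathbf 1_A\mathbf 1_B]=\mathbb E[\bar F(\widetilde X_K)\mathbf 1_B]$ for rectangles $B=\{X_K\in B_1,\,Y\in B_2\}$. For such $B$, Fubini and the product structure of the joint law of $(X_K,Y)$ and $X_{K^c}$ give the identity directly.

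\textbf{Concluding the identity.} For integrable $\Phi$ measurable with respect to $\sigma(X_K)\vee\sigma(Y)$, we get
\[
\mathbb E[\Phi\mid A]=\mathbb E[\Phi L]=\mathbb E\bigl[\Phi\,\mathbb E[L\mid\sigma(X_K)\vee\sigma(Y)]\bigr]=\mathbb E[\Phi\,\mathsf{RN}_K^A(\widetilde X_K)].
\]
One technical point: $\Phi L$ need not be integrable a priori when $\Phi$ is merely integrable. I would first take $\Phi\ge 0$, where monotone convergence applied to $\Phi\wedge n$ justifies the tower step (note $L\le 1/\mathbb P(A)$ is bounded). The general case then follows by splitting $\Phi=\Phi^+-\Phi^-$.

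The only genuine obstacle is the measurability and independence bookkeeping in the enlargement step, in particular checking that $X_{K^c}$ is independent of the joint $\sigma$-algebra $\sigma(X_K)\vee\sigma(Y)$ and not merely of each piece separately. Everything else is routine.
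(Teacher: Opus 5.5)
Your proof is correct. Its skeleton matches the paper's: identify $\mathsf{RN}_K^A$ as $\mathbb E[\mathbf 1_A/\mathbb P(A)\mid\sigma(\widetilde X_K)]$, enlarge the conditioning $\sigma$-algebra, then apply the tower property. The genuine difference is in the enlargement step.

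The paper works in two stages. It first drops $Y$ by independence, obtaining $\mathbb E[\mathbf 1_A\mid\sigma(X_K)\vee\sigma(Y)]=\mathbb E[\mathbf 1_A\mid\sigma(X_K)]$. It then passes from $\sigma(X_K)$ to $\sigma(\widetilde X_K)$ by a $\pi$-$\lambda$ argument in the \emph{event} $A$. For $A=B\cap C$ with $B\in\sigma(\widetilde X_K)$ and $C\in\sigma(\widetilde X_{K^c})$, both conditional expectations equal $\mathbf 1_B\,\mathbb P(C)$. Such sets form a $\pi$-system generating $\sigma(\widetilde X_1,\ldots,\widetilde X_m)$, and the class of good events is a Dynkin system.

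You instead fix $A$ and write $\mathbf 1_A=F(\widetilde X_K,X_{K^c})$. You then use the freezing lemma, proved by $\pi$-$\lambda$ over rectangles in the \emph{conditioning} $\sigma$-algebra, to handle $Y$ and the coarsening $X_K\to\widetilde X_K$ in one shot.

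The paper's route is purely $\sigma$-algebraic and needs neither a Doob--Dynkin factorization nor Fubini. Yours pays for those but produces the explicit density $\mathsf{RN}_K^A(u)=\mathbb E[F(u,X_{K^c})]/\mathbb P(A)$. This is exactly the representation $\mathsf{RN}_U=q_{U^c}(S_U)/\mathbb P(\mathrm{UT}_{\varepsilon,\eta}(\alpha))$ that the paper rederives separately in Step 1 of the proof of Proposition~\ref{thm:direct-Lp-no-split-corrected}.

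One small correction: your remark that $\Phi L$ might fail to be integrable is mistaken. Since $0\le L\le 1/\mathbb P(A)$, you have $|\Phi L|\le|\Phi|/\mathbb P(A)$, so the truncation via $\Phi\wedge n$ is harmless but unnecessary.
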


\begin{proof}

 We prove the first equality in \eqref{eq:projected-RN-coarse-cond-exp}.
For every bounded measurable function \(f:\mathbb R^{|K|} \rightarrow \mathbb R\),
\[
\begin{aligned}
        \mathbb E\left[
        f(\widetilde X_K) \cdot 
        \mathsf{RN}_{K}^{A}(\widetilde X_K)
        \right]
        &=
        \mathbb E\left[
        f(\widetilde X_K)
        \,\middle|\,
        A
        \right]     =
        \mathbb E\left[
        f(\widetilde X_K) \cdot
        \frac{\mathbf 1_A}{\mathbb P(A)}
        \right].
\end{aligned}
\] 
The second equality  in \eqref{eq:projected-RN-coarse-cond-exp} follows from the fact that  \(X_1,\ldots,X_m\) are independent and \(Y\) is
independent of all of them: Recalling that \(A\in
\sigma(\widetilde X_1,\ldots,\widetilde X_m) \subseteq \sigma(X_1,\ldots, X_m)\), 
\begin{align}\label{pilam}
        \mathbb E\left[
        \mathbf 1_A
        \,\middle|\,
        \sigma(X_K)\vee\sigma(Y)
        \right]
        =\mathbb E\left[
        \mathbf 1_A
        \,\middle|\,
        \sigma(X_K)
        \right]=
        \mathbb E\left[
        \mathbf 1_A
        \,\middle|\,
        \sigma (\widetilde X_K)
        \right].    
\end{align}
{Note that the first identity is immediate from the independence of $\{X_1,X_2,\ldots, X_m\}$ and $Y$, and we quickly verify the second one.
Write \(K^c=\{1,\ldots,m\}\setminus K\).
Consider first an event of the form
\[
    A=B\cap C,
    \qquad
    B\in\sigma(\widetilde{X}_K),
    \quad
    C\in\sigma(\widetilde{X}_{K^c}).
\]
Using \(B\in\sigma(\widetilde{X}_K)\subseteq\sigma(X_K)\) and the
independence of \(\sigma(X_K)\) and
\(\sigma(\widetilde{X}_{K^c})\), we obtain
\[
    \mathbb E\left[
        \mathbf 1_{B\cap C}
        \,\middle|\,
        \sigma(X_K)
    \right]
    =
    \mathbf 1_B\,\mathbb P(C).
\]
Similarly, since   \(\sigma(\widetilde{X}_K)\) and
\(\sigma(\widetilde{X}_{K^c})\) are independent, 
\[
    \mathbb E\left[
        \mathbf 1_{B\cap C}
        \,\middle|\,
        \sigma(\widetilde{X}_K)
    \right]
    =
    \mathbf 1_B\,\mathbb P(C).
\]
Thus, the second identity in \eqref{pilam} holds for all events
\(B\cap C\) of the above form.  These events form a \(\pi\)-system
generating
\[
    \sigma\bigl(
        \sigma(\widetilde{X}_K),
        \sigma(\widetilde{X}_{K^c})
    \bigr)
    =
    \sigma(\widetilde X_1,\ldots,\widetilde X_m).
\]
Since the collection of events for which the second identity in
\eqref{pilam} holds is a Dynkin system,
the \(\pi\)-\(\lambda\) theorem proves the claim.}

Finally, we prove \eqref{eq:projected-RN-coarse-identity}. Using the second equality in
\eqref{eq:projected-RN-coarse-cond-exp},  recalling that \(\Phi\)  is measurable with
respect to $\sigma(X_K)\vee\sigma(Y),$
\[
\begin{aligned}
        \mathbb E\left[
        \Phi\cdot
        \mathsf{RN}_{K}^{A}(\widetilde X_K)
        \right]
        &=
        \mathbb E\left[
        \Phi \cdot 
        \mathbb E\left[
        \frac{\mathbf 1_A}{\mathbb P(A)}
        \,\middle|\,
        \sigma(X_K)\vee\sigma(Y)
        \right]
        \right]                                                  \\
        &=
        \mathbb E\left[
        \Phi \cdot \frac{\mathbf 1_A}{\mathbb P(A)}
        \right]  =
        \mathbb E[\Phi\mid A].
\end{aligned}
\]

\end{proof}

In our applications, the random elements  are the SHF measures $Z_i=\mathscr Z^\vartheta_{t_{i-1},t_i}$. Since the objects are not standard and are quite singular, for completeness we provide the details needed to verify the measurability hypothesis for the above lemma to be applicable. 
Let us recall from Section \ref{sec2.1} the measure space supporting the SHF.  We have $E:=\mathbb R^2\times\mathbb R^2,$ 
and \(\mathcal M_+ = \mathcal M_+(E)\) the space of positive locally finite Borel
measures on \(E\), equipped with the Borel \(\sigma\)-field induced by the
vague topology:
\[
        \mathcal B _{\mathrm{vag}}(\mathcal M_+)
        :=
        \sigma\left(
        \mu\mapsto \int_E \varphi\,d\mu:
        \varphi\in C_c(E)
        \right).
\]
Then each SHF increment $Z_k=\mathscr Z^\vartheta_{t_{k-1},t_k}$ 
is viewed as a random element in $(\mathcal M_+,\mathcal B _{\mathrm{vag}}(\mathcal M_+)).$ 

{Recall that the proof of Proposition \ref{prop:second-moment-small-total-length-full}   uses the above lemma in \eqref{rnrn}, with the variables $D_i$ playing the role of the measurable function $\Phi$  in  \eqref{eq:projected-RN-coarse-identity}.}
Thus we must verify the measurability  of the random variable $D_I$ (with $I = [i_I,j_I]$) with respect to \(\sigma(Z_{i_I},\ldots,Z_{j_I})\),  where we write 
\[
        \sigma(Z_i,\ldots,Z_j)
        :=
 { (Z_i,\ldots,Z_j)^{-1}
        \left(
        \mathcal B_{\mathrm{vag}}(\mathcal M_+)^{\otimes(j-i+1)}
        \right)}.
\]
{where implicitly above we assume that all the random objects \(  Z_i\) are defined on some common probability space \((\Omega,\mathcal F,\mathbb P)\).}
Recall from \eqref{mainobject} that    $$ 
D_I
        =
        p_{i_I-1}
        \blacktriangleleft
        D_{i_I}\bullet D_{i_I+1}\bullet\cdots\bullet D_{j_I-1}
        \bullet Z_{j_I}
        \blacktriangleright1 .$$ where for any index $k,$ $D_k=Z_k-(Z_k\blacktriangleright1)p_k .$
Thus, it is enough to examine the measurability of scalar convolution functionals of the form
\[
  p_{i-1}
        \blacktriangleleft
        Z_i\bullet Z_{i+1}\bullet\cdots\bullet Z_j
        \blacktriangleright1,
\]
since 
\(D_I\) is a finite sum of several products of such objects. This can be seen by expanding $  D_k=Z_k-(Z_k\blacktriangleright1)p_k .$ 
The following lemma verifies that such quantities are  \(\sigma(Z_i,\ldots,Z_j)\)-measurable random variables.  {Recall from \eqref{conv34} that $\bullet$  denotes the limit of regularized convolutions.}

 \begin{lemma}
\label{lem:scalar-singular-convolution-measurable-version}  
Let $1\le i\le j\le N_{\varepsilon,b}$ be any integers. The singular convolution
\[
        \Phi
        :=
        p_{i-1}
        \blacktriangleleft
        Z_i\bullet Z_{i+1}\bullet\cdots\bullet Z_j
        \blacktriangleright1
\]
is \(\sigma(Z_i,\ldots,Z_j)\)-measurable.
\end{lemma}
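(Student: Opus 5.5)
We prove the claim by writing $\Phi$ as an almost sure limit of functionals that are manifestly $\sigma(Z_i,\ldots,Z_j)$-measurable. By \eqref{conv34}, the iterated singular convolution is defined as a limit of mollified convolutions. Fix a sequence $\delta_n\downarrow0$ and set
\[
        \Phi_n
        :=
        p_{i-1}\blacktriangleleft Z_i\bullet_{\delta_n}Z_{i+1}\bullet_{\delta_n}\cdots\bullet_{\delta_n}Z_j\blacktriangleright 1 .
\]
We plan to show two things. First, each $\Phi_n$ is a measurable function of $(Z_i,\ldots,Z_j)$ with respect to $\mathcal B_{\mathrm{vag}}(\mathcal M_+)^{\otimes(j-i+1)}$. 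Second, $\Phi_n\to\Phi$ almost surely, or at least in probability along a subsequence, which again gives an a.s. convergent subsequence. A pointwise limit of measurable maps is measurable, so after modifying $\Phi$ on a null set the conclusion follows. The completion of $\sigma(Z_i,\ldots,Z_j)$ is harmless for all later uses, such as Lemma \ref{lem:projected-RN-coarse-conditioning}.

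For the first point, $\Phi_n$ is an integral of the nonnegative function
\[
F_n(x_i,y_i,\ldots,x_j,y_j)=p(t_{i-1},x_i)\prod_{k=i}^{j-1}p(\delta_n^2,y_k-x_{k+1})
\]
against the product measure $Z_i\otimes\cdots\otimes Z_j$ on $E^{j-i+1}$. The argument has three steps.

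\textbf{Step (a).} For $\varphi\in C_c(E)$, the map $\mu\mapsto\int\varphi\,d\mu$ is measurable by the definition of $\mathcal B_{\mathrm{vag}}$. A monotone class argument then extends this to all nonnegative Borel $\varphi$, with measurability in $[0,\infty]$. Concretely, approximate indicators of compact sets from above by continuous functions, use the $\pi$--$\lambda$ theorem, and finish with monotone convergence.

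\textbf{Step (b).} For product test functions $\varphi_i\otimes\cdots\otimes\varphi_j$, the integral against the product measure equals $\prod_k\int\varphi_k\,dZ_k$, which is jointly measurable. The same monotone class argument on $E^{j-i+1}$ extends this to every nonnegative Borel $F$. Here we use that for locally finite measures, $\mu\mapsto(\mu_i\otimes\cdots\otimes\mu_j)(F)$ is built from the finite-dimensional rectangles, and that $E^{j-i+1}$ is $\sigma$-compact.

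\textbf{Step (c).} Apply step (b) to $F_n$. Since $F_n$ is continuous, nonnegative and non-compactly supported, we take monotone limits of truncations $F_n\mathbf 1_{\{|\cdot|\le R\}}$ as $R\to\infty$. This yields $\Phi_n\in[0,\infty]$ measurable.

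The second point, convergence, is where the real input lies, and we take it from \cite{clark2024continuum}. That work shows the regularized convolutions converge as $\delta\to0$, and this is exactly what makes \eqref{conv34} and the Chapman--Kolmogorov identity $Z_i\bullet\cdots\bullet Z_j=\mathscr Z^\vartheta_{t_{i-1},t_j}$ meaningful. We need the convergence for the scalar functional tested against $p_{i-1}$ and $1$; these are not compactly supported, so some care is required. We would combine two ingredients. The first is vague convergence in probability tested against $C_c$ functions, from \cite{clark2024continuum}. The second is a uniform first-moment tail bound: $\mathbb E\Phi_n=1$ and $\mathbb E$ of the mass outside large boxes is small uniformly in $n$, since the means are heat kernels. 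Together these give $\Phi_n\to\Phi$ in $L^1$, or at least in probability, and we then pass to an a.s. convergent subsequence.

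We expect the main obstacle to be this last step. Precisely, we must check that the mode of convergence in \cite{clark2024continuum}, which may be stated for finite pairs or for compactly supported test functions, transfers to the iterated $(j-i+1)$-fold convolution with the unbounded test function $1$. If the direct reference is insufficient, the fallback is to avoid the limit entirely. By the flow property we have $\Phi=p_{i-1}\blacktriangleleft\mathscr Z^\vartheta_{t_{i-1},t_j}\blacktriangleright1$ a.s. We would then argue measurability of $\mathscr Z^\vartheta_{t_{i-1},t_j}$ with respect to $\sigma(Z_i,\ldots,Z_j)$ via the a.s. identity from the SHF axioms, and conclude by applying step (a) to this single measure.
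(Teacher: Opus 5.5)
There is a gap at the step you flag yourself, and your fallback does not repair it.

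\textbf{The convergence step.} Your main route rests on $\Phi_n\to\Phi$, with the same $\delta_n$ inserted at every junction. For $j-i+1\ge 3$ this does not follow from pairwise vague convergence. Take three factors and test against $\phi\otimes\psi$. You get $\int \phi(x)\,(Z_i\bullet_{\delta}Z_{i+1})(dx,dy)\,f_\delta(y)$, where $f_\delta(y)=\int p(\delta^2,y-y')\,\psi(z)\,Z_{i+2}(dy',dz)$. This $f_\delta$ is a random, $\delta$-dependent function, and since $Z_{i+2}$ is singular it does not converge to a fixed continuous test function. Convergence of $Z_i\bullet_\delta Z_{i+1}$ against fixed $C_c$ functions therefore says nothing about this pairing.

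Moreover, the multi-fold $\bullet$ is defined by iterating the pairwise limit. Identifying your simultaneous-$\delta$ limit with it is a further step. Closing this would need genuine moment estimates for the joint regularization, and the proposal does not supply them.

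\textbf{The fallback is circular.} The a.s.\ identity $\mathscr Z^\vartheta_{t_{i-1},t_j}=Z_i\bullet\cdots\bullet Z_j$ transfers $\sigma(Z_i,\ldots,Z_j)$-measurability to the left side only if you already know the right side is a $\sigma(Z_i,\ldots,Z_j)$-measurable random measure. That is exactly the point at issue.

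\textbf{What you are missing.} No convergence needs to be re-proved. The paper takes from the construction of the singular convolution in \cite[Proposition 2.6]{clark2024continuum} that $\mu_{i,j}:=Z_i\bullet\cdots\bullet Z_j$ is itself a $\sigma(Z_i,\ldots,Z_j)$-measurable $\mathcal M_+$-valued random element. The only remaining issue is then that $p_{i-1}\otimes 1\notin C_c(E)$. The paper handles it as follows:
\begin{enumerate}
\item Choose $\chi_R\in C_c(\mathbb R^2)$ with $0\le\chi_R\uparrow 1$.
\item Set $\varphi_R(x,y):=\chi_R(x)\,p(t_{i-1},x)\,\chi_R(y)\in C_c(E)$.
\item Note that $\mu\mapsto\int\varphi_R\,d\mu$ is vaguely continuous, hence measurable.
\item Let $R\to\infty$ by monotone convergence.
\end{enumerate}
This is your step (a) in its simplest form. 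No $\pi$--$\lambda$ argument is needed, because the test function is an increasing limit of $C_c$ functions. Your steps (b)--(c), the tightness argument and the null-set modification all become unnecessary.
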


The definition of $Z_i\bullet Z_{i+1}\bullet\cdots\bullet Z_j$  in \eqref{conv34} defined as a limit in \cite{clark2024continuum} essentially already implies the above lemma. A slight amount of detail is still involved since the space of measures where $Z_i\bullet Z_{i+1}\bullet\cdots\bullet Z_j$ lives is equipped with the vague topology which works particularly well with compactly supported functions which the functions $p_{i-1}$ or $1$ are not. 
The proof thus involves a cutoff approximation.  Namely, after multiplying
\(p(t_{i-1},x)\) and \(1\) by increasing compactly supported cutoffs, each
resulting pairing is measurable by the definition of the vague topology which then monotonically converges to the above object.

\begin{proof}

Let
\(\mathcal F_{i,j}:=\sigma(Z_i,\ldots,Z_j)\), and choose a sequence
\((\chi_R)_{R\ge1}\subset C_c(\mathbb R^2)\) such that
\[
        0\le\chi_R\le1,
        \qquad
        \chi_R\uparrow1
        \quad\text{pointwise on }\mathbb R^2.
\]
For \(R\ge1\), define a continuous compactly support function
\[
        \varphi_R(x,y)
        :=
        \chi_R(x)\,p(t_{i-1},x)\,\chi_R(y),
        \qquad
        (x,y)\in\mathbb R^2\times\mathbb R^2.
\]
For every \(R\ge1\), the evaluation map
\[
        \mathcal M_+\ni\mu
        \longmapsto
        \int_{\mathbb R^2\times\mathbb R^2}
        \varphi_R\,d\mu
\]
is continuous with respect to the vague topology, and is therefore
\(\mathcal B_{\mathrm{vag}}(\mathcal M_+)\)-measurable.   Indeed, by definition,
\(\mu_n\to\mu\) vaguely precisely when
\[
        \int \varphi\,d\mu_n\longrightarrow\int\varphi\,d\mu
        \qquad\text{for every }\varphi\in C_c(\mathbb R^2\times\mathbb R^2).
\]
As \(\varphi_R\in C_c(\mathbb R^2\times\mathbb R^2)\), the claimed
continuity follows immediately.

Since
$\mu_{i,j}:=   Z_i\bullet Z_{i+1}\bullet\cdots\bullet Z_j
$ is an
\(\mathcal F_{i,j}\)-measurable \(\mathcal M_+\)-valued random element (see  \cite[Proposition 2.6]{clark2024continuum}), it
follows, by the above stated continuity, that
\[
        \Phi_R
        :=
        \int_{\mathbb R^2\times\mathbb R^2}
        \varphi_R(x,y)\,
        \mu_{i,j}(dx,dy)
\]
is \(\mathcal F_{i,j}\)-measurable for every \(R\).
Then by the monotone convergence theorem,   we have
\begin{align}
               \lim_{R\to\infty}\Phi_R = \Phi.
\label{eq:scalar-singular-convolution-cutoff-limit}
\end{align}
Thus \(\Phi\) is \(\mathcal F_{i,j}\)-measurable.
Finally,  as $  \mathbb E[\Phi]
=1$ and \(\Phi\ge0\),  we have \(\Phi<\infty\) almost surely.

\end{proof}

As explained before Lemma \ref{lem:scalar-singular-convolution-measurable-version}, we immediately have the following corollary.

\begin{corollary} \label{cormeasure}
 For $I = [i_I,j_I]$, $D_I$  is measurable with respect to \(\sigma(Z_{i_I},\ldots,Z_{j_I})\).    
\end{corollary}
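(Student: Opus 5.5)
The goal is to show that $D_I$ is $\sigma(Z_{i_I},\ldots,Z_{j_I})$-measurable. The plan is to reduce this to Lemma \ref{lem:scalar-singular-convolution-measurable-version} by expanding every $D_k$ and checking that each resulting term is a finite product of scalar convolution functionals.

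First, substitute $D_k=Z_k-(Z_k\blacktriangleright 1)p_k$ for each $k\in[i_I,j_I-1]$ into
\[
D_I=p_{i_I-1}\blacktriangleleft D_{i_I}\bullet\cdots\bullet D_{j_I-1}\bullet Z_{j_I}\blacktriangleright 1 .
\]
Multilinearity of $\bullet$ is justified because each regularized convolution $\bullet_\delta$ is bilinear and limits respect finite sums. Expanding gives a signed sum over $\sigma\in\{0,1\}^{[i_I,j_I-1]}$ of terms
\[
p_{i_I-1}\blacktriangleleft M^{\sigma(i_I)}_{i_I}\bullet\cdots\bullet M^{\sigma(j_I-1)}_{j_I-1}\bullet Z_{j_I}\blacktriangleright 1,
\]
with $M^0_k=Z_k$ and $M^1_k=(Z_k\blacktriangleright1)p_k$, exactly as in the proof of Lemma \ref{lem:short-fourth-product}.

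Next, apply the cutting identity \eqref{eq:rank-one-cutting-identity} at each index where $\sigma(k)=1$, and use the Chapman--Kolmogorov flow property on runs of $0$'s, as in \eqref{eq:sigmaI-factorization}. Each summand then factorizes as $\prod_r Z_{Q_r}$. Here the $Q_r=[u,v]$ partition $I$, and each $Z_Q=p_{u-1}\blacktriangleleft Z_u\bullet\cdots\bullet Z_v\blacktriangleright 1$ is a scalar functional of exactly the form treated in Lemma \ref{lem:scalar-singular-convolution-measurable-version}. That lemma makes $Z_Q$ measurable with respect to $\sigma(Z_u,\ldots,Z_v)\subseteq\sigma(Z_{i_I},\ldots,Z_{j_I})$.

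Finally, finite products and signed finite sums of measurable, almost surely finite real random variables are measurable, which concludes the argument. The only delicate point is that the cutting identity and the multilinear expansion must be applied to the singular limit objects; I will take these identities as already established in the paper's use of \eqref{eq:sigmaI-factorization}. Almost sure finiteness of each $Z_Q$ follows from $\mathbb E Z_Q=1$, noted at the end of the preceding lemma, so the differences are well defined.
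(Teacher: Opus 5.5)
Your proposal is correct and follows the paper's own argument: the paper likewise expands each $D_k=Z_k-(Z_k\blacktriangleright1)p_k$, writes $D_I$ as a finite signed sum of products of scalar functionals $p_{u-1}\blacktriangleleft Z_u\bullet\cdots\bullet Z_v\blacktriangleright1$, and then invokes Lemma \ref{lem:scalar-singular-convolution-measurable-version}. Your explicit use of the cut-set factorization \eqref{eq:sigmaI-factorization} spells out the step the paper leaves as ``this can be seen by expanding''.
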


\section{Convex analysis}
In this section, we state and prove some basic facts in convex analysis which featured in Section \ref{sec5}.
\begin{lemma} 
\label{lem:tangent-line-convex}
Let \(I\subset\mathbb R\) be an open interval, and let
\(\lambda:I\to\mathbb R\) be convex in the sense that
\[
        \lambda((1-a)\theta_1+a\theta_2)
        \le
        (1-a)\lambda(\theta_1)+a\lambda(\theta_2),
        \qquad \theta_1,\theta_2\in I,\quad a\in[0,1].
\]
Suppose that \(\lambda\) is differentiable at \(\theta_0\in I\).  Then, for every
\(\theta\in I\),
\begin{equation}
        \lambda(\theta)
        \ge
        \lambda(\theta_0)+\lambda'(\theta_0)(\theta-\theta_0).
\label{eq:tangent-line-convex}
\end{equation}
\end{lemma}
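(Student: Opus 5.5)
The plan is to prove \eqref{eq:tangent-line-convex} by comparing difference quotients, using only the convexity inequality and differentiability at the single point $\theta_0$. First I would show that the map $h\mapsto \frac{\lambda(\theta_0+h)-\lambda(\theta_0)}{h}$ is nondecreasing on $\{h\neq 0:\theta_0+h\in I\}$. This is the standard three-slope inequality.

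For $0<h_1<h_2$, write $\theta_0+h_1=(1-a)\theta_0+a(\theta_0+h_2)$ with $a=h_1/h_2\in(0,1)$. Convexity then gives $\lambda(\theta_0+h_1)\le(1-a)\lambda(\theta_0)+a\lambda(\theta_0+h_2)$. Rearranged, this says the slope over $[\theta_0,\theta_0+h_1]$ is at most the slope over $[\theta_0,\theta_0+h_2]$.

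The case $h_2<h_1<0$ is handled the same way. The mixed case $h<0<k$ uses the representation $\theta_0=\frac{k}{k-h}(\theta_0+h)+\frac{-h}{k-h}(\theta_0+k)$. Convexity then yields
\[
\frac{\lambda(\theta_0)-\lambda(\theta_0+h)}{-h}\le\frac{\lambda(\theta_0+k)-\lambda(\theta_0)}{k}.
\]

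Next I would pass to the limit $h\to0$. Since $\lambda$ is differentiable at $\theta_0$, monotonicity of the quotient implies, for every $\theta>\theta_0$ in $I$,
\[
\frac{\lambda(\theta)-\lambda(\theta_0)}{\theta-\theta_0}\ge\lim_{h\downarrow0}\frac{\lambda(\theta_0+h)-\lambda(\theta_0)}{h}=\lambda'(\theta_0).
\]
Multiplying by $\theta-\theta_0>0$ gives \eqref{eq:tangent-line-convex}. For $\theta<\theta_0$ the quotient is at most $\lambda'(\theta_0)$, since it is bounded above by the left limit. Multiplying by the negative number $\theta-\theta_0$ flips the inequality to the same conclusion. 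The case $\theta=\theta_0$ is trivial.

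There is no real obstacle. The only points needing care are the bookkeeping of the convex-combination weights in the mixed-sign case and the sign flip when $\theta<\theta_0$. Openness of $I$ guarantees that small $h$ of either sign is admissible.
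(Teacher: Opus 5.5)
Your proof is correct and is essentially the paper's argument. The paper writes $\theta_0+h$ as a convex combination of $\theta_0$ and $\theta$, compares the difference quotient at small $h$ with the chord slope to $\theta$, and lets $h\to0$ using differentiability, which is exactly your one-sided three-slope step; your mixed-sign inequality is correct but unnecessary, since each of the cases $\theta>\theta_0$ and $\theta<\theta_0$ only uses monotonicity of the quotient on one side of $0$.
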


\begin{proof}
The case \(\theta=\theta_0\) is trivial. 
First suppose \(\theta>\theta_0\).  For \(0<h<\theta-\theta_0\), write
\[
        \theta_0+h
        =
        \left(1-\frac{h}{\theta-\theta_0}\right)\theta_0
        +
        \frac{h}{\theta-\theta_0}\theta.
\]
By convexity,
\[
        \lambda(\theta_0+h)
        \le
        \left(1-\frac{h}{\theta-\theta_0}\right)\lambda(\theta_0)
        +
        \frac{h}{\theta-\theta_0}\lambda(\theta).
\]
Rearranging gives
\[
        \frac{\lambda(\theta_0+h)-\lambda(\theta_0)}{h}
        \le
        \frac{\lambda(\theta)-\lambda(\theta_0)}{\theta-\theta_0}.
\]
Letting \(h\downarrow0\), and using differentiability of \(\lambda\) at
\(\theta_0\), we obtain
\[
        \lambda'(\theta_0)
        \le
        \frac{\lambda(\theta)-\lambda(\theta_0)}{\theta-\theta_0}.
\]
Since \(\theta-\theta_0>0\), this is equivalent to
 \eqref{eq:tangent-line-convex}. The case \(\theta<\theta_0\) can be done similarly.
 
\end{proof}

\begin{lemma} 
\label{lem:interior-legendre-duality}
Let \(\lambda:(0,\infty)\to\mathbb R\) be  convex   in the sense that
\begin{align} \label{convexdef}
        \lambda((1-a)\theta_1+a\theta_2)
        \le
        (1-a)\lambda(\theta_1)+a\lambda(\theta_2),
        \qquad
        \theta_1,\theta_2>0,\quad a\in[0,1].
\end{align}
Define
\begin{align} \label{ldual}
        \mathcal I(x)
        :=
        \sup_{\theta>0}
        \{\theta x-\lambda(\theta)\}.
\end{align}
Let $x\in \mathbb R$.
Suppose that  \(\lambda\) is differentiable at some
\(\theta_x\in(0,\infty)\) and that $\lambda'(\theta_x)=x.$ 
Then
\[
        \mathcal I(x)
        =
        \theta_xx-\lambda(\theta_x).
\]
\end{lemma}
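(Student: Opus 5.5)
The plan is to prove the two inequalities $\mathcal I(x)\ge \theta_x x-\lambda(\theta_x)$ and $\mathcal I(x)\le \theta_x x-\lambda(\theta_x)$ separately.

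The lower bound is immediate. Since $\theta_x\in(0,\infty)$ is an admissible value in the supremum \eqref{ldual}, we get $\mathcal I(x)\ge \theta_x x-\lambda(\theta_x)$ by taking $\theta=\theta_x$.

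For the upper bound, we invoke Lemma \ref{lem:tangent-line-convex} with the open interval $I=(0,\infty)$ and the point $\theta_0=\theta_x$. This is permitted because $\lambda$ is convex on $(0,\infty)$ in the sense of \eqref{convexdef} and is differentiable at $\theta_x$. The lemma gives, for every $\theta>0$,
\[
\lambda(\theta)\ge \lambda(\theta_x)+\lambda'(\theta_x)(\theta-\theta_x)=\lambda(\theta_x)+x(\theta-\theta_x),
\]
where we used $\lambda'(\theta_x)=x$. Rearranging yields
\[
\theta x-\lambda(\theta)\le \theta_x x-\lambda(\theta_x)\qquad \text{for all }\theta>0.
\]
Taking the supremum over $\theta>0$ gives $\mathcal I(x)\le \theta_x x-\lambda(\theta_x)$, and together with the lower bound this proves the claimed identity.

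There is no real obstacle here. The only point needing care is that the supremum in \eqref{ldual} runs over the open half-line, so the tangent-line inequality must hold on all of $(0,\infty)$. This is exactly what Lemma \ref{lem:tangent-line-convex} provides, since it needs differentiability only at the single point $\theta_x$, which lies in the interior of the domain.
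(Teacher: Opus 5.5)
Your proposal is correct and follows the paper's proof: both apply Lemma \ref{lem:tangent-line-convex} at $\theta_x$ on $(0,\infty)$ to bound every $\theta x-\lambda(\theta)$ by $\theta_x x-\lambda(\theta_x)$. The only difference is that you state the matching lower bound (choosing $\theta=\theta_x$) explicitly, which the paper leaves implicit.
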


\begin{proof}
By Lemma~\ref{lem:tangent-line-convex}, for every \(\theta>0\),
\[
        \lambda(\theta)
        \ge
        \lambda(\theta_x)+\lambda'(\theta_x)(\theta-\theta_x)=
        \lambda(\theta_x)+x(\theta-\theta_x).
\]
Rearranging,
\[
        \theta x-\lambda(\theta)
        \le
        \theta_xx-\lambda(\theta_x),
        \qquad \forall \theta>0.
\]
Applying this to \eqref{ldual}, we have $  \mathcal I(x)
        =
        \theta_xx-\lambda(\theta_x).$ 
\end{proof}
 
The next lemma gives a convenient integral representation for the difference
between nearby values of the rate function, expressed through the unique
parameter solving the variational equation.
\begin{lemma} 
\label{lem:rate-difference-saddle-curve}
 Let $  \lambda:(0,\infty)\to\mathbb R$ be convex in the sense of \eqref{convexdef}.  Define
\[
        \mathcal I(x)
        :=
        \sup_{\theta>0}
        \{\theta x-\lambda(\theta)\}.
\]
Let  \(t\in\mathbb R\)  and
\(\mathcal U\subset\mathbb R\) be an open interval {containing 0}. 
Assume that there exists an interval $   J=[\theta_-,\theta_+]\subset(0,\infty)$  and an open interval \(O\subset(0,\infty)\) with \(J\subset O\), such that
\(\lambda\in C^1(O)\), and such that for every \(u\in\mathcal U\) there is a
unique solution \(\theta_u \in J\) to the equation   $ \lambda'(\theta_u)=t-u$. Then  for every \(s\in \mathcal U\),
\[
        \mathcal I(t)-\mathcal I(t-s)
        =
        \int_0^s\theta_u\,du.
\]
Consequently,
\begin{align} \label{541}
        \left|
        \mathcal I(t)-\mathcal I(t-s)
        \right|
        \le
        \theta_+|s|.
\end{align}
\end{lemma}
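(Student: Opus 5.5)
The plan is to show that the map $u\mapsto \mathcal I(t-u)$ is differentiable on $\mathcal U$ with derivative $-\theta_u$, and then integrate from $0$ to $s$. The first ingredient is Lemma~\ref{lem:interior-legendre-duality}: for every $u\in\mathcal U$, $\lambda$ is differentiable at $\theta_u\in J\subset O$ with $\lambda'(\theta_u)=t-u$, so
\[
\mathcal I(t-u)=\theta_u(t-u)-\lambda(\theta_u).
\]
In particular $\mathcal I$ is finite at every point $t-u$ with $u\in\mathcal U$.

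Rather than differentiating $\theta_u$ in $u$, which would require regularity of the inverse of $\lambda'$ that we do not have, I will use the supremum structure to trap difference quotients. For $u,w\in\mathcal U$, the definition of $\mathcal I$ as a supremum, evaluated at the particular choice $\theta=\theta_u$, gives
\[
\mathcal I(t-w)\ge \theta_u(t-w)-\lambda(\theta_u)=\mathcal I(t-u)-\theta_u(w-u).
\]
Exchanging the roles of $u$ and $w$ gives the reverse inequality. Together,
\[
-\theta_u(w-u)\le \mathcal I(t-w)-\mathcal I(t-u)\le -\theta_w(w-u).
\]

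Two consequences follow from this sandwich. First, since $\theta_u,\theta_w\in J$, we get $|\mathcal I(t-w)-\mathcal I(t-u)|\le\theta_+|w-u|$, so $u\mapsto\mathcal I(t-u)$ is Lipschitz on $\mathcal U$. Taking $u=0$ and $w=s$ already yields \eqref{541}, without needing the integral formula at all.

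For the integral identity, I will show that $u\mapsto\theta_u$ is continuous, which is the only delicate step. Suppose $w_n\to u$ in $\mathcal U$. By compactness of $J$, any subsequence of $\theta_{w_n}$ has a further subsequence converging to some $\theta^*\in J$. Since $\lambda\in C^1(O)$ and $J\subset O$, continuity of $\lambda'$ gives $\lambda'(\theta^*)=\lim\lambda'(\theta_{w_n})=t-u$. Uniqueness of the solution in $J$ then forces $\theta^*=\theta_u$. Hence every subsequence has a further subsequence converging to $\theta_u$, so $\theta_{w_n}\to\theta_u$.

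With continuity in hand, divide the sandwich by $w-u$. For $w>u$ the difference quotient lies between $-\theta_u$ and $-\theta_w$; for $w<u$ the inequalities flip but the quotient is still trapped between the same two values. Letting $w\to u$, both bounds tend to $-\theta_u$. So $\frac{d}{du}\mathcal I(t-u)=-\theta_u$ on $\mathcal U$, and this derivative is continuous.

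The fundamental theorem of calculus on the interval between $0$ and $s$, which lies in $\mathcal U$ since $\mathcal U$ is an interval containing $0$, then gives
\[
\mathcal I(t)-\mathcal I(t-s)=\int_0^s\theta_u\,du.
\]
The bound \eqref{541} also follows directly from this formula, because $0<\theta_u\le\theta_+$ for all $u$.

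I expect the only real obstacle to be the continuity of $\theta_u$. It rests entirely on the assumption that the solution is unique inside $J$; without uniqueness the subsequence argument would not identify the limit. If continuity were doubtful, the Lipschitz property alone would still give \eqref{541} and a.e. differentiability, and the integral formula would then follow from absolute continuity.
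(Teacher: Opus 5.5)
Your proposal is correct and follows essentially the same route as the paper: continuity of $u\mapsto\theta_u$ via compactness of $J$ and uniqueness, the two-competitor sandwich $-h\theta_u\le F(u+h)-F(u)\le -h\theta_{u+h}$ giving $F'(u)=-\theta_u$, and then the fundamental theorem of calculus. Your remark that the bound $|\mathcal I(t)-\mathcal I(t-s)|\le\theta_+|s|$ already follows from the sandwich alone, without the integral formula, is a valid small shortcut.
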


\begin{proof}
We first prove continuity of \(u\mapsto\theta_u\) {in $U$}.  Let \(u_n\to u\).  Since
\(\theta_{u_n}\in J\), every subsequence has a further subsequence converging
to some \(\theta_*\in J\).  Along such a subsequence, $    \lambda'(\theta_{u_n})=t-u_n.$ 
Because \(\lambda'\) is continuous on \(J\), passing to the limit gives $        \lambda'(\theta_*)=t-u.$ 
By uniqueness of the solution, \(\theta_*=\theta_u\).  Hence every
subsequential limit is \(\theta_u\), and therefore \(\theta_{u_n}\to\theta_u\).

 For $u\in \mathcal U,$
define
\[
        F(u):=\mathcal I(t-u) =\theta_u(t-u)-\lambda(\theta_u),
\]
where the last identity follows from Lemma~\ref{lem:interior-legendre-duality}.

We claim that \(F'(u)=-\theta_u\). For  \(u\in\mathcal U\),  set \(x_u:=t-u\).  Let \(h\) be small enough that
\(u+h\in\mathcal U\).  
Using \(\theta_u\) as a competitor for the variational problem \(\mathcal I(x_{u+h})\),
\[
       F(u+h)= \mathcal I(x_{u+h})
        \ge
        \theta_u x_{u+h}-\lambda(\theta_u).
\]
Since $  F(u)=  \mathcal I(x_u)=\theta_u x_u-\lambda(\theta_u),$ 
we obtain
\begin{align} \label{543}
        F(u+h)-F(u)
        \ge
        \theta_u(x_{u+h}-x_u)
        =
        -h\theta_u.
\end{align}
Similarly, using \(\theta_{u+h}\) as a competitor for the variational problem \(\mathcal I(x_u)\),
\[
     F(u)=   \mathcal I(x_u)
        \ge
        \theta_{u+h}x_u-\lambda(\theta_{u+h}).
\]
Since $
       F(u+h) =  \mathcal I(x_{u+h})
        =
        \theta_{u+h}x_{u+h}-\lambda(\theta_{u+h}),$ 
we get
\begin{align} \label{544}
        F(u+h)-F(u)
        \le
        \theta_{u+h}(x_{u+h}-x_u)
        =
        -h\theta_{u+h}.
\end{align}
 Since \(\theta_{u+h}\to\theta_u\) as $h\rightarrow 0$ due to the continuity of $u \mapsto \theta_u$,  by  \eqref{543} and \eqref{544},
 \[
        \frac{F(u+h)-F(u)}{h}
        \longrightarrow
        -\theta_u.
\]
Hence \(F\) is differentiable and \(F'(u)=-\theta_u\).  Since
\(u\mapsto\theta_u\) is continuous, \(F'\) is continuous.  Therefore, by the
fundamental theorem of calculus,
\[
        \mathcal I(t)-\mathcal I(t-s)
        =
        F(0)-F(s)
        =
        -\int_0^sF'(u)\,du
        =
        \int_0^s\theta_u\,du.
\]
Finally, \eqref{541} follows from triangle inequality along with the fact that $ |\theta_u| \le \theta_+.$ 
\end{proof}

\section{Bounded-variation test functions}
 We state  the following elementary comparison inequality for bounded-variation test functions.
\begin{lemma}\label{tv}
Assume that \(g\) has bounded variation whose total variation is $ \|g\|_{\mathrm{TV}}$.
Let  \(X\) and \(Y\) be (real-valued) random variables with
distribution functions \(F_X\) and \(F_Y\).  Then,
\[
        \left|
        \mathbb E[g(X)]-\mathbb E[g(Y)]
        \right|
        \le
        \|g\|_{\mathrm{TV}}
        \sup_{x\in\mathbb R}|F_X(x)-F_Y(x)|.
\]
\end{lemma}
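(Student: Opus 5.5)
The plan is to reduce to an integral against the difference of distribution functions, using the Lebesgue--Stieltjes integration by parts formula. Since $g$ has bounded variation, it has at most countably many jumps and admits left and right limits everywhere. Modifying $g$ at its jump points does not increase its total variation. We therefore first assume $g$ is right-continuous, and treat general $g$ at the end. Write $g=g(-\infty)+\mu_g((-\infty,\cdot])$, where $\mu_g$ is the signed Lebesgue--Stieltjes measure of $g$, with $|\mu_g|(\mathbb R)=\|g\|_{\mathrm{TV}}$. Here $g(-\infty)$ exists by bounded variation, and the representation holds with $\mu_g$ finite.

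\textbf{Step 1: apply Fubini.} Writing $g(X)=g(-\infty)+\int \mathbf 1_{\{u\le X\}}\,\mu_g(du)$ and taking expectations gives
\[
\mathbb E g(X)=g(-\infty)+\int \mathbb P(X\ge u)\,\mu_g(du).
\]
Fubini applies because $|\mu_g|$ is finite and the integrand is bounded. The same holds for $Y$, so
\[
\mathbb E g(X)-\mathbb E g(Y)=\int \bigl(F_Y(u^-)-F_X(u^-)\bigr)\,\mu_g(du).
\]

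\textbf{Step 2: bound the integrand.} The left limits satisfy
\[
|F_X(u^-)-F_Y(u^-)|\le \sup_x|F_X(x)-F_Y(x)|,
\]
since each left limit is a limit of values at points $x<u$. Bounding the integrand by this supremum gives the claim with $|\mu_g|(\mathbb R)=\|g\|_{\mathrm{TV}}$.

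\textbf{Step 3: general $g$.} The only delicate point is a $g$ that is not right-continuous, for instance the indicator $\mathbf 1_{\{z\ge0\}}$ times a decreasing function, as used earlier in the paper. For general BV $g$, decompose $g=g_r+g_a$. Here $g_r$ is the right-continuous modification. The term $g_a$ is supported on the countable jump set $D$, with $\sum_{d\in D}|g_a(d)|$ plus the variation of $g_r$ bounded by $\|g\|_{\mathrm{TV}}$.

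The main obstacle is the contribution of $g_a$ at atoms shared by $X$ and $Y$. To avoid bookkeeping, I would instead use the representation
\[
g(x)=g(-\infty)+\int \mathbf 1_{\{u<x\}}\,\mu^{+}(du)+\sum_{d}\bigl(g(d)-g(d^-)\bigr)\mathbf 1_{\{x=d\}}+\cdots,
\]
or more simply decompose $g$ as a combination of indicators $\mathbf 1_{(-\infty,u]}$ and $\mathbf 1_{(-\infty,u)}$ with total mass at most $\|g\|_{\mathrm{TV}}$. For each such indicator, $|\mathbb E\,\mathbf 1(X\le u)-\mathbb E\,\mathbf 1(Y\le u)|=|F_X(u)-F_Y(u)|$, and the strict version gives the left limits, both bounded by the supremum. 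Linearity together with Fubini then concludes. For the specific function used in the paper, a one-jump-plus-monotone function, this decomposition is explicit and immediate.
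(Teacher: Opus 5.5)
Your proof is correct. At heart it is the paper's argument. The paper writes $\mathbb E[g(X)]-\mathbb E[g(Y)]=-\int(F_X-F_Y)\,dg$ by Lebesgue--Stieltjes integration by parts and bounds this by $\sup_x|F_X(x)-F_Y(x)|\cdot|dg|(\mathbb R)$; your Steps~1--2 derive the same identity directly by Fubini.

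What your version buys is precision at atoms. It shows that the distribution functions enter through their left limits $F(u^-)$ wherever $dg$ and the laws of $X,Y$ share atoms. The paper's one-line formula glosses over this, harmlessly, since left-limit differences obey the same bound. Your Step~3 also treats non-right-continuous $g$, where the paper's $dg$ would need careful interpretation. Note, however, that the function $e^{-\theta\sqrt v z}\mathbf 1_{\{z\ge0\}}$ to which the paper applies the lemma is right-continuous (it equals $1$ at $z=0$), not an example of the delicate case. So your Steps~1--2 already cover every use of the lemma in the paper.

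In Step~3, the intermediate claim that $\sum_{d\in D}|g_a(d)|$ plus the variation of $g_r$ is at most $\|g\|_{\mathrm{TV}}$ is false. For $g=\mathbf 1_{(0,\infty)}$ one has $g_r=\mathbf 1_{[0,\infty)}$ and $g_a=-\mathbf 1_{\{0\}}$, so the left side is $2$ while $\|g\|_{\mathrm{TV}}=1$. That route would also lose a constant, since $|\mathbb P(X=d)-\mathbb P(Y=d)|$ is only bounded by $2\sup_x|F_X(x)-F_Y(x)|$. You were right to drop it.

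The indicator decomposition you settle on is the correct fix, but its mass bound should be stated explicitly. Write $l_d:=g(d)-g(d^-)$, $r_d:=g(d^+)-g(d)$, and let $\nu_c$ be the atomless part of the Lebesgue--Stieltjes measure of $g_r$. Then
\[
g(x)=g(-\infty)+\nu_c\bigl((-\infty,x]\bigr)+\sum_{d}l_d\,\mathbf 1_{\{x\ge d\}}+\sum_{d}r_d\,\mathbf 1_{\{x>d\}},
\qquad
\|g\|_{\mathrm{TV}}=|\nu_c|(\mathbb R)+\sum_{d}\bigl(|l_d|+|r_d|\bigr).
\]
The jump terms are handled as follows:
\begin{itemize}
\item Since $\mathbf 1_{\{x\ge d\}}=1-\mathbf 1_{\{x<d\}}$, its expectation difference is $F_Y(d^-)-F_X(d^-)$.
\item Since $\mathbf 1_{\{x>d\}}=1-\mathbf 1_{\{x\le d\}}$, its expectation difference is $F_Y(d)-F_X(d)$.
\end{itemize}
Each is bounded by the supremum. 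The $\nu_c$ term is handled exactly as in your Step~1, and summing gives the lemma with constant exactly $\|g\|_{\mathrm{TV}}$.
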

\begin{proof}
    
Writing the expectations as Lebesgue--Stieltjes integrals and
integrating by parts gives
\[
        \mathbb E[g(X)]-\mathbb E[g(Y)]
        =
        \int_{\mathbb R} g(x)\,d(F_X-F_Y)(x)
        =
        -\int_{\mathbb R}(F_X(x)-F_Y(x))\,dg(x),
\]
and therefore
\[
        \left|
        \mathbb E[g(X)]-\mathbb E[g(Y)]
        \right|
        \le
        \sup_{x\in\mathbb R}|F_X(x)-F_Y(x)|\, |dg|(\mathbb R)
        =
        \|g\|_{\mathrm{TV}}
        \sup_{x\in\mathbb R}|F_X(x)-F_Y(x)|.
\]
\end{proof}

\bibliographystyle{plain}
\bibliography{SHFLDP}

\end{document}